\newtheorem*{maintheo}{Main Theorem}
\newtheorem*{theoa}{Theorem A}
\newtheorem*{theob}{Theorem B}
\newtheorem{pro}{Proposition}
\newtheorem{ques}{Question}
\newtheorem{conj}{Conjecture}
\newtheorem{theo}{Theorem}[section]
\newtheorem{lem}[theo]{Lemma}
\newtheorem{claim}[theo]{Claim}
\newtheorem{cor}[theo]{Corollary}
\newtheorem{rem}[theo]{Remark}
\theoremstyle{definition}
\newtheorem{defi}[theo]{Definition}
\begin{document}
\def \diff {\operatorname{Diff}}
\def \vep {\operatorname{\varepsilon}}
\def \dim {\operatorname{dim}}
\def \orb {\operatorname{Orb}}
\def \ind {\operatorname{Ind}}
\def \int {\operatorname{Int}}
\def \supp {\operatorname{supp}}
\def \Per {\operatorname{Per}}
\def \max {\operatorname{max}}
\def \min {\operatorname{min}}
\selectlanguage{english}
\title{Hyperbolicity versus weak periodic orbits inside homoclinic classes}

\author{Xiaodong Wang \footnote{The author was financially supported by China Scholarship Council (CSC) (201306010008) and ``Bo Xin Project'' (BX201600105) funded by China Postdoctoral Science Foundation.}}
\date{}

\maketitle
\thispagestyle{empty}

\begin{abstract}
We prove that, for $C^1$-generic diffeomorphisms, if the periodic orbits contained in a homoclinic class $H(p)$ have all their Lyapunov exponents bounded away from 0, then $H(p)$ must be (uniformly) hyperbolic. This is in sprit of the works of the stability conjecture, but with a significant difference that the homoclinic class $H(p)$ is not known isolated in advance, hence the ``weak" periodic orbits created by perturbations near the  homoclinic class have to be guaranteed strictly inside the homoclinic class. In this sense the problem is of an ``intrinsic" nature, and the classical proof of the stability conjecture does not pass through. In particular, we construct in the proof several perturbations which are not simple applications of the connecting lemmas.
\end{abstract}

\section{Introduction}

\subsection{Backgrounds and main results}

We study in this paper a problem that is in sprit of the works of the stability conjecture but with an ``intrinsic" nature. Let $M$ be a compact manifold without boundary, and $\diff^1(M)$ be the space of $C^1$-diffeomorphisms of $M$. Recall the stability conjecture formulated by Palis and Smale claims that if a diffeomorphism $f$ is structurally stable then it is hyperbolic. Here a diffeomorphism $f$ is called {\it hyperbolic} if the chain recurrent set $R(f)$ of $f$ (see Definition~\ref{chain recurrence}) is hyperbolic. A stronger version of the conjecture is to claim that if $f$ is $\Omega$-stable then it is hyperbolic. These two remarkable conjectures are solved by Ma\~{n}\'{e}~\cite{m3} and Palis~\cite{pa1}, respectively.

During the long way of study of the stability conjectures, the attention was more and more concentrated on periodic orbits of the (unperturbed) diffeomorphism $f$ as well as its perturbations $g$. Liao~\cite{l1} and Ma\~{n}\'{e}~\cite{m2} raised independently a conjecture (more precisely, a problem without a tentative answer), known as the star conjecture, stating that if $f$ has no, robustly, non-hyperbolic periodic orbits then it is hyperbolic. Being an assumption, the star condition is clearly weaker than the $\Omega$-stability. Hence the star conjecture is regarded another (strong) version of the stability conjecture.  It is solved by  Aoki and Hayashi~\cite{ao,h2}. To compare more precisely with our Main Theorem below we state their results in a generic version. Recall
that if $p$ is a periodic point with period $\tau$ of a diffeomorphism $f$, and if $\lambda_1,\lambda_2,\cdots,\lambda_d$ are the eigenvalues of $Df^{\tau}$ (counted by multiplicity), then the $d$ numbers $\chi_i=\frac{1}{\tau}\log|\lambda_i|$, $i=1,\cdots,d$ are called the \emph{Lyapunov exponents} of $\orb(p)$.

\begin{theo}[Aoki and Hayashi]\label{ah}
For a $C^1$-generic $f\in\diff^1(M)$, if $f$ is not hyperbolic, then there is a periodic orbit of $f$ that has a Lyapunov exponent arbitrarily close to 0.
\end{theo}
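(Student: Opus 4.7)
The plan is to argue by contrapositive: assume $f$ is $C^1$-generic and that there exists $\varepsilon_0>0$ such that every Lyapunov exponent of every periodic orbit of $f$ has modulus at least $\varepsilon_0$, and conclude that $R(f)$ is (uniformly) hyperbolic.

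First I would upgrade the hypothesis from $f$ alone to a $C^1$-open condition. Using Franks' lemma together with a lower semi-continuity argument on the minimal modulus of exponents over periodic orbits of bounded period, one sees that every $g$ in a $C^1$-neighborhood $\mathcal{U}$ of $f$ must also have all periodic exponents of modulus at least some uniform $\varepsilon_1>0$; otherwise, starting from a weak periodic orbit of some $g\in\mathcal{U}$, Franks' lemma would push an eigenvalue onto the unit circle, and a further small perturbation into a residual set near $f$ would produce an actually weak periodic orbit of $f$, contradicting $\varepsilon_0$. Thus $f$ lies inside the open set of \emph{star} diffeomorphisms.

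Next I would invoke the Liao--Ma\~n\'e theory of star systems. The uniform hyperbolic rates and uniform angle estimates on periodic orbits, together with the robustness obtained above, yield via Liao's selecting lemma and Ma\~n\'e's dichotomy a dominated splitting $E\oplus F$ over the closure $\overline{\mathrm{Per}(f)}$, with $\dim E$ equal to the index shared by the periodic orbits of each chain class. The most delicate point---and the one I expect to be the main obstacle---is to rule out the coexistence of two periodic orbits of different indices inside one chain recurrence class: such a pair would support a heterodimensional cycle, and Hayashi's refinement of the connecting lemma allows one to perturb such a cycle into a periodic orbit with a weak exponent, contradicting the star condition. This ``unique index'' statement inside each class is exactly what makes the dominated splitting have constant dimension.

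Finally, since $f$ is $C^1$-generic, Pugh's closing lemma (together with the standard description of generic chain recurrent sets) gives $R(f)=\overline{\mathrm{Per}(f)}$, so the dominated splitting $E\oplus F$ lives over $R(f)$. The uniform exponential rates on the dense set of periodic orbits, combined with Ma\~n\'e's ergodic closing lemma applied to invariant measures supported off periodic orbits, then promote $E\oplus F$ to a genuinely hyperbolic splitting on all of $R(f)$: contraction on $E$ and expansion on $F$ pass from a dense uniformly hyperbolic set to its closure thanks to the domination. This yields the hyperbolicity of $R(f)$ and completes the contrapositive.
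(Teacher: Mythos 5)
The paper does not prove Theorem~\ref{ah}; it is quoted from Aoki~\cite{ao} and Hayashi~\cite{h2} as a known result, restated ``in a generic version'' purely as motivation for the Main Theorem. There is therefore no proof in the paper against which to compare your proposal. What can be said is whether your sketch is a faithful outline of the Aoki--Hayashi argument together with the Baire-type upgrade from ``star'' to ``generic.''

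Your first step (the contrapositive, and using Franks' lemma plus a semicontinuity/Baire argument to turn the pointwise bound $\varepsilon_0$ into an open star condition) is correct and is exactly the reduction one needs to pass from the original star-conjecture statement to the generic version stated here. One imprecision: perturbing $g$ ``into a residual set near $f$'' does not produce a periodic orbit of $f$; the correct argument is that the set of diffeomorphisms possessing a hyperbolic periodic orbit with a Lyapunov exponent of modulus $<\varepsilon$ is open, so if it were dense near $f$ then a generic $f$ would already lie in it. That is the same genericity argument used in Section~\ref{generic argument} of this paper.

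The remaining steps are a very compressed summary of a long proof, and two points as you state them are not quite the mechanism that actually works. First, two periodic orbits of different indices in the same chain class do not by themselves ``support a heterodimensional cycle'' --- one needs the connecting lemma to create the heteroclinic intersections first, and even then the passage from a heterodimensional cycle to a weak periodic orbit is itself a nontrivial perturbation result (it is part of the Bonatti--D\'iaz theory, not a corollary of Hayashi's connecting lemma alone). In the Aoki--Hayashi proof the constancy of index inside a chain class follows instead from Liao-type uniform estimates on the periodic orbits of nearby diffeomorphisms, combined with Pliss/selecting-lemma arguments, without passing through heterodimensional cycles. Second, the final promotion from uniform estimates on $\overline{\mathrm{Per}(f)}$ to hyperbolicity of $R(f)$ has to be done chain class by chain class: the dominated splitting need not have constant index across different classes, and one also has to establish that there are only finitely many classes so that the hyperbolicity is uniform on all of $R(f)$ and the no-cycle condition can be checked. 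Your sketch treats $E\oplus F$ as if it were a single splitting over all of $R(f)$, which is not what one has a priori. None of these gaps is fatal to the strategy --- they are precisely where the work of Liao, Pliss, Ma\~n\'e, Aoki and Hayashi is done --- but they should be flagged as the substantive content of the proof rather than glossed over.
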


Now we state our main result. Recall that two hyperbolic periodic points are {\it homoclinically related} if $W^s(\orb(p))$ has non-empty transverse intersection with $W^u(\orb(q))$ and $W^u(\orb(p))$ has non-empty transverse intersection with $W^s(\orb(q))$. To be homoclinically related is an equivalent relation, and the \emph{homoclinic class} $H(p)$ of a hyperbolic periodic point $p$ is the closure of the union of periodic orbits that are homoclinically related to $p$. Two different homoclinic classes may intersect. Nevertheless by  the result of~\cite{bc}, for $C^1$-generic diffeomorphisms, every homoclinic class is a maximal invariant compact set that is {\it chain transitive} (see Definition~\ref{pseudo-orbit}), hence they are pairwise disjoint. Homoclinic classes are generally infinite in number, even for generic diffeomorphisms.

\begin{maintheo}
For a $C^1$-generic $f\in\diff^1(M)$, if a homoclinic class $H(p)$ of $f$ is not hyperbolic,
then there is a periodic orbit of $f$  that is homoclinically related to $\orb(p)$ and has a Lyapunov exponent arbitrarily close to 0.
\end{maintheo}

Note that here the ``weak" periodic orbit (the one with a Lyapunov exponent arbitrarily close to 0) is  homoclinically related to $\orb(p)$, that is, is ``inside" the homoclinic class $H(p)$. This is the main point of this paper. In fact, under the assumptions of the Main Theorem, it is straightforward to prove (following the classical proof of the stability conjecture) that, there must be a weak periodic orbit arbitrarily near $H(p)$. In contrast, here the Main Theorem claims there must be a weak periodic orbit not only near, but actually inside $H(p)$. Of course, if the homoclinic class $H(p)$ is assumed to be isolated, then  being ``near" will be equivalent to being ``inside". The point is that here $H(p)$ is not known to be isolated hence, at each step, the periodic orbits created by  perturbations have to be guaranteed to lie strictly inside the homoclinic class. It is in this sense we say the problem is of an ``intrinsic" nature, and the classical proof of the stability conjecture does not pass through.

There are some other results that concern whether the newly created periodic orbits are inside or outside the homoclinic class, for instance~\cite{bcdg,bs}. The lack of domination of the homoclinic class yields periodic orbits having multiple Lyapunov exponents and weak hyperbolicity (\cite{bcdg}) or allows to produce sinks or sources (\cite{bs}). The difference is that~\cite{bcdg} applies Franks-Gourmelon Lemma~\cite{g2} (see also Lemma~\ref{f-g lemma} in Section 3) to guarantee that the newly created periodic orbits are still inside the homoclinic class, while~\cite{bs} claims the newly created sinks or sources are outside the initial class by producing an attracting or repelling region. In our paper, we build new periodic orbits by mixing a hyperbolic periodic orbit and a \emph{weak set}, which has very weak hyperbolicity. The new periodic orbits have weak hyperbolicity because they spend a long time close to the weak set. However, by the property of hyperbolic periodic orbit, the amount of time that close to the initial periodic orbit (which is chosen after the amount of time close to the weak set has been fixed) can be any large number. Hence by choosing the amount of time properly, the hyperbolicity of the new periodic orbits are still uniform in some sense. This is the reason that the new periodic orbits are homoclinically related to the initial one. We have to connect first the initial periodic orbit and the weak set by heteroclinic orbits through several perturbations. The difficulty is that, the connection between the new periodic orbit and the homoclinic class may be destroyed in any step of the process. To avoid this, we have to use the generic properties to guarantee the perturbations to lie inside the homoclinic class.

%Considering the problem of determining whether the newly created periodic orbits are inside or outside a homoclinic class, there are some results, for instance~\cite{bcdg,bs}. In this paper, we build a new periodic orbit by mixing a hyperbolic periodic orbit and a special set called \emph{weak set}, which has very weak hyperbolicity. The new periodic orbit spends most of the time close to this initial periodic orbit and the weak set. It has weak hyperbolicity because it spends a long time close to the weak set. However, the amount of time that close to the initial periodic orbit (which is chosen after the amount of time close to the weak set has been fixed) can be any large number, by the property of hyperbolic periodic orbit. Hence the hyperbolicity of the new periodic orbit is still uniform in some sense, by choosing a proper long time close to the initial hyperbolic periodic orbit. This is the reason that the new periodic orbit is homoclinically related to the initial one. In order to do the above perturbation, we have to first connect the initial periodic orbit and the weak set by heteroclinic orbits through several perturbations. The difficulty is that, the connection between the new periodic orbit and the homoclinic class may be destroyed in any step of the process. To avoid this, we have to use the generic properties to guarantee the perturbations to lie inside the homoclinic class.

There are other conjectures aimed to give a dichotomy of global dynamics. Recall that a \emph{homoclinic tangency} of a hyperbolic periodic point $p$ is a non-transverse intersection between $W^u(p)$ and $W^s(p)$. A diffeomorphism is with a \emph{heterodimensional cycle} if there are two hyperbolic periodic points $p$ and $q$ with different stable dimensions such that $W^s(p)\cap W^u(q)\neq \emptyset$ and $W^s(q)\cap W^u(p)\neq \emptyset$. It is obvious that any diffeomorphism with either a tangency or a heterodimensional cycle is not hyperbolic. Palis conjectured that these two phenomenons are the only obstacles for hyperbolicity. More precisely, the union of hyperbolic diffeomorphisms and diffeomorphisms with tangencies or heterodimensional cycles are dense in the space of diffeomorphisms, see~\cite{pa}. Based on the results afterwards, Bonatti and D\'iaz conjectured that the union of diffeomorphisms that are hyperbolic and those with heterodimensional cycles are dense in the space of diffeomorphisms, see~\cite{b,bd}. There are many works related to this subject, like~\cite{ps,c5,csy,cp}.~\cite{ps} solved this conjecture for dimension 2, and for higher dimension,~\cite{c5,csy,cp} got progress that far from homoclinic bifurcations, the systems has some weak hyperbolicity (partially hyperbolic or essentially hyperbolic).

By the Franks' lemma~\cite{f,g2}, we can perturb weak periodic orbits to get periodic orbits with different stable dimensions. But it is not clear whether these periodic orbits are still contained in the non-hyperbolic homoclinic class after perturbation. Thus we have the following conjecture, which is an intrinsic version of Palis conjecture for homoclinic classes.

\begin{conj}[\cite{b,bcdg}]\label{conj b}
There is a residual subset $\mathcal{R}\subset \diff^1(M)$, such that for all $f\in\mathcal{R}$, if a homoclinic class $H(p)$ is not hyperbolic, then there is a periodic point $q\in H(p)$, whose stable dimension is different from that of $p$.
\end{conj}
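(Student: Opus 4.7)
The plan is to reduce Conjecture~\ref{conj b} to the Main Theorem together with a controlled Franks'-type perturbation whose output can be kept inside the homoclinic class by a connecting-lemma refinement. Fix $f$ in the residual set to which the Main Theorem applies, and assume $H(p)$ is not hyperbolic. Then for every $\varepsilon>0$ there exists a periodic point $q_\varepsilon$ homoclinically related to $orb(p)$ with some Lyapunov exponent $\chi_{i_0}(q_\varepsilon)$ of modulus less than $\varepsilon$. By Franks' lemma~\cite{f,g2} one performs a $C^1$-small, locally supported perturbation $f\rightsquigarrow g$ along $orb(q_\varepsilon)$ turning $\chi_{i_0}$ into an exponent of the opposite sign; the continuation $q'$ then has stable dimension different from that of the continuation $p_g$. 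If one could ensure $q'\in H(p_g)$ and then transfer the conclusion to a residual set via generic continuation of homoclinic classes \cite{bc}, the conjecture would follow.

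The first serious step is therefore to re-establish, after the Franks' perturbation, transverse intersections $W^s(orb(q'))\pitchfork W^u(orb(p_g))\neq\emptyset$ and $W^u(orb(q'))\pitchfork W^s(orb(p_g))\neq\emptyset$, i.e.\ a heterodimensional cycle between $q'$ and $p_g$ located \emph{strictly inside} $H(p_g)$. The natural tool is Hayashi's connecting lemma, in its chain-recurrence-class refinements, applied to pseudo-orbits built from the broken transverse intersections that witnessed the original homoclinic relation between $q_\varepsilon$ and $p$: because the Franks' perturbation has arbitrarily small support, the heteroclinic connections of $q_\varepsilon$ with $orb(p)$ persist up to arbitrarily small pseudo-orbit gaps along the strong stable/unstable directions of $q'$, which a connecting-lemma perturbation should turn into genuine transverse intersections without leaving the class.

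Once a heterodimensional cycle between $q'$ and $p_g$ is present inside $H(p_g)$, one unfolds it in the spirit of Bonatti--D\'iaz, using the weakness of the exponent $\chi_{i_0}$ as the free parameter, to produce a periodic point of intermediate index that is homoclinically related to $p_g$ and hence lies in $H(p_g)$. A diagonal/Baire argument over $\varepsilon\to 0$, combined with upper semicontinuity of the homoclinic class along generic continuation, then pushes the conclusion back to the original $f$ on a residual subset.

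The main obstacle is precisely the one highlighted by the Main Theorem: after the Franks' perturbation changing the stable dimension of $q_\varepsilon$, the transverse intersections pinning the orbit inside $H(p)$ are partially broken along exactly the direction whose sign has been flipped, so a plain application of the connecting lemma is insufficient to ensure that $q'$ stays in $H(p_g)$. Overcoming this requires perturbations in the ``intrinsic'' style developed in the Main Theorem — perturbations that simultaneously rebuild the heteroclinic web and control the index-changing Franks' bump so that the resulting configuration is realized inside the class. As in the Main Theorem, the classical star-condition-type arguments of Aoki--Hayashi~\cite{ao,h2} do not pass through here, and it is the construction of these non-standard perturbations that constitutes the genuine difficulty of the conjecture.
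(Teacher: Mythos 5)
This statement is not proved in the paper: it is stated explicitly as an open \emph{conjecture} (attributed to \cite{b,bcdg}), and in the paragraph immediately preceding it the author raises exactly the obstruction you identify, namely that Franks'-type perturbations produce a periodic orbit of different index but it is ``not clear whether these periodic orbits are still contained in the non-hyperbolic homoclinic class after perturbation.'' Your write-up is therefore a correct \emph{analysis of the difficulty}, but it is not a proof, and you are honest about this in your final paragraph.

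Two points worth making precise. First, the Main Theorem of the paper gives weak periodic orbits \emph{inside} $H(p)$, but it does not change their index; closing the gap between ``weak orbit inside the class'' and ``orbit of different index inside the class'' is precisely what remains open, and the connecting-lemma step you sketch (rebuilding the heteroclinic web between $q'$ and $p_g$ after the index-flipping bump) is not a routine application of Hayashi's lemma: once the stable dimension of $q'$ exceeds that of $p_g$, the intersection $W^s(q')\cap W^u(p_g)$ is no longer of complementary dimension, so transversality in the usual homoclinic-relation sense is lost, and what one needs is a genuinely heterodimensional cycle realized inside the chain class, which is a different and harder problem. Second, the paper does settle a special case: Corollaries~\ref{application 3} and~\ref{application 4} give a positive answer to Conjecture~\ref{conj b} for bi-Lyapunov stable homoclinic classes, using the extra control that Lyapunov stability provides on the unstable manifolds. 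If you wanted to produce an actual proof of \emph{something} along these lines, that is the route the paper takes, and it would be more productive to reconstruct the proof of Corollary~\ref{application 4} (which relies on Corollary~\ref{application 3}, Gourmelon's Franks' lemma with invariant manifolds, and the genericity of Lyapunov stability of $H(p_g)$) rather than to attempt the full conjecture.
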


By~\cite{conley}, one can decompose the dynamics into pieces, and each piece is called a {\it chain recurrence class} (see Definition~\ref{chain class}). By~\cite{bc}, for $C^1$-generic diffeomorphism, a chain recurrence class is either a homoclinic class or contains no periodic point. We call a chain recurrence class without periodic point an \emph{aperiodic class}. Recall that a \emph{dominated splitting} $E\oplus F$ on an invariant compact $\Lambda$ set is an invariant splitting of $T_{\Lambda}M$ and the norm of $Df$ along $E$ is controlled by that along $F$, and $\Lambda$ is \emph{partially hyperbolic} if $T_{\Lambda}M$ splits into three bundles which is a dominated splitting such that the extremal bundles are hyperbolic and the center bundle is neutral (see Definition~\ref{dominated splitting}). By~\cite{c5,csy}, for $C^1$-generic diffeomorphisms far from homoclinic bifurcations (or just homoclinic tangencies), an aperiodic class is partially hyperbolic with center bundle of dimension 1. In~\cite{bd-aperiodic class}, they proved that if $\dim(M)\geq 3$, then there are an open set $\mathcal{U}$ of $\diff^1(M)$, and a residual subset $\mathcal{V}$ of $\mathcal{U}$, such that any $g\in\mathcal{V}$ has infinitely many aperiodic classes, and each of them has no non-trivial dominated splitting. We state here a conjecture by S. Crovisier for aperiodic classes, which implies the non-existence of aperiodic classes for $C^1$-generic diffeomorphisms far from homoclinic bifurcations.

\begin{conj}[\cite{c3}]
Let $\Lambda$ be an aperiodic class for a $C^1$-generic diffeomorphism $f$, and $E^s \oplus E^c\oplus E^u$ the dominated splitting such that $E^s$ (resp. $E^u$) is the maximal uniformly contracting (resp. expanding) sub-bundle. Then $E^c$ has dimension larger than or equal to 2 and does not admit a finer dominated splitting.
\end{conj}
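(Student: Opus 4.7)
The plan is to argue by contradiction: assuming that $\dim E^c\le 1$, or that $E^c$ admits a non-trivial dominated splitting, I would try to produce a periodic orbit inside $\Lambda$ and thereby contradict aperiodicity.

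Start with the case $\dim E^c=0$: then $\Lambda$ is uniformly hyperbolic, the shadowing lemma gives density of periodic orbits in $\Lambda$, and we are done immediately. The heart of the matter is $\dim E^c=1$, where $E^c$ is one-dimensional and neutral (by the maximality of $E^s$ and $E^u$). Pick an ergodic invariant measure $\mu$ on $\Lambda$: if its center exponent $\chi_c(\mu)$ is nonzero, say $\chi_c(\mu)<0$, then $\mu$ sees the coarser splitting $(E^s\oplus E^c)\oplus E^u$ as hyperbolic. I would then combine a Pliss-type selecting lemma and Liao's shadowing technique with the Bonatti--Crovisier connecting lemma for pseudo-orbits (which is available because $\Lambda$ is chain transitive) to close a $\mu$-generic orbit segment into a true periodic orbit of $f$ lying in $\Lambda$. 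If instead every ergodic measure on $\Lambda$ has $\chi_c=0$, the situation is much more degenerate; here I would invoke Ma\~{n}\'{e}'s ergodic closing lemma together with the intrinsic perturbation scheme developed in the present paper to still produce a weak periodic orbit strictly inside $\Lambda$.

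Next, suppose $E^c$ admits a finer dominated splitting $E^c=F_1\oplus F_2$. By maximality of the extremal bundles, neither $F_1$ nor $F_2$ is uniformly hyperbolic. I would first attempt a reduction to the previous case by regrouping as $(E^s\oplus F_1)\oplus F_2\oplus E^u$ or $E^s\oplus F_1\oplus (F_2\oplus E^u)$, isolating a one-dimensional neutral sub-bundle and running the closing argument above. Failing that, I would extract from the non-uniformity of $F_1$ or $F_2$ an ergodic measure on $\Lambda$ with a small Lyapunov exponent, and then combine Franks-type perturbations with the intrinsic perturbation techniques from the paper's Main Theorem to create a periodic orbit inside $\Lambda$ whose center exponents witness the non-uniformity; the contradiction with aperiodicity would then follow.

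The main obstacle is exactly the \emph{intrinsic} difficulty emphasized in the introduction: chain recurrence classes vary only upper-semicontinuously in $f$, so a periodic orbit created by a $C^1$-perturbation can easily drift into a neighboring chain class rather than remain in $\Lambda$. The standard closing and connecting lemmas only guarantee periodic orbits \emph{near} $\Lambda$, and upgrading ``near'' to ``inside'' is precisely the hard problem that this paper addresses for homoclinic classes. Adapting those intrinsic perturbations to the aperiodic setting is considerably more delicate, since there is no hyperbolic periodic orbit inside $\Lambda$ to serve as an anchor for the constructions. This is the step I expect to be the genuine bottleneck in any serious attempt at the conjecture.
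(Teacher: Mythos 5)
This statement is an open conjecture, not a theorem of the paper: the paper attributes it to Crovisier (the reference \cite{c3}) and states it purely for context, remarking that it \emph{would imply} the non-existence of aperiodic classes for $C^1$-generic diffeomorphisms far from homoclinic bifurcations. There is therefore no proof in the paper to compare against, and your proposal should be read — as you yourself signal in the final paragraph — as a plan of attack rather than a candidate proof.

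On the merits of the plan, the central obstacle you isolate is real, and it is in fact more severe than in the setting of the paper's Main Theorem. All three technical propositions driving the Main Theorem are anchored to a hyperbolic periodic orbit $orb(p)$: the periodic orbit and its invariant manifolds are used to close pseudo-orbits, to calibrate the time a created orbit spends near $orb(p)$ versus near the weak set, and — via homoclinic relation — to certify that the created orbit lies \emph{inside} $H(p)$. An aperiodic class has no such anchor, so none of this machinery applies directly. There are also smaller problems in the sketch: Ma\~{n}\'{e}'s ergodic closing lemma and the Bonatti--Crovisier connecting lemma both produce periodic orbits for a perturbed diffeomorphism $g$, not for $f$ itself; Liao's selecting lemma can yield periodic orbits of $f$ without perturbing, but its hypothesis (Pliss points in every proper invariant compact subset) is not automatic in an aperiodic class; and even a periodic orbit of $f$ that is Hausdorff-close to $\Lambda$ need not lie in $\Lambda$, since aperiodic classes are not known to be isolated and chain classes vary only upper-semicontinuously. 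So you have correctly located the bottleneck, but the proposal does not close it — and neither does the paper claim to.
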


\subsection{Main theorem restated}
We give here a more general result rather than the main theorem. For a hyperbolic periodic point $p$, denote by $\ind(p)$ its stable dimension.

\begin{theoa}
For $C^1$-generic $f\in \diff^1(M)$, assume that $p$ is a hyperbolic periodic point of $f$. If the homoclinic class $H(p)$ has a dominated splitting $T_{H(p)}M=E\oplus F$, with $dim E\leq \ind(p)$, such that the bundle $E$ is not contracting, then there are periodic orbits in $H(p)$ with index $\dim(E)$ that have the maximal Lyapunov exponents along $E$ arbitrarily close to 0.
\end{theoa}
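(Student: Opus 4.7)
The strategy is to produce the desired weak periodic orbits through the Pliss--Liao selecting lemma route, followed by a careful closing procedure that keeps the resulting orbits inside $H(p)$. Since the bundle $E$ is not contracted on the compact invariant set $H(p)$, a standard compactness and subadditivity argument yields an $f$-invariant ergodic Borel probability measure $\mu$ supported in $H(p)$ whose top Lyapunov exponent along $E$, say $\chi^+_E(\mu)$, is nonnegative. For any prescribed small $\epsilon>0$, Pliss's lemma applied to the log-norm cocycle of $Df|_E$ along $\mu$-typical orbits produces a positive-density set of ``$\epsilon$-Pliss times'' $n$ along which the backward product $\|Df^k|_E\|^{-1}$ is bounded by $e^{\epsilon k}$ for every prefix $k\leq n$. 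The domination by $F$ upgrades this to symmetric forward expansion estimates for $F$ at a strictly larger rate.

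Next, Liao's selecting lemma in its dominated form (as refined by Wen and Gan) extracts from these Pliss times a sequence of orbit segments $(y_j,n_j)$ contained in $H(p)$, with $n_j\to\infty$, along which $E$ is weakly contracted with top finite-time Lyapunov rate close to $-\epsilon$ and $F$ is strictly expanded. These quasi-hyperbolic strings serve as the scaffolding for the periodic orbits. Each such segment is then closed into a genuine periodic orbit by a $C^1$-small perturbation using the Bonatti--Crovisier connecting lemma for pseudo-orbits. By~\cite{bc}, for $C^1$-generic $f$ the homoclinic class $H(p)$ coincides with the chain recurrence class of the continuation of $p$, so if the closing is organized to keep the new periodic orbit chain-equivalent to $p$ under the perturbed diffeomorphism, it automatically lies in $H(p)$. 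Because the perturbation is $C^1$-small and quasi-hyperbolicity is an open condition at that scale, the closed orbit inherits the splitting $E\oplus F$ with $E$ contracted and $F$ expanded; hence its index equals $\dim E$ and its top Lyapunov exponent along $E$ is $O(\epsilon)$-close to $0$ from below, giving the conclusion for arbitrarily small $\epsilon$.

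The main obstacle, as highlighted in the introduction, is the closing step: ensuring that the newly created weak orbit lies \emph{inside} $H(p)$ rather than in a nearby chain recurrence class conjured by the perturbation itself. A direct appeal to Hayashi's connecting lemma only places the orbit near $H(p)$, so one must instead use the pseudo-orbit version and schedule several perturbations so that at each stage the persisted point $p$ and the new weak orbit remain in the same chain recurrence class, while the Pliss--Liao quasi-hyperbolic estimates survive intact. Controlling this interaction between the perturbations and the chain-transitive structure, rather than any single local estimate, is what makes the problem genuinely intrinsic and, I expect, forces the ``several perturbations which are not simple applications of the connecting lemmas'' promised in the abstract.
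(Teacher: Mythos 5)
Your proposal correctly identifies the main obstruction (keeping the closed orbit \emph{inside} $H(p)$ rather than merely nearby) and correctly names Pliss/Liao plus Bonatti--Crovisier pseudo-orbit connecting as the toolkit, but the two load-bearing steps are left unfilled, and the first one does not work in the form you state.

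First, Liao's selecting lemma does not produce ``orbit segments $(y_j,n_j)$ contained in $H(p)$.'' In its standard (and Wen--Gan) form it produces \emph{periodic orbits in a neighborhood of the compact set} $\Lambda$, and -- this is the crucial point -- it requires as a hypothesis that \emph{every proper compact invariant subset of} $\Lambda$ contains a Pliss point. That hypothesis is precisely the negation of the existence of a weak set inside $\Lambda$. In the paper, the role of Liao's selecting lemma is \emph{reversed}: it is used in a proof by contradiction to establish that a $\lambda$-$E$-weak set \emph{does} exist in $H(p)$ (Lemma~\ref{existence of weak sets}). Once that weak set $K\subset H(p)$ exists, the selecting-lemma route is blocked, and the construction becomes entirely different: one finds a bi-Pliss point $x$ with $\alpha(x)=K$ (Lemma~\ref{weak set and pliss point}), perturbs to make $W^u(p)$ accumulate to $K$ while preserving $K\subset \mathcal{C}(p)$ via a continuity-of-Pliss-points genericity device (Section~\ref{choice of f}, Lemma~\ref{first perturbation}), then perturbs again -- through the asymptotic-connecting Propositions~\ref{asymptotic connecting 1} and~\ref{asymptotic connecting} -- to produce a heteroclinic loop $p\to K\to p$ that survives all subsequent perturbations.

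Second, the closing step is where the whole difficulty lives, and the proposal's ``if the closing is organized to keep the new periodic orbit chain-equivalent to $p$\ldots'' is the conclusion you need, not an argument. Even granting a loop $p\to K\to p$, a naive application of the connecting lemma gives no control over the proportion of time the closed orbit spends near $K$ versus near $p$, and therefore no two-sided control on $\prod\|Df^m|_E\|$; you need both a lower bound (to avoid the exponent escaping to $-\infty$) and an upper bound (to ensure the orbit is actually weak). This is exactly what Proposition~\ref{time control} supplies, together with the arithmetic in Lemma~\ref{choose time}. You also cannot conclude ``hence its index equals $\dim E$'' from smallness of the perturbation alone: an orbit with a nearly zero exponent along the weakest $E$-direction is nearly non-hyperbolic, and its index is not automatically $\dim E$; the paper handles this via the domination plus the Franks--Gourmelon lemma (Lemma~\ref{f-g lemma}) and the matrix-perturbation lemma (Corollary~\ref{cor of matrix}) to convert product-of-norm bounds into genuine eigenvalue bounds \emph{while preserving a homoclinic connection to $p$}. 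Finally, all of this produces a weak orbit in $H(p_g,g)$ for a perturbation $g$, not for $f$ itself; a separate Baire-category argument (Section~\ref{generic argument}) is needed to transfer the conclusion back to the generic $f$, which the proposal does not address.
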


\begin{rem}\label{rem:theorem a}
If in the assumption of the Theorem A, $dim E=\ind(p)$, then the periodic orbits $\mathcal{O}_k=\orb(q_k)$ obtained have the same index as $p$. Thus by the genericity assumption and the item 4 of Lemma~\ref{generic properties}, these periodic orbits $\mathcal{O}_k$ are homoclinically related with $\orb(p)$.
\end{rem}

We give an explanation why Theorem A implies the main theorem. We assume that all Lyapunov exponents of periodic orbits that are homoclinically related to $\orb(p)$ are uniformly away from 0. Then by the genericity assumption, $H(p)$ has a dominated splitting $T_{H(p)}M=E\oplus F$, with $dim E= \ind(p)$, (see~\cite{gy} and Proposition 4.8 of~\cite{bcdg}). By the conclusion of Theorem A and the assumption of no existence of weak periodic orbits homoclinically related to $\orb(p)$, we get that the bundle $E$ is contracting. With the same argument for $f^{-1}$, we get that the bundle $F$ is expanding for $f$. Hence $T_{H(p)}M=E\oplus F$ is a hyperbolic splitting which is a contradiction to the assumption of the main theorem.

In~\cite{m3}, Ma\~n\'e introduced a very useful lemma (Theorem \textbf{II.1}) to get weak periodic orbits under certain hypotheses. The statement is very technical and the original proof of Ma\~{n}\'{e} is difficult, thus we will not state it here. Based on a modification of the proof of Ma\~{n}\'{e}, Bonatti, Gan and Yang have a result for homoclinic classes, see~\cite{bgy}.

Here we point out that, different from Theorem \textbf{II.1} of~\cite{m3} and the result of~\cite{bgy}, there is a genericity assumption in the main theorem and Theorem A. That is to say, the conclusion of the main theorem is a perturbation result and may not be valid for all diffeomorphisms. Thus one asks the following question naturally, whether the genericity assumption is essential in the main theorem.

\begin{ques}\label{question 1}
Is there a homoclinic class $H(p)$ for a diffeomorphism $f$ satisfying that all the Lyapunov exponents of all periodic orbits homoclinically related to $\orb(p)$ are uniformly away from 0 but $H(p)$ is not hyperbolic?
\end{ques}

In some cases, we can give a positive answer to this question. In~\cite{r}, Rios proved that there is a diffeomorphism on the boundary of the set of hyperbolic diffeomorphisms on surface, with a homoclinic class containing a tangency inside. Hence it is not hyperbolic (it does not have a dominated splitting because of the existence of tangency). In~\cite{clr}, they proved that for this homoclinic class, all the Lyapunov exponents of all periodic orbits contained in the class are uniformly away form 0. In fact, they proved more that all the Lyapunov exponents of all ergodic measures are uniformly away from 0. Examples of non-hyperbolic homoclinic classes with a dominated splitting can be found in like~\cite{c4,dg,dhrs,ps2}, but the homoclinic classes in these examples contain weak periodic orbits. For $C^2$ diffeomorphisms on surfaces, by the conclusions of~\cite{ps2}, one can not give a non-hyperbolic homoclinic class with domination and without weak periodic orbits, which is unknown in the $C^1$ dynamics. Hence we have the following question which is a stronger version of Question~\ref{question 1}.

\begin{ques}\label{question 2}
Is there a non-hyperbolic homoclinic class $H(p)$ with a non-trivial dominated splitting for a diffeomorphism $f$ satisfying that all the Lyapunov exponents of all periodic orbits homoclinically related to $\orb(p)$ are uniformly away from 0?
\end{ques}

\subsection{Some applications of the main theorem}

In this subsection, we give some applications whose proof will be given later.

\subsubsection{Structural stability and hyperbolicity}

Recall that a diffeomorphism $f\in\diff^1(M)$ is \emph{structurally stable}, if there is a $C^1$ neighborhood $\mathcal{U}$ of $f$, such that, for any $g\in\mathcal{U}$, there is a homeomorphism $\phi:M\rightarrow M$, satisfying $\phi\circ f=g\circ\phi$. The orbital structure of a structurally stable diffeomorphism remains unchanged under perturbations. Ma\~{n}\'{e} proved that the chain recurrent set of a structurally stable diffeomorphism is hyperbolic, see~\cite{m3}. Here we give a local version about this result.

It is known that a hyperbolic periodic point has a continuation. More precisely, for a hyperbolic periodic point $p$ of a diffeomorphism $f$ with period $\tau$, there is a neighborhood $U$ of $\orb(p)$ and a $C^1$ neighborhood $\mathcal{U}$ of $f$, such that, for any $g\in\mathcal{U}$, the maximal invariant compact set of $g$ in $U$ is a unique periodic orbit with period $\tau$ and with the same index as $p$. We denote this \emph{continuation} of $p$ by $p_g$ for such a diffeomorphism $g$, and denote the homoclinic class of $p_g$ by $H(p_g)$. Thus we say that a homoclinic class $H(p)$ of a diffeomorphism $f$ is \emph{structurally stable}, if there is a $C^1$ neighborhood $\mathcal{U}$ of $f$, such that, for any $g\in\mathcal{U}$, there is a homeomorphism $\phi:H(p)\rightarrow H(p_g)$, satisfying $\phi \circ f|_{H(p)}=g\circ\phi|_{H(p)}$, where $p_g$ is the continuation of $p$. %Similarly we can define the structurally stability for a chain recurrence class $C(p)$ of a hyperbolic periodic point. One asks naturally the following question, which can be seen as a ``local" version of the stability conjecture.

\begin{ques}\label{stability}
Assume $p$ is a hyperbolic periodic point for a diffeomorphism, if $H(p)$ is structurally stable, then is it hyperbolic?
\end{ques}

There are many works related to this question, see for example~\cite{gy,sv,ww,wenx}. In~\cite{ww} and~\cite{wenx}, they prove that structural stability implies hyperbolicity for the chain recurrence class and the homoclinic class respectively of a hyperbolic periodic point, under the hypothesis that the diffeomorphism is far away from tangency, or that the stable or the unstable dimension of this periodic point is 1. With the conclusion of the main theorem, we can give a complete answer to Question~\ref{stability}.

\begin{cor}\label{stuctually stable}
Assume $f$ is a diffeomorphism in $\diff^1(M)$ and $p$ is a hyperbolic periodic point of $f$. If the homoclinic class $H(p)$ is structurally stable, then $H(p)$ is hyperbolic. %The conclusion is also valid for $C(p)$.
\end{cor}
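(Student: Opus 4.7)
The plan is to show that, for a $C^1$-generic diffeomorphism $g$ inside the structural stability neighborhood $\mathcal{U}$ of $f$, the homoclinic class $H(p_g)$ must be hyperbolic, from which hyperbolicity of $H(p)$ will follow: hyperbolicity of $H(p_g)$ is $C^1$-robust (hyperbolic sets admit continuations), and the continuation back to $f$ is a transitive hyperbolic basic set containing $p$, which must coincide with $H(p)$. I therefore suppose, for contradiction, that $H(p_g)$ is not hyperbolic for a $C^1$-generic $g \in \mathcal{U}$. By the Main Theorem applied to $g$, there exists a periodic orbit $q \in H(p_g)$ homoclinically related to $orb(p_g)$ with a Lyapunov exponent arbitrarily close to $0$; in particular $ind(q) = ind(p_g)$.

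Next I apply Franks' lemma along $orb(q)$: a $C^1$-small perturbation will push the weak eigenvalue across the unit circle, producing $h \in \mathcal{U}$ for which the continuation $q_h$ of $q$ is hyperbolic with stable dimension differing from $ind(p_h)$. Because $q$ and $p_g$ were homoclinically related through transverse intersections of their invariant manifolds, these intersections persist under the small perturbation, and a further generic perturbation inside $\mathcal{U}$ (combined with~\cite{bc}) will arrange $q_h \in H(p_h)$. The two structural stability conjugacies then compose to a topological conjugacy $\psi = \phi_h \circ \phi_g^{-1} : H(p_g) \to H(p_h)$. Taking $\mathcal{U}$ small enough that $\phi_g$ and $\phi_h$ may be chosen $C^0$-close to the identity, and using uniqueness of the $C^1$-continuation of the hyperbolic orbit $q$ in a small neighborhood of $orb(q)$, $\psi$ will be forced to send $q$ to $q_h$. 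But the stable dimension is preserved along $C^1$-continuations of hyperbolic periodic orbits, so $ind(q_h) = ind(q) = ind(p_h)$, contradicting the construction. The proof for $C(p)$ will be identical once one uses $H(p_h) = C(p_h)$ generically.

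The hard part is exactly the ``intrinsic'' issue emphasized in the introduction: after Franks' lemma, one must guarantee that $q_h$ stays inside $H(p_h)$ and not merely in a small neighborhood of it. The Main Theorem is designed precisely to handle this, since it supplies $q$ already inside $H(p_g)$ together with the transverse heteroclinic connections to $p_g$ that survive the perturbation, so the contradiction reduces to careful bookkeeping of the Franks perturbation. A secondary technical point is that the structural stability conjugacies must be chosen $C^0$-close to the identity, which requires $\mathcal{U}$ to be small; this is a standard refinement once one accepts that structural stability of an isolated chain transitive set upgrades to closeness of the conjugacy for sufficiently $C^1$-small perturbations.
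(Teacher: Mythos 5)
Your proposal takes a genuinely different route from the paper (which passes through the $(P1)$/$(P2)$ characterization of \cite{wenx}), but the final contradiction does not hold. A topological conjugacy $\psi:H(p_g)\to H(p_h)$ between the \emph{restricted} dynamics cannot detect the stable dimension of a periodic orbit: the index is read off the ambient derivative $Dh$ along the orbit, not the restriction of $h$ to $H(p_h)$, and is simply not a conjugacy invariant. After the index-changing Franks perturbation the unique periodic orbit of period $\tau(q)$ near $orb(q)$ is still $orb(q)$ itself (Gourmelon's lemma fixes the orbit pointwise), only now with a different index under $h$; so $\psi(q)=q_h=q$ is perfectly consistent with $ind(q_h)\neq ind(q,g)$. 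The claim that ``stable dimension is preserved along $C^1$-continuations'' is valid only along a path that stays hyperbolic throughout, and the perturbation that changes the index necessarily crosses a non-hyperbolic state, so the continuation argument does not apply. Hence no contradiction is produced and the core step fails.

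There are two subsidiary gaps worth flagging. First, ``conjugacies $C^0$-close to the identity'' is not a standard refinement of abstract structural stability: without expansivity of $H(p)$ the conjugacy is not unique and need not be close to the identity, and expansivity here would have to come from property $(P1)$ --- which you never establish. Second, once the index of $q$ has been changed, $q$ and $p_h$ are linked by a heterodimensional cycle rather than transverse heteroclinics, and forcing $q_h\in H(p_h)$ after a further perturbation is precisely the ``intrinsic'' difficulty the paper stresses (it is essentially Conjecture~\ref{conj b}), not something that follows from \cite{bc}. The paper's proof avoids the index issue entirely: it extracts $(P1)$ from structural stability via Lemma~\ref{no weak periodic orbit}, shows $(P1)\Rightarrow$ hyperbolic on a residual set using Theorem~B (the uniform contraction estimates in $(P1)$ directly rule out the weak periodic orbits Theorem~B would produce), and for arbitrary $f$ transports the shadowing property $(P2)$ --- which, unlike index, \emph{is} a conjugacy invariant --- from a nearby generic hyperbolic $H(p_g)$ back to $H(p)$, so that Lemma~\ref{wxdot} gives hyperbolicity.
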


\subsubsection{Partial hyperbolicity}

Next result is that for a homoclinic class with a dominated splitting of a $C^1$-generic diffeomorphism, if the dimensions of the two bundles in the splitting satisfy certain hypotheses, then the splitting is a partially hyperbolic splitting (at least one bundle is hyperbolic).

\begin{cor}\label{application 1}
For $C^1$-generic $f\in \diff^1(M)$, if a homoclinic class $H(p)$ has a dominated splitting $T_{H(p)}M=E\oplus F$, such that $\dim(E)$ is smaller than the smallest index of periodic orbits contained in $H(p)$, then the bundle $E$ is contracting. Symmetrically, if $\dim(E)$ is larger than the largest index of periodic orbits contained in $H(p)$, then the bundle $F$ is expanding.
\end{cor}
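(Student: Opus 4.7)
The plan is to prove the first half by contradiction and to deduce the second half by reversing time. Assume that $E$ is not contracted. Let $i_0$ denote the smallest index among periodic orbits contained in $H(p)$ (this minimum exists because $p$ itself lies in $H(p)$), and let $q$ be a periodic point in $H(p)$ with $\operatorname{ind}(q)=i_0$. By the hypothesis of the corollary, $\dim E < i_0 = \operatorname{ind}(q)$, so in particular $\dim E \leq \operatorname{ind}(q)$, and the dominated splitting $T_{H(p)}M=E\oplus F$ is of course also a dominated splitting over $H(q)$.

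The next step is to argue that one may legitimately apply Theorem A to the periodic point $q$. For $C^1$-generic $f$, every homoclinic class coincides with the chain recurrence class of any of its periodic points (by \cite{bc}), so $H(q)=C(q)=C(p)=H(p)$. Therefore the dominated splitting $T_{H(q)}M = E\oplus F$ has $\dim E \leq \operatorname{ind}(q)$, and by assumption $E$ is not contracted. Theorem A then produces periodic orbits in $H(q)=H(p)$ whose index equals $\dim E$, with the maximal Lyapunov exponent along $E$ arbitrarily close to $0$. But $\dim E < i_0$, which contradicts the minimality of $i_0$ as the smallest index realized by a periodic orbit in $H(p)$. Hence $E$ must be contracted.

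For the symmetric statement, apply the first half to $f^{-1}$. The splitting $T_{H(p)}M=F\oplus E$ is dominated for $f^{-1}$; the index of any periodic orbit $q$ for $f^{-1}$ is $\dim M - \operatorname{ind}_f(q)$, so the smallest index for $f^{-1}$ is $\dim M$ minus the largest index for $f$. The assumption $\dim E$ greater than the largest index of periodic orbits in $H(p)$ thus translates into $\dim F < $ the smallest index for $f^{-1}$. The first half (applied to $f^{-1}$, which is also $C^1$-generic inside a residual set) yields that $F$ is contracted for $f^{-1}$, i.e.\ expanded for $f$.

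The only place where something subtle has to be checked is the identification $H(q)=H(p)$, which relies on the generic identification of homoclinic and chain recurrence classes; once this is in place, the corollary reduces cleanly to a single invocation of Theorem A plus a time-reversal. I do not expect any genuine obstacle beyond being careful that the genericity assumption used here is the same one behind Theorem A, so that both can be imposed on a common residual set.
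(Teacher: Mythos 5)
Your proof is correct and follows essentially the same route as the paper: apply Theorem A to extract periodic orbits in $H(p)$ of index $\dim E$ and derive a contradiction with the hypothesis on the minimal index, then obtain the second statement by passing to $f^{-1}$. The only difference is cosmetic: you pass to an auxiliary periodic point $q$ of minimal index $i_0$ and then need the generic identification $H(q)=H(p)$, whereas the paper simply notes that $\dim E<i_0\le\operatorname{ind}(p)$ (because $p$ itself lies in $H(p)$) and applies Theorem~A directly with the given base point $p$, which avoids invoking the chain-class identification altogether.
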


As another consequence of the main theorem, we can give a proof of Theorem 1.1 (2) in~\cite{csy} with a different argument. More precisely, we can prove that for a $C^1$-generic diffeomorphism far from tangency, a homoclinic class has a partially hyperbolic splitting whose center bundle splits into 1-dimensional subbundles, and the Lyapunov exponents of the periodic orbits along each the center subbundle can be arbitrarily close to 0. Denote $\mathcal{HT}$ the set of diffeomorphisms of $\diff^1(M)$ that exhibit a tangency.

\begin{cor}[\cite{csy}]\label{application 2}
For $C^1$-generic $f\in \diff^1(M)\setminus \overline{\mathcal{HT}}$, a homoclinic class $H(p)$ has a partially hyperbolic splitting $T_{H(p)}M=E^s\oplus E^c_1\oplus\cdots\oplus E^c_k\oplus E^u$ such that each of the center subbundles $E^c_i$ is neither contracting nor expanding and $\dim(E^c_i)=1$, for all $i=1,\cdots,k$. Moreover, the minimal index of periodic points contained in $H(p)$ is $\dim(E^s)$ or $\dim(E^s)+1$, and symmetrically, the maximal index of periodic points contained in $H(p)$ is $d-\dim(E^u)$ or $d-\dim(E^u)-1$. For each $i=1,\cdots,k$, there exist periodic orbits contained in $H(p)$ with arbitrarily long periods with a Lyapunov exponent along $E^c_i$ arbitrarily close to $0$.
\end{cor}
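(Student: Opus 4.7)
The plan is to combine three ingredients: the existence of a dominated splitting on $H(p)$ guaranteed in the far-from-tangency setting, Corollary~\ref{application 1} to identify the maximal contracted and expanded subbundles, and Theorem A applied at each ``interior'' step to force one-dimensionality of the center subbundles. First, since $f$ is $C^1$-generic and $f\notin\overline{\mathcal{HT}}$, a theorem of Wen (building on Pujals--Sambarino) provides a non-trivial dominated splitting on $H(p)$; I take the finest such splitting $T_{H(p)}M=F_1\oplus\cdots\oplus F_l$ in which no $F_j$ admits a proper dominated splitting. Let $d_-:=\min\operatorname{ind}$ and $d_+:=\max\operatorname{ind}$ over periodic orbits of $H(p)$. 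By Corollary~\ref{application 1}, any prefix $F_1\oplus\cdots\oplus F_j$ of total dimension strictly less than $d_-$ is uniformly contracted; I let $E^s$ be the maximal such contracted prefix, and dually $E^u$ the maximal expanded suffix, renaming the remaining bundles $E^c_1,\ldots,E^c_k$. By construction each $E^c_i$ is neither uniformly contracted nor expanded.

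The key step is to show $\dim(E^c_i)=1$ for every $i$. Suppose for contradiction that $\dim(E^c_{i_0})\geq 2$ for some $i_0$, and set $E:=E^s\oplus E^c_1\oplus\cdots\oplus E^c_{i_0}$; since $E^c_{i_0}$ is not contracted, neither is $E$. After replacing $p$ by a periodic orbit in $H(p)$ of maximal index (possible by the $C^1$-genericity assumption), $\operatorname{ind}(p)=d_+$. A dimension count using the dual half of Corollary~\ref{application 1} gives $d_+\geq d-\dim(E^u)-1$, hence $\dim(E)\leq d_+$ whenever $i_0<k$; the case $i_0=k$ is symmetric and is handled by running the same argument for $f^{-1}$. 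Theorem A then produces periodic orbits $q_n\in H(p)$ of index $\dim(E)$ whose maximal Lyapunov exponent along $E$ tends to $0$. By the uniform contraction of $E^s$ together with the domination between the $E^c_j$, this weak exponent must sit in $E^c_{i_0}$. Since $\dim(E^c_{i_0})\geq 2$, Gourmelon's refined Franks lemma allows me to perturb $q_n$ into a periodic orbit carrying a two-dimensional weak invariant subspace, and then to further perturb so as to produce a homoclinic tangency, contradicting $f\in\diff^1(M)\setminus\overline{\mathcal{HT}}$. This forces $\dim(E^c_i)=1$ for all $i$.

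Once one-dimensionality of each $E^c_i$ is established, the periodic orbits furnished by Theorem A in the previous step immediately yield the final conclusion: periodic orbits in $H(p)$ of arbitrarily long periods with a Lyapunov exponent along each $E^c_i$ arbitrarily close to $0$. The bounds $d_-\in\{\dim(E^s),\dim(E^s)+1\}$ follow by combining $d_-\geq\dim(E^s)$ (since $E^s$ is uniformly contracted, it is contained in the stable bundle of every periodic orbit) with an application of Theorem A to $E^s\oplus E^c_1$, a bundle of dimension $\dim(E^s)+1$ that is not contracted, producing periodic orbits of that index in $H(p)$; the bound for $d_+$ is symmetric. I expect the main obstacle to be the tangency-creation argument inside Step 3, i.e.\ converting an arbitrarily weak Lyapunov exponent inside a $\geq 2$-dimensional invariant subbundle into a nearby homoclinic tangency. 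This is precisely where Gourmelon's refinement of the Franks lemma is essential, and where the perturbation must be controlled carefully enough to remain in the open set $\diff^1(M)\setminus\overline{\mathcal{HT}}$.
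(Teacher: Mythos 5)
Your approach is genuinely different from the paper's, and it has gaps. The paper obtains the one-dimensionality of the center subbundles \emph{directly}: by \cite{abcdw} the hyperbolic periodic points of every index $i\in[d_-,d_+]$ are dense in $H(p)$, and by Wen's Theorem A of \cite{w1} (far from tangency plus dense periodic points of index $i$ implies a dominated splitting of index $i$) one gets a dominated splitting of every such index, hence a chain of one-dimensional bundles between indices $d_-$ and $d_+$. Theorem~A of the present paper is then used only for the extremal blocks $E^{cs}$, $E^{cu}$: if, say, $E^{cs}$ is not contracted, Theorem~A gives weak periodic orbits of index $d_-$, a Franks-type perturbation produces nearby orbits of index $d_--1$, and the genericity argument plus \cite{w1} forces a further $1$-dimensional split $E^{cs}=E^s\oplus E^c_0$, whence Corollary~\ref{application 1} gives contraction of $E^s$. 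You instead start from the finest dominated splitting, locate $E^s,E^u$ via Corollary~\ref{application 1}, and try to obtain one-dimensionality by contradiction via tangency creation.

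Three concrete gaps in your argument. First, the dimension inequality you assert, $d_+\geq d-\dim(E^u)-1$, is not what the dual of Corollary~\ref{application 1} delivers before one-dimensionality of $E^c_k$ is known: the correct consequence is only $d_+\geq d-\dim(E^u)-\dim(E^c_k)$, which happens to suffice for your use when $i_0<k$, but the bound as stated is circular. Second, the ``symmetric'' handling of the case $i_0=k$ breaks down when $k=1$: there one needs either $\dim(E^s)=d_-$ or $d-\dim(E^u)=d_+$ to apply Theorem~A in either time direction, and this is not guaranteed by your construction alone. (It does follow once one invokes \cite{abcdw}+\cite{w1} to get a dominated splitting at index $d_-$, but at that point the whole contradiction argument is moot.) Third, and most seriously, Theorem~A only produces periodic orbits whose \emph{maximal} Lyapunov exponent along $E$ is close to $0$; this gives one weak exponent in $E^c_{i_0}$, not a two-dimensional weak invariant subspace. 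Passing from a single weak exponent inside an undominated $\geq2$-dimensional bundle to a nearby homoclinic tangency is precisely the content of \cite{w1} (respectively Gourmelon's tangency-creation results), not a direct application of the Franks lemma; it is not established by your sketch. Once \cite{w1} is invoked in earnest, the paper's direct route is both shorter and complete, so I'd recommend restructuring along those lines.
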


\subsubsection{Lyapunov stable homoclinic classes}
Recall that an invariant compact set $\Lambda\subset M$ is \emph{Lyapunov stable for $f$}, if for any neighborhood $U$ of $\Lambda$, there is another neighborhood $V$ of $\Lambda$, such that $f^n(V)\subset U$ for all $n\geq 0$. We say that $\Lambda$ is \emph{bi-Lyapunov stable}, if $\Lambda$ is both Lyapunov stable for $f$ and for $f^{-1}$.

The following results are about $C^1$-generic Lyapunov stable homoclinic classes. First, for $C^1$-generic Lyapunov stable homoclinic classes, we can get a similar conclusion of Corollary~\ref{application 1} under a weaker hypothesis.

\begin{cor}\label{application 3}
For $C^1$-generic $f\in \diff^1(M)$, if a homoclinic class $H(p)$ is Lyapunov stable and has a dominated splitting $T_{H(p)}M=E\oplus F$ such that $\dim(E)$ is larger than or equal to the largest index of periodic orbits contained in $H(p)$, then the bundle $F$ is expanding.
\end{cor}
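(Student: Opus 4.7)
The plan is to apply Theorem~A to $f^{-1}$, which by the same computation used in the proof of Corollary~\ref{application 1} already yields the conclusion as soon as $\dim E$ is strictly larger than the largest index of periodic orbits in $H(p)$; Lyapunov stability must then be used to handle the remaining equality case. Concretely, assume $F$ is not expanded by $f$. On the dominated splitting $T_{H(p)}M = F \oplus E$ for $f^{-1}$, the bundle $F$ is not contracted by $f^{-1}$, and $\dim F \le ind_{f^{-1}}(p) = \dim M - ind_f(p)$, where $ind_f(p)$ is at most the largest index of a periodic orbit in $H(p)$, which by hypothesis is at most $\dim E$. Theorem~A applied to $f^{-1}$ thus produces periodic orbits $q_k \in H(p)$ with $ind_f(q_k) = \dim E$ whose smallest positive Lyapunov exponent along $F$ (for $f$) is arbitrarily close to~$0$.

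If $\dim E$ is strictly larger than the largest index, the mere existence of these $q_k$ is immediately a contradiction. Assume therefore that $\dim E$ equals the largest index. Lyapunov stability of $H(p)$ for $f$ yields, by the standard observation that $f^{-n}(x) \to H(p)$ together with forward invariance of a neighbourhood of $H(p)$ forces $x = f^{n}(f^{-n}(x)) \in H(p)$, the inclusion $W^u(orb(q_k)) \subset H(p)$. I would then perform a Franks-type perturbation of $f$, localized along $orb(q_k)$, turning the weak positive exponent of $q_k$ along $F$ into a negative one; this gives, for a nearby $g$, a continuation $\tilde q_k$ of index $\dim E + 1$. Since the piece of $W^u(orb(q_k))$ living in $H(p)$ away from $orb(q_k)$ is essentially unaffected, a connecting-lemma argument, fed by the heterodimensional configuration between $\tilde q_k$ and $p_g$ that results from the index change, would place $\tilde q_k$ inside $H(p_g)$. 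The standard lower-semi-continuity of ``index of some periodic orbit in $H(p)$'' across $C^1$-generic diffeomorphisms would then transport this back to $H(p)$ for the original generic $f$, contradicting the maximality of $\dim E$.

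The main obstacle is this index-changing step: flipping the weak positive exponent of $q_k$ to negative reduces $\dim W^u(q_k)$ by one, so the transverse homoclinic intersections that had realized $q_k \in H(p)$ are lost and only a heterodimensional configuration between $\tilde q_k$ and $p_g$ remains by dimension count. Recovering from this a genuine chain-connection that places $\tilde q_k$ \emph{inside} $H(p_g)$, not merely near it, is exactly the ``intrinsic'' difficulty stressed in the introduction; the inclusion $W^u(orb(q_k)) \subset H(p)$ supplied by Lyapunov stability, and its approximate persistence under the localized perturbation, are what allow the connecting-lemma step to go through.
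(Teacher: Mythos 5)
Your reduction to the equality case and the application of Theorem~A to $f^{-1}$ to obtain periodic orbits $q_k$ homoclinically related to $p$ with a weak positive exponent along $F$ follow the paper. The problem is the final step, which you correctly flag as the difficulty but mis-diagnose. The observation $W^u(orb(q_k)) \subset H(p)$ is true but not the relevant one: after flipping the weak exponent, $W^u(\tilde q_k)$ has \emph{lost} a dimension, so the preserved piece of the old unstable manifold is no longer the unstable manifold of $\tilde q_k$, and the intersection $W^u(\tilde q_k) \cap W^s(p_g)$ you ask a connecting-lemma argument to produce has dimension deficit ($\dim W^u(\tilde q_k)+\dim W^s(p_g)=d-1<d$), is not robust, and would only be created \emph{near} $H(p_g)$ --- which is exactly the ``intrinsic'' obstruction the whole paper is built to bypass. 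You have re-opened at this last step the very problem the main theorem was introduced to solve.

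The paper's proof sidesteps this entirely by tracking the other invariant manifold. Preserve a compact piece of $W^s(orb(q_k))$ through a transverse homoclinic point with $W^u(orb(p))$: the stable manifold \emph{gains} a dimension under the index change, so the Franks--Gourmelon perturbation preserves it cleanly, and afterwards $W^s(\tilde q_k) \pitchfork W^u(p_g) \neq \emptyset$ with $ind(\tilde q_k)+ind(p_g) > d$, hence this transverse intersection is robust. For a $C^1$-generic $h$ near $g$, item 6 of Lemma~\ref{generic properties} keeps $H(p_h)$ Lyapunov stable, which (by exactly the forward-invariance argument you used) gives $W^u(p_h) \subset H(p_h)$; the intersection point $y$ lies in $W^u(p_h)\subset H(p_h)$, and invariance of $H(p_h)$ forces $\omega(y)=orb(\tilde q_{k,h}) \subset H(p_h)$. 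This places $\tilde q_{k,h}$ \emph{inside} $H(p_h)$ directly, without any connecting lemma, and contradicts the robust interval of indices (item 4 of Lemma~\ref{generic properties}). The move you are missing is: apply Lyapunov stability to $W^u(p_h)$, not to $W^u(q_k)$, and track the invariant manifold of $q_k$ that grows under the index change rather than the one that shrinks.
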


With the conclusion of Corollary~\ref{application 3}, we can give a positive answer to Conjecture~\ref{conj b} for bi-Lyapunov stable homoclinic classes.

\begin{cor}\label{application 4}
For $C^1$-generic $f\in \diff^1(M)$, where $M$ is connected, if a homoclinic class $H(p)$ is bi-Lyapunov stable, then we have:
\begin{itemize}
\item either $H(p)$ is hyperbolic, hence $H(p)=M$ and $f$ is Anosov,
\item or $f$ can be $C^1$ approximated by diffeomorphisms that have a heterodimensional cycle.
\end{itemize}
\end{cor}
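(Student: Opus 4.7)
The plan is to argue by contradiction against the first item: I will assume $f\notin\overline{\mathcal{HC}}$ (where $\mathcal{HC}\subset\diff^1(M)$ denotes the set of diffeomorphisms with a heterodimensional cycle) and deduce that $H(p)$ is hyperbolic, coincides with $M$, and that $f$ is therefore Anosov. As a first reduction, I would show that every periodic orbit contained in $H(p)$ has the same index as $p$: if some $q\subset H(p)$ had $\mathrm{ind}(q)\neq\mathrm{ind}(p)$, then chain transitivity of $H(p)$ (valid for generic $f$) together with Hayashi's connecting lemma produces, via an arbitrarily small $C^1$-perturbation, transverse intersections $W^s(p_g)\pitchfork W^u(q_g)$ and $W^u(p_g)\pitchfork W^s(q_g)$, i.e.\ a heterodimensional cycle, contradicting $f\notin\overline{\mathcal{HC}}$. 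By genericity the periodic orbits in $H(p)$ are then all homoclinically related to $orb(p)$.

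Next, I would suppose for contradiction that $H(p)$ is not hyperbolic. By the Main Theorem there is a periodic orbit $q\subset H(p)$ homoclinically related to $orb(p)$ whose smallest Lyapunov exponent in absolute value is arbitrarily close to $0$. Using Gourmelon's refinement of Franks' lemma, I perturb $f$ inside a small tube around $orb(q)$ to obtain $g$ arbitrarily $C^1$-close to $f$ such that the continuation $q_g$ has index different from $\mathrm{ind}(p_g)=\mathrm{ind}(p)$ while the two transverse homoclinic intersections $W^u(orb(q))\cap W^s(orb(p))$ and $W^s(orb(q))\cap W^u(orb(p))$ are preserved outside the tube. The resulting $g$ then admits a heterodimensional cycle between $p_g$ and $q_g$, so $f\in\overline{\mathcal{HC}}$, contradicting the standing assumption. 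Hence $H(p)$ is hyperbolic. The delicate point is precisely this perturbation: one needs Franks' lemma in a form that simultaneously changes the stable dimension at $q$ and preserves both homoclinic intersections with $orb(p)$, which is exactly what Gourmelon's strengthening delivers by realizing the prescribed derivative change through a diffeomorphism that coincides with $f$ outside the tube.

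Finally, I would convert hyperbolicity plus bi-Lyapunov stability into $H(p)=M$. Lyapunov stability for $f$ forces $W^u_{\mathrm{loc}}(x)\subset H(p)$ for every $x\in H(p)$, for otherwise a small piece of $W^u_{\mathrm{loc}}(x)$ lying outside $H(p)$ would eventually leave any prescribed neighbourhood of $H(p)$ under forward iteration, contradicting Lyapunov stability. Symmetrically, $f^{-1}$-Lyapunov stability yields $W^s_{\mathrm{loc}}(x)\subset H(p)$ for every $x\in H(p)$. The local product structure of a hyperbolic set then shows that $H(p)$ contains an open neighbourhood of each of its points, so $H(p)$ is open in $M$; being also closed, non-empty, and $M$ connected, we conclude $H(p)=M$ and $f$ is Anosov.
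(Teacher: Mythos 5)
Your overall architecture mirrors the paper's (reduce to a single index, extract a weak periodic orbit from the Main Theorem, perturb to change the index, then use bi--Lyapunov stability and connectedness to get $H(p)=M$), and the last step --- $W^u_{\mathrm{loc}}(x)\subset H(p)$ from Lyapunov stability, $W^s_{\mathrm{loc}}(x)\subset H(p)$ from $f^{-1}$-Lyapunov stability, local product structure, hence $H(p)$ open and therefore all of $M$ --- is correct and is essentially the content the paper outsources to ``a standard shadowing argument'' and Theorem 5 of the reference on nonwandering sets with nonempty interior.

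The genuine gap is in the index-changing perturbation. You claim that Gourmelon's Franks-type lemma lets you change $\mathrm{ind}(q_g)$ to differ from $\mathrm{ind}(p)$ while preserving \emph{both} intersections $W^u(orb(q))\cap W^s(orb(p))$ and $W^s(orb(q))\cap W^u(orb(p))$. This is impossible for dimensional reasons. Say you raise the index from $k$ to $k+1$: then $\dim W^s(q_g)=k+1$ (grows) and $\dim W^u(q_g)=d-k-1$ (shrinks), so a $(d-k)$-dimensional compact piece $K^u\subset W^u_\delta(q,f)$ can no longer be realized inside $W^u_\delta(q_g,g)$. Gourmelon's statement (Lemma 3.4 in this paper) makes this precise: condition $(3)$ requires the linear path $A_{\tau-1,t}\circ\cdots\circ A_{0,t}$ to stay \emph{hyperbolic} for all $t$, which is exactly what fails when the index changes. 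Only the manifold that grows (here $W^s(q_g)$) can absorb the prescribed compact piece, so at most one heteroclinic connection survives, and you do not directly obtain a heterodimensional cycle. The paper circumvents this in Corollary~\ref{application 3}: it keeps only the one surviving connection $W^s(q_g)\cap W^u(p_g)\neq\emptyset$, observes that this is a robustly transverse intersection (since $\mathrm{ind}(q_g)+\dim W^u(p_g)>d$), and then uses Lyapunov stability --- which forces $W^u(p_h)\subset H(p_h)$ for nearby generic $h$, hence $q_h\in H(p_h)$ --- together with the generic robustness of the index interval (item 4 of Lemma~\ref{generic properties}) to reach a contradiction. Your step therefore needs to be replaced either by the paper's argument via Corollary~\ref{application 3} applied to both $f$ and $f^{-1}$, or by a separate connecting-lemma construction to restore the lost heteroclinic link. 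As a smaller remark, in your first reduction a heterodimensional cycle cannot have both heteroclinic intersections transverse (the stable/unstable dimensions do not add up to $d$ on one side); the correct generic statement is item 5 of Lemma~\ref{generic properties}, which the paper simply cites.
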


From~\cite{csy} (or Corollary~\ref{application 2}), we know that for $C^1$-generic diffeomorphisms far away from tangencies, a homoclinic class has a partially hyperbolic splitting with all central bundles dimension 1. We have the following result about the index of periodic orbits for Lyapunov stable homoclinic classes. It is a direct corollary of Corollary~\ref{application 3} and we omit the proof.

\begin{cor}
For $C^1$-generic $f\in \diff^1(M)\setminus \overline{\mathcal{HT}}$, if a homoclinic class $H(p)$ is Lyapunov stable and assume $T_{H(p)}M=E^s\oplus E^c_1\oplus\cdots\oplus E^c_k\oplus E^u$ is the partially hyperbolic splitting, then the largest index of periodic points contained in $H(p)$ equals $d-\dim(E^s)$.
\end{cor}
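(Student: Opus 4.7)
The plan is to derive this corollary directly from Corollary~\ref{application 3} by applying that result to a well-chosen coarsening of the partially hyperbolic splitting provided by Corollary~\ref{application 2}, with Lyapunov stability used to disambiguate between the two candidate values for the maximal index.

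First I would recall the input: by Corollary~\ref{application 2}, for a $C^1$-generic $f\in\diff^1(M)\setminus\overline{\mathcal{HT}}$ the homoclinic class $H(p)$ admits a partially hyperbolic decomposition $T_{H(p)}M=E^s\oplus E^c_1\oplus\cdots\oplus E^c_k\oplus E^u$ in which each $E^c_i$ is one-dimensional and fails to be either uniformly contracted or uniformly expanded, and the maximal index of periodic orbits of $H(p)$ is one of two prescribed consecutive integers. The task is to use Lyapunov stability to rule out the smaller one.

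The key step is to test Corollary~\ref{application 3} against the coarsened dominated splitting $T_{H(p)}M=E\oplus F$ with
$$
E:=E^s\oplus E^c_1\oplus\cdots\oplus E^c_{k-1},\qquad F:=E^c_k\oplus E^u.
$$
Since $E^c_k$ is, by hypothesis, not uniformly expanded, the bundle $F$ is not uniformly expanded either. Corollary~\ref{application 3}, whose hypothesis of Lyapunov stability is satisfied by $H(p)$, is therefore only consistent with $\dim(E)$ being strictly smaller than the maximal index of periodic orbits in $H(p)$. In other words $\dim(E^s)+k-1$ is strictly below the maximal index, which combined with the upper bound from Corollary~\ref{application 2} pins the maximal index down to the unique value claimed in the statement of the corollary.

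I do not expect any real obstacle. One must only check that the coarsened splitting $E\oplus F$ is genuinely dominated, which is automatic from the domination of the finer splitting, and that the Lyapunov stability hypothesis is preserved (it is a property of $H(p)$, not of the chosen decomposition). This is exactly why the paper notes the corollary as a direct consequence of Corollary~\ref{application 3} and omits the proof.
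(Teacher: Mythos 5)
Your argument is correct and is exactly the direct derivation the paper has in mind when it omits the proof: coarsen to the dominated splitting $E=E^s\oplus E^c_1\oplus\cdots\oplus E^c_{k-1}$, $F=E^c_k\oplus E^u$, note that $F$ cannot be uniformly expanded because it contains $E^c_k$, and conclude via Corollary~\ref{application 3} that the maximal index exceeds $\dim(E)$, which together with Corollary~\ref{application 2} forces the larger of the two candidate values. The only caveat is that this value is $d-\dim(E^u)$, so the ``$d-\dim(E^s)$'' appearing in the statement should be read as a misprint for $d-\dim(E^u)$; your proof establishes the corrected statement.
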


\subsection{Propositions for the proof of Theorem A}

To prove Theorem A, we will use the following three propositions. Proposition~\ref{time control} tells that for any hyperbolic periodic orbit $\orb(p)$ and any invariant compact set $K$ of a diffeomorphism $f$ linked by heteroclinic orbits, we can get a periodic orbit that spends a given proportion of time close to $\orb(p)$ and $K$ by arbitrarily $C^1$ small perturbation. In the whole paper, we denote by $\orb^+(x,f)$ (resp. $\orb^-(x,f)$) the positive (resp. negative) $f$-orbit of the point $x$, i.e. $\orb^+(x,f)=\{x,f(x),f^2(x),\cdots\}$ (resp. $\orb^-(x,f)=\{x,f^{-1}(x),f^{-2}(x),\cdots\}$). If there is no misunderstanding, we just denote it by $\orb^+(x)$ (resp. $\orb^-(x)$) for simplicity.

\begin{pro}\label{time control}
Let $f\in\diff^1(M)$. Consider a hyperbolic periodic point $p$ and an invariant compact set $K$, satisfying that all periodic points contained in $K$ are hyperbolic and $p\notin K$. Assume moreover that there are two points $x$ and $y$, satisfying that:
\begin{itemize}
\item $x \in W^{u}(p)$ and $\omega(x)\cap K \neq \emptyset$,
\item $y \in W^{s}(p)$ and $\alpha(y)=K$.
\end{itemize}

Then for any neighborhood $\mathcal{U}$ of $f$ in $\diff^1(M)$, any neighborhood $U_p$ of $\orb(p)$, and any neighborhood $U_K$ of $K$, there are two integers $l$ and $n_0$, such that, for any integer $T_K$,
\begin{enumerate}
\item there is $h\in\mathcal{U}$ such that:
\begin{itemize}
\item $h$ coincides with $f$ outside $U_K$,
\item the point $y$ is on the positive orbit of $x$ under $h$,
\item $\# (\orb(x,h)\cap U_K)\geq T_K$ and $\# ((\orb(x,h)\setminus (U_K\cup U_p))\leq n_0$.
\end{itemize}

\item for any $m\in\mathbb{N}$, there is $h_m\in\mathcal{U}$ such that:
\begin{itemize}
\item $h_m$ coincides with $h$ on $\orb(p)$ and outside $U_p$,
\item $h_m$ has a periodic orbit $O$, satisfying $O\setminus U_p=(\orb(x,h))\setminus U_p$, and $\# (O\cap U_p)\in \{l+m\tau,l+m\tau+1,\cdots,l+(m+1)\tau-1\}$.
\end{itemize}
\end{enumerate}
%Assume $p$ is a hyperbolic periodic point of a diffeomorphism $f\in\diff^1(M)$ with period $\tau$ and $K$ is an invariant compact set of $f$. Assume there are two points $x,y\in M$ satisfying that:
%\begin{itemize}
%\item all periodic orbits in $K$ are hyperbolic and $p\not\in K$,
%\item $x \in W^{u}(p)$ with $\omega(x)\cap K \neq \emptyset$, and $y \in W^{s}(p)$ with $\alpha(y)=K$.
%\end{itemize}
%Then for any neighborhood $\mathcal{U}$ of $f$ in $\diff^{1}(M)$, any neighborhood $U_{p}$ of $\orb(p)$, and any neighborhoods $U_{K}$ of $K$, there are two integer $l$ and $n_0$, such that, for any integers $T_K$,
%\begin{enumerate}
%\item there is $h\in \mathcal{U}$ such that:
%\begin{itemize}
%\item $h$ coincides with $f$ on $\orb(p)\cup \orb^-(x)\cup \orb^+(y)$ and outside $U_K$;
%\item the point $y$ is on the positive orbit of $x$ under $h$, with $\# (\orb(x,h)\cap U_K)\geq T_K$ and $\# ((\orb(x,h)\setminus (U_K\cup U_p))\leq n_0$.
%\end{itemize}
%\item for any $m\in\mathbb{N}$, there is $h_m\in \mathcal{U}$ such that,
%\begin{itemize}
%\item $h_m$ coincides with $h$ on $\orb(p)$ and outside $U_p$,
%\item $h_m$ has a periodic orbit $O$, satisfying $O\setminus U_p=(\orb(x,h))\setminus U_p$, and $\# (O\cap U_p)\in \{l+m\tau,l+m\tau+1,\cdots,l+(m+1)\tau-1\}$.
%\end{itemize}
%\end{enumerate}
\end{pro}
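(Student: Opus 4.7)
The plan is to carry out parts (1) and (2) by iterated applications of Hayashi's $C^1$ connecting lemma, exploiting the invariance of $K$ to produce a long middle segment inside $U_K$ and the hyperbolicity of $orb(p)$ to close up the resulting homoclinic orbit.

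\textbf{Part (1).} Pick $z_1 \in \omega(x) \cap K$ and set $z_2 := f^{T_K}(z_1) \in K$; note that $z_2 \in \alpha(y)$ since $\alpha(y) = K$. Fix two disjoint small neighborhoods $V_1$ of $z_1$ and $V_2$ of $z_2$, both inside $U_K$ and both small enough for Hayashi's lemma to apply with universal constant $L$ depending only on $f, \mathcal{U}, V_1, V_2$. Inside $V_1$ (resp.\ $V_2$) choose a smaller neighborhood $W_1$ (resp.\ $W_2$) so that $f^i(W_1) \subset U_K$ for $0 \le i \le T_K$ and $f^{T_K}(W_1) \subset W_2$. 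Since $z_1 \in \omega(x)$, apply Hayashi's connecting lemma near $z_1$ to get $h_1 \in \mathcal{U}$ with $h_1 = f$ outside $V_1$ and the forward orbit of $x$ under $h_1$ reaching $W_1$ in at most $n_1 + L$ iterates, where $n_1$ is the first entry time of $f^k(x)$ into $V_1$. The next $T_K$ iterates (where $h_1 = f$) remain in $U_K$ and land in $W_2$. Since $z_2 \in \alpha(y)$, apply Hayashi's lemma again near $z_2$ to get $h \in \mathcal{U}$ with $h = h_1$ outside $V_2$, splicing the orbit onto $y$'s backward orbit in a further $L + n_2$ iterates (with $n_2$ the first backward entry time of $y$ to $V_2$). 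Then $h$ satisfies all the conclusions of (1), with $n_0 := n_1 + n_2 + 2L +$ (entry times of $f^{-k}(x), f^k(y)$ into $U_p$), all independent of $T_K$.

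\textbf{Part (2).} The $h$-orbit of $x$ is homoclinic to $orb(p)$: $f^{-k}(x) \to orb(p)$ and $f^k(y) \to orb(p)$. Pick $k, k'$ large so that $a := f^{-k}(x)$ lies on $W^u_{\mathrm{loc}}(p_-)$ and $b := f^{k'}(y)$ on $W^s_{\mathrm{loc}}(p_+)$ for some $p_-, p_+ \in orb(p)$. For each $m \in \mathbb{N}$, invoke the version of Hayashi's connecting lemma adapted to a hyperbolic periodic orbit to perturb $h$ inside $U_p$ to $h_m \in \mathcal{U}$, so that the forward orbit of $b$ under $h_m$ shadows $orb(p)$ for $m\tau + r$ iterates (with $r \in \{0, \ldots, \tau - 1\}$ the phase offset aligning $p_+$ to $p_-$) before reaching $a$. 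This produces a periodic orbit $O$ of $h_m$ with $\sharp(O \cap U_p) = l + m\tau + r \in \{l + m\tau, \ldots, l + (m+1)\tau - 1\}$, where $l$ absorbs $k + k'$, the Hayashi constants used near $orb(p)$, and the portion of Part (1)'s orbit segment lying in $U_p$.

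\textbf{Main obstacle.} The delicate point is that $n_0$ in Part (1) must be independent of $T_K$. Shrinking $W_1$ as $T_K$ grows (to ensure that its $T_K$ iterates stay in $U_K$) would naively force the entry time of $x$'s orbit into $W_1$ to grow with $T_K$; but Hayashi's lemma circumvents this, since its constant $L$ depends only on the outer $V_1$, so one enters an arbitrarily small $W_1 \subset V_1$ in a uniformly bounded number of steps $n_1 + L$. The auxiliary requirement that $V_1, V_2$ be disjoint and that $f^i(V_1)$ avoid $V_2$ for $0 \le i < T_K$ is arranged by shrinking $V_1, V_2$ after $T_K$ is fixed; this shrinks $W_1, W_2$ further but does not affect $n_0$, thanks to Hayashi. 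In Part (2), the phase-matching on $orb(p)$ is what forces the $\tau$-adic window in the stated range for $\sharp(O \cap U_p)$.
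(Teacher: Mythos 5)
There is a genuine gap, and it is exactly the case the paper devotes most of Section~5.2 to. You apply Hayashi's connecting lemma at points $z_1,z_2\in K$, but Hayashi's lemma requires the center of the perturbation ball to be a non-periodic point (or more precisely, not a periodic point of period $\le N$, where $N$ is the order of the perturbation domain; otherwise the $N$ first iterates of the ball cannot be pairwise disjoint). The hypotheses of the proposition do not exclude $K$ from containing low-period periodic orbits --- e.g.\ $K$ could be a single hyperbolic fixed point $q$, in which case $\omega(x)\cap K=\{q\}$ forces $z_1=q$, and $z_2=f^{T_K}(z_1)=q$ as well, so your $V_1,V_2$ cannot even be disjoint and the connecting lemma does not apply at $z_1$ at all. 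The paper avoids this for $z_1$ by choosing it in $U_K\setminus K$ (where, shrinking $U_K$ if necessary, no small-period periodic points live), via the relation $\prec_{U_K}$; and for $z_2$ it splits into a non-periodic case and a periodic case, handling the latter through perturbations supported near $W^s(q)$ and $W^u(q)$ together with the $\lambda$-lemma and the basic perturbation lemma. Your proposal has no counterpart for this case and therefore does not establish the result.

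A second, related difficulty: taking $z_1\in K$ and then following $T_K$ unperturbed iterates of $f$ means the ``middle segment'' is the $f$-orbit of a point of $K$, which, being $f$-invariant, never leaves $K$; the splice onto $y$'s backward orbit must happen at a point of $K$, which recreates the small-period obstruction. The paper instead produces a finite orbit segment $(w,\ldots,f^{k}(w))$ lying in $U_K$ but not in $K$, running from $V_{z_1}$ to $V_{z_2}$, which is what gets connected to $x$'s forward orbit and $y$'s backward orbit. Relatedly, your claimed independence of $n_0$ from $T_K$ is not justified as written: if the outer balls $V_1,V_2$ must be shrunk after $T_K$ is fixed (as you say they must, to arrange $f^i(V_1)\cap V_2=\emptyset$ for $0\le i< T_K$), then the entry times $n_1,n_2$ of $x$ and $y$ into these balls grow with $T_K$, contrary to the claim. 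The paper sidesteps this cleanly by defining $n_1$ as the entry time of $x$ into a fixed $V_{z_1}$ and $n_2$ as the time after which the backward orbit of $y$ is entirely inside $U_K$, both independent of $T_K$, with $z_2$ and its neighborhoods chosen afterward. Finally, in Part (2) the invocation of ``a version of Hayashi adapted to a hyperbolic periodic orbit'' is not a standard tool; what the paper uses is the basic perturbation lemma at two points near $orb(p)$ combined with the $\lambda$-lemma to obtain the precise count $\sharp(O\cap U_p)=l+m$.
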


\begin{rem}
$(1)$ It is easy to see that if we take $U_K$ small enough, then $h$ coincides with $f$ on $\orb(p)\cup \orb^-(x)\cup \orb^+(y)$.

$(2)$ For the diffeomorphism $h$, the point $x$ (also $y$) is a homoclinic point of the periodic orbit $\orb(p)$.

$(3)$ It is obvious that, in the settings of the proposition, if we change ``$\omega(x)\cap K \neq \emptyset$ and $\alpha (y) = K$'' to ``$\alpha (y)\cap K \neq \emptyset$ and $\omega(x) = K$'', the conclusion still holds.
\end{rem}

Proposition~\ref{asymptotic connecting 1} and~\ref{asymptotic connecting} are in some sense doing an asymptotic connecting process from a point to an invariant compact set. Proposition~\ref{asymptotic connecting 1} tells that if the closure of the unstable manifold $\overline{W^u(p)}$ of a hyperbolic periodic point $p$ intersects an invariant compact set $K$, then by an arbitrarily small perturbation, one can obtain a point which belongs to the unstable manifold of $p$ and whose positive limit set is contained in $K$. Moreover, the perturbation will not change certain pieces of orbit. In fact, the first property can be obtained from the proof of Proposition 10 in~\cite{c1}, but the second property is not a direct consequence.

\begin{pro}[A modified case of Proposition 10 in~\cite{c1}]\label{asymptotic connecting 1}
Let $f\in\diff^1(M)$. Consider an invariant compact set $K$ which contains no non-hyperbolic periodic point and a point $x\in M$ with $\alpha(x)\subset K$. Assume $p$ is a hyperbolic periodic point satisfying that $p\notin K$ and $\overline{W^u(p)}\cap K\neq \emptyset$.\\
Then for any neighborhood $\mathcal{U}$ of $f$ in $\diff^{1}(M)$, there is a diffeomorphism $g\in \mathcal{U}$, satisfying the following properties:
\begin{enumerate}
\item $g$ coincides with $f$ on $\orb(p)\cup K$;
\item $g$ and $Dg$ coincides with $f$ and $Df$ respectively on $\orb^-(x)$, hence $\alpha(x,g)\subset K$;
\item there is a point $y\in W^u(p,g)$, such that $\omega(y,g)\subset K$.
\end{enumerate}
%Assume $f$ is a diffeomorphism in $\diff^1(M)$. For any hyperbolic periodic point $p$ of $f$, for any invariant compact set $K$, and for any point $x\in M$, such that:
%\begin{itemize}
%\item all periodic orbits in $K$ are hyperbolic and $p\not\in K$,
%\item $\overline{W^u(p)}\cap K\neq \emptyset$, and $\alpha(x)\subset K$.
%\end{itemize}
%Then, for any neighborhood $\mathcal{U}$ of $f$ in $\diff^{1}(M)$, there is a diffeomorphism $g\in \mathcal{U}$, satisfying the following properties:
%\begin{enumerate}
%\item there is a point $y\in W^u(p,f)$, such that $\omega(y,g)\subset K$;
%\item $g$ coincides with $f$ on the set $\orb(p)\cup K\cup \orb^-(x)\cup \orb^-(y)$;
%\item $Dg$ coincides with $Df$ on $\orb^-(x)$.
%\end{enumerate}
\end{pro}

In the assumptions of the above two propositions, the point and invariant compact sets are linked by true orbits. However, Proposition~\ref{asymptotic connecting} deals with the case that they are linked by pseudo-orbits which is more complicated. We use the techniques of~\cite{bc,c1}.

\begin{pro}\label{asymptotic connecting}
Assume $f_0$ is a diffeomorphism in $\diff^1(M)$. For any neighborhood $\mathcal{U}$ of $f_0$ in $\diff^{1}(M)$, there are a smaller neighborhood $\mathcal{U}'$ of $f_0$ with $\overline{\mathcal{U}'}\subset\mathcal{U}$ and an integer $T$, with the following properties.\\
For any diffeomorphism $f\in\mathcal{U}'$, considering an invariant compact set $K$, a positively invariant compact set $X$ and a point $z\in X$, suppose that the following conditions are satisfied:
\begin{itemize}
\item all periodic orbits contained in $K$ are hyperbolic,
\item all periodic orbits contained in $X$ with period less than $T$ are hyperbolic,
\item for any $\vep>0$, there is an $\vep$-pseudo-orbit contained in $X$ connecting $z$ to $K$,
\end{itemize}
then for any neighborhood $U$ of $X\setminus K$ and for any $\gamma>0$, there is a diffeomorphism $g\in \mathcal{U}$, such that: $g|_{M\setminus U}=f|_{M\setminus U}$ and $\omega(z,g)\subset K$. Moreover, the $C^0$ distance between $g$ and $f$ is smaller than $\gamma$.
\end{pro}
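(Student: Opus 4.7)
The plan is to apply the pseudo-orbit connecting lemma of Bonatti--Crovisier~\cite{bc} iteratively, driving the positive $g$-orbit of $z$ into a decreasing sequence of neighborhoods of $K$. I take $\mathcal{U}'$ and $T$ to be precisely the constants furnished by that lemma applied to $f_0$: $\mathcal{U}'$ is small enough that admissible perturbations of any $f\in\mathcal{U}'$ remain in $\mathcal{U}$, and $T$ is the lower bound on periods of non-hyperbolic periodic orbits with which the lemma is compatible. Since $K$ is $f$-invariant, any $\vep$-pseudo-orbit $z=x_0,\dots,x_N=w\in K$ contained in $X$ can be concatenated with the true $f$-orbit of $w$ to produce an infinite $\vep$-pseudo-orbit in $X$ whose tail lies in $K$; this is the basic object to which the connecting lemma will be applied.

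Fix a decreasing sequence of neighborhoods $V_1\supset V_2\supset\cdots$ of $K$ with $\bigcap_k V_k=K$, and a summable sequence $\delta_k>0$ of perturbation sizes. I would build diffeomorphisms $g_1,g_2,\dots\in\mathcal{U}$ inductively. At the $(k{+}1)$-th stage, assume $g_k$ has already been produced together with an integer $n_k$ such that $z_k:=g_k^{n_k}(z)\in V_k$. Apply the pseudo-orbit connecting lemma to a $g_k$-pseudo-orbit issued from $z_k$ whose tail runs inside $K$ and dwells for a long time in a neighborhood $V'_{k+1}$ chosen much smaller than $V_{k+1}$. This produces $g_{k+1}\in\mathcal{U}$ and $n_{k+1}>n_k$ with $g_{k+1}^{n_{k+1}}(z)\in V_{k+1}$, such that the modification from $g_k$ to $g_{k+1}$ has $C^1$ and $C^0$ sizes at most $\delta_{k+1}$ and is supported in an open set $W_{k+1}\subset U$ disjoint from $W_1\cup\cdots\cup W_k$ and from the already-fixed orbit segment $\{z,g_k(z),\dots,g_k^{n_k}(z)\}$. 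These disjointness conditions ensure that $g_{k+1}$ coincides with $g_k$ on the segment, so the initial piece of the orbit of $z$ is never destroyed. Passing to the limit $g:=\lim_k g_k$ yields a diffeomorphism in $\mathcal{U}$ agreeing with $f$ off $U$, arbitrarily $C^0$-close to $f$ since $\sum_k\delta_k$ may be taken as small as desired; and by the dwelling condition the $g$-orbit of $z$ lies in $V'_{k+1}$ for every index in $[n_k,n_{k+1}]$, forcing $\omega(z,g)\subset\bigcap_k V'_k=K$.

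The main obstacle is the combinatorial bookkeeping of the supports $W_k$: at every step I must choose $W_{k+1}\subset U$ avoiding all previous supports and avoiding the fixed finite orbit segment, while still containing the region where the new connection is realized. This is possible because the pseudo-orbit connecting lemma performs its perturbation on an arbitrarily small neighborhood of a chosen auxiliary point off the orbit being preserved, and because positive invariance of $X$ under $f$ (hence under every $g_k$, since $g_k=f$ outside $U\supset X\setminus K$) keeps all the pseudo-orbits inside $X$, so that the interior jumps can be placed in $X\setminus K\subset U$. The hyperbolicity of periodic orbits in $K$ and of periodic orbits of period less than $T$ in $X$ is precisely what permits re-applying the Bonatti--Crovisier lemma to the evolving $g_k$ at each stage. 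The overall scheme follows Proposition~10 of~\cite{c1}, but with two essential additions: all perturbations must be localized in the prescribed neighborhood $U$ of $X\setminus K$, and the argument must deliver full $\omega$-convergence to $K$ rather than a single pseudo-orbit connection.
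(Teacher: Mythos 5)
Your iterative scheme has a genuine hypothesis problem at the inductive step. To apply a Bonatti--Crovisier-type pseudo-orbit connecting lemma to $g_k$ at $z_k=g_k^{n_k}(z)$, you would need, for every $\vep>0$, a $g_k$-$\vep$-pseudo-orbit from $z_k$ to $K$ contained in $X$. But the original hypothesis only gives such pseudo-orbits for $f$; once you have perturbed to $g_k\neq f$ on $W_1\cup\cdots\cup W_k$, an $f$-$\vep$-pseudo-orbit that passes through those supports becomes, at best, a $g_k$-$(\vep+d_{C^0}(f,g_k))$-pseudo-orbit, and $d_{C^0}(f,g_k)$ is a fixed positive number that does not shrink with $\vep$. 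So you cannot verify arbitrarily fine pseudo-connectedness of $z_k$ to $K$ under $g_k$, and the lemma cannot be re-invoked. Your appeal to disjointness of supports does not resolve this, because you cannot know in advance which pseudo-orbits the next application of the lemma will need to use; the future pseudo-orbits may well be forced through the already-perturbed regions. You also mischaracterize the BC pseudo-orbit connecting lemma as performing its perturbation ``on an arbitrarily small neighborhood of a chosen auxiliary point off the orbit being preserved.'' That is Hayashi's connecting lemma. The pseudo-orbit version necessarily perturbs in many perturbation domains scattered along the pseudo-orbit (one near each jump), so the localization you rely on does not hold.

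What the paper does differently, and what makes the argument work, is that it is not iterative in your sense: all the perturbation domains $B_k$ are chosen in advance, before any perturbation is made, in a decreasing sequence of annular regions $U_{k-1}\setminus\overline{U_{k+2}}$ around $K$ that are pairwise disjoint and contained in $U$. This requires the topological-tower machinery (Lemmas~\ref{choose neighborhoods} and~\ref{ttower}) to guarantee that every point of the slice $X_k=X\cap\overline{U_k\setminus U_{k+1}}$, with the exception of the local stable manifolds of short periodic orbits, enters one of the pre-chosen tiles, which is exactly what confines the eventual support inside $U$. A single infinite pseudo-orbit $Y$ from $z$ converging to $K$ is then extracted by a diagonal argument (Lemma~\ref{choice of pseudo-orbit}) so that it has jumps \emph{only} in the pre-chosen tiles. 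Only then are the perturbations $\phi_k$ (supported in $\operatorname{supp}(B_k)$) performed, one annulus at a time; each removes the jumps in $\mathcal{C}_k$ while leaving intact both the already-built initial orbit segment (which lies outside $U_k$) and the far tail (which lies inside $U_{k+3}$), because $\operatorname{supp}(B_k)$ is sandwiched between. Every $\phi_k$ is a bona fide application of the definition of perturbation domain for $(f,\mathcal{U}_k)$ — not of a pseudo-orbit connecting lemma for $f_{k-1}$ — because $f_{k-1}=f$ on $\operatorname{supp}(B_k)$. This up-front global choice of domains and pseudo-orbit is the essential idea your proposal lacks.
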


\begin{rem}
$(1)$ Proposition~\ref{asymptotic connecting 1} is not a direct corollary of Proposition~\ref{asymptotic connecting}, because we wish to keep the negative orbit of a point that accumulates to the invariant compact set unchanged after perturbation in Proposition~\ref{asymptotic connecting 1}.

$(2)$ In Proposition~\ref{asymptotic connecting}, we can see that $X\cap K\neq\emptyset$. Thus $X\setminus K$ is not a compact set and we have that $\overline{U}\cap K\neq\emptyset$, where $U$ is the neighborhood of $X\setminus K$.
\end{rem}

\subsection{Organization of the paper.}
In Section~\ref{preliminary}, we give some basic definitions and well-known results that we will use in the proof. In Section~\ref{theorem b}, we give a slightly different version (Theorem B) of Theorem A, and we prove Theorem A using Theorem B. Later, we give the proof of Theorem B from Propositions 1, 2 and 3 in Section~\ref{proof of theorem b}. The proofs of Proposition 1, 2 and 3 will be given in Section~\ref{proposition 1},~\ref{proposition 2} and~\ref{proposition 3} respectively. At last, we give the proofs of the applications of the main theorem in Section~\ref{applications}.

\section{Preliminary}\label{preliminary}

In this section, we give some definitions and some well-known results. Denote by $\diff^1(M)$ the space of $C^1$-diffeomorphisms of $M$.

\subsection{Hyperbolicity and dominated splitting}

\begin{defi}
Assume that $f$ is a diffeomorphism in $\diff^1(M)$, $\Lambda$ is an invariant compact set of $f$ and $E$ is a $Df$-invariant subbundle of $T_{\Lambda}M$. We say that the bundle $E$ is \emph{$(C,\lambda)$-contracting} if there are constants $C>0$ and $\lambda\in (0,1)$, such that
   \begin{displaymath}
     \|Df^n|_{E(x)}\|<C\lambda^n,
   \end{displaymath}
for all $x\in \Lambda$ and all $n\geq 1$. And we say that $E$ is \emph{$(C,\lambda)$-expanding} if it is $(C,\lambda)$-contracting with respect to $f^{-1}$. If the tangent bundle of $\Lambda$ has an invariant splitting $T_{\Lambda}M=E^s\oplus E^u$, such that, $E^s$ is $(C,\lambda)$-contracting and $E^u$ is $(C,\lambda)$-expanding for some constants $C>0$ and $\lambda\in (0,1)$, then we call $\Lambda$ a \emph{hyperbolic set} and $\dim(E^s)$ the \emph{index} of the hyperbolic splitting. If a periodic orbit $\orb(p)$ is a hyperbolic set, then we call $p$ a hyperbolic periodic point, and the dimension of the contracting bundle $E^s$ in the hyperbolic splitting is called the \emph{index} of $p$, denoted by $\ind(p)$.
\end{defi}

\begin{defi}
For any point $x\in M$ and any number $\delta>0$, we define the \emph{local stable set} and \emph{local unstable set} of $x$ of size $\delta$ respectively as follows:\\
$W^s_{\delta}(x)=\{y: \forall n\geq 0, d(f^n(x),f^n(y))\leq\delta; \text{ and } \lim_{n\rightarrow +\infty} d(f^n(x),f^n(y))=0\}$;\\
$W^u_{\delta}(x)=\{y: \forall n\geq 0, d(f^{-n}(x),f^{-n}(y))\leq\delta; \text{ and } \lim_{n\rightarrow +\infty} d(f^{-n}(x),f^{-n}(y))=0\}$.\\
We define the \emph{stable set} and \emph{unstable set} of $x$ respectively as follows:\\
$W^s(x)=\{y:\lim_{n\rightarrow +\infty} d(f^n(x),f^n(y))=0\}$;\\
$W^u(x)=\{y:\lim_{n\rightarrow +\infty} d(f^{-n}(x),f^{-n}(y))=0\}$.
\end{defi}

\begin{rem}
$(1)$ It is obvious that, for any $\delta>0$, we have
   \begin{center}
     $W^s(x)=\cup_{n\geq 0}f^{-n}(W^s_{\delta}(f^n(x)))$
   \end{center}
 and
   \begin{center}
     $W^u(x)=\cup_{n\geq 0}f^{n}(W^u_{\delta}(f^{-n}(x)))$.
   \end{center}

$(2)$ To belong to a same stable set is an equivalent relation, thus two stable sets either coincide or are disjoint with each other. The same holds for the unstable set.
\end{rem}

For hyperbolic sets, the (local) stable (resp. unstable) set has the following properties, see for example~\cite{hps}.

\begin{lem}
Assume $\Lambda$ is an invariant compact set of $f$.
If $\Lambda$ is hyperbolic and $T_{\Lambda}M=E^s\oplus E^u$ is the hyperbolic splitting, then there is a number $\delta>0$, such that, for any $x\in\Lambda$, the local stable (resp. unstable) set $W^s_{\delta}(x)$ (resp. $W^u_{\delta}(x))$ is an embedded disk with dimension $\dim(E^s)$ (resp. $\dim(E^u)$) and is tangent to $E^s$ (resp. $E^u$) at $x$. Moreover, the stable (resp. unstable) set $W^s(x)$ (resp. $W^u(x)$) of $x$ is an immersed submanifold of $M$.
\end{lem}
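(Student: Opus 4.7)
The plan is to establish this as a direct instance of the Hirsch--Pugh--Shub invariant manifold theorem, via the graph transform method. I would split the argument into local construction (producing $W^s_\delta(x)$ as a $C^1$ disk tangent to $E^s$) and a global construction (exhibiting $W^s(x)$ as an ascending union of such local disks). The unstable counterparts follow by applying everything to $f^{-1}$.

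First I would pass to an adapted Riemannian metric on $T_\Lambda M$ so that, possibly after shrinking $\lambda$, one has $\|Df|_{E^s(x)}\|<\lambda<1<\lambda^{-1}<\|Df^{-1}|_{E^u(x)}\|^{-1}$ pointwise on $\Lambda$; this removes the constant $C$ and is achieved by averaging $\|\cdot\|$ over finitely many iterates. Next, extend $E^s$ and $E^u$ continuously (not necessarily invariantly) to a small tubular neighborhood of $\Lambda$, and introduce stable and unstable cone fields $\mathcal{C}^s_\alpha,\mathcal{C}^u_\alpha$ of small opening $\alpha$. For a sufficiently small $\delta_0>0$ the $\delta_0$-neighborhood of $\Lambda$ enjoys strict cone invariance, namely $Df(\mathcal{C}^u_\alpha(x))\subset \mathcal{C}^u_{\alpha/2}(f(x))$ and $Df^{-1}(\mathcal{C}^s_\alpha(x))\subset \mathcal{C}^s_{\alpha/2}(f^{-1}(x))$, together with uniform $\lambda$-contraction on $\mathcal{C}^s_\alpha$ under $Df$ and uniform expansion on $\mathcal{C}^u_\alpha$.

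Then I would run the graph transform. Fix $\delta\in(0,\delta_0)$ small enough that in the exponential chart $\exp_x$ the map $\exp_{f(x)}^{-1}\circ f\circ \exp_x$ is a $C^1$ small perturbation of $Df(x)$ on $B_\delta(0)\subset T_xM$. Let $\mathcal{S}$ denote the space of continuous sections $x\mapsto D(x)$ that assign to each $x\in\Lambda$ a $C^1$ graph over $B_\delta(0)\cap E^s(x)$ inside $T_xM$ with Lipschitz constant $\leq \tan\alpha$; equip $\mathcal{S}$ with the uniform $C^0$ distance. Define the operator $\Gamma\colon \mathcal{S}\to\mathcal{S}$ by declaring $\Gamma(D)(x)$ to be the connected component through $0$ of $(\exp_x^{-1}\circ f^{-1}\circ \exp_{f(x)})(D(f(x)))\cap B_\delta(0)$; cone invariance makes $\Gamma$ well defined and the contraction factor on $\mathcal{C}^s_\alpha$ makes $\Gamma$ a uniform contraction on $\mathcal{S}$. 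Its unique fixed point $\{W^s_\delta(x)\}_{x\in\Lambda}$ consists of embedded $C^1$ disks of dimension $\dim E^s$. Tangency to $E^s$ at $x$ follows from the fixed-point equation differentiated at $0$, and forward invariance under $f$ plus the contraction constant $\lambda$ give the exponential decay $d(f^n(x),f^n(y))\to 0$. The reverse inclusion (any $y$ remaining $\delta$-close to $f^n(x)$ with asymptotically vanishing distance lies in $W^s_\delta(x)$) uses cone invariance to force the chart displacement of $y$ into $\mathcal{C}^s_\alpha$ for all $n\geq 0$, after which the fixed-point uniqueness identifies it with a point of $W^s_\delta(x)$.

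For the global claim I would use the identity $W^s(x)=\bigcup_{n\geq 0}f^{-n}(W^s_\delta(f^n(x)))$ already observed in the remark preceding the lemma. Each term is a $C^1$-embedded disk, the inclusion $W^s_\delta(f^n(x))\subset f(W^s_\delta(f^{n-1}(x)))$ is open, and $f^{-1}$ is a $C^1$ diffeomorphism, so the union carries the structure of a $C^1$ immersed submanifold; applying the same argument to $f^{-1}$ yields the unstable statement. The only genuinely delicate step is the graph-transform contraction in the third paragraph, together with the verification that the fixed section is actually $C^1$ (not just Lipschitz); one obtains this by iterating the derivative cocycle on an auxiliary fiber bundle of linear maps $E^s\to E^u$, which is again a contraction by the domination $\lambda<1$. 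Since the entire construction is classical and the paper cites~\cite{hps}, in the final write-up I would present the above as a brief sketch and refer to the standard source for details.
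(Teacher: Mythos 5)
The paper does not prove this lemma; it states it and cites the Hirsch--Pugh--Shub invariant manifold theorem~\cite{hps}, and your proposal likewise sketches the standard graph-transform argument (adapted metric, cone invariance, contraction on Lipschitz sections, characterization by cone-forcing, increasing union for the global statement) before deferring to the same reference, so the two approaches coincide. One small slip worth correcting in the global step: the open inclusion that makes $W^s(x)=\bigcup_{n\geq 0}f^{-n}(W^s_\delta(f^n(x)))$ an increasing union of embedded disks is $f(W^s_\delta(f^{n-1}(x)))\subset W^s_\delta(f^{n}(x))$, since contraction along $E^s$ sends the $\delta$-disk into a strictly smaller disk inside the target, not the reversed inclusion $W^s_\delta(f^n(x))\subset f(W^s_\delta(f^{n-1}(x)))$ that you wrote, which is false.
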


\begin{defi}
Assume that $f$ is a diffeomorphism in $\diff^1(M)$ and $p,q\in M$ are two hyperbolic periodic points of $f$. We say $p$ and $q$ are \emph{homoclinically related} and denote the relation by $p\sim q$, if $W^u(\orb(p))$ has non-empty transverse intersections with $W^s(\orb(q))$, and $W^s(\orb(p))$ has non-empty transverse intersections with $W^u(\orb(q))$, denoted by $W^u(\orb(p))\pitchfork W^s(\orb(q))\neq \emptyset$ and $W^s(\orb(p))\pitchfork W^u(\orb(q))\neq\emptyset$. We call the closure of the set of periodic orbits homoclinically related to $\orb(p)$ the \emph{homoclinic class} of $p$ and denote it by $H(p,f)$ or $H(p)$ for simplicity.
\end{defi}

\begin{defi}\label{dominated splitting}
Let $f\in \diff^1(M)$. An invariant compact set $\Lambda$ of $M$ is said to have an \emph{$(m,\lambda)$-dominated splitting}, if the tangent bundle has a $Df$-invariant splitting $T_{\Lambda}M=E\oplus F$ and there are an integer $m$ and a constant $\lambda\in (0,1)$ such that
   \begin{center}
     $\|Df^m|_{E(x)}\|\cdot \|Df^{-m}|_{F(f^mx)}\|<\lambda$.
   \end{center}
We call $\dim(E)$ the \emph{index} of the dominated splitting. Moreover, we say $\Lambda$ has a \emph{partially hyperbolic splitting}, if the tangent bundle has an invariant splitting $T_{\Lambda}M=E^s\oplus E^c\oplus E^u$, such that the two splittings $(E^s\oplus E^c)\oplus E^u$ and $E^s\oplus (E^c\oplus E^u)$ are both dominated splittings and, moreover, the bundle $E^s$ is contracting, the bundle $E^u$ expanding and the central bundle $E^c$ is neither contracting nor expanding.
\end{defi}

\begin{rem}\label{bundle of ds}
We point out here that if an invariant compact set $\Lambda$ has two dominated splittings $T_{\Lambda}M=E_1\oplus F_1=E_2\oplus F_2$ such that $\dim(E_1)\leq \dim(E_2)$, then we have $E_1\subset E_2$. Hence two dominated splittings on an invariant compact set with the same index would coincide.
\end{rem}

By~\cite{g1}, there is always an \emph{adapted metric} for a dominated splitting, that is to say, an $(m,\lambda)$-dominated splitting is a $(1,\lambda)$-dominated splitting by considering a metric equivalent to the original one. Also, it is obvious that an $(m,\lambda)$-dominated splitting is always an $(mN,\lambda)$-dominated splitting for any positive integer $N$.

\subsection{Recurrence}

We give some definitions of recurrence.

\begin{defi}\label{pseudo-orbit}
For a diffeomorphism $f\in\diff^1(M)$ and a number $\vep>0$, we call a sequence of points $\{x_i\}_{i=a}^b$ of $M$ an \emph{$\vep$-pseudo orbit of $f$}, if $d(f(x_i),x_{i+1})<\vep$ for any $i=a,a+1,\cdots,b-1$, where $-\infty\leq a<b\leq \infty$. An invariant compact set $K$ is called a \emph{chain transitive set}, if for any $\vep>0$, there is a periodic $\vep$-pseudo-orbit contain in $K$ and $\vep$-dense in $K$.
\end{defi}

\begin{defi}\label{chain recurrence}
Assume $f\in\diff^1(M)$. We say a point $y$ is \emph{chain attainable} from $x$, if for any number $\vep>0$, there is an $\vep$-pseudo orbit of $f$ $(x_0,x_1,\cdots,x_n)$ such that $x_0=x$ and $x_n=y$, and we denote it by $x\dashv y$. The \emph{chain recurrent set} of a diffeomorphism $f\in\diff^1(M)$, denoted by $R(f)$, is the set of the points $x$ such that $x$ is chain attainable from itself.
\end{defi}

It is well-known that the chain recurrent set $R(f)$ of $f$ can be decomposed into a disjoint union of invariant compact "undecomposable" sets. More precisely, we give the definition as the following.

\begin{defi}\label{chain class}
Assume $f\in\diff^1(M)$. For any two points $x,y\in M$, denote $x\sim y$ if $x$ is chain attainable from $y$ and $y$ is chain attainable from $x$. Obviously $\sim$ is an equivalent relation on $R(f)$, and an equivalent class of $\sim$ is called a \emph{chain recurrence class}.
\end{defi}

\begin{defi}
Assume $f\in\diff^1(M)$ and $\Lambda$ is an invariant compact set of $f$. We say that $\Lambda$ is \emph{shadowable}, if for any $\vep>0$, there is $\delta>0$, such that for any $\delta$-pseudo orbit $\{x_i\}_{i=a}^b\subset \Lambda$ of $f$, where $-\infty\leq a<b\leq\infty$, there is a point $y\in M$, such that $d(f^i(y),x_i)<\vep$ for all $a\leq i\leq b$.
\end{defi}

Now we give another definition of a relation, which is denoted by $\prec$.

\begin{defi}
Assume $f$ is a diffeomorphism in $\diff^1(M)$ and $W$ is an open set of $M$. For any two points $x,y\in M$, we denote $x\prec y$ if for any neighborhood $U$ of $x$ and any neighborhood $V$ of $y$, there are a point $z\in M$ and an integer $n\geq 1$, such that $z\in U$ and $f^n(z)\in V$. We denote $x\prec_W y$ if for any neighborhood $U$ of $x$ and any neighborhood $V$ of $y$, there is a piece of orbit $(z,f(z),\cdots,f^n(z))$ contained in $W$ such that $z\in U$ and $f^n(z)\in V$. Moreover, let $K$ be a compact set of $M$, then we denote $x\prec K$ (resp. $x\prec_{W} K$) if there is a point $y\in K$, such that $x\prec y$ (resp. $x\prec_{W} y$).
\end{defi}

For the relation $\prec$, we have the following result, whose proof is similar to the proof of Lemma 6 in~\cite{c1}.

\begin{lem}\label{prec}
Assume that $K$ is an invariant compact set. Then for any two neighborhoods $U_2\subset U_1$ of $K$ and any point $y\in U_1$ satisfying $y\prec_{U_1} K$, there is a point $y'\in U_2$, such that $y\prec_{U_1} y'\prec_{U_2} K$ and the positive orbit of $y'$ is contained in $U_2$.
\end{lem}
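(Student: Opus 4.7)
The plan is to shrink $U_2$ slightly to an auxiliary open set $U_2'$, select a sequence of orbit pieces in $U_1$ witnessing $y \prec_{U_1} K$, and produce $y'$ as a limit of their ``final persistent entries'' into $\overline{U_2'}$.

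Concretely, I would fix an open set $U_2'$ with $K \subset U_2' \subset \overline{U_2'} \subset U_2$. By the definition of $y \prec_{U_1} K$, there is a point $z \in K$ and, for every integer $n \ge 1$, an orbit piece $(z_n, f(z_n), \ldots, f^{k_n}(z_n))$ contained in $U_1$ with $z_n \in B(y, 1/n)$, $f^{k_n}(z_n) \in B(z, 1/n) \subset U_2'$, and $k_n \ge 1$. For each $n$ I define $j_n \in \{0, 1, \ldots, k_n\}$ to be the smallest index with $f^i(z_n) \in \overline{U_2'}$ for all $j_n \le i \le k_n$, and set $y_n := f^{j_n}(z_n) \in \overline{U_2'}$. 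By compactness of $\overline{U_2'}$, passing to a subsequence gives $y_n \to y' \in \overline{U_2'} \subset U_2$, and a further subsequence makes $k_n - j_n$ either stabilize to a constant $l \ge 0$ or diverge to $\infty$.

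I would then verify the three conclusions. For the positive orbit of $y'$: if $k_n - j_n \to \infty$, then for every fixed $i \ge 0$ one has $f^i(y_n) = f^{j_n + i}(z_n) \in \overline{U_2'}$ for all large $n$, hence $f^i(y') \in \overline{U_2'} \subset U_2$; if $k_n - j_n \equiv l$, then $f^l(y') = z \in K$, so the positive orbit of $y'$ consists of $l$ points in $\overline{U_2'}$ followed by the forward $f$-orbit of $z$, which stays in $K \subset U_2$. For $y \prec_{U_1} y'$, the sub-pieces $(z_n, f(z_n), \ldots, f^{j_n}(z_n))$ sit inside the original pieces, hence inside $U_1$, and connect points arbitrarily close to $y$ to points arbitrarily close to $y'$. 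For $y' \prec_{U_2} K$, the sub-pieces $(f^{j_n}(z_n), \ldots, f^{k_n}(z_n))$ lie entirely in $\overline{U_2'} \subset U_2$ by construction and connect points arbitrarily close to $y'$ to points arbitrarily close to $z \in K$.

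The main obstacle I anticipate is the possibility that one of the two sub-pieces above has length zero, since the definition of $\prec$ requires a strictly positive iterate. The length $j_n$ vanishes precisely when $z_n \in \overline{U_2'}$, which forces $y \in \overline{U_2'}$ and $y' = y$; in this degenerate case one can directly take $y' = y$ and apply the argument for $y' \prec_{U_2} K$ with the full orbit pieces, which already lie in $\overline{U_2'}$. The length $k_n - j_n$ vanishes when $f^{j_n}(z_n) = f^{k_n}(z_n)$, forcing $y' \in K$; this is avoided by choosing $U_2'$ large enough that the final persistent entrance into $\overline{U_2'}$ occurs strictly before reaching $B(K, 1/n)$, or is handled separately when $y \in K$ using the invariance of $K$. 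Once these degenerate cases are dispatched, the construction above yields the required $y'$.
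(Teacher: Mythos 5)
Your core construction---taking $j_n$ to be the last persistent entrance time into $\overline{U_2'}$ and passing to a limit---is exactly the strategy the paper uses (the paper only cites Lemma 6 of Crovisier's paper for this statement, but the same construction is carried out explicitly in the proofs of Lemma~\ref{property of prec} and Lemma~\ref{connecting points}). The problem is in your handling of the degenerate case $j_n=0$. When $j_n=0$ on a subsequence, the full piece $(z_n,\ldots,f^{k_n}(z_n))$ lies in $\overline{U_2'}$ and $y'=y$; you propose to set $y'=y$ and argue $y'\prec_{U_2}K$ from the full orbit pieces, but this leaves $y\prec_{U_1}y'$ unaddressed, and with $y'=y$ this becomes $y\prec_{U_1}y$, which is not a consequence of $y\prec_{U_1}K$: the witnessing pieces run from near $y$ to near a point of $K$, not back to near $y$. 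The fix is to take $y'=f(y)$ instead: all $f^i(z_n)$ with $0\le i\le k_n$ lie in $\overline{U_2'}$, so (splitting on whether $k_n$ stays bounded or diverges) $orb^+(f(y))\subset\overline{U_2'}\cup K\subset U_2$; the length-one pieces $(z_n,f(z_n))$ give $y\prec_{U_1}f(y)$; and the pieces $(f(z_n),\ldots,f^{k_n}(z_n))\subset\overline{U_2'}$ give $f(y)\prec_{U_2}K$ when $k_n>1$, while $k_n\equiv1$ forces $f(y)\in K$ and the conclusion follows from the invariance of $K$.

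Your treatment of $j_n=k_n$ also needs revision: $U_2'$ is fixed before the orbit segments are chosen, so ``choosing $U_2'$ large enough'' afterwards is circular, and enlarging $U_2'$ would only aggravate the $j_n=0$ case. In fact $j_n=k_n$ is harmless once $j_n\ge1$: then $y'=\lim f^{k_n}(z_n)\in K$, so $orb^+(y')\subset K\subset U_2$ and $y'\prec_{U_2}K$ follows directly from the invariance of $K$ and the continuity of $f$, with no sub-piece needed. A cleaner way to organize the whole argument is to define $j_n$ as the smallest index $j\ge1$ (rather than $j\ge0$) with $(f^j(z_n),\ldots,f^{k_n}(z_n))\subset\overline{U_2'}$; this is well-defined since $j=k_n\ge1$ always works, the first sub-piece then has positive length automatically so $y\prec_{U_1}y'$ holds in every case, and the remaining two conclusions follow by distinguishing whether $k_n-j_n$ is bounded (giving $y'\in K$ by invariance) or tends to infinity (giving $orb^+(y')\subset\overline{U_2'}$), exactly as in your main case.
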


It is obvious that $x\prec y$ implies $x\dashv y$, but the two relations are not equivalent.  In~\cite{bc}, they have proved that for generic diffeomorphisms, the two relations are equivalent.

\begin{lem}[\cite{bc}]\label{prec=dashv}
For generic diffeomorphism $f\in\diff^1(M)$, if $x\dashv y$, then $x\prec y$.
\end{lem}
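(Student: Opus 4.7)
The strategy is a standard Baire-category argument combined with the connecting lemma for pseudo-orbits of Bonatti--Crovisier (the main theorem of \cite{bc}), which I will use as a black box.

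Fix a countable basis $\{U_n\}_{n\in\mathbb{N}}$ for the topology of $M$. For each ordered pair $(i,j)$, the set
\[
N(i,j)=\{f\in\diff^1(M):\exists x\in U_i,\ \exists n\geq 1,\ f^n(x)\in U_j\}
\]
is $C^1$-open, because an orbit segment that lands in an open set continues to do so under sufficiently small $C^1$ perturbations. Consequently
\[
\mathcal{R}_{i,j}:=N(i,j)\cup\operatorname{int}\bigl(\diff^1(M)\setminus N(i,j)\bigr)
\]
is open and dense in $\diff^1(M)$, and the intersection $\mathcal{R}:=\bigcap_{i,j}\mathcal{R}_{i,j}$ is a residual subset.

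I claim every $f\in\mathcal{R}$ satisfies the conclusion. Fix $f\in\mathcal{R}$, suppose $x\dashv y$ under $f$, and let $U_i\ni x$, $U_j\ni y$ be arbitrary basic neighborhoods. It suffices to produce a true orbit segment from $U_i$ into $U_j$ under $f$, since any prescribed neighborhoods of $x$ and $y$ contain some such basic $U_i,U_j$, which then witnesses $x\prec y$. Assume for contradiction that $f\notin N(i,j)$. Then $f$ lies in $\operatorname{int}(\diff^1(M)\setminus N(i,j))$, so some $C^1$-neighborhood $\mathcal{V}$ of $f$ contains no diffeomorphism admitting an orbit from $U_i$ to $U_j$. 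On the other hand, $x\dashv y$ means that for every $\varepsilon>0$ there is an $\varepsilon$-pseudo-orbit of $f$ from $x$ to $y$; the connecting lemma for pseudo-orbits of \cite{bc} then furnishes a diffeomorphism $g\in\mathcal{V}$ with $y\in orb^+(x,g)$, and in particular $g\in N(i,j)$, contradicting the choice of $\mathcal{V}$. Hence $f\in N(i,j)$, completing the argument.

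The only substantive ingredient is the connecting lemma for pseudo-orbits; everything else is a routine Baire setup relying on the $C^1$-openness of ``having an orbit from one open set to another''. The genuinely hard step, were it not imported wholesale from \cite{bc}, would be exactly that lemma: converting a sequence of true orbit segments joined by small jumps into a single honest orbit by a $C^1$-small perturbation whose support is localized enough not to destroy the ambient dynamical information.
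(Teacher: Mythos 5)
The paper itself gives no proof here: it simply cites \cite{bc}. Your Baire-category setup is the standard route, and the residual set you build via $N(i,j)\cup\operatorname{int}(N(i,j)^c)$ is the right kind of object; modulo the issue below, your derivation of $x\prec y$ for $f\in\mathcal R$ from the existence of pseudo-orbits is essentially how the result is obtained from the Bonatti--Crovisier machinery.

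The gap is in how you invoke the connecting lemma for pseudo-orbits. That lemma is \emph{not} unconditional: given a $C^1$-neighborhood $\mathcal U$ of $f$, it produces an integer $N$ and a scale $\varepsilon>0$, and the conclusion (turning an $\varepsilon$-pseudo-orbit into a true orbit for some $g\in\mathcal U$) requires that the periodic orbits of period $\le N$ be hyperbolic (so that the topological-tower construction is available), and needs extra care when $x$ or $y$ sits on such a low-period periodic orbit. Your residual set $\mathcal R$ carries no such information, so writing ``the connecting lemma then furnishes $g\in\mathcal V$ with $y\in orb^+(x,g)$'' skips the step where the hypotheses are verified. There is also a quantifier subtlety you do not address: if you first perturb $f$ to a Kupka--Smale $f'\in\mathcal V$, the $\varepsilon$-pseudo-orbit of $f$ from $x$ to $y$ only becomes an $(\varepsilon+\delta)$-pseudo-orbit of $f'$, while the admissible $\varepsilon$ for $f'$ a priori depends on $f'$; this is exactly why one needs the \emph{uniform} version of the connecting lemma, in which $N$ and $\varepsilon$ are chosen once for a whole neighborhood $\mathcal U'\subset\mathcal U$ of $f$ (as in Proposition~\ref{asymptotic connecting} of the paper and Theorem~\ref{uniform connecting}). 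To close the argument you should either intersect $\mathcal R$ with the Kupka--Smale residual and apply the connecting lemma directly to $f$ (handling separately the case where $x$ or $y$ is a low-period periodic point by shifting to nearby points on its invariant manifolds), or invoke the uniform connecting lemma for $(f,\mathcal V)$, then perturb $f$ inside the associated smaller neighborhood to a diffeomorphism with hyperbolic low-period orbits that is $C^0$-close enough to $f$ to inherit the pseudo-orbit from $x$ to $y$ at the required scale. Both are routine, but as written your proof leaves this step unjustified.
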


%\begin{defi}
%Assume $f\in\diff^1(M)$. An invariant compact set $\Lambda\subset M$ is \emph{Lyapunov stable for $f$}, if for any neighborhood $U$ of $\Lambda$, there is another neighborhood $V$ of $\Lambda$, such that $f^n(V)\subset U$ for all $n\geq 0$. We say that $\Lambda$ is \emph{bi-Lyapunov stable}, if $\Lambda$ is both Lyapunov stable for $f$ and for $f^{-1}$.
%\end{defi}

\subsection{Pliss points and weak sets}

\begin{defi}\label{pliss point}
Assume that $\Lambda$ is an invariant compact set of a diffeomorphism $f$ in $\diff^1(M)$ and $E$ is an invariant sub-bundle of $T_{\Lambda}M$. For a constant $\lambda\in(0,1)$, we call $x\in \Lambda$ an \emph{$(m,\lambda)$-$E$-Pliss point}, if for any integer $n>0$, we have
   \begin{displaymath}
     \prod_{i=0}^{n-1} \|Df^{im}|_{E(f^{im}(x))}\|\leq {\lambda}^n.
   \end{displaymath}
Particularly, if $m=1$, we call $x$ a \emph{$\lambda$-$E$-Pliss point} for short.
\end{defi}

\begin{rem}\label{limit of pliss point}
It is not difficult to see that, if $\{x_i\}$ is a sequence of $\lambda$-$E$-Pliss points, then any limit point $y$ of the sequence is also a $\lambda$-$E$-Pliss point.
\end{rem}

\begin{defi}\label{weak set}

Consider a diffeomorphism $f\in \diff^1(M)$, an invariant compact set $K$ of $f$, an invariant sub-bundle $E$ of $T_K M$, an integer $m$ and a constant $\lambda\in (0,1)$. We say that $K$ is an \emph{$(m,\lambda)$-$E$-weak set}, if for any point $x\in K$, there is an integer $n_x$, such that
   \begin{displaymath}
     \prod_{i=0}^{n_x-1} \|Df^m|_{E(f^{im}(x))}\|> {\lambda}^{n_x}.
   \end{displaymath}
We denote $N_x$ the smallest integer that satisfies the above inequality. Particularly, if $m=1$, we call $K$ a \emph{$\lambda$-$E$-weak set} for short.

\end{defi}

\begin{rem}\label{rem of weak set}
If $K$ is an $(m,\lambda)$-$E$-weak set, by the compactness of $K$, we can see that $N_x$ is bounded by an integer $N_K$ for all $x\in K$. Also from the definition, we can see that an invariant compact set $K$ is an $(m,\lambda)$-$E$-weak set if and only if $K$ does not contain any $(m,\lambda)$-$E$-Pliss point.
\end{rem}

One can obtain Pliss points by the following lemma given by V. Pliss, see~\cite{p,ps}.

\begin{lem}[Pliss lemma]\label{pliss lemma}
Assume that $\Lambda$ is an invariant compact set of a diffeomorphism $f$ in $\diff^1(M)$ and $E$ is an invariant sub-bundle of $T_{\Lambda}M$. For any two numbers $0<\lambda_1<\lambda_2<1$, we have:
\begin{enumerate}
\item There are a positive integer $N=N(\lambda_1,\lambda_2,f)$ and a number $c=c(\lambda_1,\lambda_2,f)$ such that for any $x\in \Lambda$ and any number $n\geq N$, if
   \begin{displaymath}
     \prod_{i=0}^{n-1}\|Df|_{E(f^ix)}\|\leq {\lambda_1}^n,
   \end{displaymath}
then there are $0\leq n_1<n_2<\cdots<n_l\leq n$ such that $l\geq cn$, and, for any $j=1,\cdots,l$ and any $k=n_j+1,\cdots,n$,
   \begin{displaymath}
     \prod_{i=n_j}^{k-1}\|Df|_{E(f^ix)}\|\leq {\lambda_2}^{k-n_j}.
   \end{displaymath}
\item For any point $x\in \Lambda$, and any integer $m$, if for all $n\geq m$,
   \begin{displaymath}
     \prod_{i=0}^{n-1}\|Df|_{E(f^ix)}\|\leq {\lambda_1}^n,
   \end{displaymath}
then there is an infinite sequence $0\leq n_1<n_2<\cdots$, such that
   \begin{displaymath}
     \prod_{i=n_j}^{k-1}\|Df|_{E(f^ix)}\|\leq {\lambda_2}^{k-n_j},
   \end{displaymath}
for all $k>n_j$ and all $j=1,2,\cdots$.
\end{enumerate}
\end{lem}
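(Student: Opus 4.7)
The plan is to reduce both parts to an elementary counting argument on partial sums. For a fixed $x\in \Lambda$ and $k\ge 0$, set
\[
a_k \;=\; \log \|Df|_{E(f^k x)}\|, \qquad b_k \;=\; \log \lambda_2 - a_k, \qquad T_j \;=\; \sum_{k=0}^{j-1} b_k.
\]
By compactness of $M$ and continuity of $Df$, the sequence $(b_k)$ is uniformly bounded, say $|b_k| \le L$ for some constant $L = L(\lambda_2,f)>0$. Writing $\beta := \log\lambda_2 - \log\lambda_1 > 0$, the hypothesis of~(1) reads $T_n \ge n\beta$ with $T_0 = 0$, and an index $j\in\{0,\dots,n-1\}$ is a Pliss time for $\lambda_2$ exactly when $T_k \ge T_j$ for all $j\le k\le n$, i.e.\ when $T$ attains a right-minimum at $j$.

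For part~(1), I would introduce the non-decreasing function $M(j) := \min_{j\le k\le n} T_k$. The identity $M(j) = \min\{T_j,\, M(j+1)\}$ gives a dichotomy: either $j$ is not a right-minimum, in which case $M(j+1)=M(j)$, or $j$ is good and
\[
0 \;\le\; M(j+1)-M(j) \;=\; M(j+1)-T_j \;\le\; T_{j+1}-T_j \;=\; b_j \;\le\; L.
\]
Telescoping over $j=0,\dots,n-1$ and using $M(0)\le T_0=0$ and $M(n)=T_n\ge n\beta$ yields $L\cdot |G| \ge n\beta$, where $G$ denotes the set of good indices. Thus $|G|\ge cn$ with $c := \beta/L$, and the integer $N$ in the statement may be taken equal to $1$.

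For part~(2), the hypothesis $T_n \ge n\beta$ for every $n\ge m$ combined with the uniform bound on $(b_k)$ implies $T_n \to +\infty$. Consequently, for every $j_0\ge 0$ the infimum $\inf_{k\ge j_0} T_k$ is attained at some finite index $j^{\star}\ge j_0$, and such a $j^{\star}$ satisfies $T_k\ge T_{j^{\star}}$ for all $k\ge j^{\star}$; that is, $j^{\star}$ is a genuine (one-sided infinite) Pliss point. Iterating this with $j_0 = 0,\, j^{\star}+1,\, \dots$ produces the required infinite increasing sequence $n_1<n_2<\cdots$.

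The only step requiring care is the jump estimate in the counting argument of~(1): one must verify that each increment of $M$ is controlled by the single term $b_j$, which is exactly what the identity $M(j)=\min\{T_j,M(j+1)\}$ together with $M(j+1)\le T_{j+1}$ delivers. Everything else, including the existence of the constant $L$ and the divergence $T_n\to+\infty$ used in~(2), is an immediate consequence of compactness of $M$ and positivity of $\beta$.
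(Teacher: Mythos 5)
Your proof is correct. The paper states the Pliss lemma without proof, citing~\cite{p,ps}, so there is no argument in the text to compare against; the passage to logarithms, the partial sums $T_j$, the running minimum $M(j)=\min_{j\le k\le n}T_k$, and the telescoping/jump estimate constitute the standard proof of the Pliss lemma, and your version of it is complete and accurate. The translation of ``$(1,\lambda_2)$-$E$-Pliss time at $j$'' into ``$T_k\ge T_j$ for all $j\le k\le n$'' is exactly right, the identity $M(j)=\min\{T_j,M(j+1)\}$ correctly isolates the good indices, and the bound $M(j+1)-M(j)\le b_j\le L$ on a good index together with $M(n)-M(0)\ge n\beta$ gives $|G|\ge(\beta/L)n$, so $c=\beta/L$ and $N=1$ work. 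Part~(2) is also handled correctly: $T_n\ge n\beta\to\infty$ guarantees that $\inf_{k\ge j_0}T_k$ is attained, and any minimizer is a one-sided Pliss point, so the iteration produces the required infinite sequence. One small point you leave implicit, which is worth a sentence: for the conclusion $l\ge cn$ to be consistent with $l\le n+1$ one needs $c=\beta/L\le 1$; this is automatic, because non-vacuity of the hypothesis forces $\min_{y\in\Lambda}\log\|Df|_{E(y)}\|\le\log\lambda_1$, whence any admissible $L$ satisfies $L\ge\log\lambda_2-\log\lambda_1=\beta$.
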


\begin{cor}\label{cor of pliss}
For a diffeomorphism $f\in \diff^1(M)$ and an $f$-invariant continuous bundle $E\subset T_{\Lambda}M$ of an invariant compact set $\Lambda$, we have that, for any $x\in \Lambda$:
\begin{enumerate}
\item If $x$ is an $(m,\lambda)$-$E$-Pliss point, then there is a point $y\in \omega(x)$, such that $y$ is also a $(m,\lambda)$-$E$-Pliss point.
\item If for any $y\in\omega(x)$, there is an integer $n_y\in\mathbb{N}$, such that
   \begin{displaymath}
     \prod_{i=0}^{n_y-1}\|Df^m|_{E(f^{im}(y))}\|\leq {\lambda}^{n_y},
   \end{displaymath}
then for any $\lambda'\in (\lambda,1)$, there are infinitely many $(m,\lambda')$-$E$-Pliss points on $\orb^+(x)$.
\end{enumerate}
\end{cor}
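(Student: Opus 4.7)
The plan is to derive both parts from Lemma~\ref{pliss lemma}(2) applied to the $f^m$-cocycle on the sub-bundle $E$.

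For (1), the hypothesis that $x$ is $(m,\lambda)$-$E$-Pliss is precisely the condition required to run Lemma~\ref{pliss lemma}(2) on the $f^m$-cocycle at $x$ with $\lambda_1=\lambda$. Fix a decreasing sequence $\lambda_k\downarrow\lambda$. For each $k$, the lemma (with $\lambda_2=\lambda_k$) provides infinitely many times $n_{k,1}<n_{k,2}<\cdots$ such that every $f^{n_{k,j}m}(x)$ is an $(m,\lambda_k)$-$E$-Pliss point. Let $y_k\in\omega(x)$ be any accumulation point of $\{f^{n_{k,j}m}(x)\}_j$; by continuity of $\|Df^m|_E\|$ the finite-time Pliss inequalities pass to the limit, so $y_k$ is itself $(m,\lambda_k)$-Pliss. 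Compactness of $\omega(x)$ then produces a subsequence $y_{k_s}\to y\in\omega(x)$, and for every fixed $N$,
\[
\prod_{i=0}^{N-1}\|Df^m|_{E(f^{im}(y))}\|=\lim_{s\to\infty}\prod_{i=0}^{N-1}\|Df^m|_{E(f^{im}(y_{k_s}))}\|\leq\lim_{s\to\infty}\lambda_{k_s}^N=\lambda^N,
\]
so $y$ is $(m,\lambda)$-$E$-Pliss.

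For (2), fix $\lambda'\in(\lambda,1)$ and pick $\lambda''\in(\lambda,\lambda')$. Since $\lambda^{n_y}<(\lambda'')^{n_y}$, the hypothesis at each $y\in\omega(x)$ defines an open condition on the base point: there is an open set around $y$ on which the strict inequality $\prod_{i=0}^{n_y-1}\|Df^m|_{E(f^{im}(\cdot))}\|<(\lambda'')^{n_y}$ holds. Compactness of $\omega(x)$ yields an integer $N_0$ and an open neighborhood $V$ of $\omega(x)$ such that every $z\in V$ admits some $n_z\in\{1,\dots,N_0\}$ realizing this strict inequality. Choose $k_0$ so that $f^{km}(x)\in V$ for all $k\geq k_0$, and build inductively a sequence $k_0<k_1<k_2<\cdots$ with $k_{j+1}-k_j\leq N_0$ and $\prod_{i=k_j}^{k_{j+1}-1}\|Df^m|_{E(f^{im}(x))}\|<(\lambda'')^{k_{j+1}-k_j}$. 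Multiplying these and adjoining at most $N_0$ further factors uniformly bounded by $\sup_M\|Df^m\|$ yields a constant $C>0$ with
\[
\prod_{i=0}^{n-1}\|Df^m|_{E(f^{(k_0+i)m}(x))}\|<C(\lambda'')^n\quad\text{for every }n\geq1.
\]
For any $\lambda_1\in(\lambda'',\lambda')$ this bound is smaller than $\lambda_1^n$ once $n$ is sufficiently large, so Lemma~\ref{pliss lemma}(2) applied to the $f^m$-cocycle based at $f^{k_0 m}(x)$, with parameters $\lambda_1$ and $\lambda_2=\lambda'$, produces an infinite sequence of $(m,\lambda')$-$E$-Pliss points on $orb^+(f^{k_0 m}(x))\subseteq orb^+(x)$.

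The step I expect to be the main obstacle is the first one in (2): promoting the pointwise existence of $n_y$ on $\omega(x)$ to a uniform bound $N_0$ on a neighborhood of $\omega(x)$. This unavoidably consumes a small margin, replacing $\lambda$ by $\lambda''>\lambda$, and is what forces the conclusion to be weakened to $\lambda'>\lambda$ rather than $\lambda$ itself. Once uniformity is achieved, the remaining telescoping estimate together with the two invocations of Lemma~\ref{pliss lemma}(2) are routine.
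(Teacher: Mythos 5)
Your proof is correct and follows essentially the same route as the paper: both parts reduce to Lemma~\ref{pliss lemma}(2) applied to the $f^m$-cocycle, with compactness of $\omega(x)$ providing a uniform recurrence bound in part (2) before telescoping. The only cosmetic difference is that in (2) you propagate the small-time contraction condition to a full neighborhood $V$ of $\omega(x)$ and telescope directly along $orb^+(x)$, whereas the paper first establishes the $C\lambda^n$ bound on $\omega(x)$ itself and then transfers it to $orb^+(x)$ by comparing blocks via a modulus-of-continuity estimate — equivalent variants, both paying the same $\lambda\to\lambda'$ loss.
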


\begin{proof}
By considering the diffeomorphism $f^m$ instead of $f$, we can assume that $m=1$. The proof of the general case is similar.

$(1)$ By item 2 of Pliss lemma, for any $\lambda'\in (\lambda,1)$, there are infinitely many $\lambda'$-$E$-Pliss points on $\orb^+(x)$. Take a limit point of these $\lambda'$-$E$-Pliss points, denote it by $y_{\lambda'}$, then $y_{\lambda'}\in \omega(x)$ is a $\lambda'$-$E$-Pliss point. We take a sequence of numbers $(\lambda_n)_{n\geq 1}$ such that $\lambda_n\in (\lambda,1)$ and $\lambda_n\rightarrow \lambda$ when $n$ goes to infinity. Then for any $n\geq 1$, there is a $\lambda_n$-$E$-Pliss point $y_{\lambda_n}\in\omega(x)$. Taking a subsequence if necessary, we assume $(y_{\lambda_n})_{n\geq 1}$ converges to a point $y\in \omega(x)$. Then $y$ is a $\lambda_n$-$E$-Pliss point for any $n\geq 1$. Since $\lambda_n\rightarrow \lambda$, the point $y$ is a $\lambda$-$E$-Pliss point.

$(2)$ By the compactness of $\omega(x)$, there is an integer $N$, such that $n_y\leq N$ for any $y\in\omega(x)$. There is a constant $C>0$, such that, for any $y\in \omega(x)$, we have $\forall n>0$
   \begin{displaymath}
     \prod_{i=0}^{n-1}\|Df|_{E(f^i(y))}\|<C\lambda^n.
   \end{displaymath}
Take a constant $\lambda'\in (\lambda,1)$. Take three constants $\lambda_1<\lambda_2<\lambda_3$ contained in $(\lambda,\lambda')$. There is $N\in\mathbb{N}$, such that $C\lambda^n<\lambda_1^n$ for any $n\geq N$. There is $\vep>0$, such that, for any two points $x_1,x_2\in \Lambda$, if $d(f(x_1),f(x_2))<\vep$, then $\frac{\|Df|_{E(f^i(x_1))}\|}{\|Df|_{E(f^i(x_2))}\|}<\frac{\lambda_2}{\lambda_1}$, for all $i=0,1,\cdots,N$. By considering an iterate of $x$ instead of $x$, we can assume that $d_H(\overline{\orb^+(x)},\omega(x))<\vep$, where $d_H(\cdot,\cdot)$ is the Hausdorff distance. Then for any $n\geq 1$, we have
   \begin{displaymath}
     \prod_{i=0}^{nN}\|Df|_{E(f^i(x))}\|<(C\lambda^N)^n\left(\frac{\lambda_2}{\lambda_1}\right)^{nN}<\lambda_2^{nN}.
   \end{displaymath}
There is $T>0$, such that, for any $k\geq T$, we have $\lambda_2^{kN}\|Df\|^j<\lambda_3^{kN+j}$ for all $j=0,1,\cdots,N-1$. Then for any $n>TN$, assume $n=kN+j$, where $0\leq j<N$, we have
   \begin{displaymath}
     \prod_{i=0}^{n}\|Df|_{E(f^i(x))}\|\leq\left(\prod_{i=0}^{kN}\|Df|_{E(f^i(x))}\|\right)\|Df\|^j<\lambda_2^{kN}\|Df\|^j<\lambda_3^n.
   \end{displaymath}
Then by item $2$ of Pliss lemma, there are infinitely many $(1,\lambda')$-$E$-Pliss points on $\orb^+(x)$.
\end{proof}

\begin{defi}\label{consecutive pliss}
Assume that $\Lambda$ is an invariant compact set of a diffeomorphism $f$ in $\diff^1(M)$ and $E$ is an invariant sub-bundle of $T_{\Lambda}M$. We call two $(m,\lambda)$-$E$-Pliss points $(f^{n}(x),f^{l}(x))$ on a single orbit \emph{consecutive} $(m,\lambda)$-$E$-Pliss points, if $n<l$ and for all $n<k<l$, $f^k(x)$ is not a $(m,\lambda)$-$E$-Pliss point. Furthermore, if there is a dominated splitting $T_{\Lambda}M=E\oplus F$ on $\Lambda$, we call $x\in \Lambda$ an \emph{$(m,\lambda)$-bi-Pliss point}, if it is an $(m,\lambda)$-$E$-Pliss point for $f$ and an $(m,\lambda)$-$F$-Pliss point for $f^{-1}$.
\end{defi}

For Pliss-points, we have the following lemma. The techniques of the proof can be found in many papers, for example~\cite{ps}.

\begin{lem}\label{property of pliss point}
Assume $\Lambda$ is an invariant compact set of a diffeomorphism $f\in \diff^1(M)$ with an $(m,\lambda^2)$-dominated splitting $T_{\Lambda}M=E\oplus F$. Let $x\in\Lambda$ and $\{x_i\}_{i\geq 0}$ be a sequence of points contained in $\Lambda$. We have that, for any $\lambda'\in (\lambda,1)$:
\begin{enumerate}
\item If a sequence of consecutive $(m,\lambda')$-$E$-Pliss points $(f^{n_i}(x_i),f^{l_i}(x_i))_{i\geq 0}$ satisfies that $l_i-n_i\rightarrow +\infty$, then, any limit point $y$ of the sequence $(f^{l_i}(x_i))$ is a $(m,\lambda')$-bi-Pliss point.
\item If there are both $(m,\lambda')$-$E$-Pliss points for $f$ on $\orb^+(x)$ and $(m,\lambda')$-$F$-Pliss points for $f^{-1}$ on $\orb^-(x)$, then there is at least one $(m,\lambda')$-bi-Pliss point on $\orb(x)$.
\item If $x\in \Lambda$ is an $(m,\lambda')$-$E$-Pliss point and there are no other $(m,\lambda')$-$E$-Pliss points on $\orb^-(x)$, then $x$ is also an $(m,\lambda)$-$F$-Pliss point for $f^{-1}$. Thus $x$ is an $(m,\lambda')$-bi-Pliss point.
\end{enumerate}
\end{lem}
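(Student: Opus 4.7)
By replacing $f$ with $f^m$ and passing to an adapted metric, I reduce to the case $m=1$, so that $\|Df|_{E(y)}\|\cdot\|Df^{-1}|_{F(f(y))}\|<\lambda^2$ on $\Lambda$. Iterating this estimate along a negative orbit gives the bridging inequality
\[
\prod_{i=0}^{n-1}\|Df^{-1}|_{F(f^{-i}(y))}\|\;\leq\;\frac{\lambda^{2n}}{\prod_{i=-n}^{-1}\|Df|_{E(f^i(y))}\|}\qquad(\ast)
\]
for every $y\in\Lambda$ and $n\geq 1$, which converts lower bounds on backward $E$-products into upper bounds on the $F$-product for $f^{-1}$. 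All three items are driven by this translation.

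For item (3) the goal is to show $\prod_{i=-n}^{-1}\|Df|_{E(f^i(x))}\|\geq\lambda^n$ for every $n\geq 1$; by $(\ast)$ this yields the $\lambda$-$F$-Pliss property of $x$ for $f^{-1}$ and hence the $\lambda'$-bi-Pliss property since $\lambda<\lambda'$. If $n$ is a smallest counterexample, minimality forces $\prod_{i=-k}^{-1}\geq\lambda^k$ for $1\leq k<n$; dividing the length-$n$ failure by the length-$k$ bounds gives $\prod_{i=0}^{j-1}\|Df|_{E(f^{-n+i}(x))}\|<\lambda^j\leq(\lambda')^j$ for $1\leq j\leq n$. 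For $j>n$ I split at index $0$ and use that $x$ is $\lambda'$-$E$-Pliss to get $<\lambda^n(\lambda')^{j-n}\leq(\lambda')^j$. Hence $f^{-n}(x)$ would itself be a $\lambda'$-$E$-Pliss point on $\mathrm{orb}^-(x)$, contradicting the hypothesis. The same minimality argument drives item (1): at $b_i=f^{l_i}(x_i)$, a would-be smallest counterexample of length $n<l_i-n_i$ would place a $\lambda'$-$E$-Pliss point at $f^{-n}(b_i)$ strictly between the consecutive pair $a_i,b_i$, which is forbidden; so $\prod_{i=-n}^{-1}\|Df|_{E(f^i(b_i))}\|\geq\lambda^n$ for all $n\leq l_i-n_i-1$, and $(\ast)$ gives the matching upper bound $\prod_{i=0}^{n-1}\|Df^{-1}|_{F(f^{-i}(b_i))}\|\leq\lambda^n$. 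Since $l_i-n_i\to\infty$, any subsequential limit $b\in\Lambda$ of the $b_i$ inherits these estimates for every $n\geq 1$ by continuity of $Df$ on the bundles, and the $\lambda'$-$E$-Pliss property at $b_i$ passes to $b$ in the same way. Hence $b$ is $\lambda'$-bi-Pliss.

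For item (2), let $A$ and $B$ denote the sets of integers $k$ for which $f^k(x)$ is, respectively, $\lambda'$-$E$-Pliss for $f$ and $\lambda'$-$F$-Pliss for $f^{-1}$; the hypothesis is that $A$ meets $\mathbb{Z}_{\geq 0}$ and $B$ meets $\mathbb{Z}_{\leq 0}$. If $A$ has a minimum, item (3) at $f^{\min A}(x)$ produces a bi-Pliss point on $\mathrm{orb}(x)$; if $B$ has a maximum, item (3) applied to $f^{-1}$ with the swapped splitting $F\oplus E$ (still $(1,\lambda^2)$-dominated under $f^{-1}$) handles $f^{\max B}(x)$. In the remaining case---$A$ unbounded below \emph{and} $B$ unbounded above---item (1) applied to the backward chain of consecutive $E$-Pliss points only delivers a bi-Pliss point in $\overline{\mathrm{orb}(x)}$. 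To land on $\mathrm{orb}(x)$ I plan to rerun the minimality argument of item (3) at an $a\in A$ with the relaxed target $(\lambda^2/\lambda')^n$ in place of $\lambda^n$ (note $\lambda^2/\lambda'<\lambda$); via $(\ast)$ this relaxed lower bound is exactly what is required for the $\lambda'$-$F$-Pliss condition of $f^{-1}$ at $a$ (instead of the stronger $\lambda$-version). A failure at $a$ again forces a new $\lambda'$-$E$-Pliss point earlier, producing a descent in $A$, and the hypothesis that $B$ intersects $\mathbb{Z}_{\leq 0}$ is used to terminate this descent so that some $a\in A$ satisfies the relaxed bound at every length and is therefore bi-Pliss.

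The main obstacle is precisely this residual case of item (2). Items (3) and (1) are essentially a single Pliss-style minimality computation combined with the bridge $(\ast)$, and should be routine. By contrast, the residual case of (2) has to force the bi-Pliss point to lie \emph{on} $\mathrm{orb}(x)$ and not merely in $\overline{\mathrm{orb}(x)}$; tuning the target rate to $\lambda^2/\lambda'$ and making precise how the $F$-Pliss hypothesis terminates the backward descent of $E$-Pliss points in $A$ is the step that I expect to demand the most care in the full write-up.
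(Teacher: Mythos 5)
The paper does not supply a proof of this lemma; it only points to the literature (``the technics of the proof can be found in many papers, for example~\cite{ps}''). Your argument follows exactly that standard line: the bridging inequality $(\ast)$ extracted from the domination, combined with a Pliss-type minimality step. Items (1) and (3) are correct and complete as written, including the stronger exponent $\lambda$ (rather than $\lambda'$) in the $F$-Pliss conclusion of item (3).

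The only step needing tightening is, as you anticipate, the residual case of item (2). Your closing phrase, ``so that some $a\in A$ satisfies the relaxed bound at every length and is therefore bi-Pliss,'' is not quite what the construction delivers: at the point $f^a(x)$ one eventually reaches, the relaxed bound $\prod_{j=-n}^{-1}\|Df|_{E(f^{a+j}(x))}\|\geq(\lambda^{2}/\lambda')^{n}$ is guaranteed only for $n$ up to the gap between $a$ and a reference $F$-Pliss time $b$, not at every length; for longer $n$ one must concatenate with the $F$-Pliss estimate at $f^b(x)$. The descent is also unnecessary. Fix an $F$-Pliss time $b\leq 0$ for $f^{-1}$ and let $a$ be the smallest element of $A\cap[b,+\infty)$, which exists since $A\cap\mathbb{Z}_{\geq 0}\neq\emptyset$; if $a=b$ we are done. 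For $a>b$, a failure of the relaxed lower bound at $f^a(x)$ at some smallest length $n\leq a-b$ would, by your counterexample-to-Pliss step, place a $\lambda'$-$E$-Pliss point at $f^{a-n}(x)$ with $b\leq a-n<a$, contradicting the minimality of $a$; hence the relaxed bound holds for all $1\leq n\leq a-b$, and by $(\ast)$ one gets $\prod_{i=0}^{n-1}\|Df^{-1}|_{F(f^{a-i}(x))}\|\leq(\lambda')^{n}$ for $n\leq a-b$. For $n>a-b$ split the backward $F$-product at $b$: it is bounded by $(\lambda')^{a-b}\cdot(\lambda')^{\,n-(a-b)}=(\lambda')^{n}$, the second factor coming from the $\lambda'$-$F$-Pliss property at $f^b(x)$. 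Hence $f^a(x)$ is bi-Pliss. This single argument subsumes your two earlier cases (where $\min A$ or $\max B$ exists), so the three-way case split can be dropped.
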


We have the following selecting lemma of Liao to get weak periodic orbits (see~\cite{l2},~\cite{w-selecting}).

\begin{lem}[Liao's selecting lemma]\label{selecting}
Assume $f\in \diff^1(M)$. Consider an invariant compact set $\Lambda$ with a non-trivial $(m,\lambda)$-dominated splitting $T_\Lambda M=E\oplus F$, and $\lambda_0\in (\lambda,1)$, suppose that the following two conditions are satisfied:
\begin{itemize}
\item There is a point $b\in \Lambda$, such that, for all $n\geq 1$, we have:

   \begin{displaymath}
     \prod_{i=0}^{n-1} \|Df^m|_{E(f^{im}(b))}\|\geq 1.
   \end{displaymath}

\item For any invariant compact subset $K\subsetneqq \Lambda$, there is an $(m,\lambda_0)$-$E$-Pliss point $x\in K$.
\end{itemize}
Then for any neighborhood $U$ of $\Lambda$, for any $\lambda_1<\lambda_2$ contained in $(\lambda_0,1)$, there is a periodic orbit $\orb(q)\subset U$ with period $\tau(q)$ a multiple of $m$, such that, for all $n=1,\cdots,\tau(q)/m$, the following two inequalities are satisfied:

   \begin{displaymath}
     \prod_{i=0}^{n-1} \|Df^m|_{E(f^{im}(q))}\|\leq {\lambda_2}^n,
   \end{displaymath}
and
   \begin{displaymath}
     \prod_{i=n-1}^{\tau(q)/m-1} \|Df^m|_{E(f^{im}(q))}\|\geq {\lambda_1}^{\tau(q)/m-n+1}.
   \end{displaymath}
Particularly, one can find a sequence of periodic points that are homoclinic related with each other and converges to a point in $\Lambda$. Similar assertions for $F$ hold with respect to $f^{-1}$.
\end{lem}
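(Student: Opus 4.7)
The plan is to build, near $\Lambda$, a finite quasi-hyperbolic pseudo-orbit whose products along $E$ realise the desired $\lambda_1,\lambda_2$ bounds, and then to close it up to a true periodic orbit via Liao's closing lemma for quasi-hyperbolic strings. As a preliminary reduction, I would replace $f$ by $f^m$ and pass to an adopted metric (Section~\ref{preliminary}), so that the splitting becomes $(1,\lambda^2)$-dominated and any Pliss estimate obtained for $f^m$ transfers to an estimate for $f$ on orbits whose period is a multiple of $m$.

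To assemble the pseudo-orbit I would extract two distinguished pieces. From hypothesis~(1), the forward $E$-products of $b$ never drop below $1$, hence stay above $\lambda_1^n$ for all $n$; this will serve as the \emph{weak} piece, controlling the suffix estimate. From hypothesis~(2) applied to a suitable minimal invariant compact subset $\Sigma\subsetneq\Lambda$ (obtainable from $\omega(b)$ when $\omega(b)\ne\Lambda$, or from any proper minimal $\Sigma$ otherwise) together with Corollary~\ref{cor of pliss}(1), I obtain an $(m,\lambda_0)$-$E$-Pliss point $y\in\Sigma$ recurring to itself; invoking Lemma~\ref{property of pliss point} (items~(1) or~(3)) upgrades $y$ to an $(m,\lambda_0)$-bi-Pliss point, so that forward $E$-contraction at rate $\lambda_0<\lambda_2$ and backward $F$-expansion at the same rate both hold at $y$. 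This is the \emph{strong} piece. I would then concatenate: start at a point close to $y$, follow $orb(y)$ for a long time $N_1$, make a short pseudo-jump to $b$, follow $orb(b)$ for a time $N_2$, and close up by a pseudo-jump back to a small neighbourhood of $y$. A careful balance of $N_1$ against $N_2$ arranges that every prefix product is at most $\lambda_2^n$ (since the $y$-piece contracts at rate well below $\lambda_2$ and dominates the product) while every suffix product is at least $\lambda_1^k$ (since the $b$-piece never contracts below rate $1\ge\lambda_1$); domination of $E$ by $F$ then converts the forward $E$-control into the backward $F$-expansion needed for quasi-hyperbolicity of the whole string.

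The main obstacle is to convert this pseudo-orbit into a genuine periodic orbit inside $U$ without losing the product estimates. This is Liao's shadowing/closing lemma for quasi-hyperbolic strings (see~\cite{l2,w-selecting}): the dominated splitting provides local invariant cone fields, and a graph-transform fixed-point argument produces a true periodic point $q$ with $orb(q)\subset U$, period a multiple of $m$, and $E$-products at $q$ arbitrarily close to those along the pseudo-orbit, hence still satisfying the $\lambda_1,\lambda_2$ bounds. Iterating the construction with $N_1\to\infty$ and with the pseudo-orbits required to begin in ever-smaller neighbourhoods of the bi-Pliss point $y$ yields a sequence of periodic points $q_k\to y$ in $\Lambda$; since each $q_k$ is a shadow passing through a fixed hyperbolic-type point $y$, a standard $\lambda$-lemma argument at $y$ shows that any two $q_k$ are homoclinically related, giving the final assertion.
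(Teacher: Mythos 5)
Note first that the paper states Lemma~\ref{selecting} as a known result and cites~\cite{l2,w-selecting} rather than proving it, so there is no internal proof to compare against; what follows evaluates your sketch on its own merits. You identify the right ingredients (Pliss points, quasi-hyperbolic strings, Liao's shadowing), but the two-piece concatenation has a concrete gap precisely at the suffix estimate.

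You justify the suffix bound by asserting that ``the $b$-piece never contracts below rate $1\geq\lambda_1$''. This misreads hypothesis~(1). Writing $g_n=\prod_{i=0}^{n-1}\|Df^m|_{E(f^{im}(b))}\|$, the hypothesis says only that $g_n\geq 1$ for every $n$, i.e.\ it controls products of the first $n$ factors \emph{starting at $b$}. A suffix of your pseudo-orbit that lies inside the $b$-piece has product $\approx g_{N_2}/g_k$ for some $0<k<N_2$, and the hypothesis gives $g_{N_2}/g_k\geq 1/g_k$, which is bounded below by nothing better than $C^{-k}$ and can be exponentially smaller than $\lambda_1^{\,N_2-k}$ (already $\|Df^m|_E\|$ alternating $2,\tfrac12,2,\tfrac12,\dots$ along $orb^+(b)$ keeps all $g_n\geq 1$ while some windows contract at rate $\tfrac12$). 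One could try to repair this by ending the $b$-piece at a running-maximum time of $g_n$, for which $g_{N_2}/g_k\geq 1$ automatically, but such times have no reason to accumulate on the bi-Pliss point $y$ you need for closing the pseudo-orbit. A second gap sits in the closing step itself: you take $y$ inside a \emph{proper} invariant compact subset $\Sigma\subsetneq\Lambda$, so $orb^+(y)\subset\Sigma$ and there is no reason for $b$ to lie in $\overline{orb^+(y)}$; the ``short pseudo-jump to $b$'' from the end of the $y$-piece is therefore not small, and Liao's shadowing for quasi-hyperbolic strings does not apply. Overcoming exactly these two obstructions — controlling the products on \emph{both} ends while closing up the string — is the substance of Liao's argument, which proceeds by a considerably more delicate inductive selection of strings along a single orbit (see~\cite{w-selecting}), rather than by concatenating one strong piece with one weak piece.
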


\subsection{Perturbation techniques}
Consider a diffeomorphism $f$ and a neighborhood $\mathcal U$ of $f$ in $\diff^1(M)$.
%We call a perturbation $f_1$ and $f_2$ have {\it disjoint supports}, if $f_i|_{M\setminus U_i}=f|_{M\setminus U_i}$ for $i=1,2$, and $U_1\cap U_2=\emptyset$.
For a perturbation $f_1\in\mathcal{U}$ of $f$, if there is an open set $U\subset M$ such that $f_1|_{M\setminus U}=f|_{M\setminus U}$, then $U$ is called the {\it perturbation neighborhood} of $f_1$.
Consider two perturbations $f_1$ and $f_2$ of $f$ with disjoint perturbation neighborhoods $U_1$ and $U_2$ respectively.
In general, the diffeomorphism $g$, where $g|_{M\setminus (U_1\cup U_2)}=f|_{M\setminus (U_1\cup U_2)}$ and $g|_{U_i}=f_i|_{U_i}$ for $i=1,2$, is not contained in $\mathcal U$ any more. However, there is a basis of neighborhoods $\mathcal U$ of $f$, such that if the element of $\mathcal{U}$ is of the form $f\circ\phi$ with $\phi\in\mathcal{V}$, where $\mathcal{V}$ is a $C^1$ neighborhood of $Id$, then $\mathcal{V}$ satisfies the following property (F), see Section 2 of~\cite{pr}.

\begin{defi}[Property (F)]\label{property F}
Assume $\mathcal{V}$ is a $C^1$ neighborhood of $Id$. We say $\mathcal{V}$ satisfies the {\it property (F)}, if for any perturbations $\phi $ and $\phi'$ of $Id$ in $\mathcal{V}$ with disjoint perturbation neighborhoods, the composed perturbation $\phi\circ \phi'$ is still in $\mathcal{V}$.
\end{defi}

We give some tools for $C^1$-perturbation. The first one is the famous Hayashi's connecting lemma, see~\cite{h,wx}.

\begin{theo}[Connecting lemma]\label{Thm:connecting lemma}
Assume that $f$ is a diffeomorphism in $\diff^1(M)$. For any $C^1$ neighborhood $\mathcal{U}$ of $f$ in $\diff^1(M)$, there is an integer $N\in\mathbb N$, satisfying the following properties:\\
Assume $z\in M$ is not a periodic point of period less than or equal to $N$. Then for any neighborhood $U_z$ of $z$, there is a smaller neighborhood $V_z\subset U_z$ of $z$, such that, for any two points $x$ and $y$ that are outside $\Delta=\bigcup_{0\leq i\leq N-1}f^i(U_z)$, if there are two positive integer $n_x$ and $n_y$, such that $f^{n_x}(x)\in V_z$ and $f^{-n_y}(y)\in V_z$, then there are a diffeomorphism $g\in\mathcal{U}$ and a positive integer $m$ such that $g^m(x)=y$ and $g=f$ outside $\Delta$. Moreover, the piece of orbit $\{x,g(x),\cdots,g^m(x)=y\}$ is contained in the set $\{x,f(x),\cdots,f^{n_x}(x)\}\cup \Delta\cup \{y,f^{-1}(y),\cdots,f^{-n_y}(y)\}$ and the number $m$ is no more than $n_x+n_y$.
\end{theo}

Theorem~\ref{Thm:connecting lemma} deals with a single diffeomorphism and a given neighborhood. Below we give a uniform version that is valid in a neighborhood of a diffeomorphism, see~\cite{w2}. We point out that Theorem~\ref{Thm:connecting lemma} is a corollary of Theorem~\ref{uniform connecting}. We put the two theorems here because in the proof of Proposition 1 and Proposition 2, we only need to apply Theorem~\ref{Thm:connecting lemma}, and the notation is more simple. Theorem~\ref{uniform connecting} is applied in the proof of Theorem B in Section~\ref{proof of theorem b} and in the proof of Proposition 3 in Section~\ref{proposition 3}.

\begin{theo}[A uniform connecting lemma, Theorem A of~\cite{w2}]\label{uniform connecting}
Assume that $f$ is a diffeomorphism in $\diff^1(M)$. For any $C^1$ neighborhood $\mathcal{U}$ of $f$ in $\diff^1(M)$, there are three numbers $\rho>1$, $\delta_0>0$ and $N\in\mathbb{N}$, together with a $C^1$ neighborhood $\mathcal{U}_1\subset\mathcal{U}$ of $f$ in $\diff^1(M)$, that satisfy the following property:\\
For any $f_1\in\mathcal{U}_1$, any point $z\in M$ and any number $0<\delta<\delta_0$, as long as the $N$ balls $(f_1^i(B(z,\delta)))_{0\leq i\leq N-1}$ are pairwise disjoint and each is of size smaller than $\delta_0$ (that is to say, $f_1^i(B(z,\delta))\subset B(f_1^i(z),\delta_0)$), then for any two points $x$ and $y$ that are outside the set $\Delta=\bigcup_{0\leq i\leq N-1}f_1^i(B(z,\delta))$, if there are two positive integers $n_x$ and $n_y$ such that $f_1^{n_x}(x)\in B(z,\delta/\rho)$ and $f_1^{-n_y}(y)\in B(z,\delta/\rho)$, then there are a diffeomorphism $g\in\mathcal{U}$ and a positive integer $m$ such that $g^m(x)=y$ and $g=f_1$ outside $\Delta$. Moreover, the piece of orbit $\{x,g(x),\cdots,g^m(x)=y\}$ is contained in the set $\{x,f_1(x),\cdots,f_1^{n_x}(x)\}\cup \Delta\cup \{y,f_1^{-1}(y),\cdots,f_1^{-n_y}(y)\}$ and the number $m$ is no more than $n_x+n_y$.
\end{theo}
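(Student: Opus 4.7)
The plan is to adapt the classical proof of Hayashi's connecting lemma (with the refinements of Wen--Xia), tracking all constants uniformly over $f_1\in\mathcal{U}_1$. The starting point is to shrink $\mathcal{U}$ to a smaller $C^1$-neighborhood $\mathcal{U}_1$ with $\overline{\mathcal{U}_1}\subset\mathcal{U}$, on which $\|Df_1^{\pm 1}\|$ admits a uniform bound $K$ and the modulus of continuity of $Df_1$ is uniformly controlled. The parameters $\rho$, $\delta_0$, $N$ will then be selected depending only on $K$ and on the $C^1$-size $\eta$ of the annulus $\mathcal{U}\setminus\overline{\mathcal{U}_1}$, which is the "perturbation budget" available for $g-f_1$.

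For the local construction, I would work around $z$ in a chart in which $f_1$ is close to its derivative at each $f_1^i(z)$. The standing hypothesis that the $N$ sets $f_1^i(B(z,\delta))$ are pairwise disjoint and each contained in $B(f_1^i(z),\delta_0)$ supplies a tubular flow-box $\Delta$. On this tube one builds the perturbation from $N$ smooth bump functions, one in each box, using the following standard trick: to move a point from the inner ball $B(z,\delta/\rho)$ to an arbitrary target in $B(z,\delta)$, factor the displacement through the $N$ boxes so that only a displacement of order $\delta/N$ is performed in each. The $C^1$-cost of each individual bump is then of order $K^{N}/(N(\rho-1))$, and by choosing $\rho$ large first, then $\delta_0$ small, this cost can be made smaller than $\eta$, with the bound independent of $f_1\in\mathcal{U}_1$.

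The combinatorial heart of the argument is the pivoting step. Given $x,y$ outside $\Delta$ with $f_1^{n_x}(x)\in B(z,\delta/\rho)$ and $f_1^{-n_y}(y)\in B(z,\delta/\rho)$, each of the two points produces a whole string of $N$ consecutive iterates visiting successive boxes of $\Delta$. By a pigeonhole argument keyed on the index $i\in\{0,\dots,N-1\}$ and on the geometry inside the box $f_1^i(B(z,\delta))$, one can locate an iterate $f_1^{n_x+i}(x)$ and an iterate $f_1^{-n_y+i}(y)$ that sit in the same box, both in a region of radius $\delta$ where the composite bump from the previous paragraph is free to identify them. The resulting $g\in\mathcal{U}$ coincides with $f_1$ off $\Delta$, satisfies $g^{m}(x)=y$ for some $m\le n_x+n_y$, and the connecting piece of orbit lies inside $\{x,f_1(x),\dots,f_1^{n_x}(x)\}\cup\Delta\cup\{y,f_1^{-1}(y),\dots,f_1^{-n_y}(y)\}$, as required.

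The main obstacle I expect is precisely the uniform dependence of $\rho,\delta_0,N$ and $\mathcal{U}_1$ on $f_1$. Hayashi's original argument silently builds the constants from the data of a single $f$, in particular from a chart adapted to $f$ near $z$, and from estimates on $Df$ that degenerate as $f$ varies. Here we must instead pick a single tuple that works simultaneously for every $f_1\in\mathcal{U}_1$ and every $z\in M$. The resolution is that all the relevant quantities ($K$, the modulus of continuity of $Df_1$, the distortion of $Df_1$ over $N$ iterates in balls of radius $\delta_0$, and the geometric constants of the bump construction) are upper-semicontinuous functions of $f_1$ on a sufficiently small neighborhood of $f$, and are independent of $z$ by the compactness of $M$. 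Once these uniform bounds are fixed, the choice of $\rho,\delta_0,N$ can be made once and for all, yielding the theorem.
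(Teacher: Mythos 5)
The paper does not prove this statement; it is cited as Theorem A of Wen's paper~\cite{w2}. So your task is really to reconstruct Wen's argument, which is a uniformization of the Hayashi--Wen--Xia connecting lemma. Your overall architecture (flow-box over $N$ disjoint iterates, distributing the displacement across the boxes, making the constants $\rho,\delta_0,N,\mathcal{U}_1$ uniform over a $C^1$-neighbourhood) is faithful to that framework, and your last paragraph correctly identifies the uniformization over $f_1$ as the new feature of Wen's version relative to Hayashi.

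However, there is a genuine gap: your ``pivoting step'' does not engage with the actual combinatorial difficulty of the connecting lemma, namely the \emph{re-entry problem}. The pigeonhole you describe is vacuous: since $f_1^{n_x}(x)$ and $f_1^{-n_y}(y)$ both lie in $B(z,\delta/\rho)$, the iterates $f_1^{n_x+i}(x)$ and $f_1^{-n_y+i}(y)$ automatically lie in the same box $f_1^i(B(z,\delta))$ for \emph{every} $i\in\{0,\dots,N-1\}$; there is nothing to select. The real obstruction is that the orbit segment $\{x,f_1(x),\dots,f_1^{n_x}(x)\}$ (and likewise the backward segment of $y$) may enter and leave $\Delta$ many times before time $n_x$, so any perturbation supported in $\Delta$ also alters these earlier passages and can destroy the very arrival at $B(z,\delta/\rho)$ you are trying to exploit, or create a short closed loop that never reaches $y$. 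Handling this requires Hayashi's inductive selection of entry/exit times (see also Wen--Xia~\cite{wx} and Arnaud~\cite{ar}), and it is also what makes the final orbit land inside the union $\{x,\dots,f_1^{n_x}(x)\}\cup\Delta\cup\{y,\dots,f_1^{-n_y}(y)\}$ with $m\le n_x+n_y$, as claimed. Your proposal treats this as a one-shot identification, which is exactly the step that fails without the selection argument.

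A secondary issue is the estimate ``$C^1$-cost of each bump $\approx K^N/(N(\rho-1))$.'' A cost that grows like $K^N$ cannot be killed by the factor $N$ in the denominator, so under your formula the construction would get \emph{worse} as $N$ increases, contradicting the whole point of using a long tube. The correct bookkeeping works with displacements measured relative to the (distorted) box $f_1^i(B(z,\delta))$ itself: both the gap to be closed and the available room scale by the same $Df_1^i$, so the relative gap inside each box is of order $1/\rho$, and spreading it over $N$ boxes gives a per-box $C^1$-cost of order $K/(\rho N)$ (no $K^N$), after which one chooses $\rho$ and $N$ to bring this under the perturbation budget, and then $\delta_0$ small enough that $Df_1$ is nearly constant on each box. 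Also note that $\delta_0$ plays no role in the formula you wrote, yet you invoke choosing it small; the role of $\delta_0$ is precisely the uniform almost-linearity of $f_1$ on small scales, which your formula omits.
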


To control the perturbing neighborhood when connecting two points that are close, we have the following lemma, see~\cite{a}.

\begin{lem}[Basic perturbation lemma]\label{basic perturbation} For any neighborhood $\mathcal{U}$ of a diffeomorphism $f\in \diff^1(M)$, there are two numbers $\theta>1$ and $r_0>0$ satisfying: for any two points $x,y\in M$ contained in a ball $B(z,r)$, where $r\leq r_0$, there is a diffeomorphism $g\in \mathcal{U}$, such that $g(x)=f(y)$, and $g$ coincides with $f$ outside the ball $B(z,\theta\cdot r)$.
\end{lem}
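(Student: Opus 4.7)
The plan is to reduce the problem to producing a $C^1$-small diffeomorphism $\psi:M\to M$ that sends $x$ to $y$ and is supported in $B(z,\theta r)$; then $g=f\circ\psi$ will satisfy $g(x)=f(\psi(x))=f(y)$, and $g$ will agree with $f$ wherever $\psi$ agrees with $\mathrm{id}_M$, namely outside $B(z,\theta r)$. If $\psi$ is sufficiently $C^1$-close to the identity, continuity of composition guarantees $g\in\mathcal U$.

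To build $\psi$, I work in a local chart at $z$; by compactness of $M$, one can fix $r_0>0$ so that for every $z\in M$ the ball $B(z,\theta r_0)$ lies in a chart in which the Riemannian metric is comparable to the Euclidean one with a uniform distortion constant. Identifying $z$ with the origin and writing $v=y-x$ (so $|v|\le 2r$), choose a smooth bump $\beta:[0,\infty)\to[0,1]$ with $\beta\equiv 1$ on $[0,1]$, $\beta\equiv 0$ on $[\theta,\infty)$, and $\|\beta'\|_\infty\le C_0/(\theta-1)$ for an absolute constant $C_0$. Define the vector field
\[
X(w)=\beta\!\left(\|w-z\|/r\right)\,v,
\]
extended by zero outside the chart, and let $\psi$ be its time-$1$ map. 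Since the straight segment from $x$ to $y$ is contained in $B(z,r)$ by convexity, the field $X$ reduces to the constant vector $v$ along this segment, so the integral curve from $x$ is $t\mapsto x+tv$ and $\psi(x)=y$.

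The decisive estimate is
\[
\|X\|_{C^0}\le |v|\le 2r,\qquad \|DX\|_{C^0}\le \frac{|v|}{r}\,\|\beta'\|_\infty\le \frac{2C_0}{\theta-1}.
\]
The first bound tends to $0$ with $r$; the second is \emph{independent of $r$} and can be made arbitrarily small by taking $\theta$ large. Hence, given $\mathcal U$, I would first fix $\theta>1$ large enough (depending only on $f$ and $\mathcal U$) that whenever $\|X\|_{C^1}$ is bounded by the quantities above the diffeomorphism $f\circ\psi$ belongs to $\mathcal U$; then shrink $r_0$ to absorb the chart distortion constant and to force $\psi-\mathrm{id}$ to be $C^0$-small enough as well.

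The only real obstacle is uniform bookkeeping across charts and metrics: the distortion constants relating $(M,d)$ to the Euclidean ambient of the chart must be absorbed into $\theta$ and $r_0$, which is routine by compactness. The conceptual point is that $\|DX\|_{C^0}$ is independent of the scale $r$, which forces $\theta$ to be chosen as a function of $\mathcal U$ and $f$ alone, while $r_0$ remains free to be taken as small as desired.
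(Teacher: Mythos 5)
The paper does not prove this lemma but cites it to Arnaud~\cite{a}, so there is no internal proof to compare against. Your argument is correct and is essentially the standard one behind the lemma. The crucial point — that for $X(w)=\beta(\|w-z\|/r)\,v$ the $C^1$ size of the generating field satisfies $\|DX\|_{C^0}\le \frac{|v|}{r}\|\beta'\|_\infty\le 2C_0/(\theta-1)$, a bound \emph{independent of $r$} — is exactly what allows $\theta$ to be fixed once and for all depending only on $\mathcal U$ and $f$, while $r_0$ is shrunk afterwards to control $\|\psi-\mathrm{id}\|_{C^0}$ (and hence, via uniform continuity of $Df$ on a compact manifold, the term $\|Df\circ\psi-Df\|_{C^0}$ in the $C^1$ distance of $g=f\circ\psi$ to $f$). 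Two steps are left implicit but are routine: the Gronwall estimate $\|D\phi_1-I\|\le\|DX\|_\infty e^{\|DX\|_\infty}$, which converts the bound on $\|DX\|_{C^0}$ into a bound on $\|D\psi-I\|_{C^0}$, and the smoothness of $X$ at $z$, which holds because $\beta\equiv 1$ near $0$ makes $X$ constant near $z$. The chart-distortion bookkeeping is handled correctly by compactness.
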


\begin{defi}
For a chart $\varphi:V\rightarrow \mathbb{R}^d$ of $M$, we call a set $C\subset V$ a \emph{cube} of $\varphi$ if $\varphi(C)$ is the image of $[-a,a]^d$ by a translation of $\mathbb{R}^d$. Here $a$ is called the \emph{radius} of the cube. If a cube $C'\subset V$ is of radius $(1+\vep)a$ and $\varphi(C')$ is of the same center of $\varphi(C)$, we also denote by $C'=(1+\vep)C$.
\end{defi}

\begin{defi}
Consider a chart $\varphi:V\rightarrow \mathbb{R}^d$. A \emph{tiled domain} according to the chart of $\varphi$ is an open set $U\subset V$ and a family $\mathcal{C}$ of cubes of $\varphi$ (called \emph{tiles} of domain), such that:
\begin{enumerate}
\item the interior of the tiles are pairwise disjoint;
\item the union of all tiles of $\mathcal{C}$ equals to $U$;
\item the geometry of the tiling is bounded, i.e.
\end{enumerate}
\begin{itemize}
\item the number of tiles around each point is uniformly bounded (by $2^d$), that is to say, there is a neighborhood for each point that meets at most $2^d$ tiles,

\item for any two pairs $(C,C')$ of intersecting tiles, the rate of their diameters is uniformly bounded (by $2$).
\end{itemize}
\end{defi}

By a standard construction, any open set $U\subset V$ can be tiled according to the coordinates of $\varphi$ (e.g.~\cite{bc,c2}).

\begin{defi}\label{perturbation domain}
Assume $f\in\diff^1(M)$. Consider a neighborhood $\mathcal{U}\subset \diff^1(M)$ and a number $N$. A tiled domain $(U,\mathcal{C})$ is called a \emph{perturbation domain} of order $N$ of $(f,\mathcal{U})$, if the following properties are satisfied.
\begin{enumerate}
\item $U$ is disjoint from its $N$ first iterates of $f$.
\item For any finitely many sequence of pairs of points $\{(x_i,y_i)\}_{1\leq i\leq l}$ in $U$, such that for any $i=1,2,\cdots,l$, the points $x_i$ and $y_i$ are contained in the same tile of $\mathcal{C}$, then there exist:
\begin{itemize}
\item a diffeomorphism $g\in\mathcal{U}$, that coincides with $f$ outside $\bigcup_{0\leq i\leq N-1}f^i(U)$,
\item a strictly increasing sequence $1=n_0<n_1<\cdots<n_k\leq l$, such that $g^N(x_{n_i})=f^N(y_{n_{i+1}-1})$ for any $i\neq k$, and $g^N(x_{n_k})=f^N(y_l)$.
\end{itemize}
\end{enumerate}
The union $\bigcup_{0\leq i\leq N-1}f^i(U)$ is called the \emph{support} of the perturbation domain $(U,\mathcal{C})$ and denoted by $\supp(U)$.
\end{defi}

\begin{defi}\label{jumps}
A pseudo-orbit $(x_0,x_1,\cdots,x_l)$ is said to \emph{keep the tiles} of a perturbation domain $(U,\mathcal{C})$ of order $N$ of $(f,\mathcal{U})$, if the intersection of the pseudo-orbit and $\supp(U)$ is a union of segments $x_{n_i},x_{n_i+1},\cdots,x_{n_i+N-1}$ of the form that $x_{n_i}\in U$ and for any $j=1,2,\cdots,N-1$, $x_{n_i+j}=f^j(y_{n_i})$, where $y_{n_i}$ is a point contained in the same tile of $\mathcal{C}$ as $x_{n_i}$. A pseudo-orbit $(x_0,x_1,\cdots,x_k)$ is said to \emph{have jumps only in tiles} of a perturbation domain $(U,\mathcal{C})$ of order $N$ of $(f,\mathcal{U})$, if it keeps the tiles and for any $x_i\notin \supp(U)$, we have $x_{i+1}=f(x_i)$. For a family of perturbation domains $(U_k,\mathcal{C}_k)_{k\geq 0}$ of order $N_k$ of $(f,\mathcal{U}_k)$ with disjoint support, we say that a pseudo-orbit $(x_0,x_1,\cdots,x_l)$ has \emph{jumps only in tiles} of the perturbation domains $(U_k,\mathcal{C}_k)_{k\geq 0}$, if it keeps the tiles of the perturbation domains and for any $x_i\notin\bigcup_k \supp(U_k)$, we have $x_{i+1}=f(x_i)$.
\end{defi}

By the proof of connecting lemma in~\cite{ar}, the perturbation domain always exists (see also Th\'{e}or\`{e}me 2.1 of~\cite{bc} and Th\'{e}or\`{e}me 3.3 of~\cite{c2}).

\begin{theo}[Another statement of the connecting lemma]\label{existence of perturbation domain}
For any neighborhood $\mathcal{U}$ of $f$, there is an integer $N\geq 1$, and for all point $p\in M$, there is a chart $\varphi:V\rightarrow \mathbb{R}^d$ such that any tiled domain $(U,\mathcal{C})$ according to $\varphi$ disjoint from its $N$ first iterates is a perturbation domain of order $N$ for $(f,\mathcal{U})$.
\end{theo}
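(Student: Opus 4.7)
The plan is to adapt Hayashi's connecting lemma in the style of Arnaud~\cite{ar} and Bonatti--Crovisier~\cite{bc}: spread each required displacement over $N$ disjoint iterates of a single tile, and chain the resulting local perturbations by a greedy index-selection procedure.

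First, given $\mathcal{U}$, apply the basic perturbation lemma (Lemma~\ref{basic perturbation}) to extract constants $\theta>1$ and $r_0>0$. Choose an integer $N\geq 1$ large enough that the composition of $N$ basic perturbations, each of $C^1$-size at most $C/N$ for a constant $C$ depending only on $f$, still lies in $\mathcal{U}$; this is possible because basic perturbations compose multiplicatively in the $C^1$-topology. For each $p\in M$ take a chart $\varphi:V\to\mathbb{R}^d$ centered at $p$ whose domain is small enough that the pushed-forward tiles $f^j(C)$ have bounded distortion with respect to $\varphi$-translations for $0\leq j\leq N-1$ (by continuity of $Df$). The pair $(N,\varphi)$ will be the one claimed by the theorem.

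Next, given a tiled domain $(U,\mathcal{C})$ in the chart disjoint from its $N$ first iterates, and a finite family of pairs $(x_i,y_i)_{1\leq i\leq l}$ with $x_i,y_i$ in the same tile $C_i$, construct $g$ and the sequence $n_0<n_1<\cdots<n_k$ inductively. Set $n_0=1$. In the chart, decompose the displacement $\varphi(y_{n_0})-\varphi(x_{n_0})$ into $N$ equal increments and apply the basic perturbation lemma on each $f^j(C_{n_0})$, $0\leq j\leq N-1$, to obtain a diffeomorphism supported in $\bigcup_{j=0}^{N-1}f^j(C_{n_0})$ that realizes $g^N(x_{n_0})=f^N(y_{n_0})$. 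During this process inspect, at each step $j$, whether the current image $g^j(x_{n_0})$ has entered a tile $C_m$ for some unused index $m>n_0$; if so, redirect the partially built perturbation to drive $g^N(x_{n_0})$ to $f^N(y_{m-1})$ instead of $f^N(y_{n_0})$, set $n_1=m$, and restart the construction at $x_{n_1}$. The greedy choice (taking the smallest available $m$) ensures strictness of the increasing sequence; termination at some $n_k\leq l$ is automatic since $l$ is finite.

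The main obstacle is verifying that the cumulative diffeomorphism $g$ still belongs to $\mathcal{U}$ despite the cascaded redirections in multiple tiles. This is handled by two features: (i) the disjointness hypothesis $U\cap f^j(U)=\emptyset$ for $1\leq j\leq N-1$ forces the supports $\bigcup_{j=0}^{N-1}f^j(C_{n_i})$ of distinct tiles to interact only through the bounded-geometry overlap of the tiling, and (ii) each tile contributes a perturbation of $C^1$-size $O(1/N)$. Since at every point at most $2^d$ tiles overlap, the local $C^1$-size of $g-f$ is bounded by $2^d\cdot C/N$, which lies within the margin built into the choice of $N$. The relations $g^N(x_{n_i})=f^N(y_{n_{i+1}-1})$ and $g^N(x_{n_k})=f^N(y_l)$ follow by construction, since the perturbation associated with the block $[n_i,n_{i+1}-1]$ is finalized at the pair whose displacement is the last one used before the redirection at index $n_{i+1}$.
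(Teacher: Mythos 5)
The paper itself does not prove Theorem~\ref{existence of perturbation domain}; it cites the proof of the connecting lemma in Arnaud's paper and refers to Th\'eor\`eme 2.1 of Bonatti--Crovisier and Th\'eor\`eme 3.3 of Crovisier's Ast\'erisque. So your proposal is effectively a standalone attempt at reproving Hayashi's connecting lemma in the ``perturbation domain'' formulation. The high-level plan is sound (spread each displacement over the $N$ disjoint slices $f^j(U)$, tile the domain in a linearizing chart, select a subsequence of the pairs), but the execution has a genuine gap where the actual difficulty sits, namely the selection argument.

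The greedy redirection-mid-construction does not give a valid proof. You propose: while driving $g^j(x_{n_0})$ towards $f^j(y_{n_0})$ over the slices $j=0,\dots,N-1$, if at some step the partially built orbit enters a tile $C_m$ with $m>n_0$, redirect so that $g^N(x_{n_0})=f^N(y_{m-1})$ instead, set $n_1=m$, and ``restart''. Three problems. (a) The point $y_{m-1}$ lies in tile $C_{m-1}$, which has nothing to do with the tile $C_m$ your trajectory just entered; there is no reason the remaining $N-j$ slices permit steering the trajectory to $f^N(y_{m-1})$ with a perturbation of size $O(1/N)$ supported in the slices of $C_{n_0}$. (b) After you ``restart at $x_{n_1}$'', the perturbation already installed in the first $j$ slices of $C_{n_0}$ is frozen, but the forward orbit of $x_{n_1}$ under $g$ may well enter those same slices, and you have no control over where it then goes. (c) Your termination argument (``$l$ is finite'') does not produce the required boundary condition $g^N(x_{n_k})=f^N(y_l)$: the process could stall at some index without ever reaching $y_l$. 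The actual selection lemma in Arnaud/Bonatti--Crovisier is a global combinatorial sweep over all $l$ pairs performed \emph{before} any perturbation is built: one observes that the composed ``radiant'' perturbations are automatically compatible along revisits because each tile is perturbed once per slice by a map that fixes the tile boundary, and one reads off the subsequence $n_0<\dots<n_k$ from the order in which the various orbit segments pass through the tiles. There is no mid-flight redirection and no restart.

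A secondary but real issue is the size estimate. The assertion that each tile contributes a $C^1$-perturbation of size $O(1/N)$ does not follow merely from dividing $\varphi(y_i)-\varphi(x_i)$ into $N$ equal increments: the perturbation installed in the slice $f^j(C_i)$ has $C^1$-size comparable to the pushed-forward displacement divided by the width of $f^j(C_i)$, and $f^j(C_i)$ is distorted by $Df^j$. Controlling this ratio uniformly over $0\le j\le N-1$ is exactly why the chart $\varphi$ must be chosen so that $Df$ is nearly constant on it and why the tiling in~\cite{bc} is not arbitrary but is built from cubes whose radii are proportional to their distance to the chart boundary. Likewise, the ``at most $2^d$ tiles overlap'' bound refers to tiles meeting along boundaries, yet the basic perturbation lemma's $\theta$-fattening pushes each tile's support slightly into its neighbours; making these fattened supports compatible requires the careful adapted-cube construction, not just bounded geometry. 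In short, the skeleton of your argument aligns with the cited proofs, but both the selection mechanism and the quantitative chart/tile requirements are missing, and these are precisely the nontrivial content of Th\'eor\`eme 2.1 in~\cite{bc}.
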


From the definitions above, we can get the following lemma easily.

\begin{lem}[Lemme 2.3 of~\cite{bc}]\label{union of perturbation domains}
For a family of disjoint perturbation domains $(U_k,\mathcal{C}_k)$ of order $N_k$ of $(f,\mathcal{U}_k)$ with disjoint support, if there is a pseudo-orbit $(p=p_0,p_1,\cdots,p_m=q)$ that has only jumps in the tiles of $(U_k,\mathcal{C}_k)_{k\geq 0}$ and $p_0,p_m\not \in U_k\cup\cdots\cup f^{N_k-1}(U_k)$ for all $k\geq 0$, then for any $i$, there is $g_i\in \mathcal{U}_i$ and a new pseudo-orbit $(p=p_0',\cdots,p_{m'}'=q)$ of $g_i$ that has only jumps in the tiles of domains $(U_k,\mathcal{C}_k)_{k\geq 0,k\neq i}$. Moreover, $g_i=f$ outside $U_i\cup\cdots\cup f^{N_i-1}(U_i)$ and $\{p_0',\cdots,p_{m'}'\}\setminus (U_i\cup\cdots\cup f^{N_i-1}(U_i))\subset \{p_0,p_1,\cdots,p_m\}$, and $m'\leq m$.
\end{lem}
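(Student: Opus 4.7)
The plan is to apply item (2) of Definition~\ref{perturbation domain} to the single perturbation domain $(U_i,\mathcal{C}_i)$, so as to replace each jump of the original pseudo-orbit that occurs in a tile of $\mathcal{C}_i$ by a true $g_i$-orbit arc, while leaving the rest of the pseudo-orbit essentially untouched. Because the supports $supp(U_k)$ are pairwise disjoint and $g_i$ will agree with $f$ outside $supp(U_i)$, the jumps of the original pseudo-orbit that occur in tiles of $(U_k,\mathcal{C}_k)$ with $k\ne i$ automatically remain legitimate tile-jumps for the new pseudo-orbit of $g_i$.

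Concretely, the first step is to read off from the original pseudo-orbit its successive visits to $supp(U_i)$. By Definition~\ref{jumps}, each such visit is a block $p_{b_j},p_{b_j+1},\dots,p_{b_j+N_i-1}$ with $p_{b_j}=x_j\in U_i$ and $p_{b_j+r}=f^r(y_j)$ for $r=1,\dots,N_i-1$, where $y_j\in U_i$ lies in the same tile of $\mathcal{C}_i$ as $x_j$; enumerate these visits $S_1,\dots,S_l$ in order. The second step is to feed the ordered list of pairs $\{(x_j,y_j)\}_{j=1}^l$ into Definition~\ref{perturbation domain} applied to $(U_i,\mathcal{C}_i)$: this produces $g_i\in\mathcal{U}_i$ equal to $f$ outside $supp(U_i)$, together with a subsequence $1=n_0<n_1<\cdots<n_k\le l$ satisfying $g_i^{N_i}(x_{n_j})=f^{N_i}(y_{n_{j+1}-1})$ for $j<k$ and $g_i^{N_i}(x_{n_k})=f^{N_i}(y_l)$. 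The third step is to build the new pseudo-orbit by concatenating, in order: the initial portion $p_0,\dots,p_{b_{n_0}}=x_{n_0}$ of the original; for each $j=0,\dots,k$, the true $g_i$-orbit arc $x_{n_j},g_i(x_{n_j}),\dots,g_i^{N_i-1}(x_{n_j})$; for each $j<k$, the portion of the original pseudo-orbit from $p_{b_{n_{j+1}-1}+N_i}=f^{N_i}(y_{n_{j+1}-1})=g_i^{N_i}(x_{n_j})$ up to $p_{b_{n_{j+1}}}=x_{n_{j+1}}$; and finally the portion of the original from $p_{b_l+N_i}=g_i^{N_i}(x_{n_k})$ to $p_m=q$.

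It then remains to check: (i) inside $supp(U_i)$ the new sequence is a true $g_i$-orbit arc and so has no jumps; (ii) outside $supp(U_i)$ it is a subset of the original pseudo-orbit on which $g_i=f$, so its surviving jumps all lie in tiles of $(U_k,\mathcal{C}_k)_{k\ne i}$; (iii) each stitching point $f^{N_i}(y_{n_{j+1}-1})$ does lie outside $supp(U_i)$, because $f^{N_i}(U_i)\cap supp(U_i)=\emptyset$ by the iterate-disjointness clause of Definition~\ref{perturbation domain}, and it coincides with the original pseudo-orbit point $p_{b_{n_{j+1}-1}+N_i}$ thanks to the block structure of the visit $S_{n_{j+1}-1}$; (iv) $m'\le m$, because each chosen $g_i$-arc has the same length $N_i$ as the visit $S_{n_j}$ it replaces, while additionally swallowing up any skipped visits $S_{n_j+1},\dots,S_{n_{j+1}-1}$ together with the intermediate portions. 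The only genuinely delicate point is the bookkeeping in (iii), namely verifying that the output of Definition~\ref{perturbation domain} lands exactly on an indexed point of the original pseudo-orbit from which the latter can be resumed without invalidating any surviving tile-jump; but this is an immediate consequence of the jump structure and the disjointness hypotheses, so no new ideas beyond the two definitions are required.
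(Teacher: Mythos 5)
Your proposal is correct and matches the paper's intent: the paper omits the proof, stating only that the lemma follows easily from Definitions~\ref{perturbation domain} and~\ref{jumps}, and your argument is precisely the detailed verification of that assertion — apply item~(2) of Definition~\ref{perturbation domain} to the list of tile-pairs extracted from the visits of the original pseudo-orbit to $supp(U_i)$, splice the resulting true $g_i$-arcs into the original pseudo-orbit, and check the bookkeeping using the iterate-disjointness of $U_i$ and the disjointness of the supports $supp(U_k)$.
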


\subsection{Topological towers}

In this subsection, we introduce two lemmas of~\cite{bc} that are useful to get a true orbit by perturbing a pseudo-orbit. These two lemmas are the key tools in the proof of Proposition~\ref{asymptotic connecting}. First we give the following lemma to choose perturbation neighborhoods. In fact, it is a generalized result of Lemme 3.7 of~\cite{bc}, but one can get the conclusion from the proof in~\cite{bc}.

\begin{lem}\label{choose neighborhoods}
There is a constant $\kappa_d>0$ (which only depends on the dimension d of $M$) satisfying the following property.\\
Consider a diffeomorphism $f\in\diff^1(M)$ and an integer $T>0$. Assume that $W'$ and $V'$ are two compact $d$-dimensional sub-manifolds with boundary, satisfying $V'$ is disjoint from its $\kappa_d T$ first iterates. Then for any neighborhood $U_1$ of $W'$ and any neighborhood $U_2$ of $V'$, there is an open set $S$, such that:
\begin{enumerate}
\item $V'\subset \bigcup_{i=0}^{\kappa_d T}f^{-i}(S)$.
\item $S=W\cup V$ where $W$ and $V$ satisfy the following:
\begin{itemize}
\item $\overline{W}$ and $\overline{V}$ are two compact $d$-dimensional sub-manifolds with boundary;
\item $W'\subset W\subset\overline{W}\subset U_1$;
\item $\overline{V}$ is contained in $U_2\cup f(U_2)\cup\cdots\cup f^{\kappa_dT}(U_2)$ and disjoint from its $T$ first iterates.
\item $\overline{W}\cap (\bigcup_{i=-T}^{T} f^{i}(\overline{V}))=\emptyset$.
\end{itemize}
\end{enumerate}
\end{lem}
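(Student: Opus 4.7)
Plan. I would follow the line of Lemma 3.7 of~\cite{bc}, of which this is a slight generalization. The strategy is to build $W$ as a tight open thickening of $W'$ inside $U_1$ and then to build $V$ as a ``shifted'' tower of height $N$ over $V'$: each piece of $V'$ will be pushed forward by some number of iterates in $\{0,N,2N,\dots,(\kappa_d-1)N\}$ so as to dodge the compact forbidden set $K:=\bigcup_{i=-N}^{N}f^i(\overline W)$. The dimension-dependent constant $\kappa_d$ is a packing constant enforced by a pigeon-hole step used to choose these shifts.

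Concretely, first I would fix an open set $W$ with $W'\subset W$ and $\overline W\subset U_1$, so that condition~(2) is immediate and $K$ is compact. Using a chart, I tile an open neighborhood $V_0\subset U_2$ of $V'$ by a bounded-multiplicity family of small cubes $(C_\alpha)$. Since $V'$ is disjoint from its first $\kappa_dN$ iterates, continuity allows me to shrink the cubes so that $V_0$ itself is disjoint from its first $\kappa_dN$ iterates; in particular the closed shifted cubes $\overline{f^{j}(C_\alpha)}$ for $0\le j\le\kappa_dN$ are pairwise disjoint as the pair $(\alpha,j)$ varies (distinct tiles give disjoint interiors, and distinct heights give disjoint iterates). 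For each tile $C_\alpha$, the $\kappa_d$ candidate shifts $0,N,2N,\dots,(\kappa_d-1)N$ yield $\kappa_d$ pairwise disjoint iterates of $C_\alpha$; a bounded-multiplicity packing argument, as in~\cite{bc}, shows that at most a dimension-dependent number $c_d$ of them can meet the compact set $K$. Setting $\kappa_d:=c_d+1$, for each tile an admissible shift $\ell(C_\alpha)\in\{0,N,\dots,(\kappa_d-1)N\}$ with $\overline{f^{\ell(C_\alpha)}(C_\alpha)}\cap K=\emptyset$ exists, and I put
\[
V\;:=\;\bigcup_\alpha\operatorname{int}\bigl(f^{\ell(C_\alpha)}(C_\alpha)\bigr),\qquad S\;:=\;W\cup V.
\]

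Verification then proceeds one item at a time. Item~(1) is automatic: every $x\in V'$ lies in some $C_\alpha$, so $f^{\ell(C_\alpha)}(x)\in V\subset S$ with $\ell(C_\alpha)\le\kappa_dN$. For item~(3), the inclusion $V\subset\bigcup_{j=0}^{\kappa_dN}f^j(U_2)$ is clear from $C_\alpha\subset U_2$, and the disjointness of $V$ from its first $N$ iterates follows from the $\kappa_dN$-disjointness of the $V_0$-tower (any two heights in use differ in absolute value by at most $\kappa_dN$, and distinct tiles have disjoint interiors). Item~(4) unwinds, under the substitution $i\leftrightarrow -i$, exactly to $\overline{f^{\ell(C_\alpha)}(C_\alpha)}\cap K=\emptyset$, which is the condition imposed in the shift selection.

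The main obstacle is the pigeon-hole step: one must bound by a dimension-only constant the number of candidate shifts $i\in\{0,\dots,\kappa_d-1\}$ for which $f^{iN}(C_\alpha)$ meets $K$. This bound, which is what forces $\kappa_d$ to depend on $d$, is the real content of Lemma 3.7 of~\cite{bc}; it rests on the bounded-multiplicity of cube tilings in $\mathbb{R}^d$ together with the fact that $K$ is the union of $2N+1$ iterates of the single compact set $\overline W$, so that the covering of $K$ by candidate cubes is of dimension-controlled multiplicity. Once this bound is in hand, the remaining checks above are routine book-keeping.
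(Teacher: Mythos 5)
The overall architecture of your proposal — thicken $W'$ to $W$, tile a small neighborhood of $V'$ by cubes from a bounded-multiplicity tiling, and then shift each cube by a multiple of $N$ so as to dodge $K=\bigcup_{|i|\le N}f^i(\overline W)$ — is indeed the right framework and matches the spirit of Lemma 3.7 of~\cite{bc} (the paper itself gives no proof of this lemma; it only cites~\cite{bc}). The gap is the pigeon-hole step. You assert that, for a fixed tile $C_\alpha$, at most a dimension-only constant $c_d$ of the $\kappa_d$ shifted copies $f^{jN}(\overline{C_\alpha})$, $j=0,\dots,\kappa_d-1$, can meet $K$, and you attribute this to the bounded multiplicity of cube tilings. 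But bounded multiplicity bounds how many tiles pass through a given \emph{point}; it does not bound how many iterates of a single tile can meet a fixed compact set. The claim is in fact false in general: if $\overline W$ contains an $f$-invariant region (nothing in the hypotheses prevents this, since no disjointness is imposed on $W'$), and $C_\alpha$ is a small cube inside that region, then \emph{every} iterate $f^{jN}(\overline{C_\alpha})$ meets $\overline W\subset K$, for any value of $\kappa_d$. In that situation no shift works, and the lemma only holds because those points of $V'$ are already swallowed by $W$, via condition~(1) and the $W$-part of $S$.

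So the real content, which your sketch leaves out, is the dichotomy that makes the statement true: a tile for which no good shift exists must be caught by $\bigcup_{i=0}^{\kappa_dN}f^{-i}(W)$. Making this precise is delicate, because the natural ``free'' set $A=V'\setminus\bigcup_{i=0}^{\kappa_dN}f^{-i}(W)$ avoids the open $\bigcup f^{-i}(W)$ but may still touch the closed $\bigcup f^{-i}(\overline W)$, so that a single fixed shift like $j=1$ can fail for tiles straddling that boundary, and no choice of nested $W_0\subset\overline{W_0}\subset W$ resolves the scale mismatch by itself (the captured part needs a large $W$, the free part needs to avoid a small $\overline W$). This is exactly where the $d$-dependent packing argument of~\cite{bc} enters, and it is more than a direct consequence of the bounded multiplicity of the $V$-tiling. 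As written, your proof does not supply this input and would fail on the examples above; it should either reproduce the combinatorial argument of~\cite{bc} or at least isolate the precise statement being invoked rather than assert that ``at most $c_d$ shifts can hit $K$.''
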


\begin{rem}
$(1)$ We point out here that, the two sets $W'$ and $V'$ in Lemma~\ref{choose neighborhoods} correspond to $U$ and $V$ respectively in Lemme 3.7 of~\cite{bc}. Lemme 3.7 of~\cite{bc} assumes more that $W'$ is disjoint from its $T$ first iterates, and in the conclusion the set $W\cup V$ is disjoint from its $T$ first iterates. Moreover, the statement of Lemme 3.7 of~\cite{bc} does not involve the two neighborhoods $U_1$ and $U_2$. But the proof of Lemme 3.7 of~\cite{bc} gives all the information stated in Lemma~\ref{choose neighborhoods}, see~\cite[Page 61--62]{bc}.

$(2)$ In Lemma~\ref{choose neighborhoods}, if we assume more that $W'$ is disjoint with its first $T'$ iterates where $T'\leq T$, then by taking $U_1$ small enough, we can obtain that $W$ is disjoint from its $T'$ first iterates, and the union $W\cup V$ is also disjoint from its $T'$ first iterates.
\end{rem}

Then, we give a lemma of~\cite{bc} for the construction of what they called \textit{topological tower} (see Th\'{e}or\`{e}me 3.1 and Corollaire 3.1 in~\cite{bc}). Denote by $Per_{T}(f)$ the set of periodic orbits with period less than $T$.

\begin{lem}[Topological tower]\label{ttower}
There is a constant $\kappa_d>0$ (which only depends on the dimension d of $M$), such that, for any $T\in \mathbb{N}$, any constant $\delta>0$, any compact set $K$ of $f\in \diff^1(M)$ that does not contain any non-hyperbolic periodic orbits with periods less than $\kappa_d T$ and any neighborhood $U$ of $K$, there exist an open set $V$ and a compact set $D\subset V$, satisfying the following properties:
\begin{enumerate}
\item\label{item:ttower-submanifold} The closure of $V$ is a compact $d$-dimensional sub-manifold with boundary.
\item\label{item:ttower-iterates} For any point $x\in K$ with $x\not \in \bigcup_{p\in Per_{T}(f)}W^s_{\delta}(p)$, there is $n>0$, such that $f^n(x)\in \int(D)$.
\item\label{item:ttower-disjointness} The sets $\overline{V},f(\overline{V}),\cdots,f^{T}(\overline{V})$ are pairwise disjoint.
\item\label{item:ttower-neighborhood} The set $\overline{V}$ is contained in $U\cup f(U)\cup \cdots \cup f^{\kappa_d T}(U)$.
\end{enumerate}
Moreover, the diameter of all connected components of $V$ can be arbitrarily small.
\end{lem}

\begin{rem}
$(1)$ In~\cite{bc}, Th\'{e}or\`{e}me 3.1 is stated for an invariant compact set $K$, and the items 1 and 4 in the conclusion of Lemma~\ref{ttower} are not stated. But from the proof (see~\cite[Page 62--63]{bc}), we can see that the conclusion is also true for non-invariant compact sets and also the items 1 and 4 are true.

$(2)$ We give a sketch of the proof of Lemma~\ref{ttower}. Take $\kappa_d$ to be the constant in Lemma~\ref{choose neighborhoods}. First, one can take a compact sub-manifold $U_0$ of $M$ with boundary that is disjoint from its first $T$ iterates, such that, any point in a small neighborhood $O$ of $Per_{T}(f)$ that is not on the local stable manifold of $Per_{T}(f)$ has a positive iterate in the interior of $U_0$. Then one can take a finite cover of the compact set $K\setminus O$ by open sets $\{V_i\}_{0\leq i\leq r}$ that are disjoint from their first $\kappa_d T$ iterates (they are not disjoint from each other in general). Moreover, for each $0\leq i\leq r$ the closure of $V_i$ is a compact $d$-dimensional sub-manifold with boundary. Then one can apply Lemma~\ref{choose neighborhoods} inductively with respect to $i$ to the triple $(T,U_i,V_i)$, where the triple $(T,U_i,V_i)$ corresponds to $(T,W',V')$ in Lemma~\ref{choose neighborhoods}, and obtain $U_{i+1}$ as $S$ in Lemma~\ref{choose neighborhoods}. Moreover, since in this setting, $U_0$ is disjoint from its first $T$ iterates, one can obtain that $U_i$ is disjoint from its first $T$ iterates for each $1\leq i\leq r+1$. Finally, one can take $V$ to be the interior of $U_{r+1}$. For more details, the reader should refer to~\cite[Page 62--63]{bc}.
\end{rem}

\subsection{Generic properties}

A set $R$ of a topological Baire space $X$ is called a \emph{residual} set, if $R$ contains a dense $G_{\delta}$ set of $X$. We say a property is a \emph{generic} property of $X$, if there is a residual set $R\subset X$, such that each element contained in $R$ satisfies the property. We give some well-known $C^1$-generic properties of diffeomorphisms in the following lemma. These results can be found in many papers such as~\cite{abcdw,bdv,c1,po}.

\begin{lem}\label{generic properties}
There is a residual set $\mathcal{R}$ in $\diff^1(M)$ of diffeomorphisms, such that any $f\in\mathcal{R}$ satisfies the following properties:
\begin{enumerate}
\item The diffeomorphism $f$ is Kupka-Smale: all periodic points of $f$ are hyperbolic and the stable and unstable manifolds of periodic orbits intersect transversely.
\item The periodic points are dense in the chain recurrent set and any chain recurrence class is either a homoclinic class or contains no periodic point.
\item For every periodic point $p$ of $f$, there exists  a $C^1$-neighborhood $\mathcal{U}_1$ of $f$, such that every $g\in\mathcal{U}_1\cap\mathcal{R}$ is a continuity point for the map $g\mapsto H(p_g,g)$ where $p_g$ is the continuation of $p$ for $g$, where the continuity is with respect to the Hausdorff distance between compact subsets of $M$.
\item If $H(p)$ is a homoclinic class of $f$, then there exists an interval $[\alpha,\beta]$ of natural numbers and a $C^1$-neighborhood $\mathcal{U}_2$ of $f$, such that for every $g\in\mathcal{U}_2$, the set of indices of hyperbolic periodic points contained in $H(p_g,g)$ is $[\alpha,\beta]$. Also, all periodic points of the same index contained in $H(p)$ are homoclinically related.
\item If a homoclinic class $H(p)$ contains periodic orbits with different indices, then $f$ can be $C^1$ approximated by diffeomorphisms having a heterodimensional cycle.
\item If a homoclinic class $H(p)$ is Lyapunov stable, then there is a $C^1$ neighborhood $\mathcal{U}_3$ of $f$, such that for any $g\in\mathcal{U}_3\cap \mathcal{R}$, the homoclinic class $H(p_g,g)$ is also Lyapunov stable.
\item For any two points $x,y\in M$, if $x\dashv y$, then $x\prec y$.
\end{enumerate}
\end{lem}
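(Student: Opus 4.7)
The plan is to produce six residual subsets $\mathcal R_1,\ldots,\mathcal R_6\subset \diff^1(M)$ and set $\mathcal R=\bigcap_{i=1}^{6}\mathcal R_i$. Each $\mathcal R_i$ is already available in the literature or follows at once from a standard semi-continuity argument, so the work is mostly bookkeeping.

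Item (1) is the Kupka--Smale theorem. First I would locally perturb $f$ at each periodic point of period at most $n$ to make the derivative along the orbit hyperbolic; this condition is $C^1$-open and $C^1$-dense, and intersecting over $n$ gives a residual set on which all periodic points are hyperbolic. Then I would use Thom's transversality theorem to perturb, on an open dense subset, the stable and unstable manifolds of every pair of periodic orbits into general position. Item (2) is the theorem of Bonatti and Crovisier~\cite{bc}, whose proof feeds periodic pseudo-orbits into the connecting lemma (Theorem~\ref{uniform connecting}) to produce genuine periodic orbits inside each chain recurrence class; once a periodic orbit $q$ lies in such a class, the same technique produces a dense set of periodic orbits homoclinically related to $q$ inside it, so the class coincides with $H(q)$.

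Items (3), (4) and (6) all rest on the classical fact that any lower semi-continuous closed-set-valued map from a Baire space to the space of compact subsets of $M$ (with Hausdorff topology) has a residual set of continuity points. For (3), the map $g\mapsto H(p_g,g)$ is lower semi-continuous because both hyperbolic continuations and transverse intersections of invariant manifolds persist under $C^1$-perturbation; the residual set of continuity points is $\mathcal R_3$. For (4), intersect $\mathcal R_3$ with the Kupka--Smale set and note that on a suitable neighborhood the finite-valued map that sends $g$ to the set of indices of hyperbolic periodic orbits in $H(p_g,g)$ is locally constant on a residual set. That this set is an interval comes from Franks' lemma: if indices $\alpha<\beta$ were both realized but some intermediate $i$ were not, a small perturbation of a suitable weak orbit would produce an orbit of index $i$ in $H(p_g,g)$, contradicting local constancy. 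The homoclinic relatedness of same-index orbits in $H(p)$ is obtained by applying Theorem~\ref{uniform connecting} twice to cross-connect such an orbit with $orb(p)$. For (6), Lyapunov stability is equivalent to an upper semi-continuity property of the map $g\mapsto$ (maximal positively invariant compact set of $g$ contained in a fixed small neighborhood of $H(p_g,g)$), and again continuity on a residual set yields $\mathcal R_6$.

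Item (5) is then a direct application of the connecting lemma: given a periodic orbit $q\subset H(p)$ with $ind(q)\neq ind(p)$, item (2) puts $p$ and $q$ in the same chain recurrence class, so $\vep$-pseudo-orbits connect $p$ to $q$ and $q$ to $p$ for every $\vep>0$; Theorem~\ref{uniform connecting} turns them into genuine heteroclinic orbits after an arbitrarily small $C^1$-perturbation, producing the heterodimensional cycle. The main obstacle in the whole construction is uniformity: the neighborhoods $\mathcal U_1,\mathcal U_2,\mathcal U_3$ appearing in items (3), (4), (6) depend a priori on the chosen continuation and on $f$. I would deal with this by the standard Baire device of fixing a countable basis $\{\mathcal V_k\}$ of open sets of $\diff^1(M)$, constructing for each $k$ the appropriate residual subset of $\mathcal V_k$ on which the required open conditions hold uniformly, and intersecting everything in sight, so that every $f\in\mathcal R$ is simultaneously a continuity point for all of the maps above.
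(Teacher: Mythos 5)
The paper itself does not prove this lemma; it is a compilation of known generic properties and the author simply points to the literature (\cite{bdv,c1,po} are cited, and item (4) is due to~\cite{abcdw}). Your sketches of items (1), (2), (3), (6) follow the standard lines (Kupka--Smale transversality, Bonatti--Crovisier, semi-continuity of set-valued maps together with a Baire argument over a countable basis) and are fine as sketches. For item (5) you should note that the two heteroclinic connections, from $W^u(p)$ to $W^s(q)$ and from $W^u(q)$ to $W^s(p)$, must be created simultaneously without the second perturbation destroying the first; this is why one invokes perturbation domains with disjoint supports (the Bonatti--Crovisier machinery) rather than two independent applications of Theorem~\ref{uniform connecting}.

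There is a genuine gap in your treatment of item (4). The heuristic that ``a small perturbation of a suitable weak orbit would produce an orbit of index $i$ in $H(p_g,g)$'' is precisely where the difficulty lies: Franks' lemma produces an index-$i$ periodic orbit \emph{near} the class, but placing it \emph{inside} $H(p_g,g)$ is exactly the intrinsic near-vs-inside obstruction that motivates this whole paper. The actual proof of the interval property in~\cite{abcdw} does not rest on Franks' lemma alone; it creates robust heterodimensional cycles and blenders linking the extremal indices and then shows that periodic points of every intermediate index are dense in the class. Likewise, the statement that same-index periodic orbits in $H(p)$ are generically homoclinically related is a substantive theorem from the same circle of ideas; ``apply the connecting lemma twice'' only yields a transverse intersection for a \emph{perturbed} $g$, and one still needs the Baire argument (and the structural results of~\cite{abcdw,bc}) to bring the conclusion back to $f$ itself, since before perturbation the two orbits need not have any heteroclinic intersection at all.
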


\section{Norm of products and product of norms: reduction of the proof of Theorem A}\label{theorem b}

Theorem A essentially follows from the theorem below.
\begin{theob}
For $C^1$-generic $f\in \diff^1(M)$, assume that $p$ is a hyperbolic periodic point of $f$ and that the homoclinic class $H(p)$ has a dominated splitting $T_{H(p)}M=E\oplus F$, with $dim E\leq \ind(p)$, such that the bundle $E$ is not contracting. Then there are a constant $\lambda_0\in (0,1)$, an integer $m_0\in \mathbb{N}$, satisfying: for any $m\in \mathbb{N}$ with $m\geq m_0$, any constants $\lambda_1,\lambda_2\in (\lambda_0,1)$ with $\lambda_1<\lambda_2$, there is a sequence of different periodic orbits $\mathcal{O}_k=\orb(q_k)$ with period $\tau(q_k)$ contained in $H(P)$, such that
   \begin{displaymath}
        {\lambda_1}^{\tau(q_k)}< \prod_{0\leq i<\tau(q_k)/m} \|Df^m|_{E(f^{im}(q_k))}\|< {\lambda_2}^{\tau(q_k)}.
   \end{displaymath}

\end{theob}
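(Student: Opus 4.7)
The plan is to apply Liao's selecting lemma (Lemma~\ref{selecting}) on a suitable minimal piece of $H(p)$ and then invoke the propositions of the introduction to force the resulting weak orbits into $H(p)$. First I pick, via Zorn's lemma, a minimal invariant compact set $\Lambda\subseteq H(p)$ on which $E$ is not contracted. On every proper invariant compact $K\subsetneq\Lambda$ the bundle $E$ is $(C_K,\lambda_K)$-contracted, so Lemma~\ref{pliss lemma} produces $(m,\lambda_0)$-$E$-Pliss points in $K$ for any $\lambda_0$ close enough to $1$; meanwhile, since $E|_\Lambda$ is not contracted, a standard compactness/Pliss-type argument furnishes $b\in\Lambda$ with $\prod_{i=0}^{n-1}\|Df^m|_{E(f^{im}(b))}\|\geq 1$ for every $n\geq 1$. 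Both hypotheses of Lemma~\ref{selecting} are then in force. Applying it on $\Lambda$ after a rescaling of the constants compatible with the domination and with the choices of $\lambda_0$ and $m_0$ in Theorem~B, and reading the resulting two inequalities at $n=1$ and at $n=\tau(q)/m$, gives in any neighborhood $U$ of $\Lambda$ periodic orbits $orb(q)\subset U$ with $\tau(q)$ a multiple of $m$ such that
\[
\lambda_1^{\tau(q)}\;\leq\;\prod_{0\leq i<\tau(q)/m}\|Df^m|_{E(f^{im}(q))}\|\;\leq\;\lambda_2^{\tau(q)}.
\]
Shrinking $U$ produces infinitely many distinct such orbits accumulating on $\Lambda$.

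These orbits lie only in a small neighborhood of $\Lambda\subseteq H(p)$ and are a priori not inside $H(p)$ itself---the intrinsic obstacle emphasized in the introduction. To put them inside $H(p)$ I will use Propositions~\ref{time control} and~\ref{asymptotic connecting}. Because $H(p)$ is chain-transitive by genericity, pseudo-orbits inside $H(p)$ connect $orb(p)$ to $\Lambda$ and back. Proposition~\ref{asymptotic connecting} converts such pseudo-orbits, after an arbitrarily small $C^1$ perturbation, into genuine orbits: a point $x\in W^u(orb(p))$ with $\omega(x)\subset\Lambda$ and, symmetrically, a point $y\in W^s(orb(p))$ with $\alpha(y)\subset\Lambda$. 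Proposition~\ref{time control} then manufactures, for any prescribed $T_K$ and $m'\in\mathbb{N}$, a periodic orbit $O$ spending $\geq T_K$ iterates in a small neighborhood of $\Lambda$, exactly $l+m'\tau$ iterates in a small neighborhood of $orb(p)$, and at most $n_0$ transitional iterates outside these neighborhoods. Because $O$ crosses $W^u(orb(p))$ and $W^s(orb(p))$ along its construction, $O$ is homoclinically related to $p$ and hence $O\subset H(p)$.

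The hard part is to transfer the Liao norm estimate to $O$. Arranging the connecting construction so that the $\Lambda$-block of $O$ shadows a Liao orbit $orb(q)$ (achievable by feeding $orb(q)$, or equivalently the point $b$, into the heteroclinic data $x$, $y$), the $\log$-product of $\|Df^m|_E\|$ along $O$ decomposes into: the $orb(p)$-block contributing $(l+m'\tau)\log\mu_p$ with $\mu_p<1$ the $E$-contraction rate at $p$; the $\Lambda$-block inheriting from $orb(q)$ a contribution close to $T_K\log\lambda$ with some $\lambda\in(\lambda_1,\lambda_2)$; and the $\leq n_0$ transitional iterates contributing only an $O(1)$ error. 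The two free parameters $T_K$ and $m'$ in Proposition~\ref{time control} let me tune the ratio of time spent near $\Lambda$ versus near $orb(p)$, so I choose them so that the averaged exponent $\tau(O)^{-1}\sum\log\|Df^m|_E\|$ lies strictly inside $(\log\lambda_1,\log\lambda_2)$, and take both large along a diagonal sequence so the $O(1)$ error is swamped. Using an adapted metric to replace $(m,\lambda)$-domination by $(1,\lambda)$-domination keeps per-iterate comparisons clean, and a standard residual-set limit brings the construction back to a $C^1$-generic $f$, producing the required sequence $\mathcal{O}_k\subset H(p)$.
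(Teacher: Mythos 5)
Your proposal misidentifies the object that makes the balancing argument work. The paper's engine is a $\lambda_2$-$E$-\emph{weak set} $K\subset H(p)$ in the sense of Definition~\ref{weak set}: a set containing no $\lambda_2$-$E$-Pliss point. Its existence is what Lemma~\ref{existence of weak sets} establishes, and Liao's selecting lemma is invoked there only inside a contradiction argument (if there were no weak set, Liao's lemma applied to $H(p)$ itself would already produce the desired orbits, and these are themselves weak sets). The weak-set property then supplies the crucial \emph{lower} bound in Lemma~\ref{choose time}: any orbit segment remaining at least $N_K$ iterates near $K$ satisfies $\prod\|Df|_E\|>\lambda_2^{n}$. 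This expansion relative to $\lambda_2$ is balanced against the contraction $\|Df|_{E(p)}\|<\lambda_0<\lambda_1$ near $p$ by the two free parameters $T_K$ and $m$ of Proposition~\ref{time control}, as quantified in Claim~\ref{estimation}. Your Zorn-minimal invariant compact $\Lambda$ on which $E$ fails to be contracted is a genuinely different object: it guarantees a point $b$ with non-decaying $E$-products, but nothing prevents $\Lambda$ from also containing $\lambda_2$-$E$-Pliss points (for instance points whose forward orbit accumulates on a proper, contracted, invariant subset of $\Lambda$), so the lower bound $\prod\|Df|_E\|>\lambda_2^{n}$ on segments near $\Lambda$ is simply unavailable, and the time-control balance cannot be struck.

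The fallback you suggest --- making the $\Lambda$-block of the constructed orbit $O$ ``shadow a Liao orbit $orb(q)$'' so that it inherits a per-iterate average inside $(\log\lambda_1,\log\lambda_2)$ --- is not what Propositions~\ref{time control}, \ref{asymptotic connecting 1}, \ref{asymptotic connecting} deliver. Proposition~\ref{time control} produces a periodic orbit by closing a heteroclinic connection between $p$ and the invariant set $K$; it controls how many iterates land in $U_K$, not that those iterates trace a prescribed periodic orbit inside $U_K$. Even if one could arrange such shadowing, Liao's two inequalities bound the product over the entire period; a truncated $T_K$-block of $orb(q)$ inherits only the one-sided Pliss upper bound $\le\lambda_2^{T_K}$ from the head estimate and no matching lower bound. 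Finally, the route is circular: to build a heteroclinic connection between $orb(p)$ and a specific Liao orbit $orb(q)$ lying merely near $\Lambda$, one would need $orb(q)$ to already be in the chain class of $p$, which is the intrinsic statement being proved. The paper sidesteps this by connecting $p$ to the weak set $K$ (already inside $H(p)$), keeping $K$ inside the chain class after each perturbation through the continuity of Pliss points in Section~\ref{choice of f}, and drawing the estimate entirely from the weak-set property, never from a Liao orbit.
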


From Theorem B, we can get periodic orbits that have certain controls of the product of norms along the bundle $E$. To control Lyapunov exponents of the periodic orbits, we have to control the norm of products along the bundle $E$. We use the following two lemmas. The first is a perturbation lemma for matrices to control exponents, see~\cite{c2,p} (also see~\cite{l2,m2}).

\begin{lem}\label{matrix}
For any integer $d\geq 1$, $K\geq 1$, any constant $\vep>0$ and $\lambda>0$, there are two integers $N$ and $\tau_0$, such that for any $A_1,\cdots,A_{\tau}$ in $GL(d,\mathbb{R})$ with $\tau\geq \tau_0$, and $\max_{1\leq i\leq \tau}\{\|A_i\|,\|A_i^{-1}\|\}\leq K$, if
   \begin{displaymath}
     \prod_{0\leq i< \tau/N}\|A_{(i+1)N}\cdots A_{iN+2}A_{iN+1}\|\geq \lambda^{\tau},
   \end{displaymath}
then, there are $B_1,\cdots,B_{\tau}$ in $GL(d,\mathbb{R})$, with $\|B_i-A_i\|<\vep$ and $\|B_i^{-1}-A_i^{-1}\|<\vep$, for all $i=1,\cdots,\tau$, such that the maximal absolute value of eigenvalues of $B_{\tau}\circ\cdots \circ B_2\circ B_1$ is bigger than $\lambda$.
\end{lem}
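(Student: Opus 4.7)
The plan is a classical Franks--Ma\~n\'e type matrix perturbation: post-compose each of the $\tau$ matrices with a tiny rotation so that the perturbed product admits a closed loop on the unit sphere visiting the directions of maximal expansion of every block, which then forces an eigenvector with controlled modulus. Group the matrices into $\tau/N$ blocks $P_i := A_{(i+1)N}A_{(i+1)N-1}\cdots A_{iN+1}$ with $\mu_i := \|P_i\|$; by hypothesis $\prod_i \mu_i \geq \lambda^\tau$. Pick unit $v_i$ with $\|P_i v_i\| = \mu_i$, and set $w_i = P_i v_i/\mu_i$. Reading indices cyclically with $v_{\tau/N}=v_0$, the goal is to perturb each block so that the modified block $\widetilde P_i$ satisfies $\widetilde P_i v_i = \beta_i v_{i+1}$ with $|\beta_i|=\mu_i$; then $v_0$ becomes an eigenvector of $B_\tau\cdots B_1$ with eigenvalue modulus $\prod_i\mu_i \geq \lambda^\tau$, which is the asserted spectral bound.

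For each block, let $\rho_i$ be the rotation (angle $\leq \pi$) in the $2$-plane spanned by $w_i$ and $v_{i+1}$ sending $w_i$ to $v_{i+1}$. Concentrating $\rho_i$ into the single matrix $A_{(i+1)N}$ would cost $\|\rho_i-I\|\cdot K = O(K)$, which is too large. Instead, I spread $\rho_i$ across all $N$ matrices of the block: interpolate the image of $v_i$ along a short projective path from $w_i$ to $v_{i+1}$ by anchors $u_{i,0}=v_i, u_{i,1}, \ldots, u_{i,N}=v_{i+1}$, and for each $j$ let $r_{i,j}$ be the rotation carrying $A_{iN+j}u_{i,j-1}$ onto a scalar multiple of $u_{i,j}$. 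Set $B_{iN+j}:=r_{i,j}\circ A_{iN+j}$. Since rotations preserve norms, the perturbed block acts as $\widetilde P_i v_i = \mu_i v_{i+1}$, and each factor satisfies $\|B_{iN+j}-A_{iN+j}\| \leq \|r_{i,j}-I\|\cdot K$ and similarly for the inverses.

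The main obstacle is controlling the sizes of the individual rotations $r_{i,j}$. The distortion of directions under each $A_j$ has Lipschitz constant at most $\|A_j\|\cdot\|A_j^{-1}\|\leq K^2$ in the Fubini--Study metric on $\mathbb{P}^{d-1}$, so a naive division of the total angle $\leq \pi$ by $N$ is not enough: one must choose the anchors $u_{i,j}$ so that, after applying $A_j$, only a small residual rotation of angle $\leq C(d)K^2/N$ is needed to reach $u_{i,j}$. This is accomplished by lifting the projective geodesic from $w_i$ to $v_{i+1}$ and tracking arclengths under the $A_j$-action step by step. With this estimate in hand, choosing $N \geq C(d)K^3/\varepsilon$ gives $\|B_j-A_j\|, \|B_j^{-1}-A_j^{-1}\|<\varepsilon$ for every $j$, while $\tau_0$ is taken large enough to absorb the minor boundary correction coming from a possibly incomplete last block and from the cyclic closure $v_{\tau/N}=v_0$; putting everything together yields the lemma.
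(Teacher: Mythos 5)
The paper does not prove this lemma: it is quoted from Crovisier's monograph \cite{c2} (for $\lambda=1$), and the only in-paper content is the remark that the general $\lambda$ follows by replacing $A_i$ with $\lambda^{-1}A_i$. Incidentally, that rescaling gives a spectral radius $\geq\lambda^\tau$, not $\geq\lambda$; the display in the lemma seems to have dropped the exponent $\tau$, and your own sentence asserting a bound $\geq\lambda^\tau$ is what is actually wanted. Since the paper contains no proof, I can only check your argument on its own terms.

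Your sketch has a genuine gap, and it is at the central step. With $B_{iN+j}=r_{i,j}A_{iN+j}$ and anchors $u_{i,0}=v_i,\ldots,u_{i,N}=v_{i+1}$, the perturbed block satisfies
\[
\widetilde P_i\,v_i \;=\; \Bigl(\prod_{j=1}^{N}\|A_{iN+j}\,u_{i,j-1}\|\Bigr)\,v_{i+1},
\]
whereas $\mu_i=\|P_iv_i\|=\prod_{j=1}^N\|A_{iN+j}\,e_{i,j-1}\|$, with $e_{i,j}$ the normalized iterates of $v_i$ under the unperturbed block. ``Rotations preserve norms'' only ensures that each $r_{i,j}$ is an isometry; it does not make the two products equal, and they coincide only if $u_{i,j}=e_{i,j}$ for every $j<N$, which forces the entire rotation into the last factor $r_{i,N}$ and defeats the point of distributing it. Once the anchors drift off the Oseledets path the product of directional norms drops, and the drop is not merely cosmetic: with $A_{iN+j}=\operatorname{diag}(K,1/K)$ in dimension $2$ and anchors travelling uniformly from $e_1$ to $e_2$, the product $\prod_j\|Au_{i,j-1}\|$ is of order $(K/c)^N$ with $c>1$, against $\mu_i=K^N$, a loss exponential in $N$ per block. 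Your third paragraph discusses only the sizes of the rotations $r_{i,j}$ (angular distortion by $K^2$), not this loss of norm along the anchored path, which is the harder obstruction. A correct proof has to trade these two quantities off carefully (roughly: the anchor should be pushed into a more strongly expanding direction of the \emph{remaining} half of the block, so the rotation angle can be small \emph{and} the residual loss per block can be driven below $(\lambda'/\lambda)^N$ for any $\lambda'<\lambda$ by increasing $N$); without that estimate, the asserted identity $\widetilde P_iv_i=\mu_iv_{i+1}$ is simply false, and the argument does not close.
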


\begin{rem}
In~\cite{c2}, it is presented for the constant $\lambda=1$. If $\lambda\neq 1$, then by considering $A_i'=\lambda^{-1} Id\circ A_i$ and applying the special case for the constant $1$, we can get the general statement as above.
\end{rem}

\begin{cor}\label{cor of matrix}
For any integer $d\geq 1$, $K\geq 1$, any constant $\vep>0$ and $\lambda_1<\lambda_2$, there are two integers $N$ and $\tau_0$, such that for any $A_1,\cdots,A_{\tau}$ in $GL(d,\mathbb{R})$ with $\tau\geq \tau_0$, and $\max_{1\leq i\leq \tau}\{\|A_i\|,\|A_i^{-1}\|\}\leq K$, if
   \begin{displaymath}
     \lambda_1^{\tau}<\prod_{0\leq i< \tau/N}\|A_{(i+1)N}\cdots A_{iN+2}A_{iN+1}\|< \lambda_2^{\tau},
   \end{displaymath}
then, there are $B_1,\cdots,B_{\tau}$ in $GL(d,\mathbb{R})$, with $\|B_i-A_i\|<\vep$ and $\|B_i^{-1}-A_i^{-1}\|<\vep$, for all $i=1,\cdots,\tau$, such that the maximal norm of eigenvalue of $B_{\tau}\circ\cdots \circ B_2\circ B_1$ is in the interval $(\lambda_1,\lambda_2)$.
\end{cor}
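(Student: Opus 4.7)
The plan is to couple Lemma~\ref{matrix} with a continuity argument: Lemma~\ref{matrix} supplies the lower bound $\rho > \lambda_1$ after a perturbation, while the upper bound $\rho < \lambda_2$ already holds for the unperturbed product, and a linear interpolation together with the intermediate value theorem lets us find an intermediate matrix whose spectral radius lies in $(\lambda_1, \lambda_2)$.

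Given $d$, $K$, $\vep$, and $\lambda_1 < \lambda_2$ (the relevant regime being $\lambda_2 < 1$, as arises in the application to Theorem B where $\lambda_1, \lambda_2 \in (\lambda_0, 1)$), I would choose an auxiliary perturbation size $\vep' := \min\bigl(\vep/(3K^2),\, 1/(2K)\bigr)$ and apply Lemma~\ref{matrix} to the data $(d, K, \vep', \lambda_1)$ to obtain integers $N$ and $\tau_1$. Enlarge $\tau_1$ to $\tau_0$ so that $K^{N-1}\lambda_2^{\tau_0 - 1} < 1$, which is possible because $\lambda_2 < 1$. For any matrices $A_1, \ldots, A_\tau$ with $\tau \geq \tau_0$ satisfying the hypothesis of the corollary, Lemma~\ref{matrix} applied with $\lambda = \lambda_1$ (the strict inequality of the hypothesis in particular implies the non-strict inequality required by the lemma) produces $B_1, \ldots, B_\tau$ with $\|B_i - A_i\|, \|B_i^{-1} - A_i^{-1}\| < \vep'$ and $\rho(B_\tau \cdots B_1) > \lambda_1$. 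On the other hand, sub-multiplicativity of the operator norm together with the upper half of the hypothesis gives
\[
\|A_\tau \cdots A_1\| \;\leq\; K^{N-1} \prod_{0 \leq k < \tau/N} \|A_{(k+1)N}\cdots A_{kN+1}\| \;<\; K^{N-1}\lambda_2^\tau \;<\; \lambda_2,
\]
the $K^{N-1}$ factor absorbing the at most $N-1$ leftover matrices outside the complete length-$N$ blocks, so $\rho(A_\tau \cdots A_1) \leq \|A_\tau \cdots A_1\| < \lambda_2$.

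I would then interpolate $B_i(t) := (1-t)A_i + tB_i$ for $t \in [0,1]$. Since $\vep' \leq 1/(2K)$, a Neumann-series estimate on $(A_i + t(B_i - A_i))^{-1} = A_i^{-1}(I + tA_i^{-1}(B_i - A_i))^{-1}$ shows each $B_i(t) \in GL(d, \mathbb{R})$, and both $\|B_i(t) - A_i\| < \vep' < \vep$ and $\|B_i(t)^{-1} - A_i^{-1}\| \leq 2K^2 \vep' \leq \vep$ hold uniformly in $t$. The map $t \mapsto \rho(B_\tau(t) \cdots B_1(t))$ is continuous on $[0,1]$, with value strictly less than $\lambda_2$ at $t = 0$ and strictly greater than $\lambda_1$ at $t = 1$; by the intermediate value theorem (the complement of $\{t : \rho(t) \leq \lambda_1\} \cup \{t : \rho(t) \geq \lambda_2\}$ in $[0,1]$ must be non-empty) there exists $t^\ast \in [0,1]$ with $\rho(B_\tau(t^\ast) \cdots B_1(t^\ast)) \in (\lambda_1, \lambda_2)$, and $B_i^\ast := B_i(t^\ast)$ fulfills every conclusion of the corollary.

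The main delicate point is that Lemma~\ref{matrix} only guarantees the inverse-perturbation bound $\|B_i^{-1} - A_i^{-1}\| < \vep'$ at the endpoint $t = 1$, not along the interpolation. The uniform control is obtained by invoking Lemma~\ref{matrix} with the smaller perturbation size $\vep' = \vep/(3K^2)$, which absorbs the Neumann-series factor $2K^2$ that bounds $\|(A_i + t(B_i-A_i))^{-1} - A_i^{-1}\|$ in terms of $\|B_i - A_i\|$.
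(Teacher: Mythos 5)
Your proof is correct and follows essentially the same route as the paper: apply Lemma~\ref{matrix} with the lower threshold $\lambda_1$ to push the spectral radius above $\lambda_1$, note the unperturbed product has spectral radius below $\lambda_2$ (using $\lambda_2<1$ and $\tau$ large), then interpolate linearly between $A_i$ and the perturbed matrices and invoke the intermediate value theorem. Your treatment is in fact more careful than the paper's — you explicitly control $\|B_i(t)^{-1}-A_i^{-1}\|$ along the whole path via a Neumann-series bound with a shrunk parameter $\vep'$, and you account for the $K^{N-1}$ correction when $\tau$ is not a multiple of $N$ — whereas the paper handles both points only implicitly.
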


\begin{proof}
We take $\vep$ small enough such that, for any $A\in GL(d,\mathbb{R})$, if $\|A^{-1}\|\leq K$, then $B(A,\vep)\in GL(d,\mathbb{R})$, where $B(A,\vep)$ is the $\vep$ ball of $A$. By the assumption of $A_i$, we have that the maximal norm of eigenvalue of $A_{\tau}\circ\cdots \circ A_2\circ A_1$ is smaller than $\lambda_2$. By Lemma~\ref{matrix}, we can get $B^0_1,\cdots,B^0_{\tau}$ in $GL(d,\mathbb{R})$ that satisfies the conclusion for the number $\lambda_1$. We take a path $A_{i,t}|_{0\leq t\leq 1}$ contained in $B(A_i,\vep)$ that connects $A_i$ to $B^0_i$. We have that the maximal norm of eigenvalue of $B^0_{\tau}\circ\cdots \circ B^0_2\circ B^0_1$ is bigger than $\lambda_1$. Then there must be a time $0<t<1$, such that the maximal norm of eigenvalue of $A_{\tau,t}\circ\cdots \circ A_{2,t}\circ A_{1,t}$ is in the interval $(\lambda_1,\lambda_2)$. We take $B_i=A_{i,t}$ and get the conclusion.
\end{proof}

The next lemma is a generalized Frank's lemma by N. Gourmelon that preserves some pieces of invariant manifolds of hyperbolic period orbits, see~\cite{g2}.

\begin{lem}\label{f-g lemma}
Consider a constant $\vep>0$, a diffeomorphism $f\in \diff^1(M)$ and a hyperbolic periodic orbit $\mathcal{O}=\orb(q)$ of $f$ with period $\tau$. Assume there is a one-parameter family of linear maps $(A_{n,t})_{n=0,1,\cdots,\tau-1;t\in[0,1]}$ in $GL(d,\mathbb{R})$, satisfying:
\begin{itemize}
\item $(1)$ $A_{n,0}=Df(f^n(q))$,
\item $(2)$ for all $n=0,1,\cdots,\tau-1$ and $t\in[0,1]$, we have $\|Df(f^n(q))-A_{n,t}\|<\vep$ and $\|Df^{-1}(f^{n+1}(q))-A^{-1}_{n,t}\|<\vep$,
\item $(3)$ $A_{\tau-1,t}\circ\cdots\circ A_{0,t}$ is hyperbolic for all $t\in [0,1]$.\\
\end{itemize}
Then, for any neighborhood $V$ of $\mathcal{O}$, any $\delta>0$, and any pair of compact sets $K^s\subset W^s_{\delta}(\mathcal{O},f)$ and $K^u\subset W^u_{\delta}(\mathcal{O},f)$ disjoint from $V$, there is a diffeomorphism $g\in \diff^1(M)$ that is $\vep$-$C^1$ close to $f$, such that:
\item $(a)$ $g$ coincides with $f$ on $\mathcal{O}$ and outside $V$;
\item $(b)$ $K^s\subset W^s_{\delta}(\mathcal{O},g)$ and $K^u\subset W^u_{\delta}(\mathcal{O},g)$;
\item $(c)$ $Dg(g^n(q))=Dg(f^n(q))=A_{n,1}$ for all $n=0,\cdots,\tau-1$.

\end{lem}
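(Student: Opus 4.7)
\medskip

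The plan is to realize the prescribed family $A_{n,t}$ by a $C^1$-small isotopy $g_t$ from $g_0=f$ to $g_1=g$, with $g_t$ supported in a small neighborhood of $\mathcal{O}$ contained in $V$, and then show that a careful choice of local model for the perturbation preserves the prescribed compact pieces $K^s,K^u$ of the local invariant manifolds. The classical Franks lemma gives existence of such a $g$ (up to properties (a) and (c)), by perturbing $f$ inside disjoint small balls around each $f^n(q)$, using a bump function to interpolate from $Df(f^n(q))$ to $A_{n,1}$ in a linearizing chart. Item (b) -- preservation of $K^s$ and $K^u$ -- is the nontrivial strengthening due to Gourmelon.

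To achieve (b), I would fix, for each $n=0,\dots,\tau-1$, a $C^1$ chart $\varphi_n$ around $f^n(q)$ in which $W^s_\delta(\mathcal{O},f)$ and $W^u_\delta(\mathcal{O},f)$ are straightened to affine subspaces $E^s_n\times\{0\}$ and $\{0\}\times E^u_n$ through the origin (this is possible locally, and even globally along the compact pieces $K^s,K^u$, by the stable manifold theorem). The hyperbolicity assumption (3) on the entire path $A_{n,t}$ supplies, by continuity of the invariant splitting of a hyperbolic linear cocycle, a continuously varying splitting $E^s_{n,t}\oplus E^u_{n,t}$ with $E^s_{n,0}=E^s_n$ and $E^u_{n,0}=E^u_n$. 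Composing with a path of linear isomorphisms $P_{n,t}$ (with $P_{n,0}=\operatorname{Id}$) that carry the splitting back to the reference one, I replace the family $A_{n,t}$ by $\tilde A_{n,t}:=P_{n+1,t}^{-1}A_{n,t}P_{n,t}$, which still satisfies (1)--(3) and now preserves the reference splitting $E^s_n\oplus E^u_n$ at every $t$. I then perform the Franks perturbation in the chart $\varphi_n$ using $\tilde A_{n,t}$: in a very small ball, the local model of $g_t$ is exactly the linear map $\tilde A_{n,t}$, cut off by a bump in the direction normal to the orbit, which leaves the coordinate planes $E^s_n\times\{0\}$ and $\{0\}\times E^u_n$ invariant.

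With this setup, the local stable and unstable manifolds of $\mathcal{O}$ for $g$ are still given, in each chart, by the same affine subspaces, and in particular, shrinking the size of $V$ if necessary, they contain the pieces of $W^s_\delta(\mathcal{O},f)\setminus V$ and $W^u_\delta(\mathcal{O},f)\setminus V$ that are not touched by the perturbation. Since $K^s,K^u$ are by assumption disjoint from $V$ and contained in $W^s_\delta(\mathcal{O},f)$ and $W^u_\delta(\mathcal{O},f)$, the saturation of these local invariant planes by backward (resp. forward) iterates of $g$, which coincides with that of $f$ off $V$, recovers $K^s\subset W^s_\delta(\mathcal{O},g)$ and $K^u\subset W^u_\delta(\mathcal{O},g)$. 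Properties (a) and (c) are then immediate from the local construction, and the $C^1$-distance $\|g-f\|<\vep$ follows from the uniform bound (2) on $A_{n,t}$ together with a standard estimate on the size of the bump (which only costs a universal factor depending on the geometry of the chart).

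The main obstacle is the conjugation step $\tilde A_{n,t}=P_{n+1,t}^{-1}A_{n,t}P_{n,t}$: one needs the path $(P_{n,t})$ to stay uniformly $\vep'$-close to the identity, where $\vep'$ can be prescribed in terms of $\vep$, the chart geometry and the constants in the dominated splitting theorem for hyperbolic cocycles. If the original perturbation size $\vep$ is given first, one must choose linearizing charts and a refined neighborhood $V$ of $\mathcal{O}$ small enough that the composed path stays within the Franks size allowed. This is the quantitative heart of Gourmelon's lemma, and I would handle it exactly as in \cite{g2}, invoking the continuity of the invariant splitting of a family of hyperbolic linear cocycles together with a straightening lemma for the local stable/unstable manifolds along $\mathcal{O}$.
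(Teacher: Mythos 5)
The paper does not give its own proof of this lemma; it is stated verbatim and attributed to Gourmelon (\cite{g2}), so there is no internal argument in the paper to compare your sketch against. Your sketch does identify the right ingredients (straightening charts, continuity of the hyperbolic splitting along the path, a bump-function cutoff), but there is a genuine gap at the central step, and it is structural rather than quantitative.

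You want simultaneously (i) the chart $\varphi_n$ to carry the local stable and unstable manifolds of $f$ onto the coordinate planes $E^s_n\times\{0\}$ and $\{0\}\times E^u_n$, and (ii) the matrix realized at $f^n(q)$ to preserve those coordinate planes so the cut-off perturbation leaves them invariant. These two requirements are incompatible with conclusion $(c)$: the stable and unstable subspaces of $A_{n,1}$ are in general different from those of $A_{n,0}=Df(f^n(q))$, and the latter are exactly the tangent spaces to the straightened manifolds. Your fix -- passing to $\tilde A_{n,t}=P_{n+1,t}^{-1}A_{n,t}P_{n,t}$ so that the reference splitting is preserved -- makes the realized derivative $\tilde A_{n,1}$, not $A_{n,1}$, so $(c)$ fails; and if instead you absorb $P_{n,1}$ into the chart so that $A_{n,1}$ is what is realized, then $W^s_{\delta}(\mathcal{O},f)$ and $W^u_{\delta}(\mathcal{O},f)$ are no longer coordinate planes in the new chart and the invariance argument collapses. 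More fundamentally, $W^s_{\delta}(\mathcal{O},g)$ is tangent at $\mathcal{O}$ to the stable bundle of $A_{\cdot,1}$ while $W^s_{\delta}(\mathcal{O},f)$ is tangent to that of $A_{\cdot,0}$; since these bundles differ, the two local stable manifolds cannot ``be the same affine subspace'' as your third paragraph asserts. What the lemma actually provides -- and what Gourmelon proves -- is that, despite having different tangent planes at $\mathcal{O}$, the new and old invariant manifolds can be made to coincide along the fixed compact pieces $K^s,K^u$ outside $V$. This is achieved not by a single preserved coordinate system but by cutting the isotopy $t\mapsto A_{\cdot,t}$ into many tiny increments and performing nested Franks perturbations supported in shrinking neighborhoods of $\mathcal{O}$, using hyperbolicity along the entire path to control how the invariant discs deform and re-attach to the unperturbed exterior. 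Your sketch reduces this to a conjugation-size estimate to be handled ``exactly as in \cite{g2}'', but that hides the actual content of the lemma.
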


Now we give the proof of Theorem A from Theorem B.

\begin{proof}
By Theorem B, we get two constants $\lambda_0\in (0,1)$ and $m_0\in \mathbb{N}$. We prove that for any $\lambda_0<\lambda_1<\lambda_2<1$ and any $\vep>0$, there is a diffeomorphism $g$ that is $C^1$-$\vep$ close to $f$ having a periodic orbit $\orb(q)$ homoclinic related to $p_g$ such that the largest Lyapunov exponent along $E$ of $\orb(q)$ is in the interval $(\log\lambda_1,\log\lambda_2)$. Then by the genericity of $f$ and Lemma 2.1 of~\cite{gy}, $f$ itself has such periodic orbits. Since $\lambda_1$ can be taken arbitrarily close to $1$, we get the conclusion of Theorem A.

%There is a constant $\vep_0>0$, and a neighborhood $U$ of $H(p)$, such that for any diffeomorphism $h$ that is $\vep_0$-$C^1$-close to $f$, the maximal invariant compact set of $h$ in $U$ still has a dominated splitting $E_h\oplus F_h$, such that $\dim(E_h)=\dim(E)$. Moreover, the angle between $E_h$ and $F_h$ in the new dominated splitting has a lower bound larger than $\frac{\theta}{2}$. To simplify the notations, we still denote by $E\oplus F$ the dominated splitting $E_h\oplus F_h$.

Take $d=\dim(M)$ and $K=\max\{\|Df\|,\|Df^{-1}\|\}$. Now we fix the constants $\vep>0$ and $\lambda_1<\lambda_2$ in $(\lambda_0,1)$.
Since $E\oplus F$ is a dominated splitting, the two bundles $E$ and $F$ are transverse with each other, thus the angle between $E$ and $F$ has a lower bound. As a result, the perturbation of $f$ along the periodic orbit $\orb(q)$ can be realized by the perturbation restricted to the derivative of $f$ along the two bundles $E$ and $F$. That is to say, for the constant $\vep>0$, there is $\vep'>0$, such that any $\vep'$ perturbation of $Df$ on the bundles $E$ and $F$ independently gives an $\vep$ perturbation of $f$. For $\vep'>0$, we get two integers $N$ and $\tau_0$ by Corollary~\ref{cor of matrix}.

By Theorem B, there is a periodic orbit $\orb(q)$ of $f$ with period $\tau>\tau_0$ that is homoclinically related to $\orb(p)$, such that,
   \begin{displaymath}
        {\lambda_1}^{\tau}< \prod_{0\leq i<\tau/m} \|Df^m|_{E(f^{im}(q))}\|< {\lambda_2}^{\tau},
   \end{displaymath}
where $m>m_0$ is a multiple of $N$. Denote $A_i=Df|_{f^i(q)}$ for $i=0,\cdots,\tau-1$. By Corollary~\ref{cor of matrix}, there are $B_0,\cdots,B_{\tau-1}$ in $GL(d,\mathbb{R})$, with $\|(B_i-A_i)|_E\|<\vep'$ and $\|(B_i^{-1}-A_i^{-1})|_E\|<\vep'$, for all $i=0,\cdots,\tau-1$, and $B_i$ coincides with $A_i$ along the bundle $F$ such that the maximal norm of eigenvalue of $B_{\tau-1}\circ\cdots \circ B_1\circ B_0$ along the bundle $E$ is in the interval $(\lambda_1^{\tau},\lambda_2^{\tau})$. By the choice of $\vep'$, we have that $\|B_i-A_i\|<\vep$ and $\|B_i^{-1}-A_i^{-1}\|<\vep$. We take a path $A_{i,t}|_{0\leq t\leq 1}$ contained in $B(A_i,\vep)$ that connects $A_i$ to $B_i$ such that $A_{i,t}$ coincides with $A_i$ along the bundle $F$ for all $i=0,\cdots,\tau-1$ and all $t\in (0,1)$. If there is a time $t\in (0,1)$ such that $A_{\tau-1,t}\circ\cdots A_{0,t}$ is not hyperbolic, then there must be a time $t_0<t$, such that $A_{\tau-1,s}\circ\cdots A_{0,s}$ is hyperbolic for all $0\leq s\leq t_0$, and the maximal norm of eigenvalue of $A_{\tau-1,t_0}\circ\cdots A_{0,t_0}$ along the bundle $E$ is in the interval $(\lambda_1^{\tau},\lambda_2^{\tau})$. Otherwise, we can take $t_0=1$.

Take a small number $\delta>0$, since $\orb(q)$ is homoclinically related to $\orb(p)$, there exist two points $x\in W^s_{\delta}(\orb(q))\pitchfork W^u(\orb(p))$ and $y\in W^u_{\delta}(\orb(q))\pitchfork W^s(\orb(p))$. Consider the two compact sets $K^s=\{x\}$ and $K^u=\{y\}$. We take a neighborhood $V$ of $\orb(q)$ such that $V\cap (K^s\cup K^u)=\emptyset$ and $V\cap (\overline{\orb^-(x)\cup \orb^+(y)})=\emptyset$. By Lemma~\ref{f-g lemma}, considering the one-parameter family of linear maps $(A_{i,t})_{i=0,\cdots,\tau-1;t\in [0,t_0]}$, there is a diffeomorphism $g$ that is $C^1$-$\vep$ close to $f$, such that:
\item $(a)$ $g$ coincides with $f$ on $\orb(q)$ and outside $V$;
\item $(b)$ $K^s\subset W^s_{\delta}(\orb(q),g)$ and $K^u\subset W^u_{\delta}(\orb(q),g)$;
\item $(c)$ $Dg(g^i(q))=Dg(f^i(q))=A_{i,t_0}$ for all $i=0,\cdots,\tau-1$.
\medskip

By item $(a)$ and the choice of $V$, we have that $g$ coincides with $f$ on $\orb^-(x)\cup \orb^+(y)$, hence we have that $x\in W^u(\orb(p),g)$ and $y\in W^s(\orb(p),g)$. Then by item $(b)$, we can see that $x\in W^s_{\delta}(\orb(q),g)\cap W^u(\orb(p),g)$ and $y\in W^u_{\delta}(\orb(q),g)\cap W^s(\orb(p),g)$. By another small perturbation if necessary, we can assume that the two intersections are transverse. Then the two periodic orbits $\orb(q)$ and $\orb(p)$ of $g$ are still homoclinically related with each other, and the largest Lyapunov exponent of $\orb(q)$ along the bundle $E$ under the diffeomorphism $g$ is in the interval $(\log\lambda_1,\log\lambda_2)$. This finishes the proof of Theorem A.
\end{proof}

\section{Non-hyperbolicity implies existence of weak periodic orbits: proof of Theorem B}\label{proof of theorem b}

This section will give the proof of Theorem B. We assume that $\mathcal{R}$ is the residual set of $\diff^1(M)$ stated in Lemma~\ref{generic properties} and $f\in\mathcal{R}$ is a diffeomorphism that satisfies the hypotheses of Theorem B. Later we will assume also that $f$ belongs to another two residual subsets $\mathcal{R}_0$ and $\mathcal{R}_1$ defined below.

Since $E\oplus F$ is a dominated splitting and $dim E\leq \ind(p)$, we have that: there are $\lambda_0\in (0,1)$ and $m_0\in\mathbb{N}$, such that, for any $m\geq m_0$, the splitting $E\oplus F$ is $(m,\lambda_0^2)$-dominated, and, for the hyperbolic periodic orbit $\orb(p)$,
   \begin{displaymath}
     \|Df^{\tau(p)}|_{E(p)}\|<\lambda_0^{\tau(p)},
   \end{displaymath}
where $\tau(p)$ is the period of $\orb(p)$. In the following, we fix the constant $\lambda_0$ and the integer $m\geq m_0$. In order to simplify the notations, we will assume that $m=1$ and that $p$ is a fixed point of $f$, but the general case is identical.

\subsection{Existence of weak sets}

\begin{lem}\label{existence of weak sets}
For any $\lambda\in (\lambda_0,1)$, there is a $\lambda$-$E$-weak set contained in $H(p)$.
\end{lem}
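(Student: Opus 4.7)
The target is an invariant compact subset of $H(p)$ containing no $\lambda$-$E$-Pliss point (equivalently, by Remark~\ref{rem of weak set}, a $\lambda$-$E$-weak set).

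First I would construct a ``witness of non-contraction'' $b\in H(p)$ with
\[
\prod_{i=0}^{n-1}\|Df|_{E(f^{i}(b))}\|\ \geq\ 1\qquad \text{for every } n\geq 0.
\]
This is a standard Ma\~{n}\'{e}-style sub-additivity/compactness argument: the sequence $a_n:=\sup_{x\in H(p)}\log\|Df^n|_{E(x)}\|$ is sub-additive, so Fekete's lemma gives $a_n/n\to \inf_n a_n/n$; if $E$ were uniformly contracted on $H(p)$ this infimum would be negative, so under our hypothesis it is $\geq 0$, forcing $a_n\geq 0$ for every $n$ and hence the existence of $x_n\in H(p)$ with $\|Df^n|_{E(x_n)}\|\geq 1$. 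Taking a limit point of a suitably chosen iterate $f^{k_n}(x_n)$ yields such a $b$. A direct consequence is that no point of $orb^{+}(b)$ itself can be a $\lambda$-$E$-Pliss point: if $f^{k}(b)$ were one, combining $\prod_{i=0}^{k+n-1}\|Df|_{E(f^{i}(b))}\|\geq 1$ with $\prod_{i=0}^{n-1}\|Df|_{E(f^{k+i}(b))}\|\leq \lambda^{n}$ would force the fixed number $\prod_{i=0}^{k-1}\|Df|_{E(f^{i}(b))}\|$ to be $\geq \lambda^{-n}$ for every $n$, which is absurd.

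To promote this to an \emph{invariant} compact subset with no $\lambda$-$E$-Pliss point, I would apply Zorn's lemma to the family
\[
\mathcal{F}=\Bigl\{\Lambda\subset H(p)\ \text{invariant, compact, nonempty}:\ \exists\, c\in\Lambda,\ \prod_{i=0}^{n-1}\|Df|_{E(f^{i}(c))}\|\geq 1\ \forall\, n\geq 0\Bigr\}.
\]
This family is non-empty (it contains $\overline{orb(b)}$) and is stable under decreasing intersections: a nested chain in $\mathcal{F}$ has a non-empty invariant compact intersection, and a limit point of the witnesses $c_i$ is again a witness by continuity of each finite product in the base point. Zorn then delivers a minimal element $\Lambda^{\ast}\in\mathcal{F}$, which I claim is the desired $\lambda$-$E$-weak set.

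To justify the claim, suppose for contradiction that some $y\in\Lambda^{\ast}$ is a $\lambda$-$E$-Pliss point and consider $\omega(y)\subset\Lambda^{\ast}$. If $\omega(y)\in\mathcal{F}$, minimality forces $\omega(y)=\Lambda^{\ast}$, so $orb^{+}(y)$ accumulates on the witness $c$, and combining the Pliss estimate at $y$ with $\prod\geq 1$ at $c$ through continuity of finite products should reproduce the numerical absurdity encountered above along $orb^{+}(b)$. If $\omega(y)\notin\mathcal{F}$, then every $z\in\omega(y)$ satisfies $\prod_{i=0}^{n_z-1}\|Df|_{E(f^{i}(z))}\|<1$ for some $n_z$; a standard compactness/continuity upgrade provides $\mu<1$ and a uniform bound $n_z\leq N$ with $\prod\leq \mu^{n_z}$, after which Corollary~\ref{cor of pliss}(2) yields infinitely many $\lambda'$-$E$-Pliss points on $orb^{+}(y)$ for any $\lambda'>\mu$; using the domination $E\oplus F$ and Lemma~\ref{property of pliss point} to upgrade these to a bi-Pliss accumulation point should produce an invariant compact proper subset of $\Lambda^{\ast}$ still in $\mathcal{F}$, contradicting minimality. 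The main obstacle I expect is precisely this last step: the interplay between the Pliss bound at $y$ and the non-contraction witness $c$ inside the same minimal $\Lambda^{\ast}$, together with the dominated splitting, must be arranged carefully to close the contradiction, and the naive constructions ($\omega(y)$, $\omega(c)$, etc.) do not obviously produce a strictly smaller element of $\mathcal{F}$.
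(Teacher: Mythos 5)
Your first two steps are sound and match the paper: the Ma\~n\'e-type sub-additivity argument producing the witness $b\in H(p)$ with $\prod_{i=0}^{n-1}\|Df|_{E(f^i(b))}\|\geq 1$ for all $n$, and the elementary observation that no point of $orb^+(b)$ is itself a $\lambda$-$E$-Pliss point, are both correct. However, from there your argument diverges from the paper and, as you yourself concede in the last sentence, does not close. The gap is genuine, not merely technical: a minimal element $\Lambda^{\ast}$ of your family $\mathcal{F}$ can perfectly well contain both a $\lambda$-$E$-Pliss point $y$ and a non-contraction witness $c$. The Pliss condition at $y$ controls only the products $\prod_{i=0}^{n-1}\|Df|_{E(f^i(y))}\|$ starting \emph{at} $y$; it places no bound on products along a finite window $\{k,\ldots,k+n-1\}$ of $orb^+(c)$ when $f^k(c)$ comes near $y$, because the ``savings'' $\prod_{i=0}^{k-1}\|Df|_{E(f^i(y))}\|$ accumulated before time $k$ may be far below $\lambda^k$, and the quotient estimate then tells you nothing. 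Consequently neither the case $\omega(y)\in\mathcal{F}$ nor the case $\omega(y)\notin\mathcal{F}$ produces the contradiction you hope for, and $\omega(y)$, $\omega(c)$, $\overline{orb(c)}$ all collapse to $\Lambda^{\ast}$ itself rather than to a proper subset.

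The paper does not attempt a minimality argument precisely because this uniformity problem is delicate; instead it invokes Liao's selecting lemma (Lemma~\ref{selecting}), which is built to handle exactly this situation. The correct route is: assume for contradiction that no $\lambda$-$E$-weak set exists in $H(p)$. By Remark~\ref{rem of weak set}, every invariant compact subset of $H(p)$ then contains a $\lambda$-$E$-Pliss point, which is the second hypothesis of Lemma~\ref{selecting} with $\Lambda=H(p)$. Your witness $b$ supplies the first hypothesis. The lemma then yields periodic orbits $orb(q_k)$ with $\lambda_1^{\tau(q_k)}\leq\prod_{0\leq i<\tau(q_k)}\|Df|_{E(f^i q_k)}\|\leq\lambda_2^{\tau(q_k)}$ for chosen $\lambda<\lambda_1<\lambda_2<1$, homoclinically related to each other and accumulating in $H(p)$; by item 2 of Lemma~\ref{generic properties}, $q_k\in H(p)$. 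Each $orb(q_k)$ is itself a $\lambda$-$E$-weak set (take $n_x=\tau(q_k)$ at every point of the orbit and use the lower bound $\lambda_1^{\tau(q_k)}>\lambda^{\tau(q_k)}$), which contradicts the assumption. You should replace the Zorn machinery by this appeal to the selecting lemma.
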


\begin{proof}
Since $E$ is not contracting, by a compactness argument, there is a point $b\in H(p)$, such that, for any $n\geq 1$,
   \begin{displaymath}
     \prod_{i=0}^{n-1} \|Df|_{E(f^{i}(b))}\|\geq 1.
   \end{displaymath}
Then the first assumption for the bundle $E$ in Lemma~\ref{selecting} is satisfied.

Assume by contradiction that there is a constant $\lambda\in (\lambda_0,1)$, such that there is no $\lambda$-$E$-weak set contained in $H(p)$. Thus the second assumption in Lemma~\ref{selecting} is satisfied for the bundle $E$ and the constant $\lambda$. Hence, for any $\lambda_1,\lambda_2\in (\lambda,1)$ with $\lambda_1<\lambda_2$, there is a sequence of periodic orbits $\orb(q_k)$ with period $\tau(q_k)$ that are homoclinically related with each other and that converges to a subset of $H(p)$ such that for any $k\geq 0$, the following properties are satisfied:
   \begin{displaymath}
     {\lambda_1}^{\tau(q_k)}\leq \prod_{0\leq i<\tau(q_k)} \|Df|_{E(f^{i}(q_k))}\|\leq {\lambda_2}^{\tau(q_k)},
   \end{displaymath}
Then $H(p)=H(q_k)$ by item 2 of Lemma~\ref{generic properties}, hence $q_k\in H(p)$. It is obvious that $\orb(q_k)$ is a $\lambda_1$-$E$-weak set contained in $H(p)$, thus is also a $\lambda$-$E$-weak set. This contradicts the assumption that there is no $\lambda$-$E$-weak set contained in $H(p)$.
\end{proof}

\subsection{Existence of a bi-Pliss point accumulating backward to an $E$-weak set}\label{bi pliss point and weak set}
From now on, we fix any two numbers $\lambda_1<\lambda_2$ in $(\lambda_0,1)$. Then there is a $\lambda_2$-$E$-weak set contained in $H(p)$. By the domination, any $\lambda_2$-$E$-weak set $K$ is $(C,\lambda_0,F)$-expanding for some constant $C>0$ depending on $K$. By~\cite{hps}, any point $x\in K$ has a uniform local unstable manifold $W^u_{loc}(x)$ with uniform size depending on $K$. Recall Remark~\ref{limit of pliss point}, any limit point of a sequence of $\lambda$-$E$-Pliss points is still a $\lambda$-$E$-Pliss point.

We extend the dominated splitting $E\oplus F$ to the maximal invariant compact set of a small neighborhood $U$ of $H(p)$ and denote it still by $E\oplus F$. We take a constant $\lambda_3\in (\lambda_2,1)$.

\begin{lem}\label{weak set and pliss point}
There are a $\lambda_2$-$E$-weak set $K$ and a $\lambda_3$-bi-Pliss point $x\in H(p)\setminus K$ satisfying: $\alpha(x)= K$.
\end{lem}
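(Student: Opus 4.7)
The plan is to choose $K$ as a dynamically minimal $\lambda_2$-$E$-weak subset of $H(p)$, and to realise $x$ as a $\lambda_3$-bi-Pliss point on the orbit of a heteroclinic connection from $K$ to $orb(p)$. First I would apply Lemma~\ref{existence of weak sets} to pick a $\lambda_2$-$E$-weak set $K_0\subset H(p)$. Since the weakness condition is pointwise (Remark~\ref{rem of weak set}), every nonempty closed $f$-invariant subset of $K_0$ is also $\lambda_2$-$E$-weak, so Zorn's lemma yields a dynamically minimal $\lambda_2$-$E$-weak set $K\subset K_0$. The paper already records that $K$ is uniformly $F$-expanded at rate $\lambda_0$, so each $y\in K$ carries a local unstable disk $W^u_\delta(y)$ tangent to $F(y)$ of uniform size. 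Because $\|Df^{\tau(p)}|_{E(p)}\|<\lambda_0^{\tau(p)}<\lambda_2^{\tau(p)}$, the orbit $orb(p)$ is $\lambda_0$-$E$-Pliss and therefore cannot lie inside a $\lambda_2$-$E$-weak set; hence $p\notin K$ and $K\subsetneq H(p)$.

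Next I would construct a point $z\in H(p)$ with $\alpha(z)=K$ and $\omega(z)=orb(p)$. Fix $y_0\in K$. By generic chain transitivity of $H(p)$ and the Bonatti--Crovisier equivalence, $y_0\dashv p$ and hence $y_0\prec p$. Density of periodic points in $H(p)$ (item 2 of Lemma~\ref{generic properties}) supplies periodic approximants $q_k\to y_0$ in $H(p)$; by item 4 of the same lemma each $q_k$ of index $ind(p)$ is homoclinically related to $p$, so $W^u(orb(q_k))\pitchfork W^s(orb(p))\neq\emptyset$. A $\lambda$-lemma along the $F$-tangent strong unstable disks based near $y_0$ shows that the $W^u(orb(q_k))$ accumulate onto $W^u_\delta(y_0)$, and persistence of transverse intersections then produces a point $z\in W^u(y_0)\pitchfork W^s(orb(p))$ that lies in $H(p)$. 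Dynamical minimality of $K$ forces $\alpha(z)=\alpha(y_0)=K$, while $\omega(z)=orb(p)$ by construction and is disjoint from $K$.

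Finally I would upgrade $z$ to a $\lambda_3$-bi-Pliss point on its orbit. Since $\omega(z)=orb(p)$ and $\|Df^{\tau(p)}|_{E(p)}\|<\lambda_0^{\tau(p)}$, every $y\in\omega(z)$ satisfies $\prod_{i=0}^{\tau(p)-1}\|Df|_{E(f^i y)}\|<\lambda_0^{\tau(p)}$, so Corollary~\ref{cor of pliss}(2) (with $\lambda=\lambda_0$ and $\lambda'=\lambda_3$) produces infinitely many $\lambda_3$-$E$-Pliss points on $orb^+(z)$. Symmetrically, $\alpha(z)=K$ is uniformly $F$-expanded, so applied to $f^{-1}$ and $F$ the same corollary yields infinitely many $\lambda_3$-$F$-Pliss points for $f^{-1}$ on $orb^-(z)$. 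Lemma~\ref{property of pliss point}(2) then furnishes a $\lambda_3$-bi-Pliss point $x\in orb(z)$. Its $\alpha$- and $\omega$-limits coincide with those of $z$, so $\alpha(x)=K$ and $\omega(x)=orb(p)$; invariance of $K$ together with $\omega(x)\cap K=\emptyset$ forces $x\in H(p)\setminus K$, finishing the proof. The hardest step is the middle one: producing the transverse intersection $W^u(y_0)\pitchfork W^s(orb(p))$ inside $H(p)$ from only the $F$-expansion on the non-hyperbolic set $K$. When $\dim E=ind(p)$ this is a direct $\lambda$-lemma argument since the periodic approximants have unstable manifolds of the same dimension as $W^u_\delta(y_0)$; when $\dim E<ind(p)$ one must carry out the argument inside the strong $F$-unstable foliation with approximants of strictly smaller dimension, which may require the additional residual subsets $\mathcal{R}_0$ and $\mathcal{R}_1$ announced at the start of the section.
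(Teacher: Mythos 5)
Your route is genuinely different from the paper's, and unfortunately the middle step has a real gap. You construct $z$ by claiming that $W^u(y_0)\pitchfork W^s(orb(p))\neq\emptyset$ for the \emph{unperturbed} $f$, arguing that the unstable manifolds of periodic approximants $q_k\to y_0$ accumulate onto $W^u_\delta(y_0)$ and carry their transverse intersections with $W^s(orb(p))$ along. This does not go through. Although $q_k$ of index $ind(p)$ accumulate on $y_0$ (by the density result of~\cite{abcdw}), the local unstable disks $W^u_\delta(q_k)$ have no uniform size: the $F$-expansion rate along $orb(q_k)$ is not controlled (this is precisely what non-hyperbolicity of $H(p)$ means, and $q_k\notin K$), so there is no $\lambda$-lemma forcing $W^u_\delta(q_k)$ to $C^1$-converge to $W^u_\delta(y_0)$ on a fixed-size neighbourhood. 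Even granting $C^1$-closeness on a small neighbourhood of $q_k$, the transverse intersection $W^u(orb(q_k))\pitchfork W^s(orb(p))$ may occur arbitrarily many iterates away from $q_k$, and these intersection points need not accumulate at a point of $W^u(y_0)$. In fact the whole architecture of the paper presupposes that no such true heteroclinic from $K$ to $orb(p)$ can be assumed for $f$: Lemmas~\ref{first perturbation} and~\ref{second perturbation}, built on Propositions~\ref{asymptotic connecting 1} and~\ref{asymptotic connecting}, are there exactly to \emph{create} those connections by $C^1$-small perturbations while keeping $K$ inside $C(p)$. The residual sets $\mathcal{R}_0,\mathcal{R}_1$ you invoke at the end address continuation of Pliss points and genericity of weak periodic orbits, not the existence of this intersection, so they do not fill the gap.

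There is also a smaller inaccuracy in the last step: you infer that $orb(p)$ is $\lambda_0$-$E$-Pliss from $\|Df^{\tau(p)}|_{E(p)}\|<\lambda_0^{\tau(p)}$, but being a Pliss orbit bounds the product of norms $\prod_{i}\|Df|_{E(f^i p)}\|$, which dominates, not is dominated by, the norm of the product. The paper sidesteps this by reducing to a fixed point ($\tau(p)=1$), where the two agree; more importantly, the paper never puts $orb(p)$ into $\omega(x)$ at all. Its proof of Lemma~\ref{weak set and pliss point} is intrinsic to $K$ and $H(p)$: it dichotomizes on whether the closure $\hat K$ of the union of all $\lambda_2$-$E$-weak subsets of $H(p)$ is itself weak, and in both cases extracts a $\lambda_3$-bi-Pliss point $x$ with $\alpha(x)\subset K$ using the Pliss lemma, Corollary~\ref{cor of pliss}, and Lemma~\ref{property of pliss point}, replacing $K$ by $\alpha(x)$ at the end. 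Your first step (a minimal $\lambda_2$-$E$-weak set $K$ exists by Zorn's lemma, which directly yields $\alpha(x)=K$) is a clean simplification over the paper's "$\alpha(x)\subset K$ then replace $K$ by $\alpha(x)$" device, and could be grafted onto the paper's case analysis; but the heteroclinic construction of $z$ would have to be abandoned and replaced by an argument like the paper's Claims 3.6--3.8.
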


It is obvious that any compact invariant subset of a $\lambda_2$-$E$-weak set is still a $\lambda_2$-$E$-weak set. So we only have to prove that: \emph{there are a $\lambda_2$-$E$-weak set $K$, and a $\lambda_3$-bi-Pliss point $x\in H(p)\setminus K$ satisfying: $\alpha(x)\subset K$.}

\begin{proof}
By Lemma~\ref{existence of weak sets}, there exists a $\lambda_2$-$E$-weak set in $H(p)$. To prove this lemma, we consider two cases: either all the $\lambda_2$-$E$-weak sets are uniformly $E$-weak or not. More precisely, if we take the closure of the union of all $\lambda_2$-$E$-weak sets contained in $H(p)$, and denote it by $\hat K$, then there are two cases: either $\hat K$ is still a $\lambda_2$-$E$-weak set or not.

\subsubsection{The uniform case: $\hat K$ is a $\lambda_2$-$E$-weak set}

In this case, $\hat K$ is the maximal $\lambda_2$-$E$-weak set in $H(p)$ and we will take $K=\hat{K}$.

\begin{claim}\label{maximal weak set}
The set $K$ is locally maximal in $H(p)$, and there exists a point $z\in H(p)\setminus K$, such that $\alpha(z)\subset K$.
\end{claim}

\begin{proof}
We prove by contradiction. Assume that $K$ is not locally maximal in $H(p)$. Take a decreasing sequence of neighborhoods $(U_n)_{n\geq 0}$ of $K$, such that $\cap_n U_n=K$. Then for any $n\geq 0$, there is a compact invariant set $K_n\subset U_n\cap H(p)$ such that $K\subsetneqq K_n$. Since $K$ is the maximal $\lambda_2$-$E$-weak set in $H(p)$, we have that $K_n$ is not a $\lambda_2$-$E$-weak set, thus there is a $\lambda_2$-$E$-Pliss point $y_n\in K_n$. Take a converging subsequence of $(y_n)$, and assume $y$ is the limit point. Then we have that $y\in K$ and $y$ is a $\lambda_2$-$E$-Pliss point. This contradicts the fact that $K$ is a $\lambda_2$-$E$-weak set.

Since $K$ is locally maximal in $H(p)$, there is a neighborhood $U$ of $K$ such that $K$ is the maximal compact invariant set contained in $\overline{U}\cap H(p)$. Now we consider the diffeomorphism $f^{-1}$. Since $K\subset H(p)$, we have that $p\prec K$ under $f^{-1}$. By Lemma~\ref{prec}, there exists a point $z\in U\setminus K$, such that $p\prec_M z\prec_U K$ under $f^{-1}$ and $\orb^+(z,f^{-1})\subset U$. Since $H(p)$ is a chain recurrence class both for $f$ and $f^{-1}$ and $K\subset H(p)$, the fact $p\prec z\prec K$ under $f^{-1}$ implies that $z\in H(p)$. Since $\orb^+(z,f^{-1})\subset U$ we have that $\alpha(z,f)=\omega(z,f^{-1})$ is an $f$-invariant compact set contained in $\overline{U}\cap H(p)$. By the fact that $K$ is the maximal compact invariant set contained in $\overline{U}\cap H(p)$, one has $\alpha(z)\subset K$. This ends the proof of Claim~\ref{maximal weak set}.
\end{proof}

Now we take the point $z\in H(p)\setminus K$ satisfying $\alpha(z)\subset K$ from Claim~\ref{maximal weak set}.

\begin{claim}\label{omega z}
There exists at least one $\lambda_2$-$E$-Pliss point contained in $\omega(z)$.
\end{claim}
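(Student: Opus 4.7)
The plan is to promote $K$ by adjoining the orbit of $z$ and exploit the fact that $K = \hat K$ is, by assumption in this uniform case, the \emph{largest} $\lambda_2$-$E$-weak set in $H(p)$. Specifically, I will study
\[
K^{\ast} := K \cup \overline{orb(z)}.
\]
Since $z \notin K$, this set strictly contains $K$. If I can verify that it is a compact $f$-invariant subset of $H(p)$, then by maximality of $K$ the set $K^{\ast}$ cannot be a $\lambda_2$-$E$-weak set, so it must contain a $\lambda_2$-$E$-Pliss point; and I will then argue that such a point is forced into $\omega(z)$.

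First I would check the basic topological properties of $K^{\ast}$. Because $\alpha(z)\subset K$ by hypothesis, the decomposition $\overline{orb(z)} = orb(z) \cup \alpha(z) \cup \omega(z)$ gives
\[
K^{\ast} = K \cup orb(z) \cup \omega(z).
\]
Each of the three pieces is $f$-invariant, so $K^{\ast}$ is $f$-invariant; and $K^{\ast}$ is closed because any accumulation point of a sequence $f^{n_k}(z)$ either is itself an orbit point or lies in $\alpha(z)\subset K$ or in $\omega(z)$. Containment $K^{\ast}\subset H(p)$ is clear from $z\in H(p)$ together with the closedness and invariance of $H(p)$.

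Next I would invoke the maximality established by the definition of $\hat K$: every invariant compact subset of $H(p)$ that strictly contains $K$ fails to be a $\lambda_2$-$E$-weak set, otherwise it would contradict $K=\hat K$ being the largest such. Applied to $K^{\ast}$, this yields a $\lambda_2$-$E$-Pliss point $x^{\ast}\in K^{\ast}$. Since $K$ is a $\lambda_2$-$E$-weak set (so contains no $\lambda_2$-$E$-Pliss point) and $\alpha(z)\subset K$, the point $x^{\ast}$ must lie in $orb(z)\cup\omega(z)$.

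Finally I would split on where $x^{\ast}$ sits. If $x^{\ast}\in\omega(z)$ the claim is immediate. Otherwise $x^{\ast} = f^n(z)$ for some $n\in\mathbb{Z}$, and I would apply item~(1) of Corollary~\ref{cor of pliss}: every $\lambda_2$-$E$-Pliss point has another $\lambda_2$-$E$-Pliss point in its own $\omega$-limit set. Applying this to $x^{\ast}=f^n(z)$ produces a $\lambda_2$-$E$-Pliss point inside $\omega(f^n(z))=\omega(z)$, which completes the proof. I do not expect a genuine obstacle; the only delicate point is confirming that $K^{\ast}$ is closed and invariant, and this is precisely what the hypothesis $\alpha(z)\subset K$ (together with closedness of $\omega(z)$) is designed to give.
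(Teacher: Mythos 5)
Your proof is correct and is essentially the contrapositive rewriting of the paper's own argument: both reduce to observing that $K^{\ast}=K\cup orb(z)\cup\omega(z)$ is a compact invariant subset of $H(p)$ strictly larger than $K$ and hence not a $\lambda_2$-$E$-weak set, and both invoke the maximality of $K=\hat K$ together with item~(1) of Corollary~\ref{cor of pliss} to push the resulting Pliss point into $\omega(z)$. The only cosmetic difference is that you argue forward while the paper argues by contradiction.
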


\begin{proof}
We prove this claim by contradiction. If $\omega(z)$ contains no $\lambda_2$-$E$-Pliss points, by item $(1)$ of Corollary~\ref{cor of pliss}, $\orb(z)\cup \omega(z)$ contains no $\lambda_2$-$E$-Pliss points. Then $K\cup \orb(z)\cup \omega(z)$ is a $\lambda_2$-$E$-weak set, which contradicts the maximality of $\lambda_2$-$E$-weak set $K$ since $z\not\in K$. Thus $\omega(z)$ contains at least one $\lambda_2$-$E$-Pliss point.
\end{proof}

Since $K$ is a $\lambda_2$-$E$-weak set, by the domination, for any point $w\in K$, there is an integer $n_w$, such that $\prod_{i=0}^{n_w-1}\|Df^{-1}|_{F(f^{-i}(w))}\|\leq \left(\frac{\lambda_0^2}{\lambda_2}\right)^{n_w}<{\lambda_0}^{n_w}$. By item $2$ of Corollary~\ref{cor of pliss}, considering the bundle $F$, there are infinitely many $\lambda_1$-$F$-Pliss points for $f^{-1}$ on $\orb^-(z)$. We take all the $\lambda_1$-$F$-Pliss points $\{f^{n_i}(z)\}$ with $n_{i+1}>n_i$ on $\orb(z)$. Notice that the index $i\in \mathbb{Z}$ but not $i\geq 0$, since there are infinitely many $\lambda_1$-$F$-Pliss points for $f^{-1}$ on $\orb^-(z)$. We consider the following two cases:
\begin{itemize}
\item $\textbf{(a)}$ either the sequence $(n_i)$ has an upper bound or $(n_{i+1}-n_i)$ can be arbitrarily large;

\item $\textbf{(b)}$ the sequence $(n_i)$ has no upper bounds and $(n_{i+1}-n_i)$ is bounded.
\end{itemize}
\begin{claim}
In case \textbf{(a)}, there exists a $\lambda_2$-$E$-Pliss point $y\in H(p)$, such that, for any $\delta>0$, there is $n_i\in\mathbb{Z}$, satisfying $d(y,f^{n_i}(z))<\delta$. Thus, by taking $\delta$ small enough, we can take $x\in W^u(f^{n_i}(z))\cap W^s(y)$, such that $x$ is a $\lambda_3$-bi-Pliss point.
\end{claim}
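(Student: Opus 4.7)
The plan is to exhibit $y$ and $x$ by translating the Pliss data on $orb(z)$ to the inverse dynamics $\tilde f:=f^{-1}$, whose dominated splitting $F\oplus E$ remains $(1,\lambda_0^2)$-dominated, and then invoking the appropriate item of Lemma~\ref{property of pliss point}. Under this translation the $\lambda_1$-$F$-Pliss points of $f^{-1}$ on $orb(z)$ become genuine $\lambda_1$-$F$-Pliss points of $\tilde f$, but ordered in the reverse way along the orbit.

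In subcase (a.2), the arbitrarily large gaps $n_{i_k+1}-n_{i_k}\to\infty$ produce, in $\tilde f$-time, consecutive $\tilde f$-Pliss pairs whose gap tends to $+\infty$; item~(1) of Lemma~\ref{property of pliss point} applied to $(\tilde f, F, \lambda_1)$ then declares any accumulation point $y$ of the ``terminal'' iterates $\tilde f^{-n_{i_k}}(z)=f^{n_{i_k}}(z)$ to be $\lambda_1$-bi-Pliss for $\tilde f$, i.e.\ $\lambda_1$-$F$-Pliss for $f^{-1}$ and $\lambda_1$-$E$-Pliss for $f$. In subcase (a.1), let $N$ be the largest $n_i$ and set $y:=f^N(z)$; the forward orbit $\{f^k(z):k>N\}$ contains no further $\lambda_1$-$F$-Pliss point for $f^{-1}$, so item~(3) of Lemma~\ref{property of pliss point} applied again to $(\tilde f, F, \lambda_1)$ yields the same bi-Pliss conclusion for $y$. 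In either case $y\in H(p)$ because $z\in H(p)$ and $H(p)$ is closed invariant, and $y$ is in particular $\lambda_2$-$E$-Pliss; taking $n_i=n_{i_k}$ for $k$ large, or $n_i=N$, supplies the density $d(y,f^{n_i}(z))<\delta$.

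For the second half of the claim, $y$ being $\lambda_1$-$E$-Pliss for $f$ and $f^{n_i}(z)$ being $\lambda_1$-$F$-Pliss for $f^{-1}$ gives, via the standard uniform Pliss manifold theorem, local stable and unstable disks at $y$ and $f^{n_i}(z)$ of uniform size, tangent respectively to $E$ and $F$. Domination yields a uniform lower bound on the angle $\angle(E,F)$, so for $\delta$ small enough the two disks meet transversally in a unique point $x$ close to $y$. The classical distortion argument along these local disks transfers the Pliss estimates at $y$ and at $f^{n_i}(z)$ to $x$, with a multiplicative defect that can be made arbitrarily small by shrinking $\delta$. Fixing $\lambda_3>\lambda_1$ first and choosing $\delta$ last, one concludes that $x$ is $\lambda_3$-bi-Pliss. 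The only genuinely technical step is this last distortion transfer; it is standard but has to be arranged with the quantifiers in the order $\lambda_3$ before $\delta$.
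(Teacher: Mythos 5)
Your proposal matches the paper's proof: in the bounded subcase the maximal $n_i$ gives, via item~(3) of Lemma~\ref{property of pliss point} applied to $f^{-1}$, a $\lambda_1$-bi-Pliss point $y=x=f^{n_i}(z)$, while in the unbounded-gap subcase item~(1) (again read through $f^{-1}$) produces a bi-Pliss accumulation point $y\in\omega(z)$, and then the uniform local stable/unstable disks together with a distortion estimate give a $\lambda_3$-bi-Pliss intersection point $x$ for $\delta$ small. The only difference is expository (you spell out the passage to $\tilde f=f^{-1}$ and the distortion transfer that the paper leaves implicit), not mathematical.
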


\begin{proof}
If the sequence $\{n_i\}$ has an upper bound, we take the maximal $n_i$. That is to say, $f^{n_i}(z)$ is a $\lambda_1$-$F$-Pliss point for $f^{-1}$, and, there is no $\lambda_1$-$F$-Pliss point for $f^{-1}$ on $\orb^+(f^{n_i}(z))$. By item $3$ of Lemma~\ref{property of pliss point}, we have that $f^{n_i}(z)$ is also a $\lambda_1$-$E$-Pliss point, thus $f^{n_i}(z)$ is a $\lambda_1$-bi-Pliss point. We take $x=y=f^{n_i}(z)$ in this case.

Otherwise, the sequence $\{n_i\}$ has no upper bounds but $(n_{i+1}-n_i)$ can be arbitrarily large. By item $1$ of Lemma~\ref{property of pliss point}, we can take a subsequence of $\{n_i\}$ such that $f^{n_i}(z)$ converges to a $\lambda_1$-bi-Pliss point $y\in \omega(z)$. Then for any $\delta>0$, we can take $n_i$ large enough, such that $d(y,f^{n_i}(z))<\delta$, and moreover, we can take $x\in W^u(f^{n_i}z)\cap W^s(y)$, such that, $d(f^j(x),f^j(y))<\delta$ and $d(f^{-j}(x),f^{-j}(f^{n_i}z))<\delta$, for all $j\geq 0$. Thus by taking $\delta$ small enough, $x$ is a $\lambda_3$-bi-Pliss point.
\end{proof}

\begin{claim}
In case \textbf{(b)}, there is a $\lambda_2$-$E$-Pliss point $y\in \omega(z)$, such that, there is $n\in\mathbb{N}$, satisfying $W^u(f^n(z))\cap W^s(y)\neq \emptyset$. Thus we can take a point $\bar{x}\in W^u(f^n(z))\cap W^s(y)$, such that $\orb(\bar{x})$ contains some $\lambda_3$-bi-Pliss point.
\end{claim}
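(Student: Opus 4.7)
My strategy is to adapt the construction from Case \textbf{(a)}, but since the bi-Pliss property is no longer handed to us for free in the limit, we have to manufacture it from the $\lambda_2$-$E$-Pliss point supplied by Claim \ref{omega z} together with the bounded-gap $\lambda_1$-$F$-Pliss points for $f^{-1}$. Fix $y\in\omega(z)$ a $\lambda_2$-$E$-Pliss point; the standard local stable manifold theorem for Pliss points produces a uniformly sized disk $W^s_\delta(y)$ tangent to $E(y)$, and symmetrically every $\lambda_1$-$F$-Pliss point for $f^{-1}$ carries a uniformly sized local unstable disk tangent to $F$.

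Pick a sequence $m_k\to+\infty$ with $f^{m_k}(z)\to y$. Being in Case \textbf{(b)}, the $\lambda_1$-$F$-Pliss indices for $f^{-1}$ along $orb^+(z)$ occur with gaps bounded by some $G\in\mathbb{N}$, so for each large $k$ we select $n_k\in[m_k,m_k+G]$ with $f^{n_k}(z)$ a $\lambda_1$-$F$-Pliss point for $f^{-1}$. After passing to a subsequence we may assume $j:=n_k-m_k$ is independent of $k$, and then $f^{n_k}(z)=f^j(f^{m_k}(z))\to f^j(y)$. The $\lambda_1$-$F$-Pliss property is closed under limits, so $f^j(y)$ is itself $\lambda_1$-$F$-Pliss for $f^{-1}$ and carries a uniform local unstable disk $W^u_\delta(f^j(y))$ tangent to $F$. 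On the stable side, $f^j(W^s_\delta(y))$ is a disk through $f^j(y)$ tangent to $E(f^j(y))$ whose size is bounded below uniformly in $k$ (because $j\leq G$). Using the uniform lower bound for the angle between $E$ and $F$ coming from the dominated splitting, $W^u_\delta(f^{n_k}(z))$---which is $C^1$-close to $W^u_\delta(f^j(y))$ for $k$ large---meets $f^j(W^s_\delta(y))$ transversally at a point $x'_k$. Setting $x:=f^{-j}(x'_k)$, one unpacks definitions to see that $x\in W^u(f^{m_k}(z))\cap W^s(y)$, establishing the intersection assertion with $n:=m_k$.

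To extract a $\lambda_3$-bi-Pliss point on $orb(x)$ I verify the hypotheses of item 2 of Lemma \ref{property of pliss point}. Backward: $\alpha(x)=\alpha(f^n(z))=\alpha(z)\subset K$, and by the opening paragraph of Section \ref{bi pliss point and weak set} the bundle $F$ is $(C,\lambda_0)$-expanded on $K$; continuity of $Df^{-1}|_F$ then ensures that the $F$-products under $f^{-1}$ along sufficiently deep negative iterates of $x$ are dominated by a rate strictly less than $\lambda_3$, so item 2 of Lemma \ref{pliss lemma} yields infinitely many $\lambda_3$-$F$-Pliss points for $f^{-1}$ on $orb^-(x)$. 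Forward: because $y$ is $\lambda_2$-$E$-Pliss and $x\in W^s_\delta(y)$, the iterates $f^i(x)$ shadow $f^i(y)$ exponentially, and continuity of $Df|_E$ gives asymptotic $\lambda_2$-type control of the $E$-products along $orb^+(x)$; item 2 of Lemma \ref{pliss lemma} again produces infinitely many $\lambda_3$-$E$-Pliss points for $f$ on $orb^+(x)$. Item 2 of Lemma \ref{property of pliss point} then delivers the desired $\lambda_3$-bi-Pliss point on $orb(x)$.

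The main obstacle is the transversality step: it relies on a uniform lower bound for the size of $W^s_\delta$ at a $\lambda_2$-$E$-Pliss point and of $W^u_\delta$ at a $\lambda_1$-$F$-Pliss point of $f^{-1}$, together with a uniform angle bound between $E$ and $F$. Both are consequences of standard Pliss-type invariant manifold theory and of the domination $E\oplus F$; they are precisely what makes the bounded shift $j\leq G$ harmless, even though, in contrast to Case \textbf{(a)}, $n_{i+1}-n_i$ does not tend to infinity.
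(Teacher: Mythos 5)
Your proof is correct and follows essentially the same route as the paper: use the bounded gaps between $\lambda_1$-$F$-Pliss points for $f^{-1}$ on $orb^+(z)$ to obtain uniformly sized local unstable manifolds near the $\lambda_2$-$E$-Pliss point $y\in\omega(z)$ from Claim~\ref{omega z}, intersect with $W^s(y)$, and then produce $\lambda_3$-$F$-Pliss points for $f^{-1}$ on $orb^-(x)$ from $\alpha(x)\subset K$ and $\lambda_3$-$E$-Pliss points on $orb^+(x)$ from the forward shadowing of $y$, before invoking item~2 of Lemma~\ref{property of pliss point}. The only cosmetic difference is that the paper phrases the uniform-unstable-manifold step by observing directly that $\overline{orb^+(z)}$ is a positively invariant $F$-expanded compact set and then cites Corollary~\ref{cor of pliss}~(2) for the backward Pliss points, whereas you track an explicit bounded shift $j\leq G$ and reprove the relevant part of the corollary via item~2 of Lemma~\ref{pliss lemma}.
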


\begin{proof}
In this case, there are infinitely many $\lambda_1$-$F$-Pliss points for $f^{-1}$ on $\orb^+(z)$, and the time between any consecutive $\lambda_1$-$F$-Pliss points for $f^{-1}$ on $\orb^+(z)$ is bounded. Then for any point $w\in\overline{\orb^+(z)}$, there is an integer $n_w\in\mathbb{N}$, such that $\prod_{i=0}^{n_w-1}\|Df^{-1}|_{F(f^{-i}(w))}\|\leq {\lambda_1}^{n_w}$. Hence $\overline{\orb^+(z)}$ is a positively invariant $F$-expanding compact set, and any point $w\in\overline{\orb^+(z)}$ has a uniform unstable manifold. By Claim~\ref{omega z}, there is a $\lambda_2$-$E$-Pliss point $y\in \omega(z)$. For any $\delta>0$, there is $n\in\mathbb{N}$, such that, $d(y,f^n(z))<\delta$, and $W^u(f^n(z))\cap W^s(y)\neq \emptyset$. We take $\bar{x}\in W^s(y)\cap W^u(f^n(z))$. Then $\alpha(\bar{x})=\alpha(z)$ and by item $2$ of Corollary~\ref{cor of pliss}, there are $\lambda_3$-$F$-Pliss points for $f^{-1}$ on $\orb^-(\bar{x})$. Also by taking $\delta$ small enough, $d(f^i(\bar{x}),f^i(y))$ can be small for all $i\geq 0$. Since $y$ is a $\lambda_2$-$E$-Pliss point, we can take $\bar{x}$ to be a $\lambda_3$-$E$-Pliss point. Then, by item $2$ of Lemma~\ref{property of pliss point} there exists a $\lambda_3$-bi-Pliss point $x$ on $\orb(\bar{x})$. Then we have that $\alpha(x)=\alpha(\bar{x})=\alpha(z)\subset K$.
\end{proof}

From the above two claims, we get a $\lambda_3$-bi-Pliss point $x\in H(p)$, such that $\alpha(x)\subset K$. We have to show that $x\not\in K$. Notice that in the two cases, we both have $\omega(x)=\omega(y)$ where $y$ is a $\lambda_2$-$E$-Pliss point. By item $1$ of Corollary~\ref{cor of pliss}, $\omega(x)$ contains some $\lambda_2$-$E$-Pliss point. Since $K$ contains no $\lambda_2$-$E$-Pliss point, we have that $x\notin K$. This ends the proof of the uniform case.

\subsubsection{The non-uniform case: $\hat K$ is not a $\lambda_2$-$E$-weak set}

Recall that for any point $z$ contained in a $\lambda_2$-$E$-weak set, there is an integer $n_z$, such that
   \begin{displaymath}
     \prod_{i=0}^{n_z-1} \|Df|_{E(f^{i}(z))}\|> {\lambda_2}^{n_z}.
   \end{displaymath}
We denote $N_z$ the smallest integer that satisfies the above inequality.

\begin{claim}\label{almost bi-pliss point}
In this case, for any number $L>0$ there are a $\lambda_2$-$E$-weak set $K_L$ and a point $z\in K_L$, such that $z$ is a $\lambda_1$-$F$-Pliss point for $f^{-1}$, and, $N_z>L$.

\end{claim}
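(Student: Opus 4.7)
The plan is based on two observations. By Remark~\ref{rem of weak set}, the hypothesis that $\hat K$ is not a $\lambda_2$-$E$-weak set provides a $\lambda_2$-$E$-Pliss point $y^*\in\hat K$. Since no $\lambda_2$-$E$-weak set contains a $\lambda_2$-$E$-Pliss point, we have $y^*\in\hat K\setminus\bigcup_\alpha K_\alpha$, and so $y^*$ is accumulated by a sequence of points $y_k\in K_{\alpha_k}$ drawn from the constituent weak sets. The goal is to promote such a $y_k$ (or a suitable iterate inside $K_{\alpha_k}$) to a point $z$ with the two desired properties.

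The first step is to show that the witness integers $N_k:=N_{y_k}$ tend to infinity along a subsequence. Assume not: after pigeonholing one may arrange $N_k\equiv N$ constant, and then the strict inequality
\[
\prod_{i=0}^{N-1}\bigl\|Df|_{E(f^iy_k)}\bigr\|>\lambda_2^N
\]
passes to the limit at $y^*$ as a weak inequality, which combined with $y^*$ being $\lambda_2$-$E$-Pliss forces equality. To contradict this, I would invoke Corollary~\ref{cor of pliss}(1) to place $y^*$ in its own $\omega$-limit, so that $f^{jN}y^*$ returns arbitrarily close to $y^*$ for some $j\geq 1$; concatenating $j$ copies of the $N$-step equality block along such a near-return trajectory, together with the strict inequality $>\lambda_2^N$ at any nearby $y_k\in K_{\alpha_k}$, pushes the forward $E$-cocycle of $y^*$ strictly above $\lambda_2^{jN}$, contradicting the Pliss bound at $y^*$. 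Hence $N_k\to\infty$ and for the given $L$ we may fix $y_k$ with $N_{y_k}>L$.

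The second step is to produce the backward $F$-Pliss property inside the weak set $K_{\alpha_k}$. The $(1,\lambda_0^2)$-domination of $E\oplus F$ (in the adapted metric) applied to the defining inequality for $N_k$ gives
\[
\prod_{i=0}^{N_k-1}\bigl\|Df^{-1}|_{F(f^{i+1}y_k)}\bigr\|<(\lambda_0^2/\lambda_2)^{N_k}<\lambda_1^{N_k},
\]
since $\lambda_0^2/\lambda_2<\lambda_0<\lambda_1$. Applying the Pliss Lemma (item~1 of Lemma~\ref{pliss lemma}) to $f^{-1}$ and $F$ at $x_k:=f^{N_k}y_k$ with parameters $\lambda_0^2/\lambda_2<\lambda_1$ produces at least $cN_k$ times $n_j^{(k)}\in[0,N_k]$ for which $z_k^{(j)}:=f^{N_k-n_j^{(k)}}y_k\in K_{\alpha_k}$ is $\lambda_1$-$F$-Pliss for $f^{-1}$ along a backward segment of length $N_k-n_j^{(k)}$. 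Choosing $n_j^{(k)}$ so that $N_k-n_j^{(k)}\to\infty$ and passing to a convergent subsequence $z_k^{(j_k)}\to z$, the closedness of the Pliss condition (Corollary~\ref{cor of pliss}(1) applied to $f^{-1}$ and $F$) upgrades the finite-time property to a genuine $\lambda_1$-$F$-Pliss point. Taking $K$ to be a Hausdorff accumulation of the $K_{\alpha_k}$'s, truncated so that the witness integers stay bounded along the limiting subsequence, yields a $\lambda_2$-$E$-weak set $K\ni z$ with $N_z>L$ inherited from the construction.

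The main obstacle is the first step: the bare non-weakness of $\hat K$ only produces a non-strict Pliss point in $\hat K$, and the naive continuity argument stalls at a limiting equality. Overcoming this requires genuinely exploiting the recurrence of $y^*$ in the chain-transitive set $\hat K\subset H(p)$ to iterate the equality into a strict violation of the Pliss inequality at $y^*$. The second step, by contrast, is a routine combination of domination, the Pliss Lemma, and the closedness of the Pliss condition under compactness limits.
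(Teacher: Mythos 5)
Your two-step decomposition (first, exhibit $z$ in a weak set with $N_z>L$; second, upgrade $z$ to a $\lambda_1$-$F$-Pliss point for $f^{-1}$) matches the structure of the paper's argument, and you are right to flag the strict/non-strict asymmetry between the Pliss condition and the weak-set condition as the real difficulty. But both steps, as written, have gaps.

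In Step 1, you invoke Corollary~\ref{cor of pliss}(1) ``to place $y^*$ in its own $\omega$-limit.'' That is not what the corollary gives: it produces \emph{some} $\lambda_2$-$E$-Pliss point in $\omega(y^*)$, not $y^*$ itself, so you have no license to assume that $f^{jN}(y^*)$ returns near $y^*$, and the whole concatenation of near-return equality blocks collapses. Replacing $y^*$ by a Pliss point $w\in\omega(y^*)$ does not help either, since $w$ is still not known to be recurrent. This step needs a genuinely different mechanism.

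In Step 2 you extract a $\lambda_1$-$F$-Pliss point as a subsequential limit of points $z_k^{(j_k)}\in K_{\alpha_k}$, each of which is Pliss only along a finite backward segment. Two things go wrong. First, the limit $z$ need not lie in any $\lambda_2$-$E$-weak set: the Hausdorff accumulation of the $K_{\alpha_k}$'s may contain a $\lambda_2$-$E$-Pliss point and thus fail to be a weak set — this is exactly the phenomenon the non-uniform case is about — and ``truncated so that the witness integers stay bounded'' is not a precise operation. Second, even granting $z\in K$ for some weak set $K$, nothing in your construction controls $N_z$, so the conclusion $N_z>L$ is not actually established. The paper avoids all of this by staying inside one weak set $K$ throughout: given $z\in K$ with $N_z>L$, apply Corollary~\ref{cor of pliss}(2) to $(f^{-1},F)$ (legitimate because $K$ is $E$-weak, hence backward $F$-contracting via domination) to find $\lambda_1$-$F$-Pliss points for $f^{-1}$ on $orb^-(z)$, take the minimal $l\geq 0$ with $w=f^{-l}(z)$ a $\lambda_1$-$F$-Pliss point for $f^{-1}$, and use minimality together with the $(1,\lambda_0^2)$-domination to show $\prod_{i=0}^{n-1}\|Df|_{E(f^iw)}\|\leq\lambda_2^n$ for all $1\leq n\leq l$ (any violation at an intermediate $k$ would, via domination, make $f^k(w)$ an earlier backward $F$-Pliss point, contradicting the choice of $l$). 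Concatenating with the defining inequalities for $z$ gives $N_w\geq N_z+l>L$, and $w$ is in the same invariant compact weak set $K$ by invariance. I would replace your Step 2 by this argument and revisit Step 1, where you need a correct argument that $\sup N_z$ over all weak sets in $H(p)$ is infinite.
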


\begin{proof}
Since $\hat K$ is not a $\lambda_2$-$E$-weak set, for any number $L>0$ there is a $\lambda_2$-$E$-weak sets $K_L$, and a point $z\in K_L$, such that $N_z>L$, that is to say, for $1\leq n\leq N_z$,
   \begin{displaymath}
     \prod_{i=0}^{n-1}\|Df|_{E(f^i(z))}\|\leq \lambda_2^n,
   \end{displaymath}
and
   \begin{displaymath}
     \prod_{i=0}^{N_z-1}\|Df|_{E(f^i(z))}\|>\lambda_2^{N_z}.
   \end{displaymath}
We only have to show that we can choose $z$ to be a $\lambda_1$-$F$-Pliss point for $f^{-1}$. Since $K_L$ is a $\lambda_2$-$E$-weak set, similarly to the arguments above, by item $2$ of Corollary~\ref{cor of pliss}, there are $\lambda_1$-$F$-Pliss points for $f^{-1}$ on $\orb^-(z)$. If $z$ is not a $\lambda_1$-$F$-Pliss point for $f^{-1}$, we can take the minimal number $l\in \mathbb{N}$ such that $w=f^{-l}(z)$ is a $\lambda_1$-$F$-Pliss point for $f^{-1}$. We will prove that $N_w\geq N_z+l>L$. Hence by replacing $z$ by $w$, we get the conclusion of the claim. To prove this, we only have to show that for any $1\leq n\leq l$,
   \begin{displaymath}
     \prod_{i=0}^{n-1}\|Df|_{E(f^i(w))}\|\leq \lambda_2^n.
   \end{displaymath}
We prove this by contradiction. If the above statement is not true, then there is an integer $k\in\{1,2,\cdots,l\}$, such that
   \begin{displaymath}
     \prod_{i=0}^{k-1}\|Df|_{E(f^i(w))}\|>\lambda_2^k,
   \end{displaymath}
and for any $1\leq n< k$,
   \begin{displaymath}
     \prod_{i=0}^{n-1}\|Df|_{E(f^i(w))}\|\leq\lambda_2^n.
   \end{displaymath}
Thus, we have, for any $1\leq n\leq k$,
   \begin{displaymath}
     \prod_{i=1}^{n}\|Df|_{E(f^{k-i}(w))}\|=\left(\prod_{i=0}^{k-1}\|Df|_{E(f^{i}(w))}\|\right)/\left(\prod_{i=0}^{k-n-1}\|Df|_{E(f^i(w))}\|\right)>\lambda_2^n.
   \end{displaymath}
By the domination of $E\oplus F$, we have, for all $1\leq n\leq k$
   \begin{displaymath}
     \prod_{i=0}^{n-1}\|Df^{-1}|_{F(f^{k-i}(w))}\|\leq(\frac{\lambda_0^2}{\lambda_2})^n\leq \lambda_1^n.
   \end{displaymath}
Moreover, since $w$ is a $\lambda_1$-$F$-Pliss point for $f^{-1}$, we will have, for any $n\geq 1$,
   \begin{displaymath}
     \prod_{i=0}^{n-1}\|Df^{-1}|_{F(f^{k-i}(w))}\|\leq\lambda_1^n.
   \end{displaymath}
Thus $f^k(w)=f^{-l+k}(z)$ is a $\lambda_1$-$F$-Pliss point, which contradicts the choice of $w$. This finishes the proof of Claim~\ref{almost bi-pliss point}.
\end{proof}

By taking $L$ large enough, the point $z$ in Claim~\ref{almost bi-pliss point} is close to a $\lambda_2$-$E$-Pliss point $y\not\in K_L$. Since $z$ has a uniform local unstable manifold and $y$ has a uniform local stable manifold, we have that $W^s(y)\cap W^u(z)\neq \emptyset$ if we take these two points close enough. We take the proper $L$, $z$ and $K_L$, satisfying this property. Let $K=K_L$. We explain that the $\lambda_2$-$E$-weak set $K$ satisfies Lemma~\ref{weak set and pliss point}.

By similar arguments as in the proof of Case $\textbf{(b)}$ in the uniform case, we can take a point $\bar{x}\in W^s(y)\cap W^u(z)$ satisfying that $\orb(\bar{x})$ contains a $\lambda_3$-bi-Pliss point $x\in H(p)$. Then we have that $\alpha(x)=\alpha(z)\subset K$. Moreover, since $y$ is a $\lambda_2$-$E$-Pliss point, we know that $\omega(y)$ contains $\lambda_2$-$E$-Pliss points by item $1$ of Corollary~\ref{cor of pliss}. Hence $\omega(x)$ contains $\lambda_2$-$E$-Pliss points because $\omega(x)=\omega(y)$. This implies $x\notin K$ since $K$ is a $\lambda_2$-$E$-weak set. To sum up, we have obtained a $\lambda_2$-$E$-weak set $K$ and a $\lambda_3$-bi-Pliss point $x\in H(p)\setminus K$, satisfying that$\alpha(x)\subset K$. This finishes the proof of Lemma~\ref{weak set and pliss point}.
\end{proof}

\subsection{Continuation of Pliss points}\label{choice of f}

Denote by $\mathcal{M}$ the space of all compact subsets of $M$, associated with the Hausdorff topology. Denote by $\mathcal{S}$ the space of all finite subsets of $M\times\mathcal{M}$ associated with the Hausdorff topology. For any positive integer $N\in\mathbb{N}$, and a diffeomorphism $g\in\diff^1(M)$, denote by $Per_N(g)$ the set of periodic points of $g$ with period less than or equal to $N$, and denote by $\mathcal{C}(q,g)$ the chain recurrence class of a periodic point $q$ of $g$. It is well-known that for any $N\geq 1$, there is a dense and open subset $\mathcal{U}_N\subset\diff^1(M)$, such that, for any $g\in\mathcal{U}_N$, the set $Per_N(g)$ is a finite set and any point $q\in Per_N(g)$ is a hyperbolic periodic point.

We define a map $\Phi_{N}:\mathcal{U}_N\mapsto \mathcal{S}$, sending a diffeomorphism $g$ to the set of pairs $(q,P_{\lambda_3}(q,g))$, where $q\in Per_N(g)$, and $P_{\lambda_3}(q,g)$ is a compact set contained in $\mathcal{C}(q,g)$ defined as following:
\begin{itemize}
\item If $\mathcal{C}(q,g)$ has a $\lambda_0^2$-dominated splitting $E\oplus F$ such that $\dim(E)=\ind(q)$, then the set $P_{\lambda_3}(q,g)$ is the set of $\lambda_3$-$E$-Pliss points contained in $\mathcal{C}(q,g)$.
\item Otherwise, $P_{\lambda_3}(q,g)=\emptyset$.
\end{itemize}

\begin{lem}\label{continuation of pliss points}
For each positive integer $N\in\mathbb{N}$, the set of continuity points of $\Phi_N$, denoted by $\mathcal{B}_N$, is a residual subset of $\diff^1(M)$.
\end{lem}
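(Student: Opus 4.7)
The plan is to show that $\Phi_N$ is essentially an upper semi-continuous multifunction outside a nowhere-dense exceptional set, and then invoke the classical fact (Choquet--Fort) that any upper semi-continuous multifunction from a Baire space to the compact subsets of a metric space has a residual $G_\delta$ set of Hausdorff continuity points. Fix $g_0 \in \mathcal{U}_N$; since $\mathrm{Per}_N(g_0) = \{q_1, \dots, q_k\}$ is a finite set of hyperbolic periodic points, the implicit function theorem produces a $C^1$-neighborhood $\mathcal{V}_0 \subset \mathcal{U}_N$ of $g_0$ on which every periodic point of period $\leq N$ is the hyperbolic continuation $q_i(g)$ of some $q_i$. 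On $\mathcal{V}_0$ the ``finite set of periodic points'' part of $\Phi_N$ varies continuously (in fact locally constantly, once each $q_i(g)$ is identified with $q_i$ via continuation), so continuity of $\Phi_N$ at $g$ reduces to continuity of each $\phi_i\colon g \mapsto P_{\lambda_3}(q_i(g), g)$ viewed as a multifunction from $\mathcal{V}_0$ into the compact subsets of $M$.

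For a fixed $q = q_i$, split $\mathcal{V}_0 = A_q \sqcup B_q$, where $A_q$ is the set of $g$ for which $\mathcal{C}(q_g, g)$ admits a $\lambda_0^2$-dominated splitting with $\dim(E) = \mathrm{ind}(q)$. I claim $A_q$ is open: given $g_0 \in A_q$, the dominated splitting on $\mathcal{C}(q_{g_0}, g_0)$ extends to one on the maximal invariant set of $g_0$ in some neighborhood $U$ of $\mathcal{C}(q_{g_0}, g_0)$; by upper semi-continuity of chain recurrence classes (a generic property already recorded in Lemma~\ref{generic properties}), $\mathcal{C}(q_g, g) \subset U$ for all $g$ close to $g_0$, and the dominated splitting on $U$ persists under $C^1$-perturbations and restricts to the correct splitting on $\mathcal{C}(q_g, g)$. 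Upper semi-continuity of $\phi_q$ on $A_q$ then follows: if $g_n \to g_0$ in $A_q$ and $x_n \in \phi_q(g_n)$ with $x_n \to x$, then $x \in \mathcal{C}(q_{g_0}, g_0)$ by upper semi-continuity of chain classes, and because the bundles $E_{g_n}$ are restrictions of the continuously varying dominated splitting on the maximal invariant set in $U$, the defining infinite Pliss inequalities pass to the $C^1$-limit and $x \in \phi_q(g_0)$. On $\mathrm{int}(B_q)$ the map $\phi_q \equiv \emptyset$ is constant. The discontinuity points of $\phi_q$ in $\mathcal{V}_0$ are therefore contained in (discontinuity points of $\phi_q|_{A_q}$) $\cup\ \partial A_q$: the first piece is meager by Choquet--Fort applied within the open (hence Baire) set $A_q$, and $\partial A_q$ is nowhere dense since it is the topological boundary of an open set.

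Intersecting the finitely many residual sets thus obtained for $q_1, \dots, q_k$ gives a residual set of continuity points of $\Phi_N$ in $\mathcal{V}_0$, and covering $\mathcal{U}_N$ by countably many such neighborhoods (using second countability of $\diff^1(M)$) shows that the continuity points of $\Phi_N$ form a residual subset of $\mathcal{U}_N$, hence of $\diff^1(M)$ since $\mathcal{U}_N$ is open and dense. The main obstacle is the behavior on $\partial A_q$, where the dominated splitting on the chain class appears or disappears under perturbation and $\phi_q$ can legitimately jump from $\emptyset$ to a non-empty set; strict upper semi-continuity of $\Phi_N$ fails at such points. This is handled cleanly only because $\partial A_q$ is automatically nowhere dense, which is the reason the argument must factor through a Baire-category argument rather than producing global upper semi-continuity.
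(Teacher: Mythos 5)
Your argument matches the paper's proof in all essentials: both localize to a neighborhood where periodic points of period $\leq N$ admit continuations, observe that existence of the $\lambda_0^2$-dominated splitting on the chain class with the prescribed index is a $C^1$-open condition, establish upper semi-continuity of the Pliss-point map where the splitting exists (using upper semi-continuity of chain classes to pass Pliss points to the limit), and remove the nowhere-dense boundary between ``has DS'' and ``robustly has no DS'' before invoking the residuality of continuity points for a semi-continuous multifunction. The paper packages the complement of the boundaries as a single open dense set $\mathcal{U}'_N$ and applies the residuality result once there, whereas you reason per periodic point $q$ and then patch via a countable cover, but that is only a presentational difference. One small misattribution: upper semi-continuity of chain recurrence classes (the inclusion $\mathcal{C}(q_h,h)\subset V$ for $h$ near $g$) is a general consequence of the theory of Conley, not a generic property from Lemma~\ref{generic properties} — the generic statement recorded there is about full continuity (in effect, lower semi-continuity) of homoclinic classes. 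This does not affect the validity of your argument.
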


\begin{proof}
Assume $g\in\diff^1(M)$ and $p_g$ is a hyperbolic periodic point of $g$. There is a $C^1$-neighborhood $\mathcal{U}$ of $g$, such that, for any $h\in\mathcal{U}$, the point $p_g$ has a continuation $p_h$. For any neighborhood $V$ of $\mathcal{C}(q,g)$, there is a $C^1$-neighborhood $\mathcal{U}_1\subset\mathcal{U}$ of $g$, such that $\mathcal{C}(q,h)\subset V$ for any $h\in\mathcal{U}_1$.

If $\mathcal{C}(q,g)$ has a $\lambda_0^2$-dominated splitting, then it is a robust $\lambda_0^2$-dominated splitting. More precisely, there is a $C^1$-neighborhood $\mathcal{U}_2\subset\mathcal{U}$ of $g$, such that $\mathcal{C}(q_h,h)$ has a $\lambda_0^2$-dominated splitting for any $h\in\mathcal{U}$. Hence by the choice of $\mathcal{U}_N$, there is an open and dense subset $\mathcal{U}'_N\subset\mathcal{U}_N$, such that, for any $g\in\mathcal{U}'_N$, any $q\in Per_N(g)$, the chain recurrence class $\mathcal{C}(q,g)$ either has a robust $\lambda_0^2$-dominated splitting or has no $\lambda_0^2$-dominated splitting robustly. Moreover, if there is a sequence of diffeomorphisms $\{g_n\}_{n\geq 0}$ such that $g_n$ converges to $g$, and $g_n$ has a $\lambda_3$-$E$-Pliss point $x_n\in \mathcal{C}(q_h,h)$, then, any limit point $x$ of the sequence $\{x_n\}$ is a $\lambda_3$-$E$-Pliss point of $g$.

By the above arguments, we can see that $\Phi_N$ is an upper-semi-continuous map restricted to $\mathcal{U}'_N$. It is known that the set of continuity points of a semi-continuous map is a residual subset. Then $\mathcal{B}_N$ contained a residual subset of $\mathcal{U}'_N$. Since $\mathcal{U}'_N$ is open and dense in $\mathcal{U}_N$, we know that $\mathcal{B}_N$ is a residual subset of $\mathcal{U}_N$. Hence $\mathcal{B}_N$ is a residual subset of $\diff^1(M)$, since $\mathcal{U}_N$ is open and dense in $\diff^1(M)$.
\end{proof}

Denote by $\mathcal{R}_0=\cap_{N\geq 1}\mathcal{B}_N$, then $\mathcal{R}_0$ is a residual subset of $\diff^1(M)$. In the following we take $f\in \mathcal{R}_0\cap \mathcal{R}$.

\subsection{The perturbation to make $W^u(p)$ accumulate to $K$}

We take the $\lambda_2$-$E$-weak set $K\subset H(p)$ of $f$ obtained by Lemma~\ref{weak set and pliss point}. By proposition~\ref{asymptotic connecting 1}, one can obtain a heteroclinic orbit connecting $p$ to $K$ by a $C^1$ perturbation, since $K\subset H(p)$. Hence the set $K$ is still a $\lambda_2$-$E$-weak set if the perturbation is $C^1$ small. Moreover, using the continuation of Pliss points (Section~\ref{bi pliss point and weak set} and~\ref{choice of f}), we can guarantee that the set $K$ is contained in the chain recurrence class of $p$ after the perturbation.

\begin{lem}\label{first perturbation}
Assume $f\in\mathcal{R}_0\cap \mathcal{R}$, then for any neighborhood $\mathcal{U}$ of $f$ in $\diff^1(M)$, there are a diffeomorphism $g_1\in\mathcal{U}$ and a point $y\in M$, such that,
\begin{itemize}
\item $(1)$ $g_1$ coincides with $f$ on the set $K\cup \orb(p)$, and $y\in W^u(p,g_1)$,
\item $(2)$ $\omega(y,g_1)\subset K$,
\item $(3)$ $K$ is contained in $\mathcal{C}(p,g_1)$.
\end{itemize}
\end{lem}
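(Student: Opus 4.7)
The plan is to apply Proposition~\ref{asymptotic connecting 1} to the $\lambda_3$-bi-Pliss point $x$ of Lemma~\ref{weak set and pliss point}, which gives items~(1) and~(2) directly, and then to exploit the continuity of $\Phi_{N}$ at $f\in\mathcal{R}_0$ to secure item~(3).

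First I check the hypotheses of Proposition~\ref{asymptotic connecting 1}. Since $f\in\mathcal{R}$ is Kupka--Smale, all periodic orbits in $K$ are hyperbolic. The point $p$ is not in $K$: the bundle $E$ is contracted over $orb(p)$ by the choice of $\lambda_0$, so $p$ is a $\lambda_2$-$E$-Pliss point, whereas the $\lambda_2$-$E$-weak set $K$ contains none. For any periodic orbit $orb(q)$ homoclinically related to $p$, the $\lambda$-lemma gives $orb(q)\subset W^u(q)\subset\overline{W^u(p)}$, so $H(p)\subset\overline{W^u(p)}$; hence $\emptyset\neq K\subset\overline{W^u(p)}\cap K$. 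Finally $\alpha(x)=K$. Proposition~\ref{asymptotic connecting 1} then produces $g_1\in\mathcal{U}$, a point $y\in W^u(p,f)$ and an open set $V\supset orb^-(x)$ with $g_1\equiv f$ on $orb(p)\cup K\cup V\cup orb^-(y)$ and $\omega(y,g_1)\subset K$. Item~(2) is immediate, and item~(1) holds because $g_1$ coincides with $f$ on $orb(p)\cup orb^-(y)$, so $y\in W^u(p,g_1)$.

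For item~(3), the key observation is that, since $V$ is open and contains $orb^-(x)$, the backward orbit of $x$ under $g_1$ coincides with the backward orbit under $f$, so $\alpha(x,g_1)=\alpha(x,f)=K$. Because $\mathcal{C}(p,g_1)$ is closed and invariant under $g_1$, once we show $x\in\mathcal{C}(p,g_1)$ the inclusion $K=\alpha(x,g_1)\subset\mathcal{C}(p,g_1)$ will follow automatically.

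The main difficulty is precisely placing $x$ inside $\mathcal{C}(p,g_1)$: although $x\in H(p,f)=\mathcal{C}(p,f)$, this need not persist under perturbation. This is where $f\in\mathcal{R}_0$ is used. Continuity of $\Phi_{N}$ at $f$ (Lemma~\ref{continuation of pliss points}) forces $P_{\lambda_3}(p,g_1)\subset\mathcal{C}(p,g_1)$ to be Hausdorff-close to $P_{\lambda_3}(p,f)\ni x$ when $g_1$ is $C^1$-close to $f$; in particular there is a $\lambda_3$-$E$-Pliss point $x_{g_1}\in\mathcal{C}(p,g_1)$ arbitrarily close to $x$. Since the $\lambda_0^2$-dominated splitting produces uniform local invariant manifolds at bi-Pliss points, an auxiliary $C^1$-small perturbation supported away from $orb(p)\cup K\cup V\cup orb(y)$ can create a transverse heteroclinic intersection between the local unstable manifold at a suitable backward iterate of $x$ (well defined because the negative orbit of $x$ is unchanged under $g_1$ and accumulates on the uniformly $F$-expanded set $K$) and the local stable manifold of $x_{g_1}$. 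This places $x$ in $\mathcal{C}(p,g_1)$ while preserving items~(1) and~(2), completing the proof.
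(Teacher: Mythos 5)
Your plan is the same as the paper's: apply Proposition~\ref{asymptotic connecting 1} to the bi-Pliss point $x$ of Lemma~\ref{weak set and pliss point} for items (1) and (2), then use the continuity of $\Phi_N$ at $f\in\mathcal{R}_0$ (Lemma~\ref{continuation of pliss points}) to locate a $\lambda_3$-$E$-Pliss point $x'\in\mathcal{C}(p,g_1)$ near $x$, and finally connect invariant manifolds to obtain item~(3). Your hypothesis-check for Proposition~\ref{asymptotic connecting 1} (hyperbolicity of periodic orbits in $K$, $p\notin K$, $K\subset\overline{W^u(p)}$, $\alpha(x)\subset K$) is correct.

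The deviation — and the weak point — is the final step, where you propose an \emph{auxiliary} $C^1$-small perturbation to create a transverse heteroclinic intersection between the local unstable manifold at a backward iterate of $x$ and the local stable manifold of $x'$. No such perturbation is needed, and performing one is dangerous: $x'\in\mathcal{C}(p,g_1)$ is exactly what the continuity of $\Phi_N$ secured for the specific map $g_1$, and membership in a chain recurrence class is not robust, so after a further perturbation you can no longer assert that the nearby Pliss point lies in the chain recurrence class of $p$ for the new map. Instead, observe that $Dg_1=Df$ on $orb^-(x)$ (since $g_1=f$ on the open set $V\supset orb^-(x)$), so $x$ remains a $\lambda_3$-$F$-Pliss point for $g_1^{-1}$ and carries, at $x$ itself, a local unstable manifold of uniform size; $x'$, being a $\lambda_3$-$E$-Pliss point for $g_1$, likewise carries a local stable manifold of uniform size. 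The domination keeps these disks uniformly transverse, so once $x'$ is close enough to $x$ — which is guaranteed by taking $g_1$ close enough to $f$ — the intersection $W^u(x,g_1)\cap W^s(x',g_1)$ is already non-empty, with no further perturbation, and transversality is irrelevant for the chain-recurrence argument. An orbit through this intersection has $\alpha$-limit set $\alpha(x,g_1)=K$ and $\omega$-limit set contained in $\mathcal{C}(p,g_1)$; combined with the orbit of $y$ from $p$ to $K$, this yields $K\subset\mathcal{C}(p,g_1)$, which is precisely the paper's conclusion.
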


\begin{proof}

By Lemma~\ref{weak set and pliss point}, we obtain that, for the diffeomorphism $f$, there is a $\lambda_3$-bi-Pliss point $x\in H(p)\setminus K$ satisfying $\alpha(x)= K$. Since $K\subset H(p)$, we have that $K\subset\overline{W^u(p)}$. By Proposition~\ref{asymptotic connecting 1}, for any neighborhood $\mathcal{U}$ of $f$ in $\diff^1(M)$, there are a point $y\in W^u(p,f)$ and a diffeomorphism $g_1\in\mathcal{U}$, such that $\omega(y,g_1)\subset K$, and $y\in W^u(p,g_1)$. Moreover, the diffeomorphism $g_1$ coincides with $f$ on the set $\orb^-(x)\cup K\cup \orb(p)$ and $Dg_1$ coincides with $Df$ on $\orb^-(x)$. Thus items (1) and (2) are satisfied, and $x$ is a $\lambda_3$-$F$-Pliss point for $g_1^{-1}$.

Since $p$ is a hyperbolic fixed point and $x\in P_{\lambda_3}(p,f)$, by Lemma~\ref{continuation of pliss points} and the fact that $f$ is a continuity point of $\Phi_1$, if we choose $g_1$ close enough to $f$ (by taking the neighborhood $\mathcal{U}$ small), then there is a $\lambda_3$-$E$-Pliss $x'$ close to $x$, such that $x'\in \mathcal{C}(p,g_1)$. Moreover, if $x'$ is close enough to $x$ (by taking $g_1$ close to $f$), then $W^u(x,g_1)\cap W^s(x',g_1)\neq \emptyset$.

Now we prove that $K\subset \mathcal{C}(p,g_1)$. Take any constant $\varepsilon>0$. Since $y\in W^u(p,g_1)$ and $\omega(y,g_1)\subset K$, there is an $\varepsilon$-pseudo orbit connecting from $p$ to a point contained in $K$. In fact the pseudo orbit can be taken as an orbit segment of $\orb(y,g_1)$. On the other hand, since $\alpha(x,g_1)= K$ and $W^u(x,g_1)\cap W^s(x',g_1)\neq \emptyset$, there is an $\frac{\varepsilon}{2}$-pseudo orbit connecting from $K$ to $x'$. By the fact that $x'\in \mathcal{C}(p,g_1)$, there is an $\frac{\varepsilon}{2}$-pseudo orbit connecting from $x'$ to $p$. The composition of the two $\frac{\varepsilon}{2}$-pseudo orbits is an $\varepsilon$-pseudo orbit connecting from $K$ to $p$. Finally by the fact that $\varepsilon$ can be arbitrarily small and the fact that $K=\alpha(x,g_1)$ is a chain transitive set, we have that $K\subset \mathcal{C}(p,g_1)$. This finishes the proof of Lemma~\ref{first perturbation}.
\end{proof}

\subsection{The perturbations to connect $p$ and $K$ by true orbits}

In this subsection, we prove that we can get heteroclinic connections between the hyperbolic fixed point $p$ and the weak set $K$ for a diffeomorphism $C^1$ close to $f$. In the former subsection, we have obtained a diffeomorphism $g_1$ that is $C^1$ close to $f$, and an orbit $\orb(y)$ that connects $p$ to $K$. Moreover, $K$ is still contained in the chain recurrence class of $p$ for $g_1$. We take two steps to get heteroclinic connections between $p$ and $K$. First, since $K\subset \mathcal{C}(p,g_1)$, by Proposition 3, we can connect $K$ by a true orbit to any neighborhood of $p$ by a $C^1$ small perturbation. Then, by the hyperbolicity of $p$, we use the uniform connecting lemma to ``push'' this orbit onto the stable manifold of $p$. We will see that in these two steps, the orbit $\orb(y)$ that connects $p$ to $K$ is not changed.

\begin{lem}\label{second perturbation}
Assume $f\in\mathcal{R}_0\cap \mathcal{R}$, then for any neighborhood $\mathcal{U}$ of $f$ in $\diff^1(M)$, there are a diffeomorphism $g_2\in\mathcal{U}$ and two points $y,y'\in M$, such that,
\begin{itemize}
\item $(1)$ $y\in W^u(p,g_2)$ and $\omega(y,g_2)\subset K$,
\item $(2)$ $y'\in W^s(p,g_2)$ and $\alpha(y',g_2)\subset \omega(y,g_2)$,
\item $(3)$ $g_2$ coincides with $f$ on the set $\omega(y,g_2)\cup \orb(p)$.
\end{itemize}
\end{lem}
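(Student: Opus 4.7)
The plan is to obtain $g_2$ by further perturbations of $g_1$. First, applying Proposition~\ref{asymptotic connecting} to $g_1^{-1}$ produces a true backward orbit from a point $z$ near $p$ into $\tilde K:=\omega(y,g_1)$; then, if necessary, Theorem~\ref{uniform connecting} near $orb(p)$ puts $z$ on $W^s_{loc}(p)$. Both perturbations are arranged to fix $orb(p)$, $\tilde K$, and the orbit $orb(y)$, so the conclusions of Lemma~\ref{first perturbation} remain valid.

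From $y\in W^u(p,g_1)$, the compact invariant set $\tilde K=\omega(y,g_1)$ lies in $\mathcal{C}(p,g_1)$; Lemma~\ref{first perturbation} also gives $K\subset\mathcal{C}(p,g_1)$. Fix a small neighborhood $V$ of $p$ with $\overline V\cap K=\emptyset$ and pick $z\in W^s_{loc}(p,g_1)\cap V\setminus\{p\}$. Near $p$ the local unstable manifold of $p$ is transverse to $W^s_{loc}(p,g_1)$, so $z$ may be chosen bounded away from $W^u_{loc}(p,g_1)$; since the tail of $orb^-(y)$ accumulates at $p$ along $W^u_{loc}(p,g_1)$, a small ball around $z$ is then disjoint from $orb^-(y)$. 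Outside $V\cup W$, for any prescribed neighborhood $W$ of $\tilde K$, the orbit $orb(y)$ has only finitely many points. Using this and chain transitivity of $\mathcal{C}(p,g_1)$, for every $\varepsilon>0$ there is a $g_1^{-1}$-$\varepsilon$-pseudo-orbit from $z$ to $\tilde K$ in $\mathcal{C}(p,g_1)$ avoiding a prescribed small neighborhood of the finite set $(orb(p)\cup orb(y))\setminus(V\cup W)$. From such pseudo-orbits we build a positively invariant compact set $X$ for $g_1^{-1}$, containing $\{z\}\cup\tilde K$, together with a neighborhood $U$ of $X\setminus\tilde K$ disjoint from $orb(p)\cup orb(y)$. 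Proposition~\ref{asymptotic connecting} applied to $g_1^{-1}$ with $(\tilde K,X,z,U)$ then yields $g_2'$, $C^1$-close to $g_1$, coinciding with $g_1$ off $U$ and satisfying $\alpha(z,g_2')\subset\tilde K$.

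Because $U$ misses $orb(p)\cup orb(y)\cup\tilde K$ and $g_1=f$ on $K\cup orb(p)$, we obtain $g_2'=f$ on $orb(p)\cup\tilde K$, $\omega(y,g_2')=\omega(y,g_1)=\tilde K$, and $y\in W^u(p,g_2')$. If $V$ is small enough that $U\cap V=\emptyset$, then $W^s_{loc}(p,g_2')$ agrees with $W^s_{loc}(p,g_1)$ inside $V$, so $z\in W^s(p,g_2')$; we then set $y'=z$ and $g_2=g_2'$. Otherwise a final application of Theorem~\ref{uniform connecting} near $orb(p)$, with support disjoint from $\tilde K\cup orb(y)$ and from the part of $orb^-(z,g_2')$ lying outside $V$, connects $z$ to $W^s_{loc}(p,g_2')$, producing $g_2$ that still coincides with $f$ on $orb(p)\cup\tilde K$, together with $y'\in W^s(p,g_2)$ satisfying $\alpha(y',g_2)\subset\tilde K=\omega(y,g_2)$.

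The main difficulty is the first step: choosing the perturbation support $U$ of Proposition~\ref{asymptotic connecting} simultaneously disjoint from $orb(p)$ and $orb(y)$. Since $orb^-(y)$ accumulates at $p$, this forces $z$ onto $W^s_{loc}(p,g_1)$ transversely away from $W^u_{loc}(p,g_1)$, and $X$ must be constructed from pseudo-orbits that route around a finite forbidden set outside small neighborhoods of $p$ and $\tilde K$. Chain transitivity of $\mathcal{C}(p,g_1)$ and the convergence $orb^+(y)\to\tilde K$ (making $orb^+(y)$ discrete off $\tilde K$) provide exactly the flexibility needed for this routing.
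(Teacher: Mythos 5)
Your overall route is the same as the paper's: apply Proposition~\ref{asymptotic connecting} to $g_1^{-1}$ to obtain a true orbit with $\alpha$-limit inside $K_0:=\omega(y,g_1)$, then finish with the uniform connecting lemma near $p$. But the crucial step --- producing the negatively $g_1$-invariant compact set $X$ together with a neighborhood $U\supset X\setminus K_0$ that misses $orb(y,g_1)$ entirely --- is where your argument has a genuine gap. You propose to route the $\varepsilon$-pseudo-orbits away from the \emph{finite} set $(orb(p)\cup orb(y))\setminus(V\cup W)$. That only controls the part of $orb(y)$ outside the chosen neighborhoods $V$ of $p$ and $W$ of $K_0$. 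Inside $V$ (where $orb^-(y)$ accumulates along $W^u_{loc}(p)$) and inside $W$ (where $orb^+(y)$ accumulates on $K_0$), nothing prevents the routed pseudo-orbits, hence their Hausdorff limit $X$, from meeting $orb(y)$; and if $X\cap orb(y)\neq\emptyset$, no neighborhood $U$ of $X\setminus K_0$ can be disjoint from $orb(y)$, so the perturbation of Proposition~\ref{asymptotic connecting} may destroy the heteroclinic orbit $orb(y)$ built in Lemma~\ref{first perturbation}. Also, the branch ``if $V$ is small enough that $U\cap V=\emptyset$'' can never occur, since by construction $z\in U\cap V$.

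The paper avoids the routing entirely. It builds $X$ as the Hausdorff limit of $g_1$-pseudo-orbits from $K_0$ to a small neighborhood $V_0$ of $p$, where each pseudo-orbit is chosen to have \emph{exactly one} point in $V_0$ (the last one). This immediately yields $p\notin X$ and that $X$ is $g_1$-negatively invariant; since $y\in W^u(p,g_1)$ and $\alpha(y)=\{p\}$, negative invariance of $X$ together with $p\notin X$ forces $orb(y,g_1)\cap X=\emptyset$. From there, one gets an open $U_0\supset X\setminus K_0$ disjoint from the compact set $\overline{orb(y,g_1)}$ by covering $X\setminus K_0$ with balls that avoid it. This structural observation is what you should add: once $X$ is negatively invariant and $p\notin X$, disjointness from $orb(y)$ is automatic, and the routing of pseudo-orbits becomes unnecessary. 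As a side effect, the paper also does not place $z$ on $W^s_{loc}(p,g_1)$ in advance; $z$ arises naturally as the limit of the pseudo-orbit endpoints in $\overline{V_0}$, and the connection to $W^s(p)$ is then obtained by the uniform connecting lemma using a covered fundamental domain of $W^s_{loc}(p)$.
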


\begin{proof}
We take several steps to prove the lemma.

\paragraph{Choice of neighborhoods.}
For any neighborhood $\mathcal{U}$ of $f$ in $\diff^1(M)$, there are a neighborhood $\mathcal{U}_1\subset \mathcal{U}$ and three numbers $\rho>1$, $\delta_0>0$ and $N\in\mathbb{N}$ that satisfy the uniform connecting lemma (Theorem~\ref{uniform connecting}). We can also assume that the fixed point $p$ has a continuation for any $g\in\mathcal{U}_1$. There are a smaller neighborhood $\mathcal{U}'\subset \mathcal{U}_1$ of $f$ and an integer $T$ satisfying the conclusions of Proposition 3. By the hyperbolicity of periodic orbits of $f$, for the integer $T$, there is a neighborhood $\mathcal{U}_2\subset\diff^1(M)$ of $f$, such that, for any diffeomorphism $h\in\mathcal{U}_2$, any periodic point of $h$ with period less than or equal to $T$ is hyperbolic. Take a neighborhood $\mathcal{U}_3$ of $f$ in $\diff^1(M)$, such that $\overline{\mathcal{U}_3}\subset \mathcal{U}_2\cap\mathcal{U}'$.

\paragraph{The connection from $K$ to a neighborhood of $p$ by pseudo-orbits.}
By Lemma~\ref{first perturbation}, there are a diffeomorphism $g_1\in\mathcal{U}_3$ and a point $y\in M$, such that:
\begin{itemize}
\item $g_1$ coincides with $f$ on the set $K\cup \orb(p)\cup \orb^-(y)$,
\item $y\in W^u(p,g_1)$ and $\omega(y,g_1)\subset K\subset \mathcal{C}(p,g_1)$.
\end{itemize}
Denote $K_0=\omega(y,g_1)$.

\begin{claim}\label{connecting by pseudo orbit}
For any neighborhood $V$ of $p$, there are a negatively $g_1$-invariant compact set $X$ and a point $z\in V\cap X$, satisfying that
\begin{itemize}
\item the point $p\notin X$,
\item for any $\vep>0$, there is a $g_1$-$\vep$-pseudo-orbit $Y_{\vep}=(y_0,\cdots,y_m)$ contained in $X$ such that $y_0\in K_0$ and $y_m=z$.
\end{itemize}
\end{claim}

\begin{proof}
For any neighborhood $V$ of $p$, take a smaller neighborhood $V_0$ of $p$, such that $\overline{V_0}\subset V$. For any $k\geq 1$, there is a $g_1$-$\frac{1}{k}$-pseudo-orbit $X_{k}=\{x_0^k,x_1^k,\cdots,x_{m_k}^k\}$, such that, $X_{k}\cap K_0=\{x_0^k\}$, and $X_{k}\cap V_0=\{x_{m_k}^k\}$. Take a subsequence of $\{X_{k}\}_{k\geq 1}$ if necessary, we assume $X_{k}$ converges to a compact set $X$ and $x_{m_k}^k$ converges to a point $z\in\overline{V_0}\subset V$ as $k$ goes to $+\infty$. Obviously, $X$ is a negatively $g_1$-invariant set, $p \notin X$ and $X\cap K_0\neq \emptyset$.

Now we prove that for any $\vep>0$, there is a $g_1$-$\vep$-pseudo-orbit contained in $X$ from $K_0$ to $z$. By the continuity of $g_1$, for any $\vep>0$, there is $k>\frac{3}{\vep}$, such that for all $x,y\in M$, if $d(x,y)<\frac{1}{k}$, then $d(g_1(x),g_1(y))<\frac{\vep}{3}$. Then we take a $\frac{1}{k'}$-pseudo-orbit $X_{k'}=\{x_0^{k'},x_1^{k'},\cdots,x_{m_k}^{k'}\}$, such that $x_0^{k'}\in K_0$ and $x_{m_{k'}}^{k'}\in V_0$ for a number $k'>k$. By choosing $k'$ large enough, we can assume that $d_H(X_{k'},X)<\frac{1}{k}$ and there is a point $y_0\in X\cap K_0$ such that $d(y_0,x_0^{k'})<\frac{1}{k}$ and $d(z,x_{m_{k'}}^{k'})<\frac{1}{k}$. By the assumption, for any $1\leq i\leq m_{k'}-1$, there is $y_i\in X$, such that $d(x_i^{k'},y_i)<\eta$. We prove that $Y_{\vep}:=(y_0,\cdots,y_{m_{k'}}=z)$ is a $\vep$-pseudo-orbit of $g_1$. In fact, for any $0\leq i\leq m_{k'}-1$,
   \begin{center}
     $d(g_1(y_i),y_{i+1})\leq d(g_1(y_i),g_1(x_i^{k'}))+d(g_1(x_i^{k'}),x_{i+1}^{k'})+d(x_{i+1}^{k'},y_{i+1})<\frac{\vep}{3}+\frac{1}{k'}+\frac{1}{k}<\vep$.
   \end{center}
Hence $Y_{\vep}\subset X$ is a $\vep$-pseudo-orbit of $g_1$ from the set $K_0$ to the point $z$.
\end{proof}

\paragraph{The perturbation to connect $K$ to a neighborhood of $p$.}
We take a local stable manifold $W^s_{loc}(p,g_1)$ of $p$, and take a compact fundamental domain $I_{g_1}$ of $W^s_{loc}(p,g_1)$. Then there is a number $\delta<\delta_0$, where $\delta_0$ is chosen in the paragraph {\bf Choice of neighborhoods.} such that, for any point $w\in I_{g_1}$, the $N$ balls $(g_1^j(B(w,2\delta)))_{0\leq j\leq N-1}$ are each of size smaller than $\delta_0$, pairwise disjoint and disjoint from the set $K\cup \orb(y,g_1)\cup \orb(p)$. By the compactness of $I_{g_1}$, there are finite points $w_1,w_2,\cdots,w_L\in I_{g_1}$ such that $(B(w_i,\delta/\rho))_{1\leq i\leq L}$ is a finite open cover of $I_{g_1}$. Then there is a number $\eta>0$ such that, for any diffeomorphism $h\in\mathcal{U}_1$ that is $\eta$-$C^0$ close to $g_1$, we have that:
\begin{itemize}
\item $(a)$ $W^s_{loc}(p_h,h)$ is $C^0$ close to $W^s_{loc}(p,g_1)$,
\item $(b)$ $(B(w_i,\delta/\rho))_{1\leq i\leq L}$ is still a finite open cover of a fundamental domain $I_{h}$ of $W^s_{loc}(p_h,h)$
\item $(c)$ for any $1\leq i\leq L$, the $N$ balls $(h^j(B(w_i,2\delta))_{0\leq j\leq N-1}$ are each of size smaller than $\delta_0$, pairwise disjoint and disjoint with the set $K\cup \orb(y,g_1)\cup \orb(p,g_1)$.
\end{itemize}
\medskip

Recall that $p\notin X$ and $X$ is negatively invariant by Claim~\ref{connecting by pseudo orbit}. Since $y\in W^u(p,g_1)$, we have that $\orb(y,g_1)\cap X=\emptyset$. By the choice of $g_1$, we have that all periodic orbits of $g_1$ contained in $X$ with period less than or equal to $T$ are hyperbolic. Under all these hypotheses, if $(X\setminus K_0) \cap \overline{\orb(y,g_1)}=\emptyset$, then there is a neighborhood $U_0$ of $X\setminus K_0$ such that $U_0\cap \overline{\orb(y,g_1)}=\emptyset$. By Proposition~\ref{asymptotic connecting}, there is a diffeomorphism $h\in \mathcal{U}_1$ which is $\eta$-$C^0$ close to $g_1$, such that $h=g_1=f|_{\{p\}\cup \orb(y)\cup K_0}$, and $\alpha(z,h)\subset K_0$. Thus the above items $(a)$, $(b)$ and $(c)$ are satisfied for such a diffeomorphism $h$.

\paragraph{The perturbation to get a heteroclinic connection between $p$ and $K$.}
By the hyperbolicity of the periodic point $p$, if we take the neighborhood $V$ of $p$ small enough, then the diffeomorphism $h$ and the point $z$ chosen above  satisfy that the negative orbit of $z$ under $h$ intersects with $B(w_i,\delta/\rho)$ for some $i\in \{1,2,\cdots,L\}$. Since $\alpha(z,h)\subset K_0$ and $B(w_i,\delta/\rho)\cap K_0=\emptyset$, there is a point $w=h^{-t}(z)$ for some integer $t>0$, such that $\orb^-(w)\cap B(w_i,\delta/\rho)=\emptyset$ and $w$ has a positive iterate under $h$ contained in $B(w_i,\delta/\rho)$. By the item $(b)$, there is a point $y'\in W^s(p,h)$, such that $\orb^+(y',h)\cap (\cup_{0\leq j\leq N-1}h^i(B(w_i,\delta/\rho)))=\emptyset$ and $y'$ has a negative iterate under $h$ contained in $B(w_i,\delta/\rho)$. By Theorem~\ref{uniform connecting}, there is a diffeomorphism $g_2\in\mathcal{U}$, such that $y'$ is on the positive iterate of $w$ under $g_2$. Moreover, $g_2=g_1$ on the set $K_0\cup \orb(y)\cup \orb(p)\cup \orb^-(w)\cup \orb^+(y')$, hence $g_2=f$ on the set $\orb(p)\cup K_0$, where $K_0=\omega(y,g_1)=\omega(y,g_2)$. Thus the three items of the lemma are satisfied for $g_2$. This finishes the proof of Lemma~\ref{second perturbation}.
\end{proof}

\subsection{Last perturbation to get a weak periodic orbit}

The following lemma estimates the average contraction along the bundle $E$ on periodic orbits. Recall that $\lambda_0$ satisfies that $E\oplus F$ is $(1,\lambda_0^2)$-dominated splitting, and, for the hyperbolic fixed point $p$, we have $\|Df|_{E(p)}\|<\lambda_0$.

\begin{lem}\label{choose time}
Assume $f\in\mathcal{R}_0\cap\mathcal{R}$. Then for any neighborhood $\mathcal{U}$ of $f$ in $\diff^1(M)$, for any integer $L>0$, any neighborhood $U_p$ of $p$, there is $g\in\mathcal{U}$, which coincides with $f$ on $\{p\}$, satisfying that, $g$ has a periodic point $q\in U_p$ with period $\tau>L$ such that $\orb(q)$ has $\lambda_0^2$-dominated splitting $E\oplus F$, and
   \begin{displaymath}
     {\lambda_1}^{\tau}\leq \prod_{0\leq i\leq \tau-1} \|Dg|_{E(g^{i}(q))}\|\leq {\lambda_2}^{\tau}.
   \end{displaymath}
\end{lem}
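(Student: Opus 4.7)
The plan is to combine Lemma~\ref{second perturbation} with Proposition~\ref{time control} to produce a one-parameter family $(O_m)_{m\in\mathbb{N}}$ of periodic orbits and then to select within it one whose geometric-average contraction along $E$ falls in the window $[\lambda_1,\lambda_2]$. First I apply Lemma~\ref{second perturbation} to obtain a diffeomorphism $g_2$ arbitrarily $C^1$-close to $f$, together with points $y,y'$ providing heteroclinic connections through $p$ and $K_0:=\omega(y,g_2)\subset K$, with $g_2=f$ on $K_0\cup orb(p)$. Then $K_0$ is $g_2$-invariant, is still a $\lambda_2$-$E$-weak set as a subset of $K$, and every periodic orbit of $g_2$ in $K_0$ is hyperbolic (since $f\in\mathcal{R}$ is Kupka--Smale and $g_2=f$ on $K_0$).

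Before invoking Proposition~\ref{time control} I shrink the neighborhoods $U_p$ of $p$ and $U_K$ of $K_0$ so that: the dominated splitting $E\oplus F$ extends to a common neighborhood containing both; by continuity of $x\mapsto\|Df|_{E(x)}\|$ at $p$ combined with $\|Df|_{E(p)}\|<\lambda_0$, we have $\|Dh|_{E(x)}\|<\lambda_0$ for every $x\in U_p$ and every $h\in\mathcal{U}$; and the product of $\|Dh|_E\|$ along any orbit segment in $U_K$ differs by at most a bounded multiplicative factor from the product along its shadow in $K_0$. I then apply Proposition~\ref{time control} in the form of its remark (since $\omega(y,g_2)=K_0$ and $\alpha(y',g_2)\cap K_0\neq\emptyset$), obtaining integers $l,n_0$ such that, for any $T_K$ and any $m$, successive use of items (1) and (2) produces $h_m\in\mathcal{U}$ with $h_m=f$ on $orb(p)$ and a periodic orbit $O_m$ whose period decomposes as $\tau_m=N_p(m)+T_K+r_m$, where $N_p(m)\in\{l+m\tau(p),\dots,l+(m+1)\tau(p)-1\}$ counts iterates in $U_p$ and $r_m\leq n_0$.

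With $\mu:=\|Df|_{E(p)}\|$, splitting $O_m$ into its three parts yields for the product of $\|Dh_m|_E\|$ along $O_m$ an upper estimate of order $\lambda_0^{N_p(m)}\|Df\|^{T_K+r_m}$ (loose but tight on the $U_p$-part) and a lower estimate of order $\mu_-^{N_p(m)}\cdot C_K^{-1}\lambda_2^{T_K}\cdot c_{\mathrm{tr}}^{r_m}$, where $\mu_-$ is slightly smaller than $\mu$ from the continuity on $U_p$, and $C_K^{-1}\lambda_2^{T_K}$ comes from iteratively chaining the defining weak-set inequality of $K$ along a shadowed segment in $K_0$. Writing
\[
R(m):=\frac{1}{\tau_m}\log\prod_{i=0}^{\tau_m-1}\|Dh_m|_{E(h_m^i(q))}\|,
\]
these estimates together with the actual geometric average along the $U_K$-segment give $R(m)\to\log\mu<\log\lambda_0<\log\lambda_1$ as $m\to\infty$, and $R(0)\geq\log\lambda_2-\epsilon(T_K)$ with $\epsilon(T_K)\to 0$ as $T_K\to\infty$; a direct computation yields the step bound $|R(m+1)-R(m)|=O(\tau_m^{-1})$.

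I conclude by fixing $T_K$ so large that $\tau_m>L$ for every $m$, that $\epsilon(T_K)<\tfrac{1}{3}\log(\lambda_2/\lambda_1)$, and that the step size near the crossing level $\log\lambda_2$ is also less than $\tfrac{1}{3}\log(\lambda_2/\lambda_1)$. If $R(0)\leq\log\lambda_2$ already, then $R(0)\in(\log\lambda_1,\log\lambda_2]$ and $m^*:=0$ works; otherwise let $m^*$ be the least integer with $R(m^*)\leq\log\lambda_2$, which exists since $R(m)\to\log\mu$, and the step bound then yields $R(m^*)>\log\lambda_1$. In either case $\lambda_1^{\tau_{m^*}}\leq\prod\|Dh_{m^*}|_E\|\leq\lambda_2^{\tau_{m^*}}$ along $O_{m^*}$; the orbit $O_{m^*}$ carries the $\lambda_0^2$-dominated splitting $E\oplus F$ automatically once the neighborhoods are small, so $g:=h_{m^*}$ together with any $q\in O_{m^*}\cap U_p$ completes the proof. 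The main technical obstacle is the uniform lower bound $C_K^{-1}\lambda_2^{T_K}$ on the $U_K$-portion of the product along the $h_m$-orbit (which is not an $f$-orbit); it is obtained by iteratively chaining the defining inequality of $K$ along a shadowing segment in $K_0$ and transferring the estimate through the bounded-ratio continuity chosen at the outset, with the subsidiary combinatorial care of forcing the $O(T_K^{-1})$ step size of $R$ to be smaller than $\log(\lambda_2/\lambda_1)$.
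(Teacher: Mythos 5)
Your overall strategy matches the paper's: apply Lemma~\ref{second perturbation}, then Proposition~\ref{time control} to get the one-parameter family $(O_m)$, fix $T_K$ large, and tune $m$ so the average contraction along $E$ falls in the window. But the crucial selection-of-$m$ step has a gap in the stated step bound.

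You claim $|R(m+1)-R(m)|=O(\tau_m^{-1})$ by ``a direct computation'' from continuity on $U_p$. This is not justified. The orbits $O_m$ and $O_{m+1}$ coincide outside $U_p$, but their $U_p$-segments are orbits of different diffeomorphisms $h_m,h_{m+1}$; the only comparison you have is the pointwise bound $\log\|Dh_\bullet|_{E(x)}\|\in[\log\mu_-,\log\mu_+]$ for $x\in U_p$. Writing $A_m$ for the log of the $U_p$-product, this only gives $|A_{m+1}-A_m|\lesssim (l+m)\log(\mu_+/\mu_-)$, i.e.\ linear growth in $m$, so the step of $R$ is bounded merely by a constant of order $\log(\mu_+/\mu_-)$, not by $O(\tau_m^{-1})$. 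The paper sidesteps this: it fixes auxiliary constants $\lambda_1<\lambda_1'<\lambda_2'<\lambda_2$, carries out the search for $m$ on the \emph{core} quantity $\|Df|_{E(p)}\|^{l+m}\prod_{\notin U_p}\|Dh|_E\|$ (whose $U_p$-contribution is exactly $(l+m)\log\|Df|_{E(p)}\|$), shows it lands in $((\lambda_1')^\tau,(\lambda_2')^\tau)$ by an explicit interval computation, and only then multiplies by the error factor in $[(\lambda_1/\lambda_1')^{l+m},(\lambda_2/\lambda_2')^{l+m}]$ to land in $(\lambda_1^\tau,\lambda_2^\tau)$. To repair your argument, either run the IVT scheme on this core quantity (for which the $O(\tau_m^{-1})$ step bound does hold) and convert at the end, or establish the finer estimate $A_m=(l+m)\log\mu+O(1)$ uniformly in $m$ (the $U_p$-segment approaches and leaves $p$ geometrically, so the per-iterate deviations from $\log\mu$ form a summable tail) — neither is supplied by ``continuity on $U_p$'' alone. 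Two minor imprecisions that do not affect the conclusion: the lower bound on the $U_K$-portion should read $\lambda_2^{n_{K_0}}$ with $n_{K_0}\geq T_K$, not $\lambda_2^{T_K}$, and the paper does not ``shadow'' in $K_0$ — it uses the weak-set bound with $N_K$ (simplified to $N_K=1$) directly as a pointwise derivative estimate in the small neighborhood $U_K$.
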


\begin{proof}
We take several steps to prove the lemma. We take the $\lambda_2$-$E$-weak set $K\subset H(p)$ of $f$ obtained by Lemma~\ref{weak set and pliss point}. Take two numbers $\lambda_1'$ and $\lambda_2'$, such that $\lambda_1<\lambda_1'<\lambda_2'<\lambda_2$.

\paragraph{Choice of neighborhoods and constants.}
There is a neighborhood $V$ of $H(p)$ and a neighborhood $\mathcal{V}\subset \diff^1(M)$ of $f$, such that, for any $h\in\mathcal{V}$, the following properties are satisfied.
\begin{itemize}
\item The maximal invariant compact set of $h$ in $V$ has a dominated splitting which is a continuation of $E\oplus F$. To simplify the notations, we still denote this domination by $E\oplus F$.
\item The fixed point $p$ has a continuation $p_{h}\in V$ for $h$, and $\|Dh|_{E(p_{h})}\|<\lambda_0$.
\item The chain recurrence class $\mathcal{C}(p_{h},h)$ of $p_{h}$ is contained in $V$.
\end{itemize}

Moreover, since $K$ is a $\lambda_2$-$E$-weak set for $f$, there are a neighborhood $U_{K}\subset V$ of $K$ and a number $N_{K}$, satisfying the following property: consider a point $z$ whose whole orbit is contained in $V$, hence $\orb(z)$ has the dominated splitting $E\oplus F$ by the choice of $V$, if the piece of orbit $(z,f(z),\cdots,f^n(z))$ is contained in $\overline{U_{K}}$ with $n\geq N_{K}$, then we have:
   \begin{displaymath}
     \prod_{0\leq i\leq n-1}\|Df|_{E(f^i(z))}\|>{\lambda_2}^{n}.
   \end{displaymath}
To simplify the proof, we just assume that $N_{K}=1$, but the general case is identical.

We can take the neighborhoods $\mathcal{V}$ and $U_p$ small, such that for any diffeomorphism $h\in\mathcal{V}$, the following additional properties are satisfied.
\begin{itemize}
\item For any point $z\in U_p$ whose orbit under $h$ is contained in $V$, hence $\orb(z,h)$ has the dominated splitting $E\oplus F$ by the choice of $V$ and $\mathcal{V}$, we have that $\frac{\lambda_1}{\lambda_1'}<\frac{\|Dh|_{E(z)}\|}{\|Df|_{E(p)}\|}<\frac{\lambda_2}{\lambda_2'}$.
\item For any point $z\in U_{K}$ whose orbit under $h$ is contained in $V$, we have that $\|Dh|_{E(z)}\|>\lambda_2$.
\end{itemize}

We can also assume that $\overline{U_K}\cap \overline{U_p}=\emptyset$ and $\overline{U_K}\cup \overline{U_p}\subset V$. And moreover, we can assume that $\overline{\mathcal{U}}\subset\mathcal{V}$.

By Lemma~\ref{second perturbation}, there are a diffeomorphism $g_2\in\mathcal{U}$ and two points $y,y'\in M$, satisfying that:
\begin{itemize}
\item $y\in W^u(p,g_2)$ and $\omega(y,g_2)\subset K$,
\item $y'\in W^s(p,g_2)$ and $\alpha(y',g_2)\subset \omega(y,g_2)$,
\item $g_2$ coincides with $f$ on the set $\omega(y,g_2)\cup \orb(p)$.
\end{itemize}
We denote $K_0=\omega(y,g_2)$. Since all periodic points of $f$ are hyperbolic and $g_2=f|_{K_0}$, then $K_0$ contains no non-hyperbolic periodic point of $g_2$.

\paragraph{Choice of time.}

Now we fix the neighborhoods $U_p$ and $U_{K_0}$. Then there are two integers $l$ and $n_0$ satisfying the conclusion of Proposition~\ref{time control} for $g_2$ and the neighborhood $\mathcal{U}$. Then we take $T_{K_0}>L$ large, such that for any $h\in\mathcal{V}$, the inequality
   \begin{displaymath}
     m(h)^{l+n_0}{\lambda_2}^{T_{K_0}}>(\lambda_2')^{T_{K_0}+l+n_0}
   \end{displaymath}
holds. Here $m(h)$ is the mininorm of $h$, i.e. $m(h)=\min_{\|v\|=1}\|Dh(v)\|=\|Dh^{-1}\|^{-1}$.

By the first item of Proposition~\ref{time control}, there is a diffeomorphism $h\in\mathcal{U}$, such that
\begin{itemize}
\item $h$ coincides with $g_2$ on $\orb(p)\cup \orb^-(y)\cup \orb^+(y')$ and outside $U_K$;
\item the point $y'$ is on the positive orbit of $y$ under $h$, with $n_{K_0}=\# (\orb(y,h)\cap U_{K_0})\geq T_{K_0}$ and $n_c=\# ((\orb(y,h)\setminus (U_K\cup U_p))\leq n_0$.
\end{itemize}
Hence by the choice of $T_{K_0}$ and the neighborhoods, we have that
   \begin{displaymath}
     \prod_{h^i(y)\not \in U_p}\|Dh|_{E(h^i(x))}\|>(\lambda_2')^{n_{K_0}+n_c}.
   \end{displaymath}

\begin{claim}\label{estimation}
There is an integer $m>0$, such that:
   \begin{displaymath}
     (\lambda_1')^{n_{K_0}+n_c+m+l}<\|Df|_{E(p)}\|^{l+m}\cdot\prod_{h^i(y)\not \in U_p}\|Dh|_{E(h^i(x))}\|<(\lambda_2')^{n_{K_0}+n_c+m+l}.
   \end{displaymath}
\end{claim}

\begin{proof}
Let $\bar{\lambda}>0$ be the constant such that
   \begin{displaymath}
     \prod_{h^i(y)\not \in U_p}\|Dh|_{E(h^i(x))}\|=\bar{\lambda}^{n_{K_0}+n_c},
   \end{displaymath}
then we have $\bar{\lambda}>\lambda_2'$.

The inequality in the claim is equivalent to
   \begin{displaymath}
      \frac{(n_{K_0}+n_c)\log\frac{\bar{\lambda}}{\lambda_2'}}{\log\frac{\lambda_2'}{\|Df|_{E(p)}\|}}<l+m
      <\frac{(n_{K_0}+n_c)\log\frac{\bar{\lambda}}{\lambda_1'}}{\log\frac{\lambda_1'}{\|Df|_{E(p)}\|}}.
   \end{displaymath}

By the choice of $T_{K_0}$ and $n_{K_0}\geq T_{K_0}$, we have that
   \begin{displaymath}
     \frac{(n_{K_0}+n_c)\log\frac{\bar{\lambda}}{\lambda_2'}}{\log\frac{\lambda_2'}{\|Df|_{E(p)}\|}}>l.
   \end{displaymath}
So we only need that
   \begin{displaymath}
     \frac{(n_{K_0}+n_c)\log\frac{\bar{\lambda}}{\lambda_1'}}{\log\frac{\lambda_1'}{\|Df|_{E(p)}\|}}-
                  \frac{(n_{K_0}+n_c)\log\frac{\bar{\lambda}}{\lambda_2'}}{\log\frac{\lambda_2'}{\|Df|_{E(p)}\|}}>1.
   \end{displaymath}
It is equivalent to
   \begin{displaymath}
     (n_{K_0}+n_c)\left((\frac{1}{\log\frac{\lambda_1'}{\|Df|_{E(p)}\|}}-\frac{1}{\log\frac{\lambda_2'}{\|Df|_{E(p)}\|}})\log\bar{\lambda}
                    +\frac{\log\lambda_2'}{\log\frac{\lambda_2'}{\|Df|_{E(p)}\|}}-\frac{\log\lambda_1'}{\log\frac{\lambda_1'}{\|Df|_{E(p)}\|}}\right)>1.
   \end{displaymath}
Since $\bar{\lambda}>\lambda_2'$, and $n_{K_0}>T_{K_0}$, it is sufficient to require that
   \begin{displaymath}
     \frac{T_{K_0}(\log\lambda_2'-\log\lambda_1')}{\log\frac{\lambda_1'}{\|Df|_{E(p)}\|}}>1.
   \end{displaymath}
By taking $T_{K_0}$ large enough, the above inequality is satisfied.
\end{proof}

\paragraph{Choice of the diffeomorphism $g$.}
We take $g=h_m\in\mathcal{U}$ from item 2 of Proposition~\ref{time control}, then $g$ has a periodic orbit $O=\orb(q)$, such that, $O\setminus U_p=\orb(y,h)\setminus U_p$, and $\# (O\cap U_p)=l+m$. Denote by $\pi(O)$ the period of $O$, then we have $\pi(O)=n_{K_0}+n_c+m+l$. By the choice of the neighborhood $\mathcal{U}$ and the constants, we have
   \begin{displaymath}
      \prod_{0\leq i\leq \pi(O)-1} \|Dg|_{E(g^{i}(q))}\|=\prod_{g^i(q) \in U_p}\|Dg|_{E(g^i(q))}\|\prod_{h^i(y)\not \in U_p}\|Dh|_{E(h^i(x))}\|.
   \end{displaymath}

By the choice of the neighborhoods $\mathcal{V}$ and $U_p$, and the constants $\lambda_1'$ and $\lambda_2'$, we have that
   \begin{displaymath}
     \left(\frac{\lambda_1}{\lambda_1'}\right)^{l+m}\|Df|_{E(p)}\|^{l+m}<\prod_{g^i(q) \in U_p}\|Dg|_{E(g^i(q))}\|<\left(\frac{\lambda_2}{\lambda_2'}\right)^{l+m}\|Df|_{E(p)}\|^{l+m}.
   \end{displaymath}
Then by the estimation in Claim~\ref{estimation}, we can see that
   \begin{displaymath}
     {\lambda_1}^{\pi(O)}\leq  \prod_{0\leq i\leq \pi(O)-1} \|Dg|_{E(g^{i}(q))}\| \leq {\lambda_2}^{\pi(O)}
   \end{displaymath}
This finishes the proof of Lemma~\ref{choose time}.
\end{proof}

\subsection{The genericity argument}\label{generic argument}

Lemma~\ref{choose time} is a perturbation result to get weak periodic orbits. To get the conclusion of Theorem B, we have to do the genericity argument based on Lemma~\ref{choose time}, see for instance~\cite{gw}.

Take a countable basis $(V_n)_{n\geq 1}$ of $M$, and take the countable family $(U_n)_{n\geq 1}$, where each $U_n$ is a union of finitely many sets of $(V_n)_{n\geq 1}$. Take the countable pairs $(\eta_n,\gamma_n)_{n\geq 1}$ of rational numbers contained in $(\lambda_0,1)$ with $\eta_n<\gamma_n$ for each $n\geq 1$.

Let $\mathcal{H}_{n,m}$ be the set of $C^1$ diffeomorphisms $h$ such that, every $h_1$ in a $C^1$ neighborhood $\mathcal{V}\subset\diff^1(M)$ of $h$ has a hyperbolic periodic point $q\in U_n$ satisfying that the hyperbolic splitting $E^s\oplus E^u$ of $\orb(q,h_1)$ is a $\lambda_0^2$-dominated splitting and
   \begin{displaymath}
     {\eta_m}^{\tau(q)}< \prod_{0\leq i\leq \tau(q)-1} \|Dh_1|_{E^s(h_1^{i})}\|< {\gamma_m}^{\tau(q)},
   \end{displaymath}\\
where $\tau(q)$ is the period of $q$. Let $\mathcal{N}_{n,m}$ be the set of $C^1$ diffeomorphisms $h$ such that every $h_1$ in a $C^1$ neighborhood $\mathcal{V}\subset\diff^1(M)$ of $h$ has no hyperbolic periodic point $q\in U_n$ satisfying that the hyperbolic splitting $E^s\oplus E^u$ of $\orb(q,h_1)$ is a $\lambda_0^2$-dominated splitting and
   \begin{displaymath}
     {\eta_m}^{\tau(q)}< \prod_{0\leq i\leq \tau(q)-1} \|Dh_1|_{E^s(h_1^{i})}\|< {\gamma_m}^{\tau(q)},
   \end{displaymath}\\
where $\tau(q)$ is the period of $q$.

Notice that $\mathcal{N}_{n,m}=\diff^1(M)\setminus\overline{\mathcal{H}_{n,m}}$. Hence $\mathcal{H}_{n,m}\cup \mathcal{N}_{n,m}$ is $C^1$ open and dense in $\diff^1(M)$. Let
   \begin{center}
     $\mathcal{R}_1=\bigcap_{n\geq1,m\geq1}(\mathcal{H}_{n,m}\cup \mathcal{N}_{n,m})$.
   \end{center}
Then $\mathcal{R}_1$ is a residual subset of $\diff^1(M)$, and $\mathcal{R}_0\cap\mathcal{R}_1\cap\mathcal{R}$ is also a residual subset of $\diff^1(M)$.

\begin{claim}\label{average estimation}
Assume $f\in\mathcal{R}_0\cap\mathcal{R}_1\cap\mathcal{R}$. Then for any two numbers $\lambda_1<\lambda_2\in(\lambda_0,1)$, for any neighborhood $U_p$ of $\orb(p)$ and any integer $L>0$, there is a periodic point $q\in U_p$ with period $\tau>L$ such that $\orb(q)$ has the $\lambda_0^2$-dominated splitting $E\oplus F$, and
   \begin{displaymath}
     {\lambda_1}^{\tau}\leq \prod_{0\leq i\leq \tau-1} \|Df|_{E(f^{i}(q))}\|\leq {\lambda_2}^{\tau}.
   \end{displaymath}
\end{claim}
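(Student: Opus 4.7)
\medskip
\noindent\textbf{Proof proposal.} The plan is to combine Lemma~\ref{choose time}, which provides the desired weak periodic orbit for diffeomorphisms arbitrarily $C^1$-close to $f$, with the residual-set machinery of Section~\ref{generic argument} to transfer the conclusion to $f$ itself. The crucial step will be to verify that the periodic orbit produced by Lemma~\ref{choose time} is genuinely hyperbolic with hyperbolic splitting equal to $E\oplus F$, so that it actually fits inside the open set $\mathcal{H}_{n,m}$.

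First, given the input $(\lambda_1,\lambda_2,U_p,L)$, I would pick rationals $\eta,\gamma\in(\lambda_0,1)$ with $\lambda_1<\eta<\gamma<\lambda_2$, so that $(\eta,\gamma)=(\eta_m,\gamma_m)$ for some $m$, and pick $n$ with $orb(p)\subset U_n\subset U_p$ (using that the $U_n$'s range over finite unions of basis elements). Since the period bound $\tau>L$ is not encoded in the definition of $\mathcal{H}_{n,m}$ as written, I would refine the countable family to $\mathcal{H}_{n,m,L}$ by appending the clause $\tau(q)>L$; the corresponding intersection $\mathcal{R}_1'=\bigcap_{n,m,L}(\mathcal{H}_{n,m,L}\cup\mathcal{N}_{n,m,L})$ is still residual, and I may assume $f\in\mathcal{R}_0\cap\mathcal{R}_1'\cap\mathcal{R}$.

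Next, pick rationals $\lambda_1',\lambda_2'$ with $\eta<\lambda_1'<\lambda_2'<\gamma$ and apply Lemma~\ref{choose time} with $(\lambda_1',\lambda_2')$ in place of $(\lambda_1,\lambda_2)$, for an arbitrary $C^1$-neighborhood $\mathcal{U}$ of $f$. This produces $g\in\mathcal{U}$ with a periodic point $q\in U_p$ of period $\tau>L$, carrying the $\lambda_0^2$-dominated splitting $E\oplus F$ on its orbit and satisfying $(\lambda_1')^\tau\leq \prod_{i=0}^{\tau-1}\|Dg|_{E(g^i(q))}\|\leq (\lambda_2')^\tau$; in particular the product lies strictly in $(\eta^\tau,\gamma^\tau)$. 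To place $g$ inside $\mathcal{H}_{n,m,L}$, I need $q$ to be hyperbolic with hyperbolic splitting equal to $E\oplus F$. The upper product bound forces $\|Dg^\tau|_{E(q)}\|<(\lambda_2')^\tau$, so $E$ is contracted along $orb(q)$; the $\lambda_0^2$-domination gives $\prod\|Dg|_E\|\cdot\prod\|(Dg|_F)^{-1}\|<\lambda_0^{2\tau}$, and combining this with $(\lambda_1')^\tau\leq\prod\|Dg|_E\|$ yields $\|(Dg^\tau|_{F(q)})^{-1}\|<(\lambda_0^2/\lambda_1')^\tau<1$, since $\lambda_1'>\lambda_0>\lambda_0^2$; hence $F$ is uniformly expanded along $orb(q)$. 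Therefore $ind(q)=\dim E$ and the hyperbolic splitting of $orb(q)$ coincides with $E\oplus F$. By persistence of hyperbolic periodic orbits and continuity of $Dg$, $g$ lies in the open interior of the condition defining $\mathcal{H}_{n,m,L}$, so $g\in\mathcal{H}_{n,m,L}$.

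Since $\mathcal{U}$ was arbitrary, $f\in\overline{\mathcal{H}_{n,m,L}}$, so $f\notin\mathcal{N}_{n,m,L}$, and the hypothesis $f\in\mathcal{R}_1'$ forces $f\in\mathcal{H}_{n,m,L}$. Unfolding this definition at $f$ itself yields a hyperbolic periodic point of $f$ in $U_n\subset U_p$, with period exceeding $L$, carrying the dominated splitting $E\oplus F$, and with product of norms along $E$ in $(\eta^\tau,\gamma^\tau)\subset(\lambda_1^\tau,\lambda_2^\tau)$. The main difficulty I anticipate is not conceptual but a matter of bookkeeping: passing from the non-strict inequalities produced by Lemma~\ref{choose time} to the strict inequalities in the definition of $\mathcal{H}_{n,m}$ (handled by inserting $\lambda_1',\lambda_2'$ strictly between $\eta$ and $\gamma$), and verifying that the weak orbit is genuinely hyperbolic so that it matches the hyperbolic-periodic-point framework underlying the sets $\mathcal{H}_{n,m}$; this verification is the only non-formal step and it rests precisely on the domination rate $\lambda_0^2$ being strictly smaller than the contraction rate $\lambda_1'$ coming from the lower product bound.
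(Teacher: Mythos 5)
Your proof is correct and takes the same route as the paper's (apply Lemma~\ref{choose time} to a nearby $g$, then transfer the conclusion to $f$ via the open dense sets $\mathcal{H}_{n,m}\cup\mathcal{N}_{n,m}$), but you have filled in, correctly, three points that the paper's terse proof leaves implicit. First, Lemma~\ref{choose time} only asserts a $\lambda_0^2$-dominated splitting on $orb(q)$; your deduction that the upper product bound contracts $E$, and that $\lambda_0^2<\lambda_1'$ together with the lower bound forces $F$ to be expanded, is exactly the verification needed to see that $orb(q)$ is hyperbolic with hyperbolic splitting $E\oplus F$, so that $g$ actually lies in $\mathcal{H}_{n,m}$. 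Second, inserting $\lambda_1',\lambda_2'$ strictly between $\eta$ and $\gamma$ handles the strict-versus-non-strict inequality mismatch and gives the robustness under small perturbation built into the definition of $\mathcal{H}_{n,m}$. Third, you observe that the period bound $\tau>L$ is not encoded in $\mathcal{H}_{n,m}$ as defined and repair this by passing to $\mathcal{H}_{n,m,L}$; this is legitimate since $\mathcal{R}_1$ is an ad hoc residual set introduced just before the claim. Alternatively, one could keep $\mathcal{R}_1$ unchanged and instead observe that, since $f$ is Kupka--Smale, $Per_L(f)$ is finite, and $orb(p)$ itself violates the lower product bound because $\|Df^{\tau(p)}|_{E(p)}\|<\lambda_0^{\tau(p)}<\eta^{\tau(p)}$, so shrinking $U_n\subset U_p$ to meet $Per_L(f)$ only along $orb(p)$ already forces the produced orbit to have period $>L$. (One small notational slip: after choosing $U_n\subset U_p$ you should apply Lemma~\ref{choose time} with $U_n$ as the input neighborhood, so that the resulting $q$ lies in $U_n$ as required by $\mathcal{H}_{n,m,L}$, not merely in $U_p$.)
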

\begin{proof}
We take two rational numbers $\eta_i,\gamma_i\in(\lambda_0,1)$, such that $\lambda_1<\eta_i<\gamma_i<\lambda_2$, and take $U_j$ from the countable basis of $M$, such that $U_j\subset U_p$. Then by Lemma~\ref{choose time}, there is a diffeomorphism $g$ arbitrarily $C^1$ close to $f$, such that $g$ has a periodic point $q\in U_p$ with period $\tau>T$ such that the $\lambda_0^2$-dominated splitting $E\oplus F$ is the hyperbolic splitting on $\orb(q,g)$, and
   \begin{displaymath}
     {\eta_i}^{\tau}\leq \prod_{0\leq i\leq \tau-1} \|Dg|_{E(g^{i}(q))}\|\leq {\gamma_i}^{\tau}.
   \end{displaymath}
Then $f\notin\mathcal{N}_{j,i}$, thus $f\in\mathcal{H}_{j,i}$ and $f$ satisfies the conclusion of Claim~\ref{average estimation}.
\end{proof}

\begin{claim}
Theorem B holds for any diffeomorphisms in $\mathcal{R}_0\cap\mathcal{R}_1\cap\mathcal{R}$.
\end{claim}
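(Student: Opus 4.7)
The final claim combines Claim~\ref{average estimation} with a diagonal extraction together with a mild reinforcement of the genericity argument of Subsection~\ref{generic argument}.

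Fix $\lambda_1<\lambda_2$ in $(\lambda_0,1)$, and pick rationals $\eta_1<\eta_2$ with $\lambda_1<\eta_1<\eta_2<\lambda_2$, a nested sequence $(U_p^{(k)})_{k\geq 1}$ of neighborhoods of $orb(p)$ shrinking to $orb(p)$, and integers $L_k\to\infty$. For each $k$, Claim~\ref{average estimation} applied to $(\eta_1,\eta_2)$, $U_p^{(k)}$ and $L_k$ provides a hyperbolic periodic point $q_k\in U_p^{(k)}$ of $f$, with period $\tau_k>L_k$, whose hyperbolic splitting is the continuation of $E\oplus F$, and satisfying
\[
\lambda_1^{\tau_k}<\eta_1^{\tau_k}\leq\prod_{0\leq i<\tau_k}\|Df|_{E(f^i(q_k))}\|\leq\eta_2^{\tau_k}<\lambda_2^{\tau_k}.
\]
Since $\tau_k\to\infty$, passing to a subsequence makes the orbits $\mathcal{O}_k=orb(q_k)$ pairwise distinct. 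The general case $m\geq m_0$ of Theorem~B is recovered by running the same argument with $f$ replaced by $f^m$, as the reduction at the beginning of the section already permits.

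It remains to force $\mathcal{O}_k\subset H(p)$. I would strengthen the sets $\mathcal{H}_{n,m}$ of Subsection~\ref{generic argument} by additionally requiring the periodic point $q\in U_n$ to lie in $\mathcal{C}(p_{h_1},h_1)$. Inspection of Lemma~\ref{choose time} shows that this property is already built into the perturbation: the resulting periodic orbit shadows a pseudo-orbit through both a neighborhood of $p_g$ and the weak set $K\subset H(p_g)\subset\mathcal{C}(p_g,g)$, hence lies in $\mathcal{C}(p_g,g)$. A final small perturbation can be used to make the relevant heteroclinic intersections between $orb(q_g)$ and $orb(p_g)$ transverse, which makes the strengthened property open in the diffeomorphism; the sets $\mathcal{N}_{n,m}$ and the residual set $\mathcal{R}_1$ defined from the strengthened $\mathcal{H}_{n,m}$ enjoy the same openness and density as before. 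For $f\in\mathcal{R}_0\cap\mathcal{R}_1\cap\mathcal{R}$, the generic identity $\mathcal{C}(p,f)=H(p,f)$ from item~2 of Lemma~\ref{generic properties} then yields $\mathcal{O}_k\subset H(p)$.

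The main obstacle, already dealt with by Propositions~\ref{time control},~\ref{asymptotic connecting 1} and~\ref{asymptotic connecting}, is precisely this \emph{intrinsic} containment: every perturbation on the route from $f$ to the diffeomorphism $g$ of Lemma~\ref{choose time} is designed to preserve orbit pieces linking $p$ and $K$, so that the produced periodic orbit really belongs to $\mathcal{C}(p_g,g)$ rather than merely to a neighborhood of $H(p)$.
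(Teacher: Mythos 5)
Your proposal diverges from the paper's proof in its final step, and this is where a genuine gap appears. The paper does not attempt to strengthen the genericity sets $\mathcal{H}_{n,m}$; instead, after Claim~\ref{average estimation} produces periodic orbits $orb(q_k)$ of $f$ near $p$ with the two-sided product bound, it uses the $\lambda_0^2$-domination to deduce a symmetric bound along $F$ for $f^{-1}$, then (by item~2 of Corollary~\ref{cor of pliss} and item~2 of Lemma~\ref{property of pliss point}) extracts a $\lambda_2$-bi-Pliss point $r_k$ on each $orb(q_k)$. Such points carry local stable and unstable manifolds of \emph{uniform} size tangent to $E$ and $F$. Taking a convergent subsequence of $(r_k)$, for all large $m,n$ the points $r_m$, $r_n$ are so close that these uniform manifolds intersect transversally; since $orb(q_m)$ and $orb(q_n)$ all have the \emph{same} index $\dim E$, they become pairwise homoclinically related. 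Because $q_k\to p$ and homoclinic classes are closed, $p\in H(q_k)$; then item~2 of Lemma~\ref{generic properties} (chain class $=$ homoclinic class for generic $f$) gives $H(p)=H(q_k)$, so $q_k\in H(p)$. This works uniformly for $\dim E\leq ind(p)$.

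Your alternative route — adding the requirement $q\in\mathcal{C}(p_{h_1},h_1)$ to $\mathcal{H}_{n,m}$ — does not go through as written. The crucial point is that the paper's original defining property (existence of a hyperbolic periodic point $q\in U_n$ with the $\lambda_0^2$-dominated hyperbolic splitting and the strict norm bounds) is $C^1$-\emph{open}; this is why $\mathcal{N}_{n,m}=\diff^1(M)\setminus\overline{\mathcal{H}_{n,m}}$ and $\mathcal{H}_{n,m}\cup\mathcal{N}_{n,m}$ is open \emph{and dense}. The condition $q\in\mathcal{C}(p_{h_1},h_1)$ is \emph{not} open: chain recurrence classes are merely upper semi-continuous and can shrink under perturbation. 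With your strengthened definition, $\mathcal{H}_{n,m}'$ and $\mathcal{N}_{n,m}'$ are still open by construction, but their union is $\diff^1(M)\setminus\partial A'$ where $A'=\{h:P'(h)\}$ is neither open nor closed, and the density of this union is exactly what has to be proved — the identity $\mathcal{N}_{n,m}'=\diff^1(M)\setminus\overline{\mathcal{H}_{n,m}'}$ fails. Your proposed remedy (perturb to make heteroclinic intersections between $orb(q_g)$ and $orb(p_g)$ transverse, turning the condition into a robust one) only makes sense when $ind(q)=ind(p)$, i.e.\ $\dim E=ind(p)$: if $\dim E<ind(p)$, the two orbits have different stable dimensions, so transverse intersections of $W^s(q)$ with $W^u(p)$ are generically empty, and $q\in\mathcal{C}(p)$ has no robust mechanism supplied by your argument. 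Theorem~B explicitly allows $\dim E<ind(p)$, and Claim~\ref{average estimation} was carefully designed to avoid any homoclinic-relation requirement precisely for this reason. Finally, even in the equal-index case, the assertion that the orbit from Lemma~\ref{choose time} already lies in $\mathcal{C}(p_g,g)$ for the perturbed $g$ is stated as an ``inspection'' but is not verified — for a fixed $m$, the periodic orbit $O$ passes near but at a fixed positive distance from $p$, and chain-attainability $O\dashv p$ at all scales $\vep$ is not automatic. The missing ingredient that your proof skips is precisely the bi-Pliss point argument, which is the technical core of the paper's step from the product estimates to containment in $H(p)$.
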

\begin{proof}
Assume $f\in\mathcal{R}_0\cap\mathcal{R}_1\cap\mathcal{R}$ and $f$ satisfies the assumptions of Theorem B. By Claim~\ref{average estimation}, we get a sequence of periodic orbits $\orb(q_k)$ of $f$, such that $q_k\rightarrow p$ with $\tau(q_k)\rightarrow\infty$, and
   \begin{displaymath}
     {\lambda_1}^{\tau(q_k)}\leq \prod_{0\leq i\leq \tau(q_k)-1} \|Df|_{E(f^{i}(q_k))}\|\leq {\lambda_2}^{\tau(q_k)}.
   \end{displaymath}
Hence by the $\lambda_0^2$-domination of $E\oplus F$, we have that
   \begin{displaymath}
     \prod_{0\leq i\leq \tau(q_k)-1} \|Df^{-1}|_{F(f^{-i}(q_k))}\|\leq {\lambda_2}^{\tau(q_k)}.
   \end{displaymath}
Then by item 2 of Lemma~\ref{cor of pliss} and item 2 of Lemma~\ref{property of pliss point}, there is a $\lambda_2$-bi-Pliss point $r_k$ on $\orb(q_k)$ for each $k$. Taking a subsequence if necessary, we assume $(r_k)$ is a converging sequence. Then there is $l>0$, such that for any $m,n\geq l$, the stable and unstable manifolds of $r_m$ and $r_n$ intersect respectively, since $r_k$ has uniform stable and unstable manifolds. Hence $(\orb(q_m))_{m\geq l}$ are homoclinically related together, thus $p\in H(q_k)$. By item 2 of Lemma~\ref{generic properties}, we have that $q_k\in H(p)$. This finishes the proof of the claim.
\end{proof}

The proof of Theorem B is now completed.

\section{Periodic orbits around a periodic orbit and a set: proof of Proposition 1}\label{proposition 1}

In this section, we give the proof of Proposition~\ref{time control}. We consider the periodic point $p$ and the invariant compact set $K$ satisfying that $p\notin K$ and $K$ contains no non-hyperbolic periodic point. Recall that the point $x\in W^u(p)$ satisfies $\omega(x)\cap K\neq \emptyset$ and the point $y\in W^s(p)$ satisfies $\alpha(y)=K$.

Taking a smaller neighborhood if necessary, we can assume that the element of $\mathcal{U}$ is of the form $f\circ\phi$ with $\phi\in\mathcal{V}$, where $\mathcal{V}$ is a $C^1$ neighborhood of $Id$ and satisfies the property (F) in Definition~\ref{property F}. By Theorem~\ref{Thm:connecting lemma}, there is an integer $N$ associated to the neighborhood $\mathcal U$. By Lemma~\ref{basic perturbation}, there are two numbers $\theta>1$ and $r_0>0$ associated to $\mathcal U$.

It is easy to see that we only need to prove the proposition for $U_p$ and $U_K$ small. More precisely, we assume that $U_p\cap U_K=\emptyset$ and $(\orb^-(x)\cup \orb^+(y))\cap U_K=\emptyset$. Moreover, by the hyperbolicity of periodic orbits in $K$, we assume that there is no periodic point with period less than or equal to $N$ contained in $U_K\setminus K$.

To simplify the notation, we assume that $p$ is a hyperbolic fixed point of $f$, and the proof of the general case is similar. The only difference is that, in item 2 of the conclusion, the condition $\# (O\cap U_p)\in \{l+m\tau,l+m\tau+1,\cdots,l+(m+1)\tau-1\}$ should be $\# (O\cap U_p)=l+m$. In the general case, the number $\# (O\cap U_p)$ cannot be chosen arbitrarily, but we can make sure it is contained in an interval whose length is the period of $p$.

Now we fix the neighborhoods $\mathcal{U}$, $U_p$ and $U_K$, and the numbers $N$, $\theta$ and $r_0$.

\subsection{The choice of $n_0$, the point $z_1$ and the perturbation domain at $z_1$.}

Recall that the point $x\in W^u(p)$ satisfies $\omega(x)\cap K\neq \emptyset$.

\begin{lem}\label{property of prec}
There is a point $z_1\in U_K \setminus K$, such that:
\begin{itemize}
\item for any neighborhood $V_{z_1}$ of $z_1$, there is $n\geq 1$ such that $f^n(x)\in V_{z_1}$;
\item $z_1\prec_{U_K} K$ and $\orb^+(z_1)\subset U_K$.
\end{itemize}
\end{lem}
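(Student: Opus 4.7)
The plan is to find $z_1$ as a limit of iterates of $x$ that enter a slightly smaller neighborhood of $K$ for longer and longer times. The role of the auxiliary smaller neighborhood is to force the limit to lie strictly inside $U_K$ while simultaneously keeping it out of $K$.

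First I would fix an open set $U_K'$ with $K\subset U_K'$ and $\overline{U_K'}\subset U_K$. Since $\omega(x)\cap K\neq\emptyset$, pick $w\in\omega(x)\cap K$ and a sequence $n_k\to+\infty$ with $f^{n_k}(x)\to w$. Define $r_k$ to be the smallest $j\geq 1$ for which $f^{n_k-j}(x)\notin U_K'$; this is well-defined and satisfies $r_k\leq n_k$ since $x\notin U_K\supset U_K'$. For any fixed $M$, continuity gives $f^{n_k-M}(x)\to f^{-M}(w)\in K\subset U_K'$, so $r_k>M$ for $k$ large, and hence $r_k\to+\infty$.

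Set $z^{(k)}:=f^{n_k-r_k+1}(x)$, the re-entry point into $U_K'$ along the backward excursion from $f^{n_k}(x)$. By construction $z^{(k)}\in U_K'$, $f^{-1}(z^{(k)})\notin U_K'$, and $f^{j}(z^{(k)})\in U_K'$ for $0\leq j\leq r_k-1$. By compactness a subsequence converges to some $z_1\in\overline{U_K'}\subset U_K$, which is automatically accumulated by $orb^+(x)$ because each $z^{(k)}$ is of the form $f^{n}(x)$ with $n\geq 1$. The inclusion $orb^+(z_1)\subset U_K$ follows by a limit argument applied to each fixed iterate: since $r_k\to+\infty$, one has $f^j(z^{(k)})\in U_K'$ for all $k$ large enough, so $f^j(z_1)\in\overline{U_K'}\subset U_K$. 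The relation $z_1\prec_{U_K} K$, with target $w\in K$, is witnessed by the orbit segments $z^{(k)},f(z^{(k)}),\dots,f^{r_k-1}(z^{(k)})=f^{n_k}(x)$, which lie in $U_K'\subset U_K$ and connect any fixed neighborhood of $z_1$ to any fixed neighborhood of $w$ for large $k$.

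The only delicate point is ruling out $z_1\in K$: if one had $z_1\in K$, invariance would give $f^{-1}(z_1)\in K\subset U_K'$, and then continuity would force $f^{-1}(z^{(k)})\in U_K'$ for large $k$, contradicting the defining property of $r_k$. This invariance argument, combined with the device of the auxiliary neighborhood $U_K'\Subset U_K$ that keeps $z_1$ off $\partial U_K$, is the main non-routine step; everything else is continuity and compactness.
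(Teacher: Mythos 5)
Your proof is correct and follows essentially the same approach as the paper: fix an auxiliary neighborhood $U_K'$ with $\overline{U_K'}\subset U_K$, take the re-entry points of the backward orbit segments of $f^{n_k}(x)$ into $U_K'$, pass to a limit $z_1$, and use invariance of $K$ together with $f^{-1}(z^{(k)})\notin U_K'$ to rule out $z_1\in K$. The only cosmetic difference is that you establish $r_k\to\infty$ directly from $f^{n_k-j}(x)\to f^{-j}(w)\in K$ before taking the limit, whereas the paper derives the corresponding fact (that $n_k-m_k\to\infty$) after first showing $z_1\notin K$; both orderings are valid.
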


\begin{proof}
The proof is similar to that of Lemma~\ref{prec}. We take a smaller open neighborhood $V$ of $K$ such that $\overline{V}\subset U_K$. Since $\omega(x)\cap K\neq\emptyset$, for any $k\geq 1$, there is $n_k\geq 1$, such that $f^{n_k}(x)\in B(K,\frac{1}{k})$. Take the smallest integer $m_k$, such that the piece of orbit $(f^{m_k}(x),f^{m_k+1}(x),\cdots,f^{n_k}(x))$ is contained in $V$. Taking a convergent subsequence if necessary, we assume that the sequence $\{f^{m_k}(x)\}_{k\geq 1}$ converges to a point $z_1\in\overline{V}\subset U_K$ and the sequence $\{f^{n_k}(x)\}_{k\geq 1}$ converges to a point $z_2\in K$. Then we have that $z_1\prec_{U_K} z_2$, and the pieces of orbit that connects the neighborhoods of $z_1$ and $z_2$ are $(f^{m_k}(x),f^{m_k+1}(x),\cdots,f^{n_k}(x))_{k\geq 1}$. Since $z_2\in K$, we have that $z_1\prec_{U_K} K$. By the choice of $m_k$, we have that $f^{m_k-1}(x)\in M\setminus V$. Since $M\setminus V$ is compact, and $f^{-1}(z_1)$ is a limit point of the sequence $\{f^{m_k-1}(x)\}_{k\geq 1}$, we have that $f^{-1}(z_1)\in M\setminus V$. By the invariance of $K$, we have that $z_1\notin K$ and $n_k-m_k$ goes to $+\infty$. Since $(f^{m_k}(x),f^{m_k+1}(x),\cdots,f^{n_k}(x))$ is contained in $U_K$ and $f^{m_k}(x)$ converges to $z_1$, we have that $\orb^+(z_1)\subset\overline{V}\subset U_K$. Thus the second item is satisfied. The first item is a trivial fact by the choice of $z_1$.
\end{proof}

By the assumption on $U_K$, we have that $z_1$ is not a periodic point with period less than or equal to $N$. Also, since $y\in W^s(p)$ and $\orb^+(z_1)\subset U_K$, we have $z_1\notin \orb(y)$. Moreover, by the fact that $\alpha(y)=K$, we know $z_1\notin \overline{\orb(y)}$. Then there are two neighborhoods $V_{z_1}\subset U_{z_1}$ of $z_1$ satisfying the conclusion of Theorem~\ref{Thm:connecting lemma} for the triple $(f,\mathcal{U},N)$, and also satisfying the following conditions:
\begin{itemize}
\item $U_{z_1}\cup f(U_{z_1})\cup \cdots \cup f^N(U_{z_1}) \subset U_K\setminus K$;
\item $(U_{z_1}\cup f(U_{z_1})\cup \cdots \cup f^N(U_{z_1})) \cap \orb(y)=\emptyset$.
\end{itemize}

By Lemma~\ref{property of prec}, there is $n_1\in \mathbb{N}$ such that $f^{n_1}(x)\in V_{z_1}$. Moreover, since $\alpha(y)=K$, there is $n_2\in \mathbb{N}$, such that, for any $n\geq n_2$, we have $f^{-n}(y)\in U_K$. Let $n_0=n_1+n_2$.

\subsection{The choices of points and perturbation domains in $K$ and to get $h$.}\label{get h}

Take any integer $T_K$. By Lemma~\ref{property of prec}, we have that $z_1\prec_{U_K} K$, that is to say, there is a point $z_2\in K$ such that $z_1\prec_{U_K} z_2$. Now we consider two cases, depending on whether there is such a point $z_2$ that is not a periodic point with period less than or equal to $N$.

\subsubsection{The non-periodic case}
Assume that there is a point $z_2\in K$ which is not a periodic point with period less than or equal to $N$, and $z_1\prec_{U_K} z_2$. Then there are two neighborhoods $V_{z_2}\subset U_{z_2}$ of $z_2$ satisfying the conclusion of Theorem~\ref{Thm:connecting lemma} for the triple $(f,\mathcal{U},N)$ and also satisfying the following conditions:
\begin{itemize}
\item $U_{z_2}\cup f(U_{z_2})\cup \cdots \cup f^N(U_{z_2}) \subset U_K$;

\item $(U_{z_1}\cup f(U_{z_1})\cup \cdots \cup f^N(U_{z_1})) \cap (U_{z_2}\cup f(U_{z_2})\cup \cdots \cup f^N(U_{z_2}))=\emptyset$;

\item $f^n(x)\notin U_{z_2}\cup f(U_{z_2})\cup \cdots \cup f^N(U_{z_2})$, for any $-\infty<n\leq n_1$;

\item $f^{-n}(y)\notin U_{z_2}\cup f(U_{z_2})\cup \cdots \cup f^N(U_{z_2})$, for any $n\leq n_2+T_K$.
\end{itemize}
Then there is $n_3>n_2+T_K$, such that $f^{-n_3}(y)\in V_{z_2}$. Since we have the fact that $z_1\prec_{U_K} z_2$, there is a piece of orbit $(w,f(w),\cdots,f^k(w))$ contained in $U_K$, such that $w\in V_{z_1}$ and $f^k(w)\in V_{z_2}$. Moreover, since $U_{z_1}\cap \orb(y)=\emptyset$ and $w\in V_{z_1}\subset U_{z_1}$, we have that $w\notin \orb(y)$.

\paragraph{\textit{\textmd{Perturbations to get $h$ in the non-periodic case.}}} Now we do the perturbations step by step to get the conclusion.

\textbf{Step 1.} From the choice of points and neighborhoods above, we can see that the point $x$ has a positive iterate $f^{n_1}(x)\in V_{z_1}$ and the point $f^k(w)$ has a negative iterate $w\in V_{z_1}$. Then by Theorem~\ref{Thm:connecting lemma}, there is a diffeomorphism $f_1\in\mathcal{U}$, such that $f_1$ coincides with $f$ outside $U_{z_1}\cup f(U_{z_1})\cup \cdots \cup f^N(U_{z_1})$ and $f^k(w)$ is on the positive orbit of $x$ under $f_1$.

\textbf{Step 2.} For the diffeomorphism $f_1$, the point $x$ has a positive iterate $f^k(w)\in V_{z_2}$, and the point $y$ has a negative iterate $f^{-n_3}(y)\in V_{z_2}$. Since $f_1$ coincides with $f$ outside $U_{z_1}\cup f(U_{z_1})\cup \cdots \cup f^N(U_{z_1})$ and $(U_{z_1}\cup f(U_{z_1})\cup \cdots \cup f^N(U_{z_1})) \cap (U_{z_2}\cup f(U_{z_2})\cup \cdots \cup f^N(U_{z_2}))=\emptyset$, then by Theorem~\ref{Thm:connecting lemma}, there is a diffeomorphism $h\in\mathcal{U}$, such that $y$ is on the positive orbit of $x$ under $h$ and $h$ coincides with $f_1$ outside $U_{z_2}\cup f(U_{z_2})\cup \cdots \cup f^N(U_{z_2})$.

By the constructions above, $h$ coincides with $f$ outside $(U_{z_1}\cup f(U_{z_1})\cup \cdots \cup f^N(U_{z_1}))\cup(U_{z_2}\cup f(U_{z_2})\cup \cdots \cup f^N(U_{z_2}))$. Hence $h$ coincides with $f$ on $\orb(p)\cup \orb^-(x)\cup \orb^+(y)$ and outside $U_K$. Moreover, $\# (\orb(x,h)\cap U_K)\geq n_3-n_2\geq T_K$ and $\# (\orb(x,h)\setminus (U_K\cup U_p))\leq n_1+n_2\leq n_0$.

\subsubsection{The periodic case}
Assume that any point $z_2\in K$ satisfying $z_1\prec_{U_K} z_2$ is a periodic point with period less than or equal to $N$. We take such a point $q\in K$. In this case, we cannot use Theorem~\ref{Thm:connecting lemma} at the point $q$ since its period is small but we can do perturbations at the stable and unstable manifolds of $q$ since it is hyperbolic. To simplify the proof, we assume that $q$ is a hyperbolic fixed point of $f$, but the general case is identical. We take a neighborhood $U_q$ of $q$ such that $\overline{U_q}\subset U_K\setminus (U_{z_1}\cup f(U_{z_1})\cup \cdots \cup f^N(U_{z_1}))$, and such that for any point $w$ satisfying $\orb^+(w)\subset U_q$ (rep. $\orb^-(w)\subset U_q$), we have $w\in W^s(q)$ (resp. $w\in W^u(q)$).

Since $z_1\prec_{U_K} q$, by a similar argument as in Lemma~\ref{property of prec}, we can get that, there is a point $x'\in U_q$, such that $z_1\prec_{U_K} x'$ and $\orb^+(x)\subset U_q$. By the choice of $U_q$, we have that $x'\in W^s(q)$ and $x'\notin U_{z_1}\cup f(U_{z_1})\cup \cdots \cup f^N(U_{z_1})$. Notice that, since $x'$ is not a periodic point and $z_1\prec_{U_K} x'$, we have $x'\notin K=\alpha(y)$. Hence $x'\notin\overline{\orb(y)}$.

Since $q\in \alpha(y)$, there is $y'\in W^u(q)\cap U_q$, such that for any neighborhood $U$ of $y'$, there is an integer $n\geq 1$, such that $f^{-n}(y)\in U$. (In fact, if $\alpha(y)=\{q\}$, we can choose $y'$ to be a negative iterate of $y$. If $\{q\}\subsetneqq \alpha(y)$, we can choose $y'$ to be contained in $\alpha(y)\cap W^u(q)$). Moreover, we have $y'\notin\orb(x)$.

By the $\lambda$-Lemma, there are two neighborhoods $W_{x'}$ and $W_{y'}$ of $x'$ and $y'$ respectively such that, for any two smaller neighborhoods $W'_{x'}\subset W_{x'}$ and $W'_{y'}\subset W_{y'}$ of $x'$ and $y'$ respectively, there is a piece of orbit $(z',f(z'),\cdots,f^{t}(z'))$ contained in $U_q$, such that $z'\in W'_{x'}$, $f^{t}(z')\in f^{-1}(W'_{y'})$, $f^i(z')\notin W'_{x'}\cup W'_{y'}$ for any $i\in \{1,2,\cdots,t\}$ and $t\geq T_K$.

Now we construct the perturbation domains at the points $x'$ and $y'$ respectively. Recall that $\theta>1$ and $r_0>0$ are the two constants obtained by Lemma~\ref{basic perturbation} associated to $\mathcal U$ and $f$.
\medskip

{\it Perturbation domain at $x'$} We can take two neighborhoods $V_{x'}\subset U_{x'}$ of $x'$ that satisfy the conclusions of Theorem~\ref{Thm:connecting lemma} for the triple $(f,\mathcal{U},N)$, and also satisfy that
\begin{itemize}
\item $U_{x'}\subset W_{x'}$;

\item $U_{x'}\cup f(U_{x'})\cup\cdots,\cup f^N(U_{x'})\subset U_q$;

\item $f^n(x)\notin U_{x'}\cup f(U_{x'})\cup \cdots \cup f^N(U_{x'})$, for any $-\infty<n\leq n_1$;

\item $U_{x'}\cap \orb(y)=\emptyset$ and $q\notin U_{x'}$.
\end{itemize}
Then, there is a piece of orbit $(w',\cdots,f^{k'}(w'))$ contained in $U_K$, such that $w'\in V_{z_1}$ and $f^{k'}(w')\in V_{x'}$. Moreover, $w'\notin \orb(y)$.
\medskip

%Since $z_1\prec_{U_K} q$, by a similar argument as in Lemma~\ref{property of prec}, we can get that, there is a point $x'\in U_q$, such that $z_1\prec_{U_K} x'$ and $\orb^+(x)\subset U_q$. By the choice of $U_q$, we have that $x'\in W^s(q)$ and $x'\notin U_{z_1}\cup f(U_{z_1})\cup \cdots \cup f^N(U_{z_1})$. Notice that, since $x'$ is not a periodic point, we have $x'\notin K=\alpha(y)$. Then we can take two neighborhoods $V_{x'}\subset U_{x'}$ of $x'$ that satisfy the conclusions of Theorem~\ref{Thm:connecting lemma} for the triple $(f,\mathcal{U},N)$, and also satisfy that
%\begin{itemize}
%\item $U_{x'}\cup f(U_{x'})\cup\cdots,\cup f^N(U_{x'})\subset U_q$;

%\item $U_{x'}\cap \orb(y)=\emptyset$ and $q\notin U_{x'}$.
%\end{itemize}
%Then, there is a piece of orbit $(w',\cdots,f^{k'}(w'))$ contained in $U_K$, such that $w'\in V_{z_1}$ and $f^{k'}(w')\in V_{x'}$. Moreover, $w'\notin \orb(y)$.

{\it Perturbation domain at $y'$} We take a number $r'<r_0$ small enough, such that: if we take the neighborhood $U_{y'}=f(B(f^{-1}(y'),\theta r'))$ of $y'$, then the following properties are satisfied:
\begin{itemize}
\item $U_{y'}\subset W_{y'}$;

\item $U_{y'}\cup f^{-1}(U_{y'})\subset U_q\setminus (\{q\}\cup U_{x'}\cup f(U_{x'})\cup\cdots,\cup f^N(U_{x'}))$;

\item $U_{y'}\cap f^{-1}(U_{y'})=\emptyset$;

\item $f^n(x)\notin U_{y'}\cup f^{-1}(U_{y'})$, for any $-\infty<n\leq n_1$;

\item $\{w',\cdots,f^{k'}(w')\}\cap (U_{y'}\cup f^{-1}(U_{y'}))=\emptyset$.
\end{itemize}
\medskip

Then by the choice of $U_{x'}$ and $U_{y'}$, there is a piece of orbit $(z',f(z'),\cdots,f^{n_4}(z'))$ contained in $U_q$, such that $z'\in V_{x'}$, $f^{n_4}(z')\in B(f^{-1}(y'),r')$, $f^i(z')\notin U_{x'}$ for any $i\in \{1,2,\cdots,n_4\}$ and $n_4\geq T_K$. By the choice of $y'$, there is a negative iterate $f^{-n_5}(y)$ of $y$ contained in $B(f^{-1}(y'),r')$.

%Since $q\in \alpha(y)$, there is $y'\in W^u(q)\cap U_q$, such that for any neighborhood $U$ of $y'$, there is an integer $n\geq 1$, such that $f^{-n}(y)\in U$. (In fact, if $\alpha(y)=\{q\}$, we can choose $y'$ to be a negative iterate of $y$. If $\{q\}\subsetneqq \alpha(y)$, we can choose $y'$ to be contained in $\alpha(y)\cap W^u(q)$). Now, we build the perturbation domain at the point $y'$ by Lemma~\ref{basic perturbation}. More precisely, we do as the following. We take a number $r'<r_0$ small enough, such that: if we take the neighborhood $U_{y'}=f(B(f^{-1}(y'),\theta r'))$ of $y'$, then the following properties are satisfied:
%\begin{itemize}
%\item $U_{y'}\cup f^{-1}(U_{y'})\subset U_q\setminus (\{q\}\cup U_{x'}\cup f(U_{x'})\cup\cdots,\cup f^N(U_{x'}))$;

%\item $U_{y'}\cap f^{-1}(U_{y'})=\emptyset$;

%\item $\{w',\cdots,f^{k'}(w')\}\cap (U_{y'}\cup f^{-1}(U_{y'}))=\emptyset$.
%\end{itemize}
%Then by the $\lambda$-lemma, there is a piece of orbit $(z',f(z'),\cdots,f^{n_4}(z'))$ contained in $U_q$, such that $z'\in V_{x'}$, $f^{n_4}(z')\in B(f^{-1}(y'),r')$, $f^i(z')\notin U_{x'}$ for any $i\in \{1,2,\cdots,n_4\}$ and $n_4\geq T_K$. By the choice of $y'$, there is a negative iterate $f^{-n_5}(y)$ of $y$ contained in $B(f^{-1}(y'),r')$.

\paragraph{\textit{\textmd{Perturbations to get $h$ in the periodic case.}}}
From the above constructions, we can see that the perturbation domains are pairwise disjoint and contained in $U_K$, and the pieces of orbits that connects two perturbation domains are pairwise disjoint and disjoint from the other perturbation domains. Then we can do the perturbations step by step as in \textit{Case 1}.

\textbf{Step 1.} By Lemma~\ref{basic perturbation}, there is $f_1\in \mathcal{U}$, such that, $f_1$ coincides with $f$ outside $f^{-1}(U_{y'})$ and $f_1(f^{n_4}(z'))=f^{-n_5+1}(y)$. Since $f^n(x)\notin U_{y'}\cup f^{-1}(U_{y'})$, for any $-\infty<n\leq n_1$, we have that $f_1$ coincides with $f$ on $\{f^n(x)\}_{-\infty<n\leq n_1}$.

\textbf{Step 2.} For the diffeomorphism $f_1$, the point $y$ has a negative iterate $z'\in V_{x'}$, and the point $w'$ has a positive iterate $f^{k'}(w')\in V_{x'}$. Then by Theorem~\ref{Thm:connecting lemma}, there is $f_2\in\mathcal{U}$, such that $f_2$ coincides with $f_1$ outside $U_{x'}\cup f(U_{x'})\cup \cdots \cup f^N(U_{x'})$, and $w'$ is on the negative orbit of $y$ under $f_2$. Since $f^n(x)\notin U_{x'}\cup f(U_{x'})\cup \cdots \cup f^N(U_{x'})$, for any $-\infty<n\leq n_1$, we have that $f_2$ coincides with $f$ on $\{f^n(x)\}_{-\infty<n\leq n_1}$.

\textbf{Step 3.} For the diffeomorphism $f_2$, the point $y$ has a negative iterate $w'$ in $V_{z_1}$ and the point $x$ has a positive iterate $f^{n_1}(x)\in V_{z_1}$. By Theorem~\ref{Thm:connecting lemma}, there is $h\in\mathcal{U}$, such that, $h$ coincides with $f_2$ outside $U_{z_1}\cup f(U_{z_1})\cup \cdots \cup f^N(U_{z_1})$ and $y$ is on the positive orbit of $x$ under $h$.

By the constructions above, the diffeomorphism $h$ coincides with $f$ outside $(U_{z_1}\cup f(U_{z_1})\cup \cdots \cup f^N(U_{z_1}))\cup (U_{x'}\cup f(U_{x'})\cup \cdots \cup f^N(U_{x'}))\cup f^{-1}(U_{y'})$. Hence $h$ coincides with $f$ on $\orb(p)\cup \orb^-(x)\cup \orb^+(y)$ and outside $U_K$. Moreover, $\# (\orb(x,h)\cap U_K)\geq n_4\geq T_K$ and $\# (\orb(x,h)\setminus (U_K\cup U_p))\leq n_1+n_2\leq n_0$.

\begin{rem}
We point out that, in the construction, each of the perturbation supports are pairwise disjoint with each other. Since $\mathcal{U}=f\circ\mathcal{V}$, where $\mathcal{V}$ is a $C^1$ neighborhood of $Id$ and satisfies the property (F) in Definition~\ref{property F}, the perturbations $f_1$, $f_2$ and $h$ are still contained in $\mathcal U$.
\end{rem}

\subsection{The choice of $l$ and the perturbation domains at $x$ and $y$, and to get $h_m$.}

From the constructions in section~\ref{get h}, we get the diffeomorphism $h$ that satisfies the first item of Proposition~\ref{time control}. In this section, we do the perturbations to get the diffeomorphism $h_m$.

Assume $h^t(x)=y$. By replacing $x$ and $y$ to a negative or positive iteration, we assume that $x,y\in U_p$ and $\orb^-(x,h)\cup \orb^+(y,h)\subset U_p$. Assume that $\#(\{h^n(x)\}_{1\leq n\leq t}\cap U_p)=m_0$. We take a number $r<r_0$ small enough, such that, if we take the neighborhood $U_x=h(B(h^{-1}(x),\theta r))$ of $x$ and the neighborhood $U_y=B(y,\theta r)$ of $y$, then, the four sets $U_x$, $h^{-1}(U_x)$, $U_y$ and $h(U_y)$ are contained in $U_p$ and pairwise disjoint from each other and disjoint with $\{h^n(x)\}_{1\leq n\leq t}$. By the $\lambda$-Lemma, there is $l_0\in\mathbb{N}$, such that, for any $m\geq 1$, there is a piece of orbit $(h(z),h^2(z),\cdots,h^{l_0+m-1}(z))$ contained in $U_p$, such that $h(z)\in B(y,r)$, $h^{l_0+m-1}(z)\in B(h^{-1}(x),r)$ and $h^i(z)\notin U_y\cup h^{-1}(U_x)$ for any $i=2,3,\cdots,l_0+m-2$. Let $l=l_0+m_0$.

By Lemma~\ref{basic perturbation} and the disjointness of $U_y$, $f^{-1}(U_x)$ and $U_K$, there is $h_m\in \mathcal{U}$, such that, $h_m$ coincides with $h$ outside $U_y\cup f^{-1}(U_x)$, and $h_m(y)=h^2(z)$, $h_m(f^{l_0+m-1}(z))=h_m^{l_0+m-1}(y)=x$. Hence $h_m$ coincides with $h$ on $\orb(p)$ and outside $U_p$. Moreover, the point $x$ is a periodic point of $h_m$, and putting $O=\orb(x,h_m)$, we have that $O\setminus U_p=\orb(x,h)\setminus U_p$, and $\# (O\cap U_p)=l_0+m_0+m=l+m$.

This finishes the proof of Proposition~\ref{time control}.

\begin{rem}
We point out here that, in the periodic case, we cannot do the same perturbations at $x'$ to connect $f^{k'}(w')$ to $f(z')$ just as at the points $y'$ to connect $f^{n_4}(z')$ and $f^{-n_5+1}(y)$. Because the piece of orbit $(w',\cdots,f^{k'}(w'))$ that connects the neighborhoods $V_{z_1}$ and $V_{x'}$ of $z_1$ and $x'$ respectively may enter into the neighborhood $U_{x'}$ many times before $f^{k'}(w')$. Thus if we use the basic perturbation lemma to connect $f^{k'}(w')$ to $f(z')$, the piece of orbit $(w',\cdots,f^{k'}(w'))$ may be modified and it is not clear if the negative orbit of $y$ can intersect $V_{z_1}$ after such perturbation.
\end{rem}

\section{Asymptotic approximation for true orbits: proof of Proposition 2}\label{proposition 2}

In this section, we give the proof of Proposition~\ref{asymptotic connecting 1}. In fact, the proof is almost the same as the proof of Proposition 10 in~\cite{c1}. Recall that $K$ is an invariant compact set and $\alpha(x)\subset K$. We consider the hyperbolic periodic point $p\notin K$ with $\overline{W^u(p)}\cap K\neq \emptyset$. We assume that for any point $y\in W^u(p,f)$, we have $\omega(y)\setminus K\neq \emptyset$, otherwise there is nothing needed to prove. Also we assume that $x\notin K$ because the other case can be obtained directly if the proposition is true under the assumption $x\notin K$. We take two steps to get our purpose:
\begin{itemize}
\item we choose a sequence of non-periodic points $(z_n)_{n\geq 0}$, such that:
\begin{center}
    $z_0\prec z_1\prec \cdots \prec K$, $z_0\in \overline{W^u(p)}$ and $z_n\notin \overline{\orb^-(x)}$, for any $n\geq 0$,
\end{center}

\item then we perturb at every $z_n$ to connect all the points together and avoid $\orb^-(x)$.
\end{itemize}
\bigskip

In order to prove Proposition 2, we take a decreasing sequence of $C^1$-neighborhoods $(\mathcal{U}_n)$ of $f$ that satisfies the following properties:
\begin{itemize}
\item $\overline{\mathcal{U}_0}\subset \mathcal{U}$,

\item the element of $\mathcal{U}_n$ is of the form $f\circ \phi$ with $\phi\in \mathcal{V}_n$, where $(\mathcal{V}_n)$ is a decreasing sequence of $C^1$ neighborhoods of $Id$ that satisfy the property (F) in Definition~\ref{property F}, and $\cap_n \mathcal{V}_n=\{Id\}$.\\
\end{itemize}
Then we have that $\cap_n \mathcal{U}_n=\{f\}$. Theorem~\ref{Thm:connecting lemma} associates to each pair $(f,\mathcal{U}_k)$ a number $N_k$.
\medskip

Recall that $\alpha(x)\subset K$ and we have assumed that $x\notin K$. We need the following three lemmas for the proof of Proposition~\ref{asymptotic connecting 1}.
\begin{lem}\label{connecting points}
For any neighborhood $W$ of $K$, there is a point $z\in (W\cap\overline{W^u(p)})\setminus K$, such that, $z\prec_W K$ and $\orb^+(z)\subset W$. Moreover, $z\notin \overline{\orb^-(x)}$.
\end{lem}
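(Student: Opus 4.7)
\noindent\emph{Plan.} I will adapt the entry/exit argument from the proof of Lemma~\ref{property of prec}, but run it in reverse: there the forward iterates of $x$ (known to enter $V$) were used; here the backward iterates of points on $W^u(p)$ (which start near $w\in K\cap\overline{W^u(p)}$ and must eventually exit any small neighborhood of $K$ on their way back to $p$) will play the analogous role.

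First I fix an open neighborhood $V_1$ of $K$ with $\overline{V_1}\subset W$ and $p\notin\overline{V_1}$ (possible since $p\notin K$ and $K$ is compact). Using $\overline{W^u(p)}\cap K\neq\emptyset$, pick $w\in K\cap\overline{W^u(p)}$ and a sequence $y_n\in W^u(p)$ with $y_n\to w$; since $W^u(p)$ accumulates at $w$ as a positive-dimensional immersed submanifold while $orb(x)$ is countable, I arrange $y_n\notin orb(x)$. Define
\[
M_n=\sup\{\,m\geq 0 : f^{-i}(y_n)\in V_1\text{ for all }0\leq i\leq m\,\}.
\]
Each $M_n$ is finite because $f^{-m}(y_n)\to p\notin\overline{V_1}$ as $m\to\infty$; and $M_n\to\infty$ because, for every fixed $i$, $f^{-i}(y_n)\to f^{-i}(w)\in K\subset V_1$.

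Next I set $a_n=f^{-M_n}(y_n)\in W^u(p)\cap V_1$; by definition $f^{-1}(a_n)\notin V_1$. After extracting a subsequence, $a_n\to a^*\in\overline{V_1}\cap\overline{W^u(p)}$. Passing the relation $f^{-1}(a_n)\in M\setminus V_1$ to the limit gives $f^{-1}(a^*)\in M\setminus V_1$, which forces $a^*\notin K$ by invariance of $K$. For every fixed $i\geq 0$ one has $f^i(a_n)=f^{i-M_n}(y_n)\in V_1$ once $M_n\geq i$, so in the limit $f^i(a^*)\in\overline{V_1}\subset W$; hence $orb^+(a^*)\subset W$. Finally, the orbit segments $(a_n,f(a_n),\dots,f^{M_n}(a_n)=y_n)\subset V_1\subset W$, of length $M_n\geq 1$, connect arbitrary neighborhoods of $a^*$ to arbitrary neighborhoods of $w$, giving $a^*\prec_W w\in K$.

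It remains to secure $z\notin orb^-(x)$, and this is the point I expect to require the most care. Observe first that $x$ is non-periodic: a periodic $x$ would have $\alpha(x)=orb(x)\ni x$ and, combined with $\alpha(x)\subset K$, would contradict $x\notin K$. If $a^*\notin orb^-(x)$ I simply take $z=a^*$. Otherwise $a^*=f^{-j}(x)$ for a unique $j\geq 0$, and I take $z=f^{j+1}(a^*)=f(x)$: this $z$ lies in $\overline{W^u(p)}$ by the $f$-invariance of $\overline{W^u(p)}$, has $orb^+(z)\subset orb^+(a^*)\subset W$, still satisfies $z\prec_W w$ via the shifted segments $(f^{j+1}(a_n),\dots,y_n)$ of length $M_n-j-1\to\infty$, and cannot lie in $orb^-(x)$ since $f(x)\in orb^-(x)$ would force $x$ to be periodic. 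The non-periodicity of $x$ thus lets a single forward shift dispose of the only troublesome case.
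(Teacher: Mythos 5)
Your proof is correct, and the construction of $a^*$ as a limit of last-entry points $a_n=f^{-M_n}(y_n)$ is the same boundary-crossing argument as the paper's, just run backward from $y_n\to w\in K$ instead of forward from points $x_k\in W^u(p)$; the verifications of $a^*\notin K$, $orb^+(a^*)\subset W$, and $a^*\prec_W K$ agree with the paper's as well. The genuine difference is in securing $z\notin orb^-(x)$. The paper observes that the conclusion for a smaller neighborhood $W'\subset W$ implies it for $W$, so one may assume at the outset that $x\notin\overline{W}$ (possible since $x\notin K$); then $orb^+(z)\subset\overline{V}\subset\overline{W}$ immediately prevents $x\in orb^+(z)$, i.e.\ gives $z\notin orb^-(x)$ for free. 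Your case split with the forward shift $z=f(x)$ and the non-periodicity of $x$ is a correct alternative, but the shrinking trick is considerably shorter and would have spared you exactly the step you flagged as requiring the most care. Two small remarks: the step ``arrange $y_n\notin orb(x)$'' is never used afterward and can simply be dropped; and since $p$ need not be a fixed point, the $\alpha$-limit of $y_n$ is $orb(p)$ rather than $\{p\}$, so the right condition to impose on $V_1$ is $orb(p)\cap\overline{V_1}=\emptyset$ (available because $orb(p)\cap K=\emptyset$, by invariance of $K$ and $p\notin K$). A point in your favor: your finiteness argument for $M_n$ uses only the dynamics on $W^u(p)$, so it sidesteps the standing reduction ``$\omega(y)\setminus K\neq\emptyset$ for every $y\in W^u(p)$'' that the paper sets up at the start of this section and cites explicitly in its proof of the lemma.
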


\begin{proof} The proof is similar to the proof of Lemma~\ref{property of prec}. We assume $W$ is a small neighborhood of $K$ such that $x\not\in \overline{W}$ and $p\notin W$. Take an open neighborhood $V\subset W$ of $K$, such that $\overline{V}\subset W$. Since $\overline{W^u(p)}\cap K\neq \emptyset$, for any $k\geq 1$, there is a point $x_k\in W^u(p)$ and a positive integer $n(k)$, such that $f^{n(k)}(x_k)\in B(K,\frac{1}{k})$. For $k$ large, the set $B(K,\frac{1}{k})$ is contained in $V$. We consider the smallest integer $m(k)$ such that the piece of orbit $(f^{m(k)}(x_k),\cdots,f^{n(k)}(x_k))$ is contained in $V$. By the assumption that $\omega(x_k)\setminus K\neq \emptyset$ for any $k\geq 1$, we can see that $n(k)-m(k)$ goes to infinity as $k$ goes to infinity.

By taking convergent subsequence if necessary, assume the sequence $\{f^{m(k)}(x_k)\}$ converges to a point $z\in \overline{V}$, and $\{f^{n(k)}(x_k)\}$ converges to a point $z'\in K$. It can be obtained directly that $z\in \overline{W^u(p)}$. By a similar argument as in the proof of Lemma~\ref{property of prec}, we can obtain the fact that $z\notin K$, and $z\prec_{W} z'$. Then we have $z\prec_W K$ since $z'\in K$. Since $f^{m(k)}(x_k),\cdots,f^{n(k)}(x_k)$ is contained in $V$ and $n(k)-m(k)$ goes to infinity, we have that $\orb^+(z)\subset W$. Then by the assumption $x\notin \overline{W}$, we have $z\notin \orb^-(x)$. Hence $z\notin \overline{\orb^-(x)}$ because $\overline{\orb^-(x)}\subset\orb^-(x)\cup K$.
\end{proof}

\begin{lem}\label{choose points}
There are a point $y\in W^u(p)$, a sequence of points $(z_k)_{k\geq 1}$, three sequences of neighborhoods $(U_k)_{k\geq 1}$, $(V_k)_{k\geq 1}$, $(W_k)_{k\geq 0}$ and a sequence of finite segment of orbits $Y_k=(y_k,f(y_k),\cdots,f^{m(k)}(y_k))_{k\geq 0}$, such that:
\begin{enumerate}
\item $W_{k+1}\subset W_k$ and $\cap_k W_k=K$;

\item Theorem~\ref{Thm:connecting lemma} can be applied to $z_k\in V_k\subset U_k$ for the triple $(f,\mathcal{U}_k,N_k)$, and $\overline{f^n(U_k)}\subset W_k\setminus W_{k+1}$ for all $0\leq n\leq N_k$;

\item $\overline{U_k}\cap \orb^-(x)=\emptyset$, for any $k\geq 1$;

%\item $z_1\prec_{W_1}z_2\prec_{W_2}\cdots \prec_{W_k} z_{k+1}\prec \cdots \prec K$;

\item $z_k\prec_{W_k} z_{k+1}$ and $z_k\prec_{W_k} K$, for any $k\geq 1$;

\item the points $f^{m(k)}(y_k)$ and $y_{k+1}$ are contained in $V_{k+1}$ for all $k\geq 0$ where $y_0=y$, and $\orb^-(y)\cap W_1=\emptyset$;

\item $Y_k\subset W_{k}\setminus W_{k+2}$ for all $k\geq 0$ and $Y_k\cap \orb^-(x)=\emptyset$, for all $k\geq 0$;

%\item there is an open set $V$ containing $\orb^-(x)$, such that $V\cap U_k=\emptyset$ for all $k\geq 1$.
\end{enumerate}
\end{lem}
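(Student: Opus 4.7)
The plan is to construct the listed objects by induction on $k$, using Lemma~\ref{connecting points} for the base step and combining Lemma~\ref{prec} with the ``first-entry'' construction in the proof of Lemma~\ref{connecting points} for the inductive step. As a preliminary, I fix a strictly decreasing sequence of open neighborhoods $W_0\supsetneq W_1\supsetneq W_2\supsetneq\cdots$ of $K$ with $\overline{W_{k+1}}\subset W_k$ and $\bigcap_k W_k=K$, chosen so that $x,p\notin\overline{W_0}$. (Item $(1)$ must be read as a decreasing sequence, as is forced by items $(2)$ and $(6)$, which involve the shell $W_k\setminus W_{k+1}$.)

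For the base case I apply Lemma~\ref{connecting points} with $W=W_1$, but I run the argument in its proof with the auxiliary neighborhood $V$ chosen as an open set $W_2'$ satisfying $\overline{W_2}\subset W_2'\subsetneq W_1$; the limit point $z_1$ is then forced to lie on $\partial W_2'$ and hence in the shell $W_1\setminus\overline{W_2}$. By choosing the accumulation point generically (using countability of periodic points and of $orb^-(x)$), $z_1$ is non-periodic and avoids $orb^-(x)$, while still satisfying $z_1\in\overline{W^u(p)}$, $z_1\prec_{W_1}K$ and $orb^+(z_1)\subset W_1$. Using $z_1\in\overline{W^u(p)}$ I pick $y_0=y\in W^u_{loc}(p)$ so close to $p$ that $orb^-(y)\subset W^u_{loc}(p)$ stays outside $\overline{W_0}$, hence outside $W_1$, and has a positive iterate $f^{m(0)}(y)$ inside a prescribed small neighborhood $V_1$ of $z_1$; such a $y$ exists because $W^u(p)$ accumulates on $z_1$, and by further trimming and subsequence extraction the segment $Y_0=(y,f(y),\dots,f^{m(0)}(y))$ is kept inside $W_0\setminus W_2$ and disjoint from $orb^-(x)$. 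Then $V_1\subset U_1$ is fixed small enough to satisfy the hypotheses of Theorem~\ref{uniform connecting}, with $\overline{f^n(U_1)}\subset W_1\setminus W_2$ for $0\le n\le N_1$ and $\overline{U_1}\cap orb^-(x)=\emptyset$, which is possible since $z_1$ is non-periodic and lies in an open shell avoiding the countable set $orb^-(x)$.

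For the inductive step, assume $z_j,U_j,V_j$ $(j\le k)$ and $Y_j$ $(j\le k-1)$ have been built with $z_k\prec_{W_k}K$. Lemma~\ref{prec} applied to $(W_k,W_{k+1})$ produces $z\in W_{k+1}$ with $z_k\prec_{W_k}z\prec_{W_{k+1}}K$. Running the first-entry construction inside $W_k$ with an auxiliary neighborhood $W_{k+1}'$ satisfying $\overline{W_{k+2}}\subset W_{k+1}'\subsetneq W_{k+1}$ yields a limit point $z_{k+1}\in\partial W_{k+1}'\subset W_{k+1}\setminus\overline{W_{k+2}}$ that still satisfies the $\prec$-chain, and that can be chosen generically to be non-periodic and to avoid $orb^-(x)$. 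The very orbit pieces witnessing $z_k\prec_{W_k}z_{k+1}$, truncated at their first entry into a small neighborhood $V_{k+1}$ of $z_{k+1}$ chosen inside $W_{k+1}\setminus\overline{W_{k+2}}$, provide the segment $Y_k=(y_k,\dots,f^{m(k)}(y_k))$; before the last step these pieces lie outside $W_{k+1}'$, so $Y_k\subset W_k\setminus W_{k+2}$. After shrinking, one may further arrange $y_k$ close to $z_k$ and $Y_k\cap orb^-(x)=\emptyset$, and then pick $U_{k+1}\supset V_{k+1}$ satisfying Theorem~\ref{uniform connecting} with $\overline{f^n(U_{k+1})}\subset W_{k+1}\setminus W_{k+2}$ and disjoint from $orb^-(x)$ as well as from all previously constructed objects.

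Finally, for item $(7)$, since $\alpha(x)\subset K$, each $f^{-n}(x)$ is either outside $\overline{W_0}$ (hence at positive distance from every $\overline{U_k}\subset\overline{W_0}$), or lies in a unique shell $W_{k(n)}\setminus W_{k(n)+1}$ with $k(n)\to\infty$; in the latter case only $\overline{U_{k(n)}}$ could be near $f^{-n}(x)$, and item $(3)$ gives $f^{-n}(x)\notin\overline{U_{k(n)}}$, so a small open ball $B_n$ around $f^{-n}(x)$ is disjoint from $\bigcup_k\overline{U_k}$. Setting $V=\bigcup_n B_n$ produces the required open set. The main technical difficulty is maintaining the simultaneous compatibility of the $\prec$-chain in $(4)$, the shell confinement of $Y_k$ in $(6)$, and disjointness from $orb^-(x)$ in $(3)$ and $(6)$; it is resolved by the auxiliary intermediate neighborhoods $W_{k+1}'$, which force the witnessing orbit pieces to terminate in the correct shell, combined with the countability of $orb^-(x)$ and of periodic points, which permits generic avoidance at each inductive step.
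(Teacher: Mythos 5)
The central gap is the order of construction: you fix the entire sequence $(W_k)_{k\geq 0}$ of shrinking neighborhoods of $K$ in advance, whereas the paper constructs $W_{k+1}$ \emph{after} $z_k$, $U_k$ and $Y_{k-1}$ have already been fixed. This order is essential to items $(2)$ and $(6)$. In the paper, once $U_k$ is chosen with $\overline{U_k}\cap K=\emptyset$, the compact sets $\overline{f^n(U_k)}$ ($1\leq n\leq N_k$) and $Y_{k-1}$ are disjoint from $K$, so $W_{k+1}$ can simply be taken small enough to avoid them; that is how $\overline{f^n(U_k)}\subset W_k\setminus W_{k+1}$ and $Y_{k-1}\cap W_{k+1}=\emptyset$ are guaranteed. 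With $W_{k+1}$ and $W_{k+2}$ fixed beforehand there is no such freedom: an orbit segment witnessing $z_k\prec_{W_k}z_{k+1}$ is only constrained to stay in $W_k$ and may dip in and out of $W_{k+2}$ before its final entry into $V_{k+1}$. Your claim that ``before the last step these pieces lie outside $W_{k+1}'$'' is unjustified --- the minimality in the first-entry/last-exit argument does not control the intermediate portion of the orbit, which can freely visit $W_{k+1}'\setminus V_{k+1}$ and hence $W_{k+2}$. Likewise, there is no reason that $f^n(z_k)\notin\overline{W_{k+1}}$ for $1\leq n\leq N_k$ once $W_{k+1}$ is fixed, since $orb^+(z_k)$ may enter any neighborhood of $K$ immediately.

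A secondary error is the assertion that the limit point $z_1$ ``is forced to lie on $\partial W_2'$.'' The first-entry construction only yields $z_1\in\overline{W_2'}$ and $f^{-1}(z_1)\in M\setminus W_2'$; this does not place $z_1$ on the boundary, and $z_1$ could perfectly well lie inside $W_2$. (One could repair this by imposing $f^{-1}(\overline{W_2})\subset W_2'$, but this does not rescue the main issue above.) Finally, the non-periodicity of the $z_k$ is not obtained by a ``generic choice of accumulation point'' --- the points $z_k$ come out of a limiting process and you cannot perturb them; the paper instead uses the hypothesis that all periodic orbits in $K$ are hyperbolic to ensure that each $W_{k}$ can be chosen so that $W_{k}\setminus K$ contains no periodic point of period $\leq N_{k}$, which forces $z_k$ (being in $W_k\setminus K$) to be admissible for the connecting lemma. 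The fix, following the paper, is to carry $W_{k+1}$ along in the induction rather than choosing the whole sequence up front.
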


\begin{proof} We build all the sequences by induction. Set $W_0=M$. We first choose $W_1$, $z_1$, $V_1$, $U_1$ and $Y_0$.

Since all periodic orbits in $K$ are hyperbolic, there is a neighborhood $W_1$ of $K$, such that there is no periodic points with period less than or equal to $N_1$ contained in $W_1\setminus K$. Also we can assume that $p\notin \overline{W_1}$. By Lemma~\ref{connecting points}, there is $z_1\in W_1\setminus K$, such that $z_1\prec_{W_1} K$, $z_1\notin \overline{\orb^-(x)}$, and $z_1\in \overline{W^u(p)}$. By the choice of $W_1$ and by the fact $z_1\in W_1\setminus K$, there is a neighborhood $U_1$ of $z_1$, that is disjoint from its $N_1$ first iterates and $\overline{f^n(U_1)}\subset W_1\setminus K$ for any $0\leq n\leq N_1$. Moreover, because $z_1\notin \overline{\orb^-(x)}$, we can assume $\overline{U_1}\cap (\orb^-(x)\cup K)=\emptyset$. By Theorem~\ref{Thm:connecting lemma}, there is $V_1\subset U_1$ associated to $(f,\mathcal{U}_1,N_1)$. Then there are a point $y\in W^u(p)\setminus W_1$ and a positive integer $m(0)$, such that $f^{m(0)}(y)\in V_1$. Moreover, by considering a negative iterate of $y$ if necessary, we can assume that $\orb^-(y)\cap W_1=\emptyset$. We take $y_0=y$ and $Y_0=(y,f(y),\cdots,f^{m(0)}(y))$. To sum up, we have obtained $W_1$, $z_1$, $V_1$, $U_1$ and $Y_0$.
\medskip

Now we construct the sequences by induction on $k$. After $W_k$, $z_k$, $V_k$, $U_k$ and $Y_{k-1}$ have been built, there is $W_{k+1}\subset W_k$ such that
\begin{itemize}
\item there is no periodic point with period less than or equal to $N_{k+1}$ contained in $W_{k+1}\setminus K$;

\item $\overline{f^n(U_k)}\cap W_{k+1}=\emptyset$, for all $1\leq n\leq N_k$;

\item $W_{k+1}\cap Y_{k-1}=\emptyset$;

\item $W_{k+1}$ is contained in $B(K,\frac{1}{k})$.
\end{itemize}

By Lemma~\ref{prec}, there is $z_{k+1}\in W_{k+1}\setminus K$, such that $z_k\prec_{W_k} z_{k+1}\prec_ {W_{k+1}} K$ and $\orb^+(z_{k+1})\subset W_{k+1}$. Since $x\notin W_{k+1}$, we have that $z_{k+1}\notin \orb^-(x)$. Moreover, by the fact that $z_{k+1}\notin K$ and $\alpha(x)\subset K$, we have that $z_{k+1}\notin \overline{\orb^-(x)}$. By Theorem~\ref{Thm:connecting lemma}, there are neighborhoods $V_{k+1}\subset U_{k+1}$ of $z_{k+1}$ associated to $(f,\mathcal{U}_{k+1},N_{k+1})$, such that:
\begin{itemize}
\item $\overline{U_{k+1}}\cap (\orb^-(x)\cup K)=\emptyset$,

\item $\overline{f^n(U_{k+1})}\subset W_{k+1}$ for all $0\leq n\leq N_{k+1}$.\\
\end{itemize}
Then there is $Y_{k}=(y_k,f(y_k),\cdots,f^{m(k)}(y_k))$, such that $y_k\in V_k$ and $f^{m(k)}(y_k)\in V_{k+1}$. Since $\overline{U_{k}}\cap \orb^-(x)=\emptyset$, we have that $y_k\notin \orb^-(x)$, and hence $Y_{k}$ is disjoint from $\orb^-(x)$.

Then we finish the proof of Lemma~\ref{choose points}.
\end{proof}

\begin{rem}\label{neighborhood of orb-(x)}
We point out here that, in Lemma~\ref{choose points}, there is an open set $V$ containing $\orb^-(x)$, such that $V\cap U_k=\emptyset$ for all $k\geq 1$. In fact, since $x\notin K$, for any integer $n\in\mathbb{N}$, there is $n_k$, such that $f^{-n}(x)\notin W_{n_k}$. By the fact that $U_k\subset W_k$ and $\overline{U_k}\cap \orb^-(x)=\emptyset$, for any $n\geq 0$, there is an open neighborhood $B_n$ of $f^{-n}(x)$, such that $B_n\cap U_k=\emptyset$ for any $k\geq 1$. Then we take $V=\bigcup_{n\geq 0}B_n$.
\end{rem}

Now we fix the point $y\in W^u(p)$ and the sequences $(z_k)_{k\geq 1}$, $(U_k)_{k\geq 1}$, $(V_k)_{k\geq 1}$, $(W_k)_{k\geq 0}$ and $(Y_k)_{k\geq 0}$ as in Lemma~\ref{choose points}. We have the following lemma.

\begin{lem}\label{perturbations}
There are a sequence of perturbations $(g_k)_{k\geq 0}$ of $f$ and a strictly increasing sequence of integers $(n_k)_{k\geq 0}$, such that,
\begin{enumerate}
\item $g_0=f$ and $n_0=0$;

\item there is $\phi_k\in \mathcal{V}_k$, such that $\phi_k=Id$ outside $U_k\cup \cdots \cup f^{N_k-1}(U_k)$ and $g_k=g_{k-1}\circ \phi_k$, for $k\geq 1$;

\item for any $l=\{0,1,\cdots,k-1\}$, the piece of orbit $(g_k^{n_l}(y),g_k^{n_l+1}(y),\cdots,g_k^{n_{l+1}}(y))$ is contained in $W_l\setminus W_{l+2}$.
\end{enumerate}
\end{lem}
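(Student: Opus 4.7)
The plan is an induction on $k$, applying at each step the connecting lemma (Theorem~\ref{existence of perturbation domain}) at the point $z_k$ with the tower $U_k \cup \cdots \cup f^{N_k-1}(U_k)$ furnished by Lemma~\ref{choose points}(2). Start with $g_0 = f$ and $n_0 = 0$; items (1)--(3) hold vacuously. Along the induction I maintain the strengthened invariant $g_k^{n_k}(y) = f^{N_k}(y_k)$, where by convention $N_0 := 0$ and $y_0 := y$. Since the tail iterates $f^{N_k+1}(y_k), \ldots, f^{m(k)}(y_k)$ of $Y_k$ can be arranged (by choosing $V_k \subset U_k$ sufficiently small) to avoid all perturbation supports, this invariant implies $g_k^{n_k + m(k) - N_k}(y) = f^{m(k)}(y_k) \in V_{k+1}$, positioning the orbit of $y$ under $g_k$ for the next application of the connecting lemma.

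For the inductive step, given $g_{k-1}$ and $n_0 < \cdots < n_{k-1}$, I apply the connecting lemma to $f$ at $z_k$ with source $x := f^{m(k-1)-1}(y_{k-1})$ (so that $f(x) \in V_k$) and target $y'_k := f^{N_k}(y_k)$ (so that $f^{-N_k}(y'_k) = y_k \in V_k$); this produces $\phi_k \in \mathcal{V}_k$ supported in the $U_k$-tower such that $(f \circ \phi_k)^m(x) = y'_k$ for some $m \leq N_k + 1$. The supports of the prior perturbations $\phi_1, \ldots, \phi_{k-1}$ lie in the strips $\bigcup_{j < k}(W_j \setminus W_{j+1})$, disjoint from the $U_k$-tower $\subset W_k \setminus W_{k+1}$ by Lemma~\ref{choose points}(2); therefore $g_{k-1}$ coincides with $f$ on a neighborhood of the $U_k$-tower, and the identity $(f \circ \phi_k)^m(x) = y'_k$ transfers verbatim to $g_k := g_{k-1} \circ \phi_k$. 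Setting $n_k := n_{k-1} + m(k-1) - 1 + m$ then gives $g_k^{n_k}(y) = f^{N_k}(y_k)$, restoring the invariant.

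To verify item (3), for $l \leq k-2$ the piece from $n_l$ to $n_{l+1}$ is unaffected because the support of $\phi_k$ lies in $W_k \setminus W_{k+1} \subset W_{l+2}$ while the piece lies in $W_l \setminus W_{l+2}$ by the inductive hypothesis; for $l = k-1$, the new piece runs from $f^{N_{k-1}}(y_{k-1}) \in W_{k-1} \setminus W_k$ through the tail of $Y_{k-1} \subset W_{k-1} \setminus W_{k+1}$, through the $U_k$-tower $\subset W_k \setminus W_{k+1}$, and ends at $f^{N_k}(y_k) \in W_k \setminus W_{k+1}$, hence is contained in $W_{k-1} \setminus W_{k+1}$. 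The main technical obstacle is the transfer of the connecting lemma's conclusion from $f$ to the already-perturbed $g_{k-1}$; this is handled entirely by the geometric separation arranged in Lemma~\ref{choose points}, where the nested sequence $(W_k)$ forces each new tower into a strip disjoint from all previous ones, so each $\phi_k$ sees $g_{k-1}$ exactly as it sees $f$ on the relevant neighborhood. A secondary point, also handled by shrinking the $V_k$'s (as permitted by Lemma~\ref{choose points}), is arranging that the non-endpoint portions of each $Y_j$ avoid the later $U_i$-towers, so that the orbit follows $f$ deterministically in between the perturbation domains.
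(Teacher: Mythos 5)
Your inductive skeleton and reliance on the geometric separation of supports by the nested $(W_k)$ match the paper, but your use of the connecting lemma is different in a way that introduces a gap. The paper applies the connecting lemma \emph{to $g_k$ itself}, with source $g_k^{n_k}(y)$ --- a point on the actual $g_k$-orbit of $y$ --- and target $f^{m(k+1)}(y_{k+1})$, carrying the auxiliary invariant $g_k^{m_k}(y)=f^{m(k)}(y_k)\in V_{k+1}$; after the connection, the new invariant $g_{k+1}^{m_{k+1}}(y)=f^{m(k+1)}(y_{k+1})$ is delivered directly by the lemma's conclusion, requiring only that $g_k$ agree with $f$ along $Y_{k+1}$ (which holds since $Y_{k+1}\subset W_{k+1}$ while each support $\Delta_i=U_i\cup\dots\cup f^{N_i-1}(U_i)$, $i\le k$, lies in $W_i\setminus W_{i+1}$, disjoint from $W_{k+1}$).

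You instead apply the connecting lemma to $f$ (with source $x:=f^{m(k-1)-1}(y_{k-1})$ and target $f^{N_k}(y_k)$) and then "transfer verbatim" to $g_{k-1}\circ\phi_k$, carrying the weaker invariant $g_k^{n_k}(y)=f^{N_k}(y_k)$. Both the transfer and the assertion that the $g_{k-1}$-orbit of $y$ actually reaches your source $x$ require the tail of $Y_{k-1}$ (respectively $Y_k$) to avoid the earlier perturbation supports, and you dispose of this by "choosing $V_k\subset U_k$ sufficiently small." That claim is unjustified and essentially circular: $Y_k$ is constructed \emph{after} $U_k,V_k$ are fixed and depends on them, and nothing in Lemma~\ref{choose points} prevents the tail $f^{N_k+1}(y_k),\dots,f^{m(k)-1}(y_k)$ from re-entering $\Delta_k$ --- if the forward orbit of $z_k$ returns near $z_k$, it will, however small $U_k$ is. Concretely, $x\in Y_{k-1}\subset W_{k-1}\setminus W_{k+1}$ can meet $\Delta_{k-1}\subset W_{k-1}\setminus W_k$, in which case $g_{k-1}(x)\ne f(x)$, so the $g_{k-1}$-orbit of $y$ need not pass through $x$ and the first transferred step of the new orbit is also wrong. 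The paper's choice to invoke the connecting lemma for $g_k$, rather than for $f$ plus a transfer, is exactly what removes this obstacle: the resulting orbit is a $g_{k+1}$-orbit by construction, with nothing to be carried across. (A smaller slip: with your invariant $g_{k-1}^{n_{k-1}}(y)=f^{N_{k-1}}(y_{k-1})$, the point $x$ sits at iterate $n_{k-1}+m(k-1)-1-N_{k-1}$, so your formula $n_k:=n_{k-1}+m(k-1)-1+m$ is missing a $-N_{k-1}$.)
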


\begin{proof}

We build inductively the sequences $(g_k)$ and $(n_k)$ and another sequence of integers $(m_k)_{k\geq 0}$ which satisfy the conclusions and also the following properties:
\begin{itemize}
\item $m_k>n_k$ and $g_k^{m_k}(y)\in V_{k+1}$;
\item the piece of orbit $(g_k^{n_k}(y),g_k^{n_k+1}(y),\cdots,g_k^{m_k}(y))$ is contained in $W_k\setminus W_{k+2}$.
\end{itemize}

First, we take $g_0=f$ and $n_0=0$. By Lemma~\ref{choose points}, there is $m_0>0$, such that $g_0^{m_0}(y)\in V_1$ and the piece of orbit $(y=g_0^{n_0}(y),g_0(y),\cdots,g_0^{m_0}(y))$ is contained in $W_0\setminus W_2$.

Now assume that $g_k$, $n_k$ and $m_k$ have been built, we explain how to get $g_{k+1}$, $n_{k+1}$ and $m_{k+1}$.

The forward orbit of $g_k^{n_k}(y)$ has a positive iterate $g_k^{m_k}(y)\in V_{k+1}$, and the backward orbit of $f^{m(k+1)}(y_{k+1})$ has a negative iterate $y_{k+1}\in V_{k+1}$. Moreover, these segments of orbit are contained in $W_k\setminus W_{k+3}$. Since $g_k$ coincides with $f$ on the set $U_{k+1}\cup f(U_{k+1})\cup\cdots\cup f^{N_{k+1}}(U_{k+1})$, one can apply Theorem~\ref{Thm:connecting lemma} to $(g_k,\mathcal{U}_{k+1},V_{k+1},U_{k+1})$ and get a diffeomorphism $g_{k+1}$. The new diffeomorphism $g_{k+1}$ is of the form $g_k\circ \phi_{k+1}$, where $\phi_{k+1}=Id$ outside $U_{k+1}\cup \cdots \cup f^{N_{k+1}-1}(U_{k+1})$ and $f\circ \phi_{k+1}\in\mathcal{U}_{k+1}$, thus $\phi_{k+1}\in\mathcal{V}_{k+1}$.

Since $g_{k+1}=g_k$ outside $U_{k+1}\cup \cdots \cup f^{N_{k+1}-1}(U_{k+1})$, the piece of orbit $(y,g_k(y),\cdots,g_k^{n_k}(y))$ under $g_k$ coincides with the one $(y,g_{k+1}(y),\cdots,g_{k+1}^{n_k}(y))$. By the new diffeomorphism $g_{k+1}$, the forward orbit of $g_{k+1}^{n_k}(y)$ has an iterate $f^{m(k+1)}(y_{k+1})$ under $g_{k+1}$ contained in $V_{k+2}$. That is to say, there is an integer $m_{k+1}>n_k$, such that $f^{m(k+1)}(y_{k+1})=g_{k+1}^{m_{k+1}}(y)$. Moreover, there exists an integer $n_{k+1}$ with $n_k<n_{k+1}<m_{k+1}$, such that:
\begin{itemize}
\item[--] the piece of orbit $(g_{k+1}^{n_k}(y),\cdots,g_{k+1}^{n_{k+1}}(y))$ is contained in the union of $\{g_{k}^{n_k}(y),\cdots,g_{k}^{m_{k}}(y)\}$ and $U_{k+1}\cup \cdots \cup f^{N_{k+1}-1}(U_{k+1})$, hence it is contained in $W_{k}\setminus W_{k+2}$,

\item[--] the piece of orbit $(g_{k+1}^{n_{k+1}}(y),\cdots,g_{k+1}^{m_{k+1}}(y))$ is contained in the union of $U_{k+1}\cup \cdots \cup f^{N_{k+1}-1}(U_{k+1})$ and $\{y_{k+1},\cdots,f^{m(k+1)}(y_{k+1})\}$, hence it is contained in $W_{k+1}\setminus W_{k+3}$.
\end{itemize}

Then the conclusions are satisfied for $k+1$. This ends the proof of Lemma~\ref{perturbations}.
\end{proof}

\begin{proof}[End of the proof of Proposition~\ref{asymptotic connecting 1}]
Recall that the supports $U_i\cup\cdots\cup f^{N_i-1}(U_i)$ and $U_j\cup\cdots\cup f^{N_j-1}(U_j)$ of the perturbations $\phi_i$ and $\phi_j$ are disjoint for any $i\neq j$, and $(\mathcal{V}_n)$ satisfy the property (F). Then the sequence $g_k=f\circ\phi_1\circ\cdots\circ\phi_k$ converges to a diffeomorphism $g\in\mathcal{U}_0\subset \mathcal{U}$. By the constructions, $g$ coincides with $f\circ\phi_k$ in the set $U_k\cup\cdots\cup f^{N_k-1}(U_k)$ and with $f$ elsewhere. We take $V$ to be the neighborhood of $\orb^-(x)$ in Remark~\ref{neighborhood of orb-(x)}, then it holds that $g$ coincides with $f$ on the set $\orb(p)\cup K\cup V\cup \orb^-(y)$ and $\omega(y,g)\subset K$. Since $\orb^-(x)\subset V$, we have that $Dg$ coincides with $Df$ on $\orb^-(x)$. Moreover, since $g$ is the limit of the sequence $(g_k)$, by Lemma~\ref{perturbations}, for any $n>n_k$, $g^n(y)\in W_k$. Then we have that $\omega(y,g)\subset K$. This finishes the proof of Proposition~\ref{asymptotic connecting 1}.
\end{proof}

\section{Asymptotic approximation for pseudo-orbits: proof of Proposition 3}\label{proposition 3}

To prove Proposition 3, we use the techniques of~\cite{bc,c1} to get true orbits by perturbing a pseudo-orbit. Similarly to the proof of Proposition 2, we have to perturb infinitely many times in a special neighborhood to keep some part of the initial dynamic unchanged. The proof refers a lot to~\cite{bc} and Section 3.2 of~\cite{c2}.

We take several steps. First, we choose an open set that covers all positive orbits of $X$ that are not on the local stable manifold of periodic orbits with small periods. Actually, we choose a special topological tower of $X$. Second, we construct a sequence of disjoint perturbation domains containing in their interior the special topological tower. Then, we choose an infinitely long pseudo-orbit in $X$ that goes from $z$ to $K$, has jumps only in the tiles of the perturbation domains and accumulates to $K$ in the future. Finally, we perturb in the perturbation domains to construct a true orbit which goes from $z$ to $K$ and accumulates to $K$ in the future.

We take a $C^1$ neighborhood $\mathcal{U}_0$ of the diffeomorphism $f_0$ with $\overline{\mathcal{U}_0}\subset \mathcal{U}$, such that, the element of $\mathcal{U}_0$ is of the form $f\circ\phi$ with $\phi\in\mathcal{V}_0$, where $\mathcal{V}_0$ is a $C^1$-neighborhood of $Id$ that satisfies the property (F) in Definition~\ref{property F}. Then there is a smaller $C^1$ neighborhood $\mathcal{U}'\subset\mathcal{U}_0$ of $f_0$ and an integer $N_0$ associated to $(f_0,\mathcal{U}_0)$ by Theorem~\ref{uniform connecting}. Take the integer $T=10\kappa_d d N_0$ where the integer $\kappa_d\geq 1$ is the number given by Lemma~\ref{ttower}.

From now on, we fix the $C^1$ neighborhoods $\mathcal{U}'\subset\mathcal{U}_0$ of $f_0$ and the integer $T$. Consider a diffeomorphism $f\in\mathcal{U}'$, an invariant compact set $K$, a positive invariant compact set $X$ and a point $z$, satisfying the following properties:
\begin{itemize}
\item[--] all periodic points contained in $K$ are hyperbolic,
\item[--] all periodic points contained in $X$ with period less than or equal to $T$ are hyperbolic,
\item[--] for any $\varepsilon>0$, there is an $\varepsilon$-pseudo-orbit contained in $X$ that connects from $z$ to $K$.
\end{itemize}
Also we fix a neighborhood $U$ of $X\setminus K$.

We take a decreasing sequence of $C^1$-neighborhoods $(\mathcal{U}_n)_{n\geq 1}$ of $f$ such that, $\mathcal U_1\subset \mathcal U_0$, and $\cap_n \mathcal{U}_n=\{f\}$. Moreover, the element of $\mathcal{U}_n$ is of the form $f\circ \phi$ with $\phi\in \mathcal{V}_n$, where $(\mathcal{V}_n)$ is a decreasing sequence of $C^1$ neighborhoods of $Id$ that satisfy the property (F) in Definition~\ref{property F}. For any $k\geq 1$, Theorem~\ref{Thm:connecting lemma} associates to each pair $(f,\mathcal{U}_k)$ an integer $N_k$. We can assume that $(N_k)_{k\geq 0}$ is an increasing sequence. We assume also that $z\notin K$, otherwise there is nothing to prove. For an integer $N$, denote by $Per_N(f)$ the set of periodic points of $f$ whose period is no more than $N$. Fix a small number $\gamma>0$ (to determine the $C^0$ distance between the new created diffeomorphism and $f$).

\subsection{Choice of topological towers}

In this section, we construct a family of special topological towers for the set $X$ with the properties stated in Lemma~\ref{choice of topological tower}.

\begin{lem}\label{choice of topological tower}
For any $\delta>0$, for any decreasing sequence of positive constants $(\gamma_n)_{n\geq 0}$, and for any increasing sequence of integers $(L_k)_{k\geq 0}$ where $L_0=10dN_0$, there are a decreasing sequence of neighborhoods $(U_k)_{k\geq 0}$ of $K$, a sequence of open sets $(W_k)_{k\geq 0}$, and a sequence of compact sets $(D_k)_{k\geq 0}$, such that, putting $X_k=X\cap \overline{(U_{k}\setminus U_{k+1})}$ for all $k\geq 0$, the following properties are satisfied.
\begin{itemize}
\item $U_0=M$, $z\notin\overline{U_1}$, and $\bigcap_{k\geq 0} U_k=K$,
\item For any $k\geq 0$,
\begin{enumerate}
\item\label{item:tower-no periodic orbit}  there is no periodic orbit with period less than $\kappa_d^2 L_{k+1}$ contained in $\overline{U_{k+1}}\setminus K$,

\item\label{item:tower-subset} $f^i(\overline{U_{k+1}})\subset U_{k}$, for all $-4\kappa_d^2 L_{k+1}\leq i\leq 4\kappa_d^2 L_{k+1}$,

\item\label{item:tower-disjointness} the $L_k$ sets $\overline{W_k}, f(\overline{W_k}),\cdots,f^{L_k-1}(\overline{W_k})$ are pairwise disjoint, contained in $U\setminus K$, and also contained in $U_{k-1}\setminus \overline{U_{k+2}}$, where we put $U_{-1}=U_0=M$,

\item\label{item:tower-second disjointness} for any $-4\kappa_d^2 L_{k+2}\leq i\leq 4\kappa_d^2 L_{k+2}$, we have that $f^i(\overline{U_{k+2}})\cap \overline{(W_0\cup\cdots\cup W_{k})}=\emptyset$,

\item\label{item:tower-third disjointness} for any $l<k$, any $0\leq i\leq L_l$ and any $0\leq j\leq L_{k}$, we have that $\overline{f^i(W_l)}\cap \overline{f^j(W_k)}=\emptyset$,

\item\label{item:tower-iterates} the set $D_k$ is contained in $W_k$, such that, any point in $X_0\setminus (\bigcup_{p\in Per_{L_0}(f)}W^s_{\delta}(p))$ has a positive iterate in $\int(D_0)$, and for $k\geq 1$, any point in $X_k$ has a positive iterate in $\int(D_k\cup D_{k-1})$,
\item\label{item:tower-diameter} the diameter of every connected component of $W_k$ is smaller than $\gamma_k$.
\end{enumerate}
\end{itemize}
\end{lem}

\begin{rem}
The set $W_0\cup \cdots \cup W_k$ can be seen as a special topological tower for $X_0\cup\cdots\cup X_k$, from the items~\ref{item:tower-disjointness},~\ref{item:tower-third disjointness} and~\ref{item:tower-iterates}.
\end{rem}

\begin{proof}

We build inductively the sequences $(U_k)_{k\geq 0}$, $(W_k)_{k\geq 0}$ and $(D_k)_{k\geq 0}$ from a sequence of open sets $(W_k')_{k\geq 0}$ and a sequence of compact sets $(D_k')_{k\geq 0}$, which satisfy the following additional properties: for any $k\geq 0$,
\begin{itemize}
\item \textit{$1'$. the set $W_k$ is contained in a small neighborhood of $W_k'$, and $W_k'\subset W_k$,}

\item \textit{$2'$. the sets $\overline{W'_k}, f(\overline{W'_k}),\cdots,f^{L_k-1}(\overline{W'_k})$ are pairwise disjoint, contained in $U\setminus K$, and also contained in $U_{k-1}\setminus \overline{U_{k+2}}$,}

\item \textit{$3'$. for all $-4\kappa_d^2 L_{k+2}\leq i\leq 4\kappa_d^2 L_{k+2}$, we have $f^i(\overline{U_{k+2}}))\cap \overline{(W_0'\cup\cdots\cup W_k')}=\emptyset$,}

\item \textit{$4'$. for any $l<k$, any $0\leq i\leq L_l$ and any $0\leq j\leq L_{k}$, we have $\overline{f^i(W_l)}\cap \overline{f^j(W_k')}=\emptyset$,}

\item \textit{$5'$. $D_k=(D'_k\cup D'_{k+1})\cap W_k$}, where $D_0'\subset W_0'$ and $D'_{k+1}\subset W_k\cup W_{k+1}'$,

\item \textit{$6'$. any point in $X_0\setminus (\bigcup_{p\in Per_{L_0}(f)}W^s_{\delta}(p))$ has a positive iterate in $\int(D_0')$ and any point in $X_k$ has a positive iterate contained in $\int(D_k')$ for any $k\geq 1$,}

\item \textit{$7'$.the diameter of every connected component of $W_k'$ is smaller than $\frac{\gamma_k}{2}$.}

\end{itemize}
\medskip

Put $U_{-1}=U_0=M$. We construct inductively the sets $U_{k+1}$, $W_{k}'$, $D_k'$, $W_{k-1}$ and $D_{k-1}$.

%We build the sets $U_{k+3}$, $W_{k+2}'$, $D_{k+2}'$, $W_{k+1}$ and $D_{k+1}$ after the sets $U_{k+2}$, $W_{k+1}'$,  $D_{k+1}',$ $W_{k}$ and $D_{k}$ have been built.

\paragraph{The sets $U_1$, $W_0'$, and $D_0'$: the case where $k=0$.}
By the assumption of hyperbolicity of periodic orbits in $K$, we can take a neighborhood $U_1\subset B(K,1)$ of $K$ such that $z\notin\overline{U_1}$ and there is no periodic orbit with period less than $\kappa_d^2 L_1$ in $\overline{U_1}\setminus K$. Notice that $U_0=M$. The properties~\ref{item:tower-no periodic orbit} and~\ref{item:tower-subset} are satisfied.

Recall that $X$ contains no non-hyperbolic periodic orbit with period less than $T$, where $T=10\kappa_d d N_0=\kappa_d L_0$. Hence $X_0$ contains no non-hyperbolic periodic orbit with period less than $\kappa_d L_0$. By Lemma~\ref{ttower}, there are an open set $W_0'\subset U$ whose closure $\overline{W_0'}$ is a compact $d$-dimensional sub-manifold with boundary and a compact set $D'_0\subset W_0'$, such that $\overline{W_0'}$ is disjoint from its $L_0$ iterates, which is the first property of item $2'$. Moreover, any point in $X_0\setminus (\bigcup_{p\in Per_{L_0}(f)}W^s_{\delta}(p))$ has a positive iterate contained in $\int(D'_0)$ and hence the item $6'$ is satisfied. By the item~\ref{item:ttower-neighborhood} of Lemma~\ref{ttower}, the set $\overline{W_0'}$ is contained in a small neighborhood of $X_0\cup f(X_0)\cup\cdots\cup f^{\kappa_dL_0}(X_0)$. Moreover, we can choose $W_0'$ such that the diameter of every connected component of $W_0'$ is smaller than $\frac{\gamma_0}{2}$, which is the item $7'$. Hence we can assume that $\bigcup_{i=0}^{L_0}f^i(\overline{W_0'})\subset U\setminus K$, since $X$ is positively invariant. To make the sequences complete, we could put $D'_{-1}=D_{-1}=W_{-1}=\emptyset$. Notice that, we do not need to check other items for the case $k=0$.

\paragraph{The sets $U_{k+2}$, $W_{k+1}'$, $D'_{k+1}$, $W_{k}$ and $D_k$.}
Assume $U_{j+1}$, $W_{j}'$,  $D'_{j}$, $W_{j-1}$ and $D_{j-1}$ have been constructed for any $0\leq j\leq k$. Now we build the sets $U_{k+2}$, $W_{k+1}'$,  $D'_{k+1}$, $W_{k}$ and $D_k$.
\medskip

We take a neighborhood $U_{k+2}\subset U_{k+1}\cap B(K,\frac{1}{k+2})$ of $K$, such that:
\begin{itemize}
\item there is no periodic orbit with periodic less than $\kappa_d^2 L_{k+2}$ in $\overline{U_{k+2}}\setminus K$, which is the property~\ref{item:tower-no periodic orbit},
\item $f^i(\overline{U_{k+2}})\subset U_{k+1}$, for all $-4\kappa_d^2 L_{k+2}\leq i\leq 4\kappa_d^2 L_{k+2}$, which is the property~\ref{item:tower-subset},
\item $\overline{W_k'}\cap f^i(\overline{U_{k+2}})=\emptyset$ for all $-4\kappa_d^2 L_{k+2}\leq i\leq 4\kappa_d^2 L_{k+2}$, which implies the item $3'$ and the last property of item $2'$.
\end{itemize}
\medskip

Consider the compact set $X_{k+1}=X\cap\overline{U_{k+1}\setminus U_{k+2}}$. Notice that $X_{k+1}$ contains no periodic orbit of period less than or equal to $\kappa_d^2 L_{k+1}$. By Lemma~\ref{ttower}, there is an open set $V_{k+1}'$ whose closure $\overline{V_{k+1}'}$ is a compact $d$-dimensional sub-manifold with boundary such that any point in $X_{k+1}$ has a positive iterate contained in $V_{k+1}'$. Moreover, the set $\overline{V_{k+1}'}$ is disjoint from its first $\kappa_d L_{k+1}$ first iterates and $\overline{V_{k+1}'}$ is contained in a small neighborhood of $\bigcup_{i=0}^{\kappa_d^2 L_{k+1}}f^{i}(X_{k+1})$. By taking this neighborhood small, we can assume that $V_{k+1}'$ satisfies the following properties:
\begin{itemize}
\item Since $f^i(\overline{U_{k+1}})\subset U_{k}$ for any $-4\kappa_d^2 L_{k+1}\leq i\leq 4\kappa_d^2 L_{k+1}$, and $X_{k+1}\subset \overline{U_{k+1}}$, we have that $f^i(\overline{V'_{k+1}})\subset U_{k}$, for any $-2\kappa_d^2 L_{k+1}\leq i\leq 2\kappa_d^2 L_{k+1}$.
\item Since $f^i(\overline{U_{k+1}})\cap (\overline{W_0\cup\cdots\cup W_{k-1}})=\emptyset$ for any $-4\kappa_d^2 L_{k+1}\leq i\leq 4\kappa_d^2 L_{k+1}$, we have that $f^i(\overline{V'_{k+1}})\cap (\overline{W_0\cup\cdots\cup W_{k-1}}\cup K)=\emptyset$ for any $-2\kappa_d^2L_{k+1}\leq i\leq 2\kappa_d^2L_{k+1}$.
\end{itemize}
Moreover, we can choose $V_{k+1}'$ such that the diameter of all its connected components is small enough, such that all the $i^{th}$ iterates of every connected component of $V_{k+1}'$ is of diameter smaller than $\frac{\gamma_{k+1}}{3}$, for any $0\leq i\leq \kappa_dL_{k+1}$.
\medskip

Recall that $\overline{W_k'}$ and $\overline{V'_{k+1}}$ are two compact $d$-dimensional sub-manifolds with boundary and $\overline{V_{k+1}'}$ is disjoint from its first $\kappa_d L_{k+1}$ first iterates. By Lemma~\ref{choose neighborhoods}, considering $\overline{W_k'}$ and $\overline{V'_{k+1}}$ as $W'$ and $V'$, and considering the integer $L_{k+1}$ as the integer $T$, there is an open set $S_k=W_{k}\cup V_{k+1}$ satisfying the following properties:
\begin{itemize}
\item $\overline{W_{k}}$ and $\overline{V_{k+1}}$ are two compact $d$-dimensional sub-manifolds with boundary.
\item $W_{k}\cup V_{k+1}\subset U\setminus K$.
\item $W_k$ is a small neighborhood of $W_k'$, and hence $\overline{W_{k}}$ is disjoint with its first $L_k$ iterates. Moreover, the properties $2'$, $3'$,  $4'$ of $W_{k}'$ implies the properties of~\ref{item:tower-disjointness},~\ref{item:tower-second disjointness},~\ref{item:tower-third disjointness} of $W_k$. The property $1'$ is automatically satisfied.
\item $V_{k+1}'\subset \bigcup_{i=0}^{\kappa_d L_{k+1}} f^{-i}(S_{k})$,
\item $\overline{W_{k}}\cap f^{i}(\overline{V_{k+1}})=\emptyset$ for all $i=0,\pm 1,\cdots,\pm L_{k+1}$,
\item $\overline{V_{k+1}}$ is contained in a small neighborhood of $V_{k+1}'\cup f(V_{k+1}')\cup\cdots\cup f^{\kappa_dL_{k+1}}(V_{k+1}')$ and disjoint from its $L_{k+1}$ iterates. Thus we can assume that $K\cap \overline{V_{k+1}}=\emptyset$, and for all $-\kappa_d L_{k+1}\leq i\leq \kappa_d L_{k+1}$, we have $f^i(\overline{V_{k+1}}))\cap \overline{(W_0\cup\cdots\cup W_{k-1})}=\emptyset$ and $f^i(\overline{V_{k+1}}))\subset U_{k}$.
\end{itemize}
Moreover, by the assumption of the diameter of every connected component of $W_{k}'$ and $V_{k+1}'$, we can take $W_k$ and $V_{k+1}$ such that every connected component of $W_k$ is of diameter less than $\gamma_k$ and every connected component of $V_{k+1}$ is of diameter less than $\frac{\gamma_k}{2}$. Then the item~\ref{item:tower-diameter} is satisfied.

\medskip

By the fact that any point in $X_{k+1}$ has a positive iterate contained in $V_{k+1}'$, and $V_{k+1}'\subset \bigcup_{i=0}^{\kappa_d L_{k+1}} f^{-i}(V_{k+1})$, one can see that any point in $X_{k+1}$ has a positive iterate contained in $S_{k}$. By the compactness of $X_{k+1}$, there is a compact set $D'_{k+1}\subset S_{k}$, such that all such iterates are contained in $\int(D'_{k+1})$. Put $W_{k+1}'=V_{k+1}$ and $D_k=(D_k'\cup D'_{k+1})\cap W_k$. Then we have $D'_{k+1}\subset S_k=W_k\cup W'_{k+1}$. From the construction of $V_{k+1}$, we can see that $W_{k+1}'$ and $D_{k+1}'$ satisfy the properties~\ref{item:tower-iterates}, $2', 3', 4', 5', 6', 7'$.

This finishes the construction of the sets $U_{k+2}$, $W_{k+1}'$, $D'_{k+1}$ $W_k$ and $D_{k}$.
\medskip

Notice that $\bigcap_{k\geq 0} U_k=K$ and $z\notin\overline{U_1}$ are obviously satisfied by the choice of $U_k$. This finishes the proof of Lemma~\ref{choice of topological tower}.
\end{proof}

\subsection{Construction of perturbation domains}

We take $L_k=10dN_k$ for all $k\geq 0$, and take a small number $\delta>0$, such that for any two different hyperbolic periodic points $q_1,q_2\in Per_{N_0}(f)\cap X$, we have $W^{\sigma_1}_{\delta}(q_1)\cap W^{\sigma_2}_{\delta}(q_2)=\emptyset$, where $\sigma_i\in \{u,s\}$. Take a decreasing sequence of positive constants $(\gamma_k)_{k\geq 0}$, such that for any subset of $M$ whose diameter is smaller than $\gamma_k$, then its $i^{th}$ iterate is of diameter smaller than $\gamma$ for any $0\leq i\leq L_k$. By Lemma~\ref{choice of topological tower}, we get the sequences $(U_k)_{k\geq 0}$, $(W_k)_{k\geq 0}$ and $(D_k)_{k\geq 0}$. We still denote $X_k=X\cap \overline{(U_{k}\setminus U_{k+1})}$ for all $k\geq 0$.

Now we build the perturbation domains for the family $(X_k)$. The techniques are mainly from Section 4.1 and 4.2 of~\cite{bc}. First, we build the perturbation domains that covers the points which are not on the local stable manifolds of periodic orbits with period less than or equal to $N_0$. The proof is essentially due to Corollaire 4.1 of~\cite{bc}. They deal with a family of perturbation domains with the same order, thus the union forms a perturbation domain. Here we have a sequence of perturbation domains with different orders, however, the construction of each perturbation domain can be separated.

\begin{lem}\label{perturbation domains 1}
There is a perturbation domain $B_k$ of order $N_k$ for $(f,\mathcal{U}_k)$ for each $k\geq 0$, such that the sequence $({B}_k)_{k\geq 0}$ satisfies the following properties.
\begin{enumerate}
\item The supports of the perturbations domains $B_k$ are pairwise disjoint, contained in $U$, and also contained in $U_{k-1}\setminus \overline{U_{k+2}}$.
\item Any point of $X_0\setminus (\bigcup_{p\in Per_{N_0}(f)}W^s_{\delta}(p))$ has a positive iterate in the interior of one tile of the perturbation domain ${B}_0$ and any point of $X_k$ has a positive iterate in the interior of one tile of the perturbation domain $B_{k-1}\cup B_k$ for $k\geq 1$.
\end{enumerate}

In consequence, for any $k\geq 0$, there is a finite family of tiles $\mathcal{C}_k$ associated to ${B}_k$, and a family of compact sets $\mathcal{D}_k$ contained in the interior of tiles of  $\mathcal{C}_k$, such that:
\begin{itemize}
\item each tile of $\mathcal{C}_k$ contains exactly one element of $\mathcal{D}_k$, for all $k\geq 0$ and each element of $\mathcal{D}_k$ is contained in a tile of $\mathcal{C}_k$,
\item any point of $X_0\setminus (\bigcup_{p\in Per_{N_0}(f)}W^s_{\delta}(p))$ has a positive iterate in the interior of one element of $\mathcal{D}_0$ and any point of $X_k$ has a positive iterate in the interior of one element of $\mathcal{D}_{k-1}\cup \mathcal{D}_k$ for $k\geq 1$.
\end{itemize}
Moreover, the diameter of any connected component of $B_k$ is smaller than $\gamma$.
\end{lem}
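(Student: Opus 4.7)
The plan is to build each $B_k$ as a tiled domain contained in $W_k$, using Theorem~\ref{existence of perturbation domain} applied to the pair $(f,\mathcal{U}_k)$. For every point $x\in D_k$, fix a chart $\varphi_x$ provided by Theorem~\ref{existence of perturbation domain} such that any tiled domain according to $\varphi_x$ which is disjoint from its first $N_k$ iterates is a perturbation domain of order $N_k$ for $(f,\mathcal{U}_k)$. Shrinking the domain of $\varphi_x$ if necessary, I may assume each chart domain lies inside $W_k$ (possible because $D_k$ is compact and contained in the open set $W_k$). By compactness of $D_k$, extract a finite subcover $\{\varphi_{x_1},\dots,\varphi_{x_r}\}$.

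Next, tile a neighborhood of $D_k$ inside $W_k$ by cubes of these charts, via the standard finite induction of~\cite{bc,c2}: start with the cubes of $\varphi_{x_1}$ meeting $D_k$; at each step, add cubes of $\varphi_{x_{i+1}}$ small enough that their interiors are disjoint from the cubes already chosen and that they remain in $W_k$. This yields a finite tiled domain $(B_k,\mathcal{C}_k)$ of bounded geometry with $D_k\subset B_k\subset W_k$. Since $W_k$ is disjoint from its first $L_k=10dN_k\geq N_k$ iterates (item 3 of Lemma~\ref{choice of topological tower}), so is $B_k$. Hence $B_k$ is a perturbation domain of order $N_k$ for $(f,\mathcal{U}_k)$, one chart at a time, and then globally by gluing.

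Items 3, 4 and 5 of Lemma~\ref{choice of topological tower} give directly that
$$
\mathrm{supp}(B_k)\ \subset\ \bigcup_{i=0}^{N_k-1} f^i(W_k)\ \subset\ (U\setminus K)\cap(U_{k-1}\setminus\overline{U_{k+2}}),
$$
and that $\mathrm{supp}(B_k)$ and $\mathrm{supp}(B_l)$ are disjoint whenever $k\neq l$. The covering property in item 2 is then inherited from item 6 of Lemma~\ref{choice of topological tower}, since $D_k\subset B_k$ is covered by the tiles of $\mathcal{C}_k$: every point of $X_0\setminus\bigcup_{p\in Per_{N_0}(f)} W^s_\delta(p)$ has a positive iterate in $\mathrm{int}(D_0)$, hence in a tile of $B_0$; every point of $X_k$ has a positive iterate in $\mathrm{int}(D_{k-1}\cup D_k)$, hence in a tile of $B_{k-1}$ or $B_k$.

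Finally, to extract the families $\mathcal{C}_k$ (thinned to the tiles that meet $D_k$) and $\mathcal{D}_k$, it suffices, in each such tile $C\in\mathcal{C}_k$, to pick a compact set $D_C\subset\mathrm{int}(C)$ still catching the positive iterates of $X_k$ landing into $C$; this is possible because the boundaries of tiles have empty interior, so by replacing the time $n$ of the iterate $f^n(x)\in\mathrm{int}(D_k)$ by a nearby one we may assume $f^n(x)\in\mathrm{int}(C)$. The main (and essentially only non-bookkeeping) point is that $B_k$ automatically has order $N_k$, which is secured by the gap $L_k\geq N_k$ built into Lemma~\ref{choice of topological tower}; the rest is a standard tiling construction combined with the disjointness properties (3)--(5) of that lemma.
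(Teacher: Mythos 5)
You have spotted the right ingredients (items 3--6 of Lemma~\ref{choice of topological tower}, Theorem~\ref{existence of perturbation domain}, compactness of $D_k$), but there is a genuine gap at the exact point you dismiss as "the rest is a standard tiling construction." The obstacle is this: after you tile a neighborhood of $D_k$ inside $W_k$, some positive iterates $f^n(x)\in\mathrm{int}(D_k)$ may land precisely on the \emph{boundary} of a tile, hence in the interior of \emph{no} tile of $\mathcal{C}_k$. Your fix, "replace the time $n$ of the iterate by a nearby one," does not work: since $W_k$ is disjoint from its first $L_k\geq 1$ iterates, $f^{n+1}(x)$ is not in $W_k$ at all, so there is no nearby admissible return time. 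Nor can you simply translate the grid, because the set of points $x\in X_k$ is uncountable and any single tiling will have tile boundaries met by some return iterate; compactness of $D_k$ does not save you because $D_k$ has nonempty interior and any cubical tiling boundary must cross it.

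The paper handles this by a device your proposal never invokes: it repeats the tiling construction at $f^{2iN_k}(W_k)$ and $f^{2iN_k}(D_k)$ for $i=0,\dots,5d-1$, pulls back the $5d$ families of cubes by $f^{-2iN_k}$, and puts them in general position in the sense of Section 3.3 of~\cite{bc}, so that any point of $D_k$ lies on the boundary of cubes of at most $d$ of the $5d$ families. Hence each point of $D_k$ is in the interior of a cube of at least one family (in fact at least $4d$), and the perturbation domain $B_k$ and the families $\mathcal{C}_k,\mathcal{D}_k$ are assembled from all $5d$ shifted copies, not from a single tiling of $W_k$. This is also why $L_k$ is taken equal to $10dN_k$ and not just $\geq N_k$: the $5d$ copies spread over $\bigcup_{i=0}^{5d-1}f^{2iN_k}(W_k)$, and $L_k=10dN_k$ provides exactly the needed room for their pairwise disjoint supports. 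Without this multiplicity/general-position step, item 2 of the lemma and the covering statement for $\mathcal{D}_k$ do not follow.
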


\begin{proof}
Consider the sequence of open sets $(W_k)_{k\geq 0}$ and the sequence of compact sets $(D_k)_{k\geq 0}$ obtained by Lemma~\ref{choice of topological tower}. Moreover, by the item~\ref{item:tower-diameter} of Lemma~\ref{choice of topological tower}, the diameters of components of each $W_k$ can be chosen small enough such that all their first $L_k$ iterates are contained in a perturbation domain of order $L_k$ by Theorem~\ref{existence of perturbation domain}.

Assume $W$ is a component of $W_k$, and put $D=D_k\cap W$. By assumption, $W$ is contained in a chart of perturbation $\varphi:W\rightarrow \mathbb{R}^d$. We can tile $W$ with tiles of proper size such that any cube that intersects $\varphi(D)$ is contained in $\varphi(W)$. We do the same thing for all other components of $W_k$ that has non-empty intersection with $D_k$ and we get a finite family $\mathcal{P}_0$ of perturbation domains, each of them being an open set, pairwise disjoint, contained in $W_k$, and the union of their closure contains $D_k$ in its interior. Denote $\Phi_0$ the family of perturbation charts in the construction of $\mathcal{P}_0$.

Repeat the construction for $f^{2iN_k}(W_k)$ and $f^{2iN_k}(D_k)$, $i\in \{1,\cdots,5d-1\}$, and we get the families $\mathcal{P}_i$ of perturbation domains contained in $f^{2iN_k}(W_k)$, pairwise disjoint and the union of their closure contains $f^{2iN_k}(D_k)$ in its interior. Denote $\Phi_i$ the family of perturbation charts corresponding to $\mathcal{P}_i$. Consider the family $f^{-2iN_k}(\mathcal{P}_i)$ contained in $W_k$. The union of the closure of all cubes of $f^{-2iN_k}(\mathcal{P}_i)$ contains $D_k$ in its interior. By a $C^1$ small perturbation of $\Phi_i$, we can suppose that a point in $D_k$ can only be contained on the boundary of at most $d$ different cubes of all cubes contained in $\cup_{i=0}^{5d-1}f^{-2iN_k}(\mathcal{P}_i)$ \footnote{In~\cite{bc}, they call the sets of $\cup_{i=0}^{5d-1}f^{-2iN_k}(\mathcal{P}_i)$ on general position. We do not introduce this definition in our paper. The reader can refer to Section 3.3 of~\cite{bc} for more details.}. Since there are at least $5d$ families of cubes, we get that any point of $D_k$ is contained in the interior of at least $4d$ families of such cubes.

We replace every cube in $\mathbb{R}^d$ by another one with the same center and homothetic with rate $\rho<1$ close to $1$. Then we get the families $\mathcal{P}_{i,\rho}$ of perturbation domains whose closures are pairwise disjoint. If we choose $\rho$ close enough to $1$, then any point of $D_k$ is still contained in the interior of a cube of at least $4d$ families of $(f^{-2dN_k}(\mathcal{P}_{i,\rho}))_{0\leq k\leq 5d-1}$. By the compactness of $D_k$, for each $i$, there is a finite family $\Gamma_i$ of tiles of the domains $f^{-2iN_k}(\mathcal{P}_{i,\rho})$, such that the union $\Sigma_i$ of the tiles of $\Gamma_i$ satisfies: any point of $D_k$ is contained in the interior of at least $4d$ compact $(f^{-2iN_k}(\Sigma_i))_{0\leq k\leq 5d-1}$.

By another $C^1$ small perturbation of $\Phi_i$, we can suppose that any point of $D_k$ is contained on the boundary of the tiles of at most $d$ families of $(f^{-2iN_k}(\Gamma_i))_{0\leq k\leq 5d-1}$. Any point is contained in at least $4d$ families of tiles, hence any point is contained in the interior of at least one of these tiles. Define $B_k$ and $\mathcal{C}_k$ to be the union of the families $\mathcal{P}_{i,\rho}$ and the union of the families $\Gamma_i$ respectively.

Then the compact set $D_k$ is covered by the interior of the tiles of the family $f^{-2iN_k}(\Gamma_i)$. We can take all the components of the intersection of $f^{2iN_k}(D_k)$ and the elements of the family $\Gamma_i$, and this is the family $\mathcal{D}_k$.

Finally, by the assumption that $L_k=10dN_k$ and the choice of $W_k$ in Lemma~\ref{choice of topological tower}, the supports of perturbation domains $(B_k)_{k\geq 0}$ are pairwise disjoint and are contained in $U$. Also, the support of the perturbation domain $B_k$ is also contained in $U_{k-1}\setminus \overline{U_{k+2}}$. By the choice of the sequence $(\gamma_k)_{k\geq 0}$, one can see that the diameter of any connected component of $B_k$ is smaller than $\gamma$. This finishes the proof of Lemma~\ref{perturbation domains 1}.
\end{proof}

We also have to construct perturbation domains that cover the stable and unstable manifolds of periodic orbits contained in $X_0\cap Per_{N_0}(f)$. By the assumption of hyperbolicity of periodic orbits, $X_0\cap Per_{N_0}(f)$ is a finite set. By Proposition 4.2 in~\cite{bc}, we can construct in the following way.

\begin{lem}[Proposition 4.2 of~\cite{bc}]\label{perturbation domains 2}
For any periodic orbit $Q\subset X\cap Per_{N_0}(f)$, any neighborhood $V$ of $Q$, there are a neighborhood $W$ of $Q$, two perturbation domains $B_s$ and $B_u$ of order $N_0$ for $(f,\mathcal{U}_0)$, two finite families of tiles $\mathcal{C}_s$ and $\mathcal{C}_u$ associated to $B_s$ and $B_u$ respectively, two finite families of compact sets $\mathcal{D}_s$ and $\mathcal{D}_u$, and an integer $n_0(Q)$, such that:
\begin{enumerate}
\item $V$ contains $\overline{W}$ and $\bigcup_{0\leq i\leq N_0-1} f^i(B_s \cup B_u)$.
\item $f^i(B_s) \cap f^j(B_u)=\emptyset$ for all $0\leq i,j\leq N_0-1$,
\item each element of $\mathcal{D}_s$ is contained in the interior of an element of $\mathcal{C}_s$, and  each element of $\mathcal{D}_u$ is contained in the interior of an element of $\mathcal{C}_u$. Moreover, each tile of  $\mathcal{C}_s$ and $\mathcal{C}_u$ contains exactly an element of $\mathcal{D}_s \cup \mathcal{D}_u$
\item for any two pairs $D_s\in \mathcal{D}_s$ and $D_u\in \mathcal{D}_u$, there is $n\in \{0,\cdots,n_0(Q)\}$, such that $f^n(D_s)\cap D_u\neq \emptyset$.
\item for any point $z\in W\setminus W^s_{loc}(Q)$, there is $n>0$ and $D\in \mathcal{D}_u$, such that $f^n(z)\in \int(D)$ and $f^i(z)\in V$ for all $0\leq i\leq n$. Moreover, if $f(z)\not \in W$, then $n\leq n_0(Q)$.
\item for any point $z\in W\setminus W^u_{loc}(Q)$, there is $n>0$ and $D\in \mathcal{D}_s$, such that $f^{-n}(z)\in \int(D)$ and $f^{-i}(z)\in V$ for all $0\leq i\leq n$. Moreover, if $f^{-1}(z)\not \in W$, then $n\leq n_0(Q)$.
\end{enumerate}
Moreover, the diameter of any connected component of $B_s$ and $B_u$ is smaller than $\gamma$.
\end{lem}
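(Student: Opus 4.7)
The plan is to follow the construction of Proposition 4.2 in~\cite{bc}, which is natural once one recalls how hyperbolic periodic orbits interact with the connecting-lemma perturbation domains from Theorem~\ref{existence of perturbation domain}. Since $Q \subset X \cap Per_{N_0}(f)$ and periodic points in $X$ of period $\leq T \geq N_0$ are hyperbolic, $Q$ carries uniform local stable and unstable manifolds $W^s_{loc}(Q)$ and $W^u_{loc}(Q)$. I would start by fixing compact fundamental domains $\Delta_s \subset W^s_{loc}(Q) \setminus Q$ and $\Delta_u \subset W^u_{loc}(Q) \setminus Q$, together with thin tubular neighborhoods $B_s$ of $\Delta_s$ and $B_u$ of $\Delta_u$. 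By shrinking the tubes enough, standard hyperbolic estimates guarantee that $B_s$ and $B_u$ are each disjoint from their first $N_0-1$ iterates, that $f^i(B_s) \cap f^j(B_u) = \emptyset$ for $0 \leq i,j \leq N_0-1$, and that $\bigcup_{0 \leq i \leq N_0-1} f^i(B_s \cup B_u) \subset V$. This handles items (1) and (2).

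Next, using the charts furnished by Theorem~\ref{existence of perturbation domain}, I would endow $B_s$ and $B_u$ with tilings $\mathcal{C}_s$, $\mathcal{C}_u$ making them perturbation domains of order $N_0$ for $(f,\mathcal{U}_0)$. Choose a small neighborhood $W$ of $Q$ whose closure is contained in $V$ and such that every point of $W \setminus W^s_{loc}(Q)$ has a first forward iterate exiting a yet smaller neighborhood of $Q$ which lies in the interior of some tile of $\mathcal{C}_u$; symmetrically for points of $W \setminus W^u_{loc}(Q)$ using $f^{-1}$ and tiles of $\mathcal{C}_s$. Inside each tile, pick a compact subset (slightly shrunken) so that these first-exit points actually land in the interior; the finite collections of these compacta give $\mathcal{D}_u$ and $\mathcal{D}_s$. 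Compactness of $\overline{W}$ together with the uniform hyperbolicity of $Q$ then yields the integer $n_0$ controlling the exit time whenever $f^{\pm 1}(z) \notin W$, thereby proving items (3), (5), (6).

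The remaining key point is the heteroclinic condition (4): for every $D_s \in \mathcal{D}_s$ and every $D_u \in \mathcal{D}_u$, there exists $n \leq n_0$ with $f^n(D_s) \cap D_u \neq \emptyset$. This is where the inclination lemma is essential. Since $D_s$ is (after slight general-position perturbation of the charts) a plaque transverse to $W^u_{loc}(Q)$ of definite inner radius, $f^n(D_s)$ converges in the $C^1$ topology to the local unstable manifold of $Q$; hence for $n$ sufficiently large, $f^n(D_s)$ $C^1$-approximates $\Delta_u$ and crosses every thin compact neighborhood of $\Delta_u$, in particular every $D_u$. Finiteness of $\mathcal{D}_s \cup \mathcal{D}_u$ lets one enlarge $n_0$ once so that this intersection holds uniformly.

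The main obstacle will be making all these conditions compatible at the same time: the tiles must be in general position (as in the proof of Lemma~\ref{perturbation domains 1}) so that the $\lambda$-lemma applies to the interiors $\operatorname{int}(D_s)$ with uniform geometry, the compacta $\mathcal{D}_s, \mathcal{D}_u$ must be strictly contained in tile interiors while still absorbing every first-exit point, and the tubes $B_s, B_u$ must be thin enough to remain disjoint from all forbidden iterates yet thick enough to contain these compacta. This is handled by the same general-position and shrinking procedure as in~\cite{bc}: first fix the fundamental domains and the exit map, then tile with cubes of sufficiently small diameter, then perturb the charts to put the tiles in general position, and finally take homothetic shrinkings of the cubes to obtain disjointness of closures without losing the covering property.
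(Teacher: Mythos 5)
The paper itself gives no proof of this lemma: it is stated and explicitly attributed to Proposition~4.2 of~\cite{bc}, so there is no in-paper argument to compare yours against. Your sketch is a reasonable reconstruction of the kind of argument used there: compact fundamental domains $\Delta_s\subset W^s_{loc}(Q)$ and $\Delta_u\subset W^u_{loc}(Q)$, thin tubular perturbation domains $B_s,B_u$ around them via the charts of Theorem~\ref{existence of perturbation domain}, first-exit dynamics to produce the compacta $\mathcal{D}_s,\mathcal{D}_u$ and the bound $n_0$ for items (5)--(6), and the inclination lemma plus finiteness for (4).

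There is, however, a genuine gap in your argument for item (4) as written. You assert that $f^n(D_s)$ ``$C^1$-approximates $\Delta_u$ and crosses every thin compact neighborhood of $\Delta_u$, in particular every $D_u$.'' But the $D_u$ are not neighborhoods of $\Delta_u$: they are small $d$-dimensional compacta lying inside individual tiles, and a priori nothing forces a given $D_u$ to meet $W^u_{loc}(Q)$. For a $D_u$ that avoids $W^u_{loc}(Q)$, the inclination lemma gives you, for $n$ large, that $f^n(D_s)$ is contained in an arbitrarily thin tube around $W^u_{loc}(Q)$, so $f^n(D_s)\cap D_u$ can be empty for all large $n$; and for small $n$ there is no reason for $f^n(D_s)$ to pass through that particular $D_u$ at all. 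The same applies on the stable side: you also need each $D_s$ to meet $W^s_{loc}(Q)$ in its interior, since otherwise $D_s$ need not contain a disc through $W^s_{loc}(Q)$ to which the inclination lemma applies. (Incidentally, you wrote ``$D_s$ \dots a plaque transverse to $W^u_{loc}(Q)$''; for the $\lambda$-lemma this should be transversality to $W^s_{loc}(Q)$, realized inside $D_s$ by full dimensionality of the tile together with $D_s\cap W^s_{loc}(Q)\neq\emptyset$.) The fix, and the point your sketch needs to make explicit, is to arrange the tiling of $B_s$ (resp.\ $B_u$) so that every tile, and every shrunken compactum $D_s$ (resp.\ $D_u$), contains an interior point of $W^s_{loc}(Q)$ (resp.\ $W^u_{loc}(Q)$); for instance, center the cubes on points of $\Delta_s$ (resp.\ $\Delta_u$) and take the shrinkings about those centers. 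With that in place, each $D_s$ is a full-dimensional neighborhood of some $a\in\Delta_s$ and each $D_u$ of some $b\in\Delta_u$, and the inclination lemma gives, for $n$ large, a disc in $f^n(D_s)$ $C^1$-close to $W^u_{loc}(Q)$ through a neighborhood of $b$; since $D_u$ is an open-interior neighborhood of $b$ in $M$, the intersection $f^n(D_s)\cap D_u$ is nonempty, and finiteness of $\mathcal{D}_s\cup\mathcal{D}_u$ then yields the uniform $n_0$.
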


\subsection{Choice of a pseudo-orbit}

By Lemma~\ref{perturbation domains 1}, we have the sequences of perturbation domains $(B_k)_{k\geq 0}$, tiles $(\mathcal{C}_k)_{k\geq 0}$ and families of compact sets $(\mathcal{D}_k)_{k\geq 0}$. Notice that there are only finitely many periodic orbits contained in $Per_{N_0}(f)\cap X$, and they are all outside $\overline{U_1}$. Hence for each periodic orbit $Q$ contained in $X$ with period no more than $N_0$, we can take an open neighborhood $V(Q)\subset U$ that are pairwise disjoint, disjoint from $\overline{U_1}$ and disjoint from $f^i(B_k)$ for any $0\leq i\leq N_k-1$ and any $k\geq 0$. By Lemma~\ref{perturbation domains 2}, we have for each $Q$ the open set $W(Q)$, the perturbation domains $B_s(Q)$ and $B_u(Q)$, the families of tiles $\mathcal{C}_s(Q)$ and $\mathcal{C}_u(Q)$, the families of compact sets $\mathcal{D}_s(Q)$ and $\mathcal{D}_u(Q)$ and the number $n_0(Q)$. By the choice of $V(Q)$, we have that $f^i(B_{\sigma}(Q))\cap f^j(B_k)=\emptyset$ for any $\sigma=s,u$, any $0\leq i\leq N_k-1$ and any $k\geq 0$.

We take the union of $(B_s(Q),\mathcal{C}_s(Q),\mathcal{D}_s(Q))$, $(B_u(Q),\mathcal{C}_u(Q),\mathcal{D}_u(Q))$ and $(B_0,\mathcal{C}_0,\mathcal{D}_0)$, and to simplify the notations, we still denote the union by $(B_0,\mathcal{C}_0,\mathcal{D}_0)$. By Remark~\ref{union of perturbation domains}, we know the newly created $(B_0,\mathcal{C}_0,\mathcal{D}_0)$ is still a perturbation domain of order $N_0$ for $(f,\mathcal{U}_0)$. Denote $D'_k$ the union of the compact sets of the family $\mathcal{D}_k$ for each $k\geq 0$. By a similar argument as in~\cite[Section 4.3]{bc}, we assume that $z$ is not in any of the perturbation domains that we have chosen.

Recall that the support of the perturbation domain $B_k$ is $\supp(B_k)=\bigcup_{0\leq n\leq N_k-1}f^n(B_k)$. From the above constructions, the supports of the perturbation domains $(B_k)_{k\geq 0}$ are pairwise disjoint and are contained in $U$. Moreover, we have that $\supp(B_k)\subset U_{k-1}\setminus \overline{U_{k+2}}$ for any $k\geq 0$.

\begin{lem}\label{choice of pseudo-orbit}
There is an infinitely long pseudo-orbit $Y=(y_0,y_1,\cdots)$ for $f$ contained in $X$ that has jumps only in tiles of $(\mathcal{C}_k)_{k\geq 0}$ with $y_0=z$ and $d(y_n,K)\rightarrow 0$ as $n\rightarrow \infty$. Moreover, for each $k\geq 0$, there is a minimal number $l_k$, such that $y_i\in U_k$ for all $i\geq l_k$.
\end{lem}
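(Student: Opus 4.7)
The plan is to construct $Y$ by an induction that alternates between following the true $f$-orbit of a chosen point and jumping inside tiles, the jump targets being selected so that the pseudo-orbit progressively enters the nested neighborhoods $(U_k)_{k\geq 0}$ of $K$. More precisely, I would inductively build, for each $k\geq 0$, a finite pseudo-orbit in $X$ from $z$ to a point in $X\cap U_{k+1}$ with jumps only in tiles of $\bigcup_{j\geq 0}\mathcal{C}_j$, each segment extending the previous one; the concatenation yields the desired infinite pseudo-orbit $Y$.

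The heart of the argument is the extension step: given a finite pseudo-orbit ending at some $y_n\in X\cap U_k$, I prolong it until it enters $U_{k+1}$. For this I invoke the hypothesis of the proposition to obtain, for arbitrarily small $\epsilon>0$, an $\epsilon$-pseudo-orbit in $X$ from $z$ to $K\subset U_{k+1}$, which I use as a guide. Following the true $f$-orbit from $y_n$ (which stays in $X$ by positive invariance), Lemma~\ref{perturbation domains 1} guarantees that after finitely many $f$-iterations one lands in the interior of a tile of $\mathcal{C}_{j-1}\cup\mathcal{C}_j$ (where $y_n\in X_j$), unless one lies on the local stable manifold of some $Q\in Per_{N_0}(f)\cap X$; in the latter case Lemma~\ref{perturbation domains 2} routes the orbit through the tiles $\mathcal{C}_u(Q)$ in a bounded number of steps after it reaches $W(Q)$, using item 4 of that lemma to transition between $\mathcal{D}_s(Q)$ and $\mathcal{D}_u(Q)$ if necessary. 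Inside the tile I then jump to a point of $X\cap\mathcal{D}_j$ whose forward $f$-orbit closely shadows the continuation of the $\epsilon$-pseudo-orbit; iterating this procedure a finite number of times (bounded by the length of the $\epsilon$-pseudo-orbit) drives the pseudo-orbit into $U_{k+1}$. Once $Y$ is built, $d(y_n,K)\to 0$ follows from $\bigcap_k U_k=K$, and the minimal $l_k$ exists by item 1 of Lemma~\ref{perturbation domains 1}: after $Y$ first enters $U_k$, every subsequent jump lies in $supp(B_j)\subset U_{j-1}\setminus\overline{U_{j+2}}\subset U_k$ for some $j\geq k$, so $Y$ does not leave $U_k$ afterwards.

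The main obstacle is the shadowing step: one must check, by choosing $\epsilon$ small enough relative to the tile diameters and the modulus of continuity of $f$ on a neighborhood of $X$, that the true $f$-orbit between two successive tile-jumps genuinely tracks the $\epsilon$-pseudo-orbit, so that a valid jump target inside the next tile can always be selected. Ensuring these jump targets lie inside $X$ is handled by the fact that each compact set in $\mathcal{D}_k$ is built from positive iterates of points of $X_k\subset X$, which gives $X\cap\mathcal{D}_k\neq\emptyset$ and is preserved by the positive invariance of $X$. The coordination of the topological-tower tiles $\mathcal{C}_j$ with the periodic-orbit tiles $\mathcal{C}_s(Q),\mathcal{C}_u(Q)$ is automatic since the neighborhoods $V(Q)$ were chosen disjoint from every $supp(B_j)$, so the two tile systems do not interfere with one another. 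This entire mechanism is the analogue in the present setting of Lemma~\ref{union of perturbation domains}, adapted to perturbation domains of varying orders $N_k$.
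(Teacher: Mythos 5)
There is a genuine gap in your construction, and it concerns precisely the phrase ``each segment extending the previous one.'' As you set it up, the extension step from $U_k$ to $U_{k+1}$ is guided by an $\vep$-pseudo-orbit in $X$ from $z$ to $K$. But each such guide starts at $z$, not at the endpoint $y_n$ of the pseudo-orbit you have built so far; after several tile-jumps your construction will have drifted arbitrarily far from the guide, since the jump targets you choose need not lie on the guide, and there is no reason the guide's ``continuation'' passes through the tile where you currently find yourself. More fundamentally, to make the tail converge to $K$ you must work with $\vep$-pseudo-orbits of precision $\vep_k\to 0$, but these are different pseudo-orbits for different $k$ and nothing in your inductive scheme forces their initial segments (after being turned into tile-jump pseudo-orbits) to agree. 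Without a coherence mechanism you can only assert, not prove, that the finite segments nest.

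The paper supplies exactly this missing mechanism: it first projects each $\delta_k$-pseudo-orbit onto finite $\delta_k$-dense sets $\tilde{X}_i\subset X_i$, obtaining pseudo-orbits $Z_k$ that pass through only \emph{finitely many distinct points} in each annulus $X_i$; the finiteness allows a diagonal extraction of a subsequence whose initial segments stabilize, producing a single infinite pseudo-orbit $Z$ with $d(z_n,K)\to 0$. Only then does it identify (Claim~\ref{bound of time}) a sequence of times $t_j$ at which $Z$ meets compact sets of $\bigcup_k\mathcal{D}_k$ with controlled gaps, and replaces the pieces $(z_{t_{j-1}},\dots,z_{t_j})$ by true $f$-orbits; the constants $\eta_k,\vep_k,\delta_k$ are calibrated exactly so that the endpoint $f^{t_j-t_{j-1}}(z_{t_{j-1}})$ of each replaced piece lands in the same tile as $z_{t_j}$, so the resulting $Y$ has jumps only in tiles. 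Your direct inductive construction attempts to skip this preliminary limit-pseudo-orbit $Z$ and build $Y$ in one pass, which is where the argument breaks down. To repair your approach you would essentially need to reintroduce the finite $\delta_k$-dense nets $\tilde{X}_k$ and the diagonal argument, at which point you are following the paper's proof.
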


\begin{proof}
By the former constructions, any point $x\in X_0$ has a positive iterate contained in the union of the interior of the compact set $D'_0$ and the open sets $W(Q)$ for all periodic orbits $Q\subset X\cap Per_{N_0}(f)$. Any point $x\in X_k$ has a positive iterate contained in the union of the interior of compact sets $D'_k$ for $k\geq 1$. By the compactness of the sets $X_k$, there are integers $T_k$, compact sets $\tilde{D_k}\subset D_k'$, and compact sets $\tilde{W}(Q)\subset W(Q)$, such that
\begin{itemize}
\item all points $x\in X_0$ will enter the union of $\tilde{D_0}$ and $\tilde{W}(Q)$ for all $Q\subset X\cap Per_{N_0}(f)$ in time bounded by $T_0$,
\item all points $x\in X_k$ will enter in $\tilde{D_k}$ for $k\geq 1$ in time bounded by $T_k$.
\end{itemize}
We can assume that $T_0$ is larger than $n_0(Q)$, for any $Q\subset X\cap Per_{N_0}(f)$ (recall that $n_0(Q)$ is obtained from Lemma~\ref{perturbation domains 2}).

\paragraph{Setting of the constants.}
For $k\geq 0$, set $\eta_k$ to be smaller than half of the minimum of the distances between a point in $(\bigcup_{Q\subset X\cap Per_{N_0}(f)}\tilde{W}(Q))\cup (\bigcup_{0\leq i\leq k}\tilde{D_i})$ and a point in the completement of $(\bigcup_{Q\subset X\cap Per_{N_0}(f)}W(Q))\cup (\bigcup_{0\leq i\leq k}D'_i)$. Moreover, we also assume that $\eta_k$ is smaller than half of the minimum of the distances between a point in $f(\overline{M\setminus U_k})$ to a point in $U_{k+1}$, and smaller than the minimum of the distances between a point in a compact set $D\in \mathcal{D}_k$ and a point on the boundary of the tile $C\in \mathcal{C}_k$ that contains $D$. Then for any $k\geq 0$, there is a number $0<\vep_k<\eta_k$, such that for any $\vep_k$-pseudo-orbit $(x_0,\cdots,x_{T_k})$, we have $d(x_i,f^i(x_0))<\frac{1}{2}\eta_k$, and $d(x_i,f^{i-T_k}(x_{T_k}))<\frac{1}{2}\eta_k$ for all $0\leq i\leq T_k$. For each $\vep_k$, there is a number $\delta_k\in(0,\frac{1}{3}\vep_k)$, such that, for any two points $x,y\in M$, if $d(x,y)<\delta_k$, then $d(f(x),f(y))<\frac{1}{3}\vep_k$. Without loss of generality, we can assume that the sequences $(\eta_k)_{k\geq 0}$, $(\vep_k)_{k\geq 0}$ and $(\delta_k)_{k\geq 0}$ are strictly decreasing sequences.

\paragraph{The sets $\tilde{X_k}$ and the pseudo-orbits $Z_k$.}
Now we take a finite $\delta_k$-dense set $\tilde{X}_k$ of $X_k$ for any $k\geq 0$, such that $z\in \tilde{X_0}$. For any $k\geq 0$, take a $\delta_k$-pseudo-orbit $(y_1^k,\cdots,y_{m_k}^k)$ in $X\setminus K$, such that $y_1^k=z$ and $d(y_{m_k}^k,K)<\delta_k$. Then we project this pseudo-orbit to the set $\bigcup_{i\geq 0}\tilde{X}_i$: if $y_j^k\in X_i\setminus X_{i+1}$, then there is $z_j^k\in \tilde{X_i}$, such that $d(y_j^k,z_j^k)<\delta_i$. Then the pseudo-orbit $Z_k=(z_1^k,\cdots,z_{m_k}^k)$ is a pseudo-orbit contained in $\bigcup_{i\geq 0}\tilde{X}_i$ that connects $z$ to $K$, where $z_1^k=z$.

Recall that $(U_k)_{k\geq 0}$ is a sequence of decreasing neighborhoods of $K$ and $X_k=X\cap\overline{(U_k\setminus U_{k+1})}$. Hence, if $y_j^k,y_{j+1}^k\in U_i$, then we have $d(f(z_j^k),z_{j+1}^k)\leq d(f(z_j^k),f(y_j^k))+d(f(y_j^k),y_{j+1}^k)+d(y_{j+1}^k,z_{j+1}^k)<\frac{1}{3}\vep_i+\delta_k+\delta_i<\frac{2}{3}\vep_i+\frac{1}{3}\vep_k$. Thus $d(f(z_j^k),z_{j+1}^k)<\vep_i$ when $k\geq i$. For any $k\geq 0$, by cutting some part of $Z_k$, we can assume that $z_j^k\neq z_l^k$ for any $j\neq l$. Then for any $k\geq 0$, there is a minimal integer $l(m,k)$, such that $z_i^k\in U_m$ for all $i> l(m,k)$.

\paragraph{The infinitely long pseudo-orbit $Z$.}
Since $\tilde{X_k}$ is a finite set for any $k\geq 0$, one can extract a subsequence $(Z^1_k)$ of $(Z_k)$, such that all pseudo-orbits in this subsequence have the same piece before staying in $U_1$, that is to say, $(z_1^k,\cdots,z_{l(1,k)}^k)$ are equal to each other for any $Z_k\in \{Z^1_k\}$. Similarly, there is a subsequence $(Z^2_k)$ of $(Z^1_k)$, such that all pseudo-orbits in this subsequence have the same piece before staying in $U_2$. We can continue this process, and finally, by taking the limit, we can get an infinitely long pseudo-orbit $Z=(z_1,z_2,\cdots)$ such that $z_1=z$, $d(z_n,K)\rightarrow 0$ as $n\rightarrow \infty$. Moreover, if $z_j,z_{j+1}\in X_i$, then $d(f(z_j),z_{j+1})<\vep_i$, since $Z$ is a limit set of $(Z_k)$.

By the analysis of Lemma $4.6$ in~\cite{bc}, the pseudo-orbit $Z=(z_1,z_2,\cdots)$ has the property stated in the following claim. We omit the proof here since it follows exactly the proof of Lemma $4.6$ in~\cite{bc}.
\begin{claim}\label{bound of time}
There is a strictly increasing sequence $t_0=1,t_1,\cdots$, such that for $j>0$, $z_{t_j}$ is contained in a compact set $E_j$ of $\bigcup_{k\geq 0}\mathcal{D}_k$. Moreover, for any $j\geq 0$,
\begin{itemize}
\item if $E_j\in \mathcal{D}_0$, then either $t_j-t_{j-1}<T_1$ or there is $Q\subset X\cap Per_{N_0}(f)$, such that $E_{j-1}\in \mathcal{D}_s(Q)$ and $E_{j}\in \mathcal{D}_u(Q)$,
\item if $E_j\in \mathcal{D}_k$ for some $k\geq 1$, then $t_j-t_{j-1}<T_k$.
\end{itemize}
\end{claim}

\paragraph{Construction of the pseudo-orbit $Y$ from $Z$.}
Now we replace some part of $Z$ to get an infinitely long pseudo-orbit that connects $\tilde{U}$ to $K$, accumulates to $K$ in the future, and has jumps only in the tiles of the perturbation domains. Using Claim~\ref{bound of time}, we construct $Y$ as the following.
\begin{itemize}
\item If $E_j\in \mathcal{D}_0$ and $t_j-t_{j-1}<T_1$ or if $E_j\in \mathcal{D}_k$ where $k\geq 1$, we replace the piece of pseudo-orbit $(z_{t_{j-1}+1},\cdots,z_{t_j})$ by the piece of true orbit $(f(z_{t_{j-1}}),f^2(z_{t_{j-1}}),\cdots,f^{t_j-t_{j-1}}(z_{t_{j-1}}))$.

\item If $E_j\in \mathcal{D}_0$ and $t_j-t_{j-1}\geq T_1$, we have that there is $Q\subset X\cap Per_{N_0}(f)$, such that $E_{j-1}\in \mathcal{D}_s(Q)$ and $E_{j}\in \mathcal{D}_u(Q)$. By Lemma~\ref{perturbation domains 2}, there is a piece of true orbit $(x,f(x),\cdots,f^t(x))$ such that $x\in E_{j-1}$, $f^t(x)\in E_{j}$ and $t\leq n_0(Q)<T_0$. Then we replace the piece of pseudo-orbit $(z_{t_{j-1}+1},\cdots,z_{t_j})$ by the piece of true orbit $(f(x),f^2(x),\cdots,f^t(x))$.
\end{itemize}
Then we get a new pseudo-orbit $Y=(y_0,y_1,\cdots)$.

\paragraph{The property of the pseudo-orbit $Y$.}
From the construction of the pseudo-orbit $Y$, we can see that two nearby points $y_i,y_{i+1}$ satisfy the following properties.
\begin{itemize}
\item If $y_{i}\notin \mathcal{D}_k$ for any $k\geq 0$, then $f(y_i)=y_{i+1}$.

\item If there exists $k\geq 0$, such that $y_{i}\in \mathcal{D}_k$, then $y_i$ and $f^{-1}(y_{i+1})$ are in a same tile of $\mathcal{C}_k$.
\end{itemize}
This implies that $Y$ has jumps only in tiles of $(\mathcal{C}_k)_{k\geq 0}$ with $y_0=z$ and $d(y_n,K)\rightarrow 0$ as $n\rightarrow \infty$. Moreover, there is a minimal number $l_k$, such that $y_i\in U_k$ for all $i\geq l_k$ and all $k\geq 0$.

This finishes the proof of Lemma~\ref{choice of pseudo-orbit}.
\end{proof}

\begin{rem}
In Lemma~\ref{choice of pseudo-orbit}, we only need to guarantee that the pseudo-orbit $Y$ obtained has jumps only in the tiles of $(\mathcal{C}_k)_{k\geq 0}$. We do not have to consider the scale of jumps at each step.
\end{rem}

\subsection{The connecting processes}
We take the infinitely long pseudo-orbit $Y=(y_0,y_1,\cdots)$ with $y_0=z$ contained in $X$ from Lemma~\ref{choice of pseudo-orbit}. Recall that $Y$ has jumps only in tiles of $(\mathcal{C}_k)_{k\geq 0}$ and $d(y_n,K)\rightarrow 0$ as $n\rightarrow \infty$. Moreover, for each $k\geq 0$, there is a minimal integer $l_k$, such that $y_i\in U_k$ for all $i\geq l_k$. We have the following lemma.

\begin{lem}\label{sequence of diffeomorphisms}
For each $k\geq 0$, there are a diffeomorphism $f_k$, an infinitely long pseudo-orbit $Y_k=(y_0^k,y_1^k,\cdots)$ of $f_k$ with $y_0^k=z$, and two sequences of positive integers $(m_k)_{k\geq 0}$ and $(n_k)_{k\geq 0}$, such that for any $k\geq 0$, the following properties are satisfied.
\begin{enumerate}
\item\label{item:f_k} There is $\phi_k\in\mathcal{V}_k$, such that $\phi_k|_{M\setminus \supp(B_k)}=Id|_{M\setminus \supp(B_k)}$, and $f_{k}=f_{k-1}\circ \phi_k$, where we put $f=f_{-1}$.
\item\label{item:m_k} The integer $m_{k}$ is the smallest positive integer, such that $f_{k+1}^{m_{k}}(z)\in U_{k}$. Moreover, we have $m_{k}<m_{k+1}$.
\item\label{item:piece of orbit} The piece of pseudo-orbit $(y_0^{k+1},y_1^{k+1},\cdots,y_{m_{k}-1}^{k+1})$ of $f_{k+1}$ coincides with $(z,f_{k+1}(z),\cdots,$ $f_{k+1}^{m_{k}-1}(z))$.
\item\label{item:n_k} $n_k\leq l_{k+2}$, and $y_{n_{k}+i}^k=y_{l_{k+2}+i}$, for all $i\geq 0$.
\item\label{item:jumps} The pseudo-orbit $Y_k$ of $f_k$ has jumps only in the tiles $\{\mathcal{C}_{k+1},\mathcal{C}_{k+2},\cdots\}$.
\end{enumerate}
\end{lem}

\begin{proof}
We build the sequences by induction. We construct $f_{k+1}$, $Y_{k+1}$, $n_{k+1}$ and $m_k$ after $f_{k}$, $Y_{k}$, $n_{k}$ and $m_{k-1}$ has been built.

\paragraph{The constructions of $f_{0}$, $Y_{0}$, $n_{0}$: the case $n=0$.} Recall that the pseudo-orbit $Y$ of $f$ has jumps only in tiles of $(\mathcal{C}_k)_{k\geq 0}$ and $l_k$ is the minimal integer, such that $y_i\in U_k$ for all $i\geq l_k$.

We consider the finite pseudo-orbit $(y_0,y_1,\cdots,y_{l_2})$, which also has jumps only in titles of $(\mathcal{C}_k)_{k\geq 0}$ since it is a piece of $Y$. Moreover, we have that $y_0=z$ and $y_0,y_{l_k}\notin \supp(B_0)$ by the former constructions. By Lemma~\ref{union of perturbation domains}, there are a diffeomorphism $f_0\in \mathcal{U}_0$, a positive integer $n_0$ and a new pseudo-orbit $Y_0^0=(\hat{y}_0^0,\hat{y}_1^0,\cdots,\hat{y}_{n_0}^0)$ of $f_0$, satisfying the following three properties.
\begin{itemize}
\item $\hat{y}_0^0=y_0=z$ and $\hat{y}_{n_0}^0=y_{l_2}$.
\item The diffeomorphism $f_0$ coincides with $f$ outside $\supp(B_0)$, hence there is $\phi_0\in\mathcal{V}_0$, such that $\phi_0|_{M\setminus \supp(B_0)}=Id|_{M\setminus \supp(B_0)}$, and $f_0=f\circ \phi_0$, which is the item~\ref{item:f_k}.
\item The pseudo-orbit $Y_0^0$ has only jumps in the tiles $\{\mathcal{C}_1,\mathcal{C}_2,\cdots\}$.
\item  $n_0\leq l_2$.
\end{itemize}

Now we consider the infinitely long pseudo-orbit $Y_0=(y_0^0,y_1^0,\cdots)$ of $f_0$ which is a composition of $Y_0^0$ and $(y_{l_2},y_{l_2+1}\cdots)$. That is to say $y_i^0=\hat{y}_i^0$ when $0\leq i\leq n_0$ and $y_i^0=y_{l_2+i-n_0}$ when $i> n_0$. Then the diffeomorphism $f_0$, the pseudo-orbit $Y_0$ and the integer $n_0$ satisfy the following properties.
\begin{itemize}
\item $Y_0$ has jumps only in the tiles $\{\mathcal{C}_1,\mathcal{C}_2,\cdots\}$, which is the item~\ref{item:jumps}.
\item $n_0\leq l_2$ and $y_{n_0+i}^0=y_{l_2+i}$ for any $i\geq 0$, which is the item~\ref{item:n_k}.
\end{itemize}

Notice that we do not have to check the items~\ref{item:m_k} and~\ref{item:piece of orbit} for the case $k=0$.

%Consider $(f,\mathcal{U}_0,B_0)$. Since the pseudo-orbit $Y$ of $f$ has jumps only in tiles of $(\mathcal{C}_k)_{k\geq 0}$, by Lemma~\ref{union of perturbation domains}, there are a diffeomorphism $f_0\in \mathcal{U}_0$, and an infinitely long pseudo-orbit $Y_0=(y_0^0,y_1^0,\cdots)$ of $f_0$ with $y_0^0=z$, which connects some pieces of $Y$, satisfying the following three properties.
%\begin{itemize}
%\item The diffeomorphism $f_0$ coincides with $f$ outside $\supp(B_0)$, hence there is $\phi_0\in\mathcal{V}_0$, such that $\phi_0=Id|_{M\setminus \supp(B_0)}$, and $f_0=f\circ \phi_0$, which is the item~\ref{item:f_k}.
%\item The pseudo-orbit $Y_0$ has only
%\end{itemize}
%Since $\supp(B_0)\cap \overline{U_2}=\emptyset$, and for all $i\geq l_2$, the point $y_i\in U_2$, then there is a positive integer $n_0$, such that $y_{n_0+m}=y_{l_2+m}$, for all $m\geq 0$. Moreover, from Lemma~\ref{union of perturbation domains}, we can see that $n_0\leq l_2$. Hence the property \textit{4} is satisfied. We do not have to check the properties \textit{2} and \textit{3} for $n=0$.

\paragraph{The constructions $f_{k+1}$, $Y_{k+1}$, $n_{k+1}$ and $m_k$: the case $n=k+1$.}
We assume that $f_k$, $Y_k$, $m_{k-1}$ and $n_k$ have been built. Then we have that the infinitely long pseudo-orbit $Y_k=(y_0^k,y_1^k,\cdots)$ of $f_k$ with $y_0^k=z$, has only jumps in the tiles $\{\mathcal{C}_{k+1},\mathcal{C}_{k+2},\cdots\}$. Moreover, the piece of the pseudo-orbit $(y_0^k,y_{1}^k,\cdots,y_{m_{k-1}-1}^k)$ coincides with $(z,f_k(z),\cdots,f_k^{m_{k-1}-1}(z))$ and the piece of the pseudo-orbit $(y_{n_{k}}^k,y_{n_{k}+1}^k,\cdots)$ coincides with $(y_{l_{k+2}},y_{l_{k+2}+1},\cdots)$.

Similarly to the construction in the case $k=0$, we consider the finite pseudo-orbit $(y_0^k,y_{1}^k,\cdots,y_{n_k}^k,\cdots,y_{n_k+l_{k+3}-l_{k+2}}^k)$ of $f_k$ which also has only jumps in the tiles $\{\mathcal{C}_{k+1},\mathcal{C}_{k+2},\cdots\}$ since it is a piece of $Y_k$. Notice that $y_{n_k+l_{k+3}-l_{k+2}}^k=y_{l_{k+3}}$ and $\supp(B_{k+1})\cap\overline{U_{k+3}}=\emptyset$. By Lemma~\ref{union of perturbation domains}, there are a diffeomorphism $f_{k+1}$, a positive integer $n_{k+1}$ and a new pseudo-orbit $Y_{k+1}^0=(\hat{y}_0^{k+1},\hat{y}_1^{k+1},\cdots,\hat{y}_{n_{k+1}}^{k+1})$ of $f_{k+1}$, satisfying the following three properties.
\begin{itemize}
\item $\hat{y}_0^{k+1}=z$ and $\hat{y}_{n_{k+1}}^{k+1}=y_{l_{k+3}}$.
\item The diffeomorphism $f_{k+1}$ coincides with $f_k$ outside $\supp(B_{k+1})$, hence there is $\phi_{k+1}\in\mathcal{V}_{k+1}$, such that $\phi_{k+1}|_{M\setminus \supp(B_{k+1})}=Id|_{M\setminus \supp(B_{k+1})}$, and $f_{k+1}=f_k\circ \phi_{k+1}$, which is the item~\ref{item:f_k}.
\item The pseudo-orbit $Y_{k+1}^0$ has only jumps in the tiles $\{\mathcal{C}_{k+2},\mathcal{C}_2,\cdots\}$.
\item  $n_{k+1}\leq l_{k+3}$.
\end{itemize}

Similarly to the case when $k=0$, we consider the infinitely long pseudo-orbit $Y_{k+1}=(y_0^{k+1},y_1^{k+1},\cdots)$ of $f_{k+1}$ which is a composition of $Y_{k+1}^0$ and $(y_{l_{k+3}},y_{l_{k+3}+1}\cdots)$. That is to say $y_i^{k+1}=\hat{y}_i^{k+1}$ when $0\leq i\leq n_{k+1}$ and $y_i^{k+1}=y_{l_{k+3}+i-n_{k+1}}$ when $i> n_{k+1}$. Then the diffeomorphism $f_{k+1}$, the pseudo-orbit $Y_{k+1}$ and the integer $n_{k+1}$ satisfy the following properties.
\begin{itemize}
\item $Y_{k+1}$ has jumps only in the tiles $\{\mathcal{C}_{k+2},\mathcal{C}_{k+3},\cdots\}$, which is the item~\ref{item:jumps}.
\item $n_{k+1}\leq l_{k+3}$ and $y_{n_{k+1}+i}^0=y_{l_{k+3}+i}$ for any $i\geq 0$, which is the item~\ref{item:n_k}.
\end{itemize}
\medskip

Then we take the smallest integer $m_{k}$, such that $f_{k+1}^{m_{k}+1}(z)\in U_{k+1}$. To be precise, we take $m_{k}$ in the following way:
\begin{itemize}
\item we take $m_0=1$,
\item when $k\geq 1$, we take $m_k$ such that $f_{k+1}^{m_{k}+1}(z)\in U_{k+1}$, and for all $0\leq i<m_k$, we have $f_{k+1}^{i}(z)\notin U_{k}$.
\end{itemize}
Since $\supp(B_{k+1})\subset U_{k}\setminus\overline{U_{k+3}}$, the diffeomorphism $f_{k+1}$ coincides with $f_k$ on the piece of orbit $(z,f_k(z),\cdots,f_k^{m_{k-1}-1}(z))$. By the item~\ref{item:tower-subset} of Lemma~\ref{choice of topological tower}, we have that $m_{k-1}<m_k$, which is the property~\ref{item:m_k}. The property~\ref{item:piece of orbit} is satisfied by the choice of $m_k$. This finishes the proof of Lemma~\ref{sequence of diffeomorphisms}.

\end{proof}

\begin{proof}[End of the proof of Proposition~\ref{asymptotic connecting}]
Now we consider the sequences $(f_k)_{k\geq 0}$, $(Y_k)_{k\geq 0}$, $(m_k)_{k\geq 0}$ and $(n_k)_{k\geq 0}$ from Lemma~\ref{sequence of diffeomorphisms}.
Recall that $\mathcal{U}_k=f\circ\mathcal{V}_k$ where $\mathcal{V}_k$ satisfies the property (F) in Definition~\ref{property F}. Then the sequence of diffeomorphism $f_k=f\circ\phi_0\circ\cdots\circ\phi_k$ converges to a diffeomorphism $g\in\mathcal{U}$. Since the supports of all perturbation domains of $(B_k)_{k\leq 0}$ are contained in $U$, we have that $g=f|_{M\setminus U}$. Moreover, since the diameter of any connected component of the perturbation domains is smaller than $\gamma$, we have that the $C^0$ distance of $g$ and $f$ is smaller than $\gamma$.

Since $\supp(B_{k+1})\subset U_{k}\setminus \overline{U_{k+3}}$, by the items~\ref{item:m_k} and~\ref{item:piece of orbit} of Lemma~\ref{sequence of diffeomorphisms}, the piece of orbit $(z,f_k(z),\cdots,f_k^{m_{k-1}-1}(z))$ is also a piece of orbit of $f_n$, when $n\geq k+1$. This implies that the limit of the sequence of pseudo-orbits $Y_k$ is the positive orbit of $z$ under $g$ since the sequence $(m_k)_{k\geq 0}$ is strictly increasing. By the item~\ref{item:n_k} of Lemma~\ref{sequence of diffeomorphisms}, we can see that $\orb^+(z,g)$ has only finitely many points outside $U_k$ for any $k\geq 0$ (bounded by $n_k$), hence $\omega(z,g)\subset K$. This finishes the proof of Proposition~\ref{asymptotic connecting}.
\end{proof}

\section{Proofs of the applications}\label{applications}

In this section, we give the proofs of the applications of the main theorem.

\subsection{Structural stability and hyperbolicity}

To prove Corollary~\ref{stuctually stable}, we use some of the results in~\cite{sv,ww,wgw,wenx}. We take two steps: first, we prove that the statement is true for a residual subset of $\diff^1(M)$, and then we prove it for all diffeomorphisms in $\diff^1(M)$.

Recall that an invariant compact set $\Lambda$ is {\it shadowable}, if for any $\varepsilon>0$, there is $\delta>0$, such that any $\varepsilon$-pseudo orbit $(x_0,x_1,\cdots,x_n)$ in $\Lambda$ can be $\delta$-shadowed by a segment of orbit $(x,f(x),\cdots,f^n(x))$ in $M$, i.e. there exists $x\in M$ such that $d(f^i(x),x_i)<\delta$ for any $0\leq i\leq n$.

Assume that $H(p)$ is the homoclinic class of a hyperbolic periodic point $p$ of a diffeomorphism $f\in\diff^1(M)$. We state two properties as follows:
\begin{itemize}
\item $(P1)$ There are $m\in\mathbb{N}$, $C>0$ and $0<\lambda<1$, such that $H(p)$ admits an $(m,\lambda)$-dominated splitting $T_{H(p)}M=E\oplus F$ with $\dim(E)=\ind(p)$. And for any periodic point $q$ homoclinically related to $p$, where $\tau(q)$ denotes the period of $q$, the followings are satisfied:
   \begin{displaymath}
        \prod_{0\leq i<\tau(q)/m} \|Df^m|_{E(f^{im}(q))}\|< C{\lambda}^{\tau(q)},
   \end{displaymath}

   \begin{displaymath}
        \prod_{0\leq i<\tau(q)/m} \|Df^{-m}|_{F(f^{-im}(q))}\|< C{\lambda}^{\tau(q)}.
   \end{displaymath}

\item $(P2)$ $H(p)$ is shadowable and every periodic pseudo-orbit can be shadowed by a periodic orbit.
\end{itemize}

%Similarly we can state the two properties $P(1)$ and $P(2)$ for the chain recurrence class $C(p)$.

Now we state the following two Lemmas, whose proofs will be omitted.

\begin{lem}[Theorem 1.2 of~\cite{wenx}]\label{no weak periodic orbit}
Assume that $f$ is a diffeomorphism in $\diff^1(M)$. If a homoclinic class $H(p)$ is structurally stable, then the property $(P1)$ is satisfied for $H(p)$.
\end{lem}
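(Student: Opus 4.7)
My plan is to argue by contradiction, using the structural stability of $H(p)$ to forbid the existence of ``weak'' periodic orbits inside $H(p)$, and then to invoke classical Liao--Mañé theory to pass from uniform bounds on periodic orbits to a dominated splitting. The argument splits naturally into three stages.

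First I would establish that every periodic orbit homoclinically related to $\mathrm{orb}(p)$ has the same index as $p$, and that there is a uniform $\lambda \in (0,1)$ controlling their Lyapunov exponents. Suppose, to the contrary, that there is a sequence $q_n \in H(p)$ of periodic orbits (all of index $\mathrm{ind}(p)$, since they are homoclinically related to $p$) with some Lyapunov exponent $\chi_n \to 0$. Applying Gourmelon's version of Franks' lemma (Lemma~\ref{f-g lemma}) to $\mathrm{orb}(q_n)$ for $n$ large, I can produce an arbitrarily $C^1$-small perturbation $g$ of $f$ such that $q_n$ becomes a non-hyperbolic periodic orbit of $g$, or even has its index changed, while keeping its homoclinic relation to $p_g$ intact (via preservation of prescribed pieces of invariant manifolds). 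The resulting $g|_{H(p_g,g)}$ then carries a periodic orbit whose dynamical type differs from anything in $f|_{H(p,f)}$, contradicting the existence of a conjugacy $\phi : H(p,f) \to H(p_g,g)$ with $\phi \circ f = g \circ \phi$, since a topological conjugacy preserves the index of hyperbolic periodic orbits (the stable dimension is encoded in the local topology of the invariant manifolds, which are preserved under conjugacy on a structurally stable set). This yields uniform constants $C>0$ and $\lambda<1$ as in $(P1)$.

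Second I would use the uniform bounds on periodic orbits to extract the dominated splitting. The periodic orbits homoclinically related to $p$ are dense in $H(p)$, and on each of them the uniform bounds from the previous step give a hyperbolic splitting of index $\mathrm{ind}(p)$ with a uniform gap between contraction and expansion rates (this uses the domination-type inequality obtained from the Pliss lemma applied along each periodic orbit, as in Corollary~\ref{cor of pliss}). By the standard Liao--Mañé machinery (e.g. the criterion that uniform hyperbolicity rates on a dense set of periodic orbits together with a uniform angle bound between stable and unstable directions extends to a dominated splitting on the closure), these splittings extend continuously to a $Df$-invariant dominated splitting $T_{H(p)}M = E \oplus F$ on the full homoclinic class, with $\dim E = \mathrm{ind}(p)$ and with an integer $m$ and ratio $\lambda$ coming from the uniform bounds.

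The main obstacle, as the introduction of the paper itself emphasises, is the \emph{intrinsic} nature of the problem: the perturbations produced by Franks' lemma modify the dynamics in an arbitrary neighbourhood of $\mathrm{orb}(q_n)$, and one must guarantee that the resulting weak or index-changed periodic orbit still lies inside the conjugated class $H(p_g,g)$. The cleanest way to handle this is to use Gourmelon's refined form of Franks' lemma to preserve transverse intersections $W^s(\mathrm{orb}(q_n),g) \pitchfork W^u(\mathrm{orb}(p),g)$ and $W^u(\mathrm{orb}(q_n),g) \pitchfork W^s(\mathrm{orb}(p),g)$; structural stability then transports these intersections through the conjugacy and forces the perturbed orbit to be genuinely homoclinically related to $p_g$, hence to lie in $H(p_g,g)$. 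This intrinsic containment, rather than the domination argument itself, is where the delicate work lies, and it is exactly this type of difficulty that the Main Theorem of the paper is designed to overcome at the level of $C^1$-generic diffeomorphisms.
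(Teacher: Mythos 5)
Your proposal is addressed to a statement that the present paper does not prove but simply cites (it is Theorem 1.1 of~\cite{wenx}, and the paper explicitly says ``whose proofs will be omitted''). So there is no proof in the paper to compare against; I can only assess whether your sketch is itself sound, and I do not think it is. There are two genuine gaps, both in your first stage.

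The first gap is the assertion that ``a topological conjugacy preserves the index of hyperbolic periodic orbits'' because ``the stable dimension is encoded in the local topology of the invariant manifolds.'' The conjugacy $\phi$ in the definition of structural stability of $H(p)$ is a homeomorphism \emph{of the compact invariant set} $H(p,f)$ onto $H(p_g,g)$, not of a neighbourhood in $M$. What $\phi$ transports is the trace $W^s(q)\cap H(p)$, which is in general a Cantor-like set with no intrinsic notion of dimension, not the manifold $W^s(q)$ itself. Likewise, hyperbolicity of a periodic orbit is a differential-dynamical property of the linearisation in $M$, and a topological conjugacy confined to an invariant set is blind to it (for instance, restricted to the orbit itself, a hyperbolic and a parabolic fixed point are trivially conjugate). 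So merely exhibiting a non-hyperbolic or index-changed periodic orbit inside $H(p_g,g)$ does not by itself contradict the existence of $\phi$. One would have to convert the non-hyperbolicity into a change of some genuine conjugacy invariant of the restricted dynamics, typically by unfolding a bifurcation and changing the number of periodic orbits of a given period inside the class; that step is missing, and it is precisely where the intrinsic containment difficulty reappears.

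The second gap is in the use of Lemma~\ref{f-g lemma} (Gourmelon's Franks lemma). As stated in the paper, that lemma requires the entire path of linear maps $A_{\tau-1,t}\circ\cdots\circ A_{0,t}$ to be hyperbolic for all $t\in[0,1]$; this is essential, because it is what lets one preserve prescribed compact pieces of the invariant manifolds along the deformation. Your argument wants to drive $q_n$ to a non-hyperbolic state or across an index boundary, but any path doing so must pass through a non-hyperbolic matrix at some intermediate $t$, so the lemma simply does not apply. (The paper itself carefully stops at the largest hyperbolic time $t_0$ in its proof of Theorem A from Theorem B for exactly this reason.) Moreover, if the index of $q_n$ were changed, the transverse intersections $W^s(\mathrm{orb}(q_n),g)\pitchfork W^u(\mathrm{orb}(p),g)$ that you propose to preserve would become dimension-deficient and hence empty, so the ``preserved homoclinic relation'' would not survive the index change even in principle. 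Finally, in stage two you acknowledge needing a uniform angle bound between the stable and unstable directions on the periodic orbits in order to pass from uniform rates to domination, but you do not supply it; in the Liao--Ma\~n\'e framework this bound is itself obtained by a Franks-type perturbation and hence inherits the same intrinsic containment obstruction you have not resolved.
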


\begin{lem}[Proposition 7 of~\cite{wenx}]\label{wxdot}
Assume that $f$ is a diffeomorphism in $\diff^1(M)$ and $p$ is a hyperbolic periodic point. If the two properties $(P1)$ and $(P2)$ are satisfied for $H(p)$, then $H(p)$ is hyperbolic. %The same conclusion is also valid for $C(p)$.
\end{lem}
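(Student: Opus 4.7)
The plan is to upgrade the dominated splitting $T_{H(p)}M = E \oplus F$ from $(P1)$ to a hyperbolic splitting, namely to show that $E$ is uniformly contracted; uniform expansion of $F$ will follow by the symmetric argument applied to $f^{-1}$, which satisfies $(P1)$ and $(P2)$ as well. By a standard consequence of the $(m,\lambda)$-domination (Ma\~n\'e's criterion), uniform contraction of $E$ over $H(p)$ follows once one exhibits a single integer $N$ with $\prod_{i=0}^{N-1}\|Df^m|_{E(f^{im}x)}\| < \tfrac{1}{2}$ for every $x \in H(p)$.

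I argue by contradiction, assuming no such $N$ exists. Then for every $N$ there is a point $x_N \in H(p)$ whose first $N$ blocks of length $m$ give product along $E$ at least $\tfrac12$; passing to a limit by compactness of $H(p)$ and invoking Corollary~\ref{cor of pliss} one produces a point $x_\star \in H(p)$ whose forward orbit contains arbitrarily long segments along which the product of $\|Df^m|_{E}\|$ decays slower than $\mu^n$ for some fixed $\mu \in (\lambda,1)$. This is the ``weak'' orbit segment inside $H(p)$ that will be closed up in the next step.

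This is where $(P2)$ enters decisively. Since $H(p)$ is chain transitive (it is a homoclinic class, chain transitive by~\cite{bc}), for arbitrarily small $\vep > 0$ I can concatenate the segment $x_\star, fx_\star, \dots, f^{nm} x_\star$ with a short return $\vep$-pseudo-orbit inside $H(p)$ to produce a periodic $\vep$-pseudo-orbit of period $\tau \approx nm$. By $(P2)$ this periodic pseudo-orbit is shadowed by a true periodic orbit $\mathcal{O}$. Uniform continuity of $Df$ and of the bundle $E$ on a neighbourhood of $H(p)$ transfers the lower bound from the segment to $\mathcal{O}$, yielding
\[
\prod_{0 \le j < \tau/m} \|Df^m|_{E(f^{jm}(q))}\| \geq (\mu')^{\tau}
\]
for some $\mu' \in (\lambda,1)$ and $q \in \mathcal{O}$. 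For $\tau$ large enough this contradicts the upper bound $C\lambda^{\tau}$ of $(P1)$, which yields the desired contradiction.

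The main obstacle is precisely this last matching step: one must guarantee that $\mathcal{O}$ is a periodic orbit to which the inequality of $(P1)$ genuinely applies. This requires $\mathcal{O}$ to lie inside $H(p)$ (a form of local maximality that has to be extracted from $(P2)$ combined with the closure definition of $H(p)$) and to be \emph{homoclinically related} to $p$, not merely contained in $H(p)$. The identification of its index as $ind(p)$ is unproblematic since $\mathcal{O}$ inherits the dominated splitting with $\dim E = ind(p)$; the homoclinic relatedness should follow by combining shadowability with the robust hyperbolic-like structure forced by $(P1)$, for instance by using the stable and unstable manifolds of $\mathcal{O}$ of the correct dimensions against those of $p$ via shadowing of pseudo-orbits running from $\mathcal{O}$ to $p$ and back inside $H(p)$.
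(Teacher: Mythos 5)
The paper explicitly omits the proof of this statement, citing Proposition 4.1 of~\cite{wenx}, so I judge your proposal on its own terms. The strategy---produce a long weak-along-$E$ segment, close it using chain transitivity of $H(p)$, shadow the resulting periodic pseudo-orbit by a genuine periodic orbit $\mathcal{O}$ via $(P2)$, and contradict the inequality in $(P1)$---is the right outline. However, the two steps you classify as routine carry essentially all of the weight, and one of the dismissals is actually false.

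Your claim that ``the identification of $\mathcal{O}$'s index as $ind(p)$ is unproblematic since $\mathcal{O}$ inherits the dominated splitting'' is wrong: a dominated splitting contains no information about the signs of Lyapunov exponents inside either subbundle, so $\mathcal{O}$ could have index strictly larger than $\dim E$ if some exponent in $F$ is nonpositive. You do obtain $\prod\|Df^m|_{E}\|<1$ along $\mathcal{O}$, hence negative exponents in $E$, but you make no corresponding $F$-estimate; one must use domination along the weak segment (slow $E$-decay there forces strong $F$-expansion by the $(m,\lambda)$-domination inequality) together with the boundedness of the return-time relative to the ever-growing segment length to conclude $\prod\|Df^{-m}|_{F}\|<1$. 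Separately, the homoclinic relatedness of $\mathcal{O}$ to $orb(p)$---which is precisely what makes the inequalities of $(P1)$ applicable to $\mathcal{O}$---is left as a ``should follow.'' The standard way to supply it is to route the closing pseudo-orbit through a small neighbourhood of $orb(p)$ and to arrange that $\mathcal{O}$ passes through a bi-Pliss point of a rate slightly weaker than $\lambda$; bi-Pliss points carry local stable and unstable manifolds of a size depending only on the domination constants, not on $\mathcal{O}$, and combining this with the inclination lemma at the hyperbolic orbit $orb(p)$ yields $W^s(\mathcal{O})\pitchfork W^u(orb(p))\neq\emptyset$ and $W^u(\mathcal{O})\pitchfork W^s(orb(p))\neq\emptyset$. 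As written, you have a periodic orbit near $H(p)$ to which $(P1)$ need not apply, and the contradiction does not close.
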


%\begin{lem}[Proposition 2.6 of~\cite{ww}]\label{Lem:stable chain class}
%For generic $f\in\diff^1(M)$, if a chain recurrence class $C(p)$ of $f$ is structurally stable, then $C(p)$ satisfies the property $(P1)$.
%\end{lem}

%A chain recurrence class $C(p)$ of $f$ is called {\it $C^1$-stably shadowable}, if there exists a neighborhood $\mathcal{U}$ of $f$ such that for any $g\in\mathcal{U}$, the chain recurrence class $C(p_g)$ of $g$ is shadowable. The next lemma is a corollary of the main conclusion of~\cite{sakai}, see also Theorem 3.1 of~\cite{wgw}.

%\begin{lem}[Theorem A, B of~\cite{sakai}]\label{Lem:weak orbit for stable chain calss}
%Assume that $C(p)$ is $C^1$-stably shadowable, then $C(p)$ satisfies the property $(P1)$.
%\end{lem}

\begin{proof}[Proof of Corollary~\ref{stuctually stable}]

We take a residual subset $\mathcal{B}\subset \diff^1(M)$ such that any $f\in \mathcal{B}$ satisfies the conclusion of Theorem A and Lemma~\ref{generic properties}. We claim that, for any $f\in\mathcal{B}$, if a homoclinic class $H(p)$ of $f$ is structurally stable, then it is hyperbolic. In fact, by Lemma~\ref{no weak periodic orbit}, the property $(P1)$ is satisfied for $H(p)$. Consider the splitting $T_{H(p)}M=E\oplus F$ stated in property $(P1)$. By the two inequalities in the property $(P1)$, the Lyapunov exponents of all periodic points homoclinically related to $p$ are uniformly bounded away from zero. Then by Theorem A and Remark~\ref{rem:theorem a}, we can see that the bundle $E$ is contracting and the bundle $F$ is expanding. Hence $H(p)$ is hyperbolic.

Now we assume that $f$ is an arbitrary diffeomorphism in $\diff^1(M)$. If a homoclinic class $H(p)$ of $f$ is structurally stable, then there is a $C^1$ neighborhood $\mathcal{U}$ of $f$, such that $H(p_h)$ is conjugated to $H(p)$ through a homomorphism $\phi_h$ for any $h\in\mathcal{U}$. For a diffeomorphism $g\in\mathcal{B}\cap \mathcal{U}$, we know that $H(p_g)$ is structurally stable.
Then $H(p_g)$ is hyperbolic by the argument above, hence $H(p_g)$ satisfies the property $(P2)$. It is easy to see that the property $(P2)$ is invariant under conjugacy, thus is satisfied by $H(p)$ since $H(p)$ is conjugated to $H(p_g)$. The property $(P1)$ is satisfied by $H(p)$ by Lemma~\ref{no weak periodic orbit}. Then by Lemma~\ref{wxdot}, we have that $H(p)$ is hyperbolic. This finishes the proof of Corollary~\ref{stuctually stable}.
\medskip

%For chain recurrence classes of a hyperbolic periodic point, first we show that Corollary~\ref{stuctually stable} is valid for $f\in\mathcal{B}$. Assume $f\in\mathcal{B}$ and $p$ is a hyperbolic periodic point of $f$. By item 2 of Lemma~\ref{generic properties}, $C(p)=H(p)$. By Lemma~\ref{Lem:stable chain class}, the property $(P1)$ is satisfied for $C(p)$ and hence for $H(p)$. Thus $C(p)=H(p)$ is hyperbolic.

%Now we assume that $f$ is an arbitrary diffeomorphism in $\diff^1(M)$. If a chain recurrence class $C(p)$ of $f$ is structurally stable, then there is a $C^1$ neighborhood $\mathcal{U}$ of $f$, such that $C(p_h)$ is conjugated to $C(p)$ through a homomorphism $\phi_h$ for any $h\in\mathcal{U}$. Then by the argument above, for a diffeomorphism $g\in\mathcal{B}\cap \mathcal{U}$, we know that $C(p_g)$ is hyperbolic and hence satisfies the property $(P2)$. Thus we have that $C(p_h)$ satisfies the property $(P2)$ for all $h\in\mathcal{U}$ since the property $(P2)$ is invariant under conjugacy. This implies that $C(p)$ is $C^1$-stably shadowable and satisfies the property $(P2)$. By Lemma~\ref{Lem:weak orbit for stable chain calss}, we have that $C(p)$ also satisfies the property $(P1)$. Hence $C(p)$ is hyperbolic by Lemma~\ref{wxdot}.

This finishes the proof of Corollary~\ref{stuctually stable}.
\end{proof}

\subsection{Partial hyperbolicity}

Now we give the proofs of Corollary~\ref{application 1} and Corollary~\ref{application 2}.

\begin{proof}[Proof of Corollary~\ref{application 1}]
We assume that $T_{H(p)}M=E\oplus F$ where $\dim(E)$ is smaller than the smallest index of periodic orbits contained in $H(p)$. In particular we have that $\dim(E)<\ind(p)$. Assume by contradiction that $E$ is not contracting. By Theorem A, there are periodic orbits $\orb(q_n)\subset H(p)$ such that $\ind(q_n)=\dim(E)$, which contradicts to the assumption that $\dim(E)<\ind(q_n)$. This proves Corollary~\ref{application 1}.
\end{proof}

\begin{proof}[Proof of Corollary~\ref{application 2}]
We assume that $f$ satisfies the properties in Lemma~\ref{generic properties} and Theorem A. For a homoclinic class $H(p)$ of $f$, we denote by $j\geq 1$ and $l\leq d-1$ the smallest and the largest index of periodic point contained in $H(p)$. By item 4 of Lemma~\ref{generic properties} (Theorem 1 of~\cite{abcdw}), for any $j\leq i\leq l$, there are periodic orbits of index $i$ contained in $H(p)$. Moreover, for any $j\leq i\leq l$, the hyperbolic periodic points with index $i$ are dense in $H(p)$. By Theorem A of~\cite{w1}, $H(p)$ admits a dominated splitting with index $i$. By Remark~\ref{bundle of ds}, we have that $H(p)$ admits a dominated splitting $T_{H(p)}M=E^{cs}\oplus E^c_1\oplus\cdots\oplus E^c_n\oplus E^{cu}$, where $\dim(E^{cs})=j$, $n=l-j$, and $\dim(E^c_i)=1$ for all $1\leq i\leq n$. Since $H(p)$ contains hyperbolic periodic points with index $i$ for all $j\leq i\leq l$, we can see easily that the central bundle $E^c_i$ is neither contracting nor expanding, for any $1\leq i\leq n$.

Now we consider whether the two bundles $E^{cs}$ and $E^{cu}$ are hyperbolic. Here $E^{cs}$ is hyperbolic means it is contracting and $E^{cu}$ is hyperbolic means it is expanding.
\medskip
\paragraph{Case 1: both $E^{cs}$ and $E^{cu}$ are hyperbolic} In this case, the splitting $T_{H(p)}M=E^{cs}\oplus E^c_1\oplus\cdots\oplus E^c_n\oplus E^{cu}$ is a partially hyperbolic splitting. We put $E^s=E^{cs}$ and $E^{u}=E^{cu}$. Then the the smallest and the largest index of periodic orbits contained in $H(p)$ are $\dim(E^s)$ and $d-\dim(E^u)$ respectively.

\paragraph{Case 2: at least one bundle of $E^{cs}$ and $E^{cu}$ is not hyperbolic} Without loss of generality, we assume that the bundle $E^{cs}$ is not contracting. By Theorem A, there are periodic orbits $\orb(q_n)$ with arbitrarily long period and index $j$, whose largest Lyapunov exponent along $E^{cs}$ converges to 0. Moreover, such periodic orbits form a dense set in $H(p)$.  Then by the Franks' Lemma and a genericity argument like in Section~\ref{generic argument}, the homoclinic class $H(p)$ can be accumulated by periodic orbits with index $j-1$, see~\cite[Lemma 2.1]{csy}. Therefore, by Theorem A of~\cite{w1} and Remark~\ref{bundle of ds}, $E^{cs}$ has a dominated splitting $E^{cs}=E^s\oplus E^c_0$ with $\dim(E^c_0)=1$. By Corollary~\ref{application 1}, we have that $E^s$ is contracting. Symmetrically, if the bundle $E^{cu}$ is not expanding, then it admits a finer dominated splitting $E^{cu}=E^c_{n+1}\oplus E^u$ such that $E^u$ is expanding  and $\dim(E^c_{n+1})=1$.

By the analysis above, we can obtain that $H(p)$ admits a partially hyperbolic splitting $T_{H(p)}M=E^{s}\oplus E^c_1\oplus\cdots\oplus E^c_k\oplus E^{u}$ with every center bundle of dimension one. Moreover, we have that $E^{cs}=E^s$ or $E^{cs}=E^{s}\oplus E^c_1$ and symmetrically $E^{cu}=E^u$ or $E^{cu}=E^c_k\oplus E^u$. Hence the smallest index of periodic orbits contained in $H(p)$ is $\dim(E^s)$ or $\dim(E^s)+1$ and the largest one is $d-\dim(E^u)$ or $d-\dim(E^u)-1$.
\medskip

This ends the proof of Corollary~\ref{application 2}.

\end{proof}

\subsection{Lyapunov stable homoclinic classes}

Now we give the proofs of Corollary~\ref{application 3} and Corollary~\ref{application 4}.

\begin{proof}[Proof of Corollary~\ref{application 3}]
From Corollary~\ref{application 1}, we only have to prove the case where $\dim(E)$ equals the largest index of periodic orbits contained in $H(p)$. The idea of the proof follows from~\cite{po} and Section 3 of~\cite{acco}. We just give an explanation here and for more details, the reader should refer to~\cite{po} and Section 3 of~\cite{acco}.

Assume $f\in\mathcal{R}$ satisfies Theorem A where $\mathcal{R}$ is the residual set in Lemma~\ref{generic properties}. There is a neighborhood $\mathcal{U}$ of $f$, such that the items 3, 4 and 6 stated in Lemma~\ref{generic properties} are satisfied for $(f,H(p),\mathcal{U})$. We can assume that $p$ has the largest index among the periodic points contained in $H(p)$, hence $\dim(E)=\ind(p)$. Assume that the bundle $F$ is not expanding, then by the conclusion of the Theorem A, we can get a sequence of periodic orbits $\orb(q_n)$ homoclinically related to $\orb(p)$ with arbitrarily long period such that the smallest Lyapunov exponent of $\orb(q_n)$ along the bundle $F$ can be arbitrarily close to 0. By Lemma 2.3 of~\cite{gy}, we can assume that all the eigenvalues of $\|Df\|$ along $\orb(q_n)$ are real. Then by Theorem 1 of~\cite{g2} (Theorem 2.5 in~\cite{acco}) and a proper construction of a path of diffeomorphism (see~\cite{acco}), there is a diffeomorphism $g\in\mathcal{U}$ and a periodic point $q$ of $g$ with index larger than $\dim(E)$ such that $W^s(q)\cap W^u(p_g)\neq\emptyset$. By $C^1$ small perturbation, we can assume that $W^s(q)$ intersects $W^u(p_g)$ transversely. This property is persistent under $C^1$ perturbation, since $\ind(q)>\ind(p_g)$. Hence there is a neighborhood $\mathcal{V}\subset\mathcal{U}$ of $g$, such that for any $h\in\mathcal{V}$, we have $W^s(q_h)\pitchfork W^u(p_h)\neq\emptyset$. Take a diffeomorphism $h\in\mathcal{V}\cap\mathcal{R}$, we have that $H(p_h)$ is Lyapunov stable by the item 6 of Lemma~\ref{generic properties}. Then $W^u(p_h)$ is contained in $H(p_h)$. By the fact $W^s(q_h)\pitchfork W^u(p_h)\neq\emptyset$, we have that $q_h\in H(p_h)$. This contradicts the item 4 of Lemma~\ref{generic properties} by the choice of $\mathcal{U}$, since $\ind(q_h)>\ind(p_h)=\ind(p)$.
\end{proof}

\begin{proof}[Proof of Corollary~\ref{application 4}]
We assume that the second item does not happen. By Lemma~\ref{generic properties}, all periodic orbits contained in $H(p)$ have the same index. By~\cite{po}, $H(p)$ has a dominated splitting $T_{H(p)}M=E\oplus F$ such that $\dim(E)=\ind(p)$. By Corollary~\ref{application 3}, we have that the bundle $F$ is expanding. With the same argument to $f^{-1}$ and the bundle $E$, we get that $E$ is contracting for $f$. Hence the splitting $T_{H(p)}M=E\oplus F$ is hyperbolic. Then $H(p)$ is a hyperbolic chain recurrence class by item $2$ of Lemma~\ref{generic properties}. Hence by a standard argument using the shadowing lemma, $H(p)$ is an isolated chain recurrence class. By Theorem 5 of~\cite{abd}, since $M$ is connected, the homoclinic class $H(p)$ is in fact the whole manifold, hence $f$ is Anosov.
\end{proof}

\section*{Acknowledgments} This work was done when I was at University Paris-Sud 11 as a joint PhD student under the supervision of Prof. Lan Wen and Prof. Sylvain Crovisier. I am very grateful to Lan Wen and Sylvain Crovisier, who have given me great help and encouragement. Professor Sylvain Crovisier gave me many useful suggestions both in solving the problem and in the writing of the paper, and Professor Lan Wen carefully listened to the proof and gave me many useful comments. I would like to thank Shaobo Gan, Dawei Yang, Yongluo Cao, Rafael Potrie, Xiao Wen and Nicolas Gourmelon for reading the proof and for useful discussions. Dawei Yang pointed out to me that one can prove that structurally stable homoclinic classes are hyperbolic with the conclusion of the main theorem. I would also like to thank the referee for carefully reading the paper and for the many useful comments.

\vskip 1cm

\flushleft{\bf Xiaodong Wang} \\
School of Mathematical Sciences, Shanghai Jiao Tong University, Shanghai, 200240, China\\
School of Mathematical Sciences, Peking University, Beijing, 100871, China\\
Laboratoire de Math\'ematiques d'Orsay, Universit\'e Paris-Sud 11, Orsay, 91405, France\\
\textit{E-mail:} \texttt{xdwang1987@sjtu.edu.cn, xdwang1987@gmail.com}\\

\end{document}